\newtheorem{theorem}{Theorem}[section]
\newtheorem{lemma}{Lemma}[section]
\newtheorem{corollary}{Corollary}[section]
\newtheorem{remark}{Remark}[section]
\newtheorem{prop}{Proposition}[section]
\newtheorem{definition}{Definition}[section]
\numberwithin{equation}{section}
\newcommand{\ba}{\begin{array}}
\newcommand{\ea}{\end{array}}
\newcommand{\rank}{{\rm rank}}
\newcommand{\Id}{\textrm{Id}}
\newcommand{\Vol}{{\rm Vol}}
\def \qed{\cqfd}
\def\qed{\vbox{\hrule
\hbox{\vrule\hbox to 5pt{\vbox to 8pt{\vfil}\hfil}\vrule}\hrule}}
\newcommand{\beg}{\begin{eqnarray*}}
\newcommand{\begn}{\begin{eqnarray}}
\newcommand{\en}{\end{eqnarray*}}
\newcommand{\enn}{\end{eqnarray}}
\newcommand{\tr}{\mbox{\rm tr\,}}
\begin{document}

\vspace*{0.5cm} \begin{center}\noindent {\LARGE \bf Projectively flat bundles and semi-stable Higgs bundles  }\\[0.7cm]
Changpeng Pan\\[0.2cm]
School of Mathematical Sciences\\
University of Science and Technology of China\\
Hefei, 230026,P.R. China\\
E-mail: pcp1995@mail.ustc.edu.cn\\[0.5cm]
Chuanjing Zhang\\[0.2cm]School of Mathematical Sciences\\
University of Science and Technology of China\\
Hefei, 230026,P.R. China\\
E-mail: chjzhang@mail.ustc.edu.cn\\[0.5cm]
Xi Zhang \footnote[1]{The third author is the corresponding
author who was supported in part by NSF in China, No.11625106, 11571332 and 11721101.}\\
School of Mathematical Sciences\\
University of Science and Technology of China\\
Hefei, 230026,P.R. China\\
E-mail: mathzx@ustc.edu.cn\\[1cm]
\end{center}


{\bf Abstract.}The Corlette-Donaldson-Hitchin-Simpson's correspondence states that, on a compact K\"ahler manifold $(X, \omega )$, there is  a one-to-one correspondence between the moduli space of semisimple flat complex vector bundles and the moduli space of poly-stable Higgs bundles with vanishing Chern numbers. In this paper, we extend this correspondence to the projectively flat bundles case. We prove that there is an equivalence of categories between the category of $\omega$-semi-stable (poly-stable) Higgs bundles $(E, \overline{\partial}_{E}, \phi )$ with $(2rc_{2}(E)-(r-1)c_{1}^{2}(E))\cdot [\omega]^{n-2}=0
$  and the category of (semi-simple) projectively flat bundles $(E, D)$ with $\sqrt{-1}F_{D}=\alpha \otimes \Id_{E}$ for some real (1,1)-form $\alpha$. Furthermore, we also establish the above correspondence on some compact non-K\"ahler manifolds. As its application, we obtain a vanishing theorem of characteristic classes of  projectively flat bundles.
\\[1cm]
{\bf AMS Mathematics Subject Classification.} 53C07, 58E15 \\
 {\bf
Keywords and phrases.} Projectively flat bundle, \ Higgs bundle, \ non-K\"ahler, \ the Hermitian-Yang-Mills flow, \ $\epsilon$-regularity theorem.


\newpage

\section{Introduction}

Let $E$ be a complex vector bundle of rank $r$ over a compact complex manifold $X$. A connection $D$ in $E$ is said to be projectively flat if the induced connection in the principal $PGL(r; \mathbb{C})$-bundle is flat, or equivalently, the curvature $F_{D}$ takes values in scalar multiples of the identity endomorphism of $E$. The bundle $E$ is projectively flat if and only if it admits a projectively flat connection. The moduli space of fundamental group representations and the moduli space of flat vector bundles are related by the Riemann-Hilbert correspondence. A Higgs bundle $(E, \overline{\partial}_{E}, \theta )$ is a holomorphic bundle $(E, \overline{\partial}_{E})$ coupled with a Higgs field $\theta \in \Omega^{1, 0}(\mbox{End}E)$ satisfying $\overline{\partial}_{E} \theta =0$ and $\theta \wedge \theta =0 $. Higgs bundles were introduced by Hitchin (\cite{H}) and developed by Simpson (\cite{S1,S2}). Under the assumption that $(E, D)$ is a semi-simple flat bundle over a compact K\"ahler manifold $(M, \omega )$, the theorem of
 Corlette (\cite{Cor}) and Donaldson (\cite{Don3}) on the existence of harmonic metric implies that there exists a poly-stable Higgs structure $ (\overline{\partial}_{E}, \theta )$ on $E$. On the other hand, by the work of  Hitchin (\cite{H}) and Simpson (\cite{S2}) on the Hitchin-Kobayashi correspondence for Higgs bundles, one has a  correspondence between  the moduli space for semi-simple flat bundles and the moduli space for poly-stable Higgs bundles with vanishing Chern numbers. The motivation of this paper is to extend the Corlette-Donaldson-Hitchin-Simpson's correspondence to the projectively flat bundles case and the non-K\"ahler case.

Suppose $E$ is a projectively flat complex vector bundle over a compact complex manifold $X$ with rank $r$ and $c_{1}(E)\cap \Omega^{1,1}(X, \mathbb{C})\cap \Omega^{2}(X, \mathbb{R})\neq \emptyset $. Then we have a connection $D$ in $E$ such that its curvature satisfies
\begin{equation}\label{condition1}
\sqrt{-1}F_{D}=\alpha \otimes \Id_{E},
\end{equation}
where $\alpha \in \frac{2\pi}{ r} c_{1}(E)$ is a real $(1, 1)$-form.
A subbundle $S\subset E$ is said to be $D$-invariant if $De\in \Omega^{1}(X, S)$ for any $e\in \Omega^{0}(X, S)$. We say $(E,D)$ is simple if it has no proper $D$-invariant subbundle and  $(E,D)$ is semi-simple if it is a direct sum of $D$-invariant subbundles.  Given a Hermitian metric $H$ on $E$, there is a unique decomposition
\begin{equation}
D=D_{H}+\psi_{H},
\end{equation}
where $D_{H}$ is a unitary connection and $\psi_{H}\in \Omega^{1}(\mbox{End}(E))$ is self-adjoint with respect to $H$.
In the following, we denote the $(1,0)$-part (resp. $(0,1)$-part) of $D_{H}$ by $\partial_{H}$ (resp. $\bar{\partial}_{H}$) and define
\begin{equation}\label{de1}
D_{H}^{''}=\bar{\partial}_{H}+\psi_{H}^{1,0},\ \ D^{'}_{H}=\partial_{H}+\psi_{H}^{0,1},
\end{equation}
\begin{equation}\label{de2}
G_{H}=(D_{H}^{''})^{2}=\bar{\partial}_{H}^{2}+\bar{\partial}_{H}\psi_{H}^{1,0}+\psi_{H}^{1,0}\wedge\psi_{H}^{1,0}.
\end{equation}
One can check that $\sqrt{-1}\tr(\bar{\partial}_{H}\psi_{H}^{1,0})=\sqrt{-1}\tr G_{H}$ is a real $\partial$-closed and $\bar{\partial}$-closed $(1,1)$-form. Furthermore, for any other Hermitian metric $K$ on $E$, we have
\begin{equation}\label{GHK01}
G_{H}-G_{K}=\frac{1}{4}D(h^{-1}D_{K}^{c}(h)),
\end{equation}
and
\begin{equation}
\sqrt{-1}\tr(\bar{\partial}_{H}\psi_{H}^{1,0})-\sqrt{-1}\tr(\bar{\partial}_{K}\psi_{K}^{1,0})=\frac{\sqrt{-1}}{2}\partial \bar{\partial} \log \det(h),
\end{equation}
where $h=K^{-1}H$, $D_{K}^{c}=D_{K}^{''}-D_{K}^{'}$. Hence we can define a fixed class in the Bott-Chern cohomology,
\begin{equation}
BC(E,D)=[\sqrt{-1}\tr(\bar{\partial}_{H}\psi_{H}^{1,0})]_{BC}\in H_{BC}^{1,1}(X,\mathbb{R}).
\end{equation}
Since $\sqrt{-1}\tr(\bar{\partial}_{H}\psi_{H}^{1,0})$ is $d$-exact, if $X$ satisfies the $\partial \bar{\partial}$-lemma (specially $X$ is K\"ahler), then
\begin{equation}
BC(E,D)=0.
\end{equation}
A natural question arises as follows. For  any real $(1,1)$-from $\zeta \in BC(E, D)$, does there exist a Hermitian metric $H$ on $(E, D)$ such that
\begin{equation}\label{Q1}
\sqrt{-1}G_{H} =\frac{1}{r}\zeta \otimes \Id_{E}?
\end{equation}
In particular, if $BC(E, D)=0$, does there exist a Hermitian metric $H$ on $(E, D)$ such that $G_{H}=0$?
To solve this problem, we obtain the following theorem.

\medskip

\begin{theorem}\label{thm0}
Let $(X, \omega )$ be a compact Hermitian manifold and $(E, D)$  a simple projectively flat complex vector bundle over $X$ satisfying the condition (\ref{condition1}). Then there must exist a unique Hermitian metric $H$ on $E$ such that
\begin{equation}\label{h0}
\sqrt{-1}\Lambda_{\omega }G_{H}^{\perp}=0,
\end{equation}
where $G_{H}^{\perp}$ is the trace-free part of $G_{H}$, i.e. $G_{H}^{\perp}=G_{H}-\frac{1}{r}\tr G_{H} \otimes \Id_{E}$. Furthermore, if $\omega $ is astheno-K\"ahler, i.e. $\partial\bar{\partial}\omega^{n-2}=0$, then we have
\begin{equation}
G_{H}^{\perp}=0.
\end{equation}
\end{theorem}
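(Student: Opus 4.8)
The plan is to realize the metric $H$ as the solution of a Hermitian–Yang–Mills type flow and to analyze its long-time existence and convergence, exactly as in the Higgs bundle case but adapted to the operator $D_H''$ whose "curvature" is $G_H$. Fix a background metric $K$ on $E$ and write an arbitrary metric as $H = K h$ with $h \in \Omega^0(\mathrm{End}(E))$ positive self-adjoint with respect to $K$. By (\ref{GHK01}) the quantity $\sqrt{-1}\Lambda_\omega G_H^\perp$ is, up to lower-order terms, a second-order elliptic operator in $h$ of the type $\Delta \log h$; one then runs the evolution equation
\begin{equation*}
h^{-1}\dd h = -2\sqrt{-1}\Lambda_\omega G_H^\perp,
\end{equation*}
with initial condition $h(0) = \Id$. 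Since $D_H''$ satisfies $(D_H'')^2 = G_H$ and $G_H - G_K$ depends on $h$ through $D(h^{-1}D_K^c h)$ as in (\ref{GHK01}), this is a (degenerate-)parabolic equation for a positive endomorphism, and the standard parabolic theory gives short-time existence.

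The main steps are then: (i) a maximum-principle estimate for $\mathrm{tr}\, h$ and $\mathrm{tr}\, h^{-1}$ bounding the $C^0$-norm of $h$ along the flow on any finite time interval, using that $\Lambda_\omega G_K^\perp$ is a fixed bounded endomorphism; (ii) higher-order estimates (parabolic Schauder/bootstrap) giving long-time existence of the flow for all $t \in [0,\infty)$; (iii) analysis of the limit as $t \to \infty$. For (iii) the key is a Donaldson-type functional, or rather its analog for $D_H''$, whose gradient flow is the above equation; simplicity of $(E,D)$ is used precisely to rule out destabilizing sub-objects and to force the $C^0$-bound to be uniform in $t$, hence to extract a smooth limit $H_\infty$ solving $\sqrt{-1}\Lambda_\omega G_{H_\infty}^\perp = 0$. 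Uniqueness follows from a standard convexity/maximum-principle argument: if $H_1, H_2$ are two solutions, set $h = H_1^{-1}H_2$ and show $\Delta \log \mathrm{tr}(h) \ge 0$ type inequality forces $h$ to be parallel, and simplicity then forces $h = c\,\Id$; normalizing the determinant (which is controlled since $\mathrm{tr}\, G_H$ is metric-independent in cohomology) gives $c = 1$.

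For the second assertion, assume $\omega$ is astheno-K\"ahler, $\partial\bar\partial\omega^{n-2} = 0$. Starting from the metric $H$ with $\sqrt{-1}\Lambda_\omega G_H^\perp = 0$, the goal is to upgrade this to $G_H^\perp = 0$ pointwise. The strategy is a Chern–Weil / integration-by-parts identity: one computes $\int_X |G_H^\perp|^2\, \omega^n$ (or rather $\int_X \mathrm{tr}(G_H^\perp \wedge \ast G_H^\perp)$) and rewrites it, using the Bianchi-type identity $D_H''G_H = 0$ and the astheno-K\"ahler condition, as a multiple of $\int_X \mathrm{tr}(G_H \wedge G_H)\wedge \omega^{n-2}$ minus a term involving $\Lambda_\omega G_H^\perp$ which vanishes. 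The characteristic-number hypothesis implicit in the setup — that $\alpha \in \frac{2\pi}{r}c_1(E)$ so that $G_H$ has the right trace — together with $\partial\bar\partial\omega^{n-2}=0$ makes the remaining topological integrand reduce to $\big(2rc_2(E) - (r-1)c_1^2(E)\big)\cdot[\omega]^{n-2}$ type expression, which one shows is non-positive; combined with $\int_X|G_H^\perp|^2 \ge 0$ this forces $G_H^\perp \equiv 0$.

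The hardest part will be (iii), the uniform $C^0$-estimate along the flow and the convergence argument: on a general compact Hermitian (non-K\"ahler) base one does not have the usual K\"ahler identities, so the Donaldson functional must be replaced by a substitute adapted to $\omega$, and one must check it is still bounded below and proper modulo the simplicity hypothesis. A secondary subtlety is that $D_H''$ is not a genuine holomorphic structure (its square $G_H$ need not vanish), so the standard Higgs-bundle Bochner formulas require modification; keeping careful track of the extra $G_H$ terms, and verifying they are absorbed by the astheno-K\"ahler condition in the second part, is where the real work lies.
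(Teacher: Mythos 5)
Your outline for the second assertion (astheno-K\"ahler implies $G_H^{\perp}=0$) is essentially the paper's Lemma \ref{lem:x1}: the Riemann bilinear relation converts $\int_X\tr(G_H^{\perp}\wedge G_H^{\perp})\wedge\omega^{n-2}$ into $\int_X(|G_H^{\perp}|^2-|\Lambda_{\omega}G_H^{\perp}|^2)$, and the structure equations of the projectively flat connection show that the left-hand side vanishes \emph{exactly}, not merely that it is non-positive; the computation uses $\partial_{H}\bar{\partial}_{H}+\bar{\partial}_{H}\partial_{H}+[\psi_{H}^{1,0},\psi_{H}^{0,1}]=-\sqrt{-1}\alpha\otimes\Id_{E}$ together with the vanishing of the trace of a commutator with $\alpha\otimes\Id_{E}$, and you need to carry it out rather than assert a sign. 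Your uniqueness argument coincides with the paper's Lemma \ref{lemmau}.

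For the existence part your route is genuinely different --- a heat flow in $h$ rather than the paper's continuity method --- and the difference sits exactly where your gap is. The paper first replaces $\omega$ by a Gauduchon metric in its conformal class, solves the perturbed equation (\ref{dd:3}) with the $\epsilon\log h$ term for every $\epsilon\in(0,1]$ by continuity (the perturbation yields the $C^0$ bound for free from the maximum principle), and then the entire content of the theorem is concentrated in the limit $\epsilon\to 0$: if $\|\log h_{\epsilon}\|_{L^2}\to\infty$, one rescales, extracts Simpson's weak $L^2_1$ limit with constant eigenvalues, builds $L^2_1$ projections via Uhlenbeck--Yau regularity, obtains a $D$-invariant subbundle on $X\setminus\Sigma$, and --- in a step special to projectively flat connections --- extends it across the codimension-two set $\Sigma$ using the local flatness of $D-\beta\otimes\Id_{E}$, contradicting simplicity. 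Your step (iii) is the analogue of all of this, and you explicitly leave it open: you appeal to a ``Donaldson-type functional'' whose very definition, lower bound and properness on a general compact Hermitian base (not even assumed Gauduchon before the conformal change) are precisely what is unavailable, and you give no mechanism by which simplicity concretely produces the uniform $C^0$ bound along the flow, nor any substitute for the extension-across-$\Sigma$ argument. As written, the proposal reduces the theorem to its hardest step without supplying an argument for it.
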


\medskip

\begin{remark}
Given a real $(1,1)$-form $\zeta \in BC(E, D)$, there exists a real function $\varphi $ such that $\frac{\sqrt{-1}}{r}\tr G_{H}=\frac{1}{r}\zeta +\sqrt{-1}\partial \bar{\partial }\varphi $. Letting $\tilde{H}=e^{-2\varphi }H$, according to formula (\ref{GHK01}), we have
\begin{equation}
\sqrt{-1}G_{H}^{\perp}=\sqrt{-1}G_{\tilde{H}}-\frac{1}{r}\zeta \otimes \Id_{E}.
\end{equation}
Thus, by Theorem \ref{thm0} and conformal transformation, (\ref{Q1}) can be solved if $X$ admits an  astheno-K\"ahler metric $\omega$.
\end{remark}

A Hermitian metric $H$ on $(E, D)$ is called harmonic if it satisfies:
\begin{equation}\label{h1}
\sqrt{-1}\Lambda _{\omega} G_{H}=0.
\end{equation}
 In the case that $(X, \omega )$ is K\"ahler  and $D$ is a flat connection, (\ref{h1}) is equivalent to $D_{H}^{\ast }\psi_{H}=0$,   the existence of harmonic metric was proved by Donaldson (\cite{Don3}) and Corlette (\cite{Cor}). If $BC(E, D)=0$ and $(X, \omega )$ is astheno-K\"ahler, Theorem \ref{thm0} implies that $(E, D)$ must admit a harmonic metric $H$ with $G_{H}=0$. Different to Corlette and Donaldson's arguments by using the heat flow, we will use the continuous method to solve the equation (\ref{h0}), see section 3 for details.

\medskip

In this paper, we also study some characteristic classes of projectively flat bundle $(E, D)$ with $\sqrt{-1}F_{D}=\alpha \otimes \Id_{E}$, where $\alpha $ is a real $(1, 1)$-form. Set
\begin{equation}
v_{2j+1}(E, D, H)=(2\pi \sqrt{-1})^{-j}\tr \psi_{H}^{2j+1}.
\end{equation}
Like that in the flat bundle case, one can check that $v_{2j+1}(E, D, H)$ is a real $d$-closed form and the class $[v_{2j+1}(E, D, H)]\in H^{2j+1}_{DR}(X, \mathbb{R})$ is independent of the choice of Hermitian metrics. So we define the following class:
\begin{equation}
v_{2j+1}(E, D)=[(2\pi \sqrt{-1})^{-j}\tr \psi_{H}^{2j+1}]
\end{equation}
for $j\geq 0$ . In the flat bundle case, this class was  defined by Kamber-Tondeur and Dupont (\cite{KT, DC}), and its Chern-Weil type description was given by Bismut and Lott(\cite{BL}). When $(X, \omega )$ is a K\"ahler manifold and $(E, D)$ is a semi-simple flat bundle, Reznikov (\cite{R}) proved that the classes $v_{2j+1}(E, D)$ necessarily vanish for $j\geq 1$ (a simple proof can be found in \cite{Ko}). As an application of Theorem \ref{thm0},  we generalized Reznikov's result to the projectively flat bundles on astheno-K\"ahler manifolds. In fact, we obtain the following theorem.

\medskip

\begin{theorem}\label{thm:2-4}
Let $(E, D)$ be a projectively flat complex vector bundle over a compact complex manifold $X$ with $\sqrt{-1}F_{D}=\alpha \otimes \Id_{E}$, where $\alpha $ is a real $(1, 1)$-form on $X$. If $X$ admits an  astheno-K\"ahler metric $\omega$, then for any $j\geq 1$, the class $v_{2j+1}(E, D)$ must vanish.
\end{theorem}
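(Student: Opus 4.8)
The plan is to deduce Theorem \ref{thm:2-4} from Theorem \ref{thm0} by a Chern-Weil argument, exactly paralleling the classical flat-bundle case. First I would observe that the vanishing of the classes $v_{2j+1}(E,D)$ only depends on the projective-flatness hypothesis up to twisting by a line bundle: replacing $D$ by the connection on $E\otimes L$ for a suitable $L$ does not change $\psi_H$ (the self-adjoint part is insensitive to scalar twists once we adjust $\alpha$), so without loss of generality one may reduce to the situation where the Bott-Chern class $BC(E,D)$ is represented by a specific form. More importantly, by the Remark following Theorem \ref{thm0} together with Theorem \ref{thm0} itself, on an astheno-K\"ahler manifold there exists a Hermitian metric $H$ on $(E,D)$ with $G_H^\perp=0$; after the conformal change in the Remark one even gets control of the full $G_H$, but for the characteristic-class computation $G_H^\perp=0$ will be the key structural input. (If $(E,D)$ is not simple, decompose it into $D$-invariant pieces and treat each summand separately, since $v_{2j+1}$ is additive over direct sums.)

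Next I would set up the Chern-Weil calculus. With $H$ the metric from Theorem \ref{thm0}, write $D=D_H+\psi_H$ with $D_H$ unitary and $\psi_H$ self-adjoint. The flatness (projective flatness) of $D$ gives $F_D=\sqrt{-1}^{-1}\alpha\otimes\Id_E$, and expanding $F_D=(D_H+\psi_H)^2$ and separating the trace-free parts yields the structural identities: $D_H\psi_H=0$ (the trace-free part of the mixed term) and $F_{D_H}^\perp+(\psi_H\wedge\psi_H)^\perp=0$. The condition $G_H^\perp=0$ translates, after taking $(1,1)$, $(2,0)$, $(0,2)$ components, into $\bar\partial_H\psi_H^{1,0}=$ (scalar)$\otimes\Id$ and similar, which combined with $D_H\psi_H=0$ pins down $F_{D_H}^\perp=-(\psi_H\wedge\psi_H)^\perp$ as an explicit $2$-form. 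Since $\psi_H$ is self-adjoint for $H$, the forms $\tr\psi_H^{2j+1}$ are real; $D_H\psi_H=0$ together with the Bianchi identity for $D_H$ shows $d\,\tr\psi_H^{2j+1}=\tr(D_H(\psi_H^{2j+1}))=0$ after using $\tr(F_{D_H}\wedge\psi_H^{2j})=\tr(F_{D_H}^\perp\wedge\psi_H^{2j})$ and feeding in $F_{D_H}^\perp=-(\psi_H\wedge\psi_H)^\perp$, which makes the relevant commutator-type terms cancel. This is the Chern-Weil verification that $[v_{2j+1}]$ is well defined, recalled in the paper.

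The actual vanishing then follows Reznikov's / Kobayashi's argument: I would compute the $L^2$-norm of $\psi_H$ (equivalently $\tr(\psi_H\wedge *\psi_H)$) and show, using $D_H\psi_H=0$ and the astheno-K\"ahler condition $\partial\bar\partial\omega^{n-2}=0$, that $\psi_H$ must be $D_H$-parallel, or more directly integrate $\tr\psi_H^{2j+1}\wedge\omega^{n-2j-1}$ (for the degree to match one needs $2j+1\le 2n$, and the top classes vanish for dimension reasons; for intermediate $j$ one pairs against $\omega$) and show the integrand is a multiple of an exact form via the structural identities. Concretely, $\tr\psi_H^{2j+1}=\tr\psi_H^{2j-1}\wedge(\psi_H\wedge\psi_H)$ and $(\psi_H\wedge\psi_H)^\perp=-F_{D_H}^\perp$, so up to trace terms $\tr\psi_H^{2j+1}$ is expressible through $\tr(\psi_H^{2j-1}\wedge F_{D_H})$; since $D_H\psi_H=0$, this is $D_H$-exact-ish, and wedging with a closed form (here $\omega^{n-2j-1}$, closed because astheno-K\"ahler plus the degree bookkeeping, or simply using that only the pairing with powers of $\omega$ is needed and $\partial\bar\partial\omega^{n-2}=0$ kills the obstruction) and integrating by parts gives zero. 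The main obstacle I anticipate is precisely this last integration-by-parts step in the non-K\"ahler setting: on a general complex manifold $\omega^k$ need not be closed, so one cannot freely integrate by parts against $\tr(D_H(\cdots))$; the astheno-K\"ahler hypothesis $\partial\bar\partial\omega^{n-2}=0$ is exactly what is needed to push the argument through, and making the bookkeeping of which power of $\omega$ appears (and why the relevant $\partial$ or $\bar\partial$ of it vanishes) match Reznikov's computation is the delicate point. Everything else — reality of the forms, additivity over $D$-invariant summands, the reduction via Theorem \ref{thm0} — is routine.
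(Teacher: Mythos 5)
Your proposal correctly identifies the main input (Theorem \ref{thm0} supplying a metric $H$ with $G_{H}^{\perp}=0$ on an astheno-K\"ahler manifold), but both halves of the argument have genuine gaps. For the simple case, the paper's proof is purely algebraic and pointwise: $G_{H}^{\perp}=0$ forces $\psi_{D,H}^{1,0}\wedge\psi_{D,H}^{1,0}=0$ (the trace part of the $(2,0)$-component is automatically zero since $\tr(A\wedge A)=0$ for any $\mathrm{End}(E)$-valued $1$-form $A$), so in the expansion of $\psi_{D,H}^{2j+1}$ only the alternating words $(\psi^{1,0}\wedge\psi^{0,1})^{j}\wedge\psi^{1,0}$ and $(\psi^{0,1}\wedge\psi^{1,0})^{j}\wedge\psi^{0,1}$ survive, and each has vanishing trace by cyclic invariance (cycling the last factor to the front, with sign $(-1)^{2j}=+1$, produces a factor $\psi^{1,0}\wedge\psi^{1,0}$ or $\psi^{0,1}\wedge\psi^{0,1}$). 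Hence the \emph{form} $v_{2j+1}(E,D,H)$ vanishes identically for $j\geq 1$; no integration by parts is needed. Your proposed substitute — pairing $\tr\psi_{H}^{2j+1}$ against $\omega^{n-2j-1}$ — does not work: the degrees do not add up to $2n$ (one gets $2n-2j-1$), and even a correctly set-up pairing against powers of $\omega$ could only test one component of the odd-degree de Rham class, not establish that the whole class vanishes.

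The second gap is in your reduction of the non-simple case. You propose to "decompose $(E,D)$ into $D$-invariant pieces and treat each summand separately," but a non-simple projectively flat bundle need not be semi-simple: in general one only has an iterated \emph{extension} $0\to S\to E\to Q\to 0$ by a $D$-invariant subbundle, with a nontrivial off-diagonal term $\beta$. The paper's proof of additivity $v_{2j+1}(E,D)=v_{2j+1}(S,D_{S})+v_{2j+1}(Q,D_{Q})$ for such extensions is the real content here: one takes the one-parameter family of block-diagonal metrics $\tilde{H}_{t}=\mathrm{diag}(tH_{S},\,t^{-1}H_{Q})$, observes that the cross terms in $\tr\psi_{\tilde{D},\tilde{H}_{t}}^{2j+1}$ involving $\beta$ scale as $t^{2k}$ ($k\geq 1$) while the class $[v_{2j+1}]$ is independent of the metric, and concludes that each cross-term class must vanish. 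Combined with the fact that the minimal-rank invariant subbundle is simple (so its classes vanish by the first part) and induction on the rank, this finishes the proof. This extension/scaling argument is entirely absent from your proposal and cannot be replaced by "additivity over direct sums."
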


\medskip

 In the following, we denote the space of connections in $E$ by $\mathcal{A}_{E}$, and set the moduli spaces
\begin{equation}
\mathcal{C}_{DR}(E, \alpha )=\{D\in \mathcal{A}_{E}\ | \ \sqrt{-1}F_{D}=\alpha \otimes \Id_{E} \}/\mathcal{G},
\end{equation}
 \begin{equation}
\mathcal{C}_{DR}^{s}(E, \alpha )=\{\text{Semi-simple}\ D\in \mathcal{A}_{E}\ | \ \sqrt{-1}F_{D}=\alpha \otimes \Id_{E} \}/\mathcal{G},
\end{equation}
where $\mathcal{G}$ is the gauge group of $E$.

Now, we suppose that $(X,\omega)$ is a compact Hermitian manifold of complex dimension $n$, and  the Hermitian metric $\omega $  satisfies the Gauduchon and astheno-K\"ahler conditions, i.e. $\partial\bar{\partial}\omega^{n-1}=\partial\bar{\partial}\omega^{n-2}=0$. These conditions ensure that the first and second Chern numbers of holomorphic vector  bundles are well-defined. Under the Gauduchon condition, we can also assume that $\Lambda_{\omega}\alpha =\lambda $, where $\lambda $ is a real number.
  Adding another condition that $\int_{X}\partial [\eta]\wedge\frac{\omega^{n-1}}{(n-1)!}=0$ for any Dolbeault class $[\eta]\in H^{0,1}(X)$, one can easily check that $X$ satisfies the global $\partial \bar{\partial }$-lemma, and then $BC(E, D)=0$.  By Theroem \ref{thm0}, there must exist a harmonic metric $H$ on the semi-simple projectively flat bundle $(E, D)$, and then $G_{H}=0$.
Denote $\overline{\partial }_{E}=\bar{\partial}_{H}$ and $\theta = \psi_{H}^{1,0}$, then we obtain a Higgs bundle structure $(E, \overline{\partial }_{E}, \theta )$ with
\begin{equation}\label{HE0}
\sqrt{-1}(F_{H}+[\theta,\theta^{*H}])=\alpha \otimes \Id_{E}, \quad \partial_{H}\theta=0.
\end{equation}
 It is not hard to find that (\ref{HE0}) means the Higgs bundle $(E, \overline{\partial }_{E}, \theta )$ is poly-stable and
\begin{equation}\label{CC}
\Delta (E,\bar{\partial}_{E})\cdot [\omega^{n-2}]:=(c_{2}(E,\bar{\partial}_{E})-\frac{r-1}{2r}c_{1}^{2}(E,\bar{\partial}_{E}))\cdot [\omega^{n-2}]=0.
\end{equation}

 In the following,
 $\mathcal{C}_{Dol}^{s}(E)$ (resp. $\mathcal{C}_{Dol}(E)$) denotes the moduli space of polystable (resp. semi-stable) Higgs structures $(\overline{\partial }_{E}, \theta )$  on bundle $E$ with the vanishing Chern number (\ref{CC}).
 Hitchin (\cite{H}) and Simpson (\cite{S1}) proved the Higgs version of Donaldson-Uhlenbeck-Yau theorem (\cite{NarSe,Don2,UhYau}) which states that every  poly-stable Higgs bundle over a K\"ahler manifold must admit a Hermitian-Einstein metric $H$, i.e.
  \begin{equation}
  \sqrt{-1}\Lambda_{\omega }(F_{H}+[\theta , \theta ^{\ast H}])=\lambda \Id_{E},
  \end{equation}
  and it is also valid for compact Gauduchon manifolds (\cite{bu,LY,MA}). Therefore, for any poly-stable Higgs structure $(\overline{\partial }_{E}, \theta ) $ with the vanishing Chern number (\ref{CC}), the Hitchin-Simpson connection $D_{H,\theta}=D_{H}+\theta+\theta^{*H}$ with respect to the Hermitian-Einstein metric $H$ must be  projectively flat. Then one can obtain a correspondence between  $\mathcal{C}_{Dol}^{s}(E)$ and $\mathcal{C}_{DR}^{s}(E, \alpha )$, i.e. we proved the following theorem.

 \medskip

\begin{theorem}\label{thm:2-1}
Let $(X,\omega)$ be a compact Hermitian manifold of dimension $n$ satisfying $\partial\bar{\partial}\omega^{n-1}=\partial\bar{\partial}\omega^{n-2}=0$ and $\int_{X}\partial [\eta]\wedge\frac{\omega^{n-1}}{(n-1)!}=0$ for any Dolbeault class $[\eta]\in H^{0,1}(X)$, $E$ be a complex vector bundle over $X$ with rank $r$ and $c_{1}(E)\cap \Omega^{1,1}(M, \mathbb{C})\cap \Omega^{2}(M, \mathbb{R})\neq \emptyset $, i.e. there is a real $(1, 1)$-form $\alpha \in \frac{2\pi}{ r} c_{1}(E)$.
Then there is a one-to-one correspondence between the moduli spaces $\mathcal{C}_{Dol}^{s}(E)$ and $\mathcal{C}_{DR}^{s}(E, \alpha )$.
\end{theorem}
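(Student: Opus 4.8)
The plan is to construct the correspondence in both directions and verify that the two constructions are mutually inverse. Starting from the de Rham side, let $D \in \mathcal{C}_{DR}^{s}(E,\alpha)$ be a semi-simple projectively flat connection. Under the hypotheses on $(X,\omega)$ — namely $\partial\bar\partial\omega^{n-1}=\partial\bar\partial\omega^{n-2}=0$ together with the vanishing of $\int_X \partial[\eta]\wedge\frac{\omega^{n-1}}{(n-1)!}$ for all $[\eta]\in H^{0,1}(X)$ — the global $\partial\bar\partial$-lemma holds, so $BC(E,D)=0$. Thus Theorem \ref{thm0}, applied to each simple $D$-invariant summand and assembled, produces a harmonic metric $H$ with $G_H=0$. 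Setting $\overline{\partial}_E=\bar\partial_H$ and $\theta=\psi_H^{1,0}$, the identity $G_H=0$ unwinds (via \eqref{de2}) into $\bar\partial_E^2=0$, $\bar\partial_E\theta=0$, and $\theta\wedge\theta=0$, i.e. a genuine Higgs structure, while the projective flatness of $D$ gives \eqref{HE0}, in particular the Hermitian-Einstein equation $\sqrt{-1}\Lambda_\omega(F_H+[\theta,\theta^{*H}])=\lambda\,\Id_E$. A standard Chern-Weil / Lübke-type inequality argument then shows $(E,\bar\partial_E,\theta)$ is poly-stable and satisfies \eqref{CC}, so we obtain a well-defined point $[(\bar\partial_E,\theta)]\in\mathcal{C}_{Dol}^{s}(E)$. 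One must check this is independent of the choice of harmonic metric (uniqueness up to the obvious automorphisms, using the uniqueness clause in Theorem \ref{thm0} on simple factors) and gauge-equivariant, so the assignment descends to the moduli spaces.

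For the reverse direction, take $[(\bar\partial_E,\theta)]\in\mathcal{C}_{Dol}^{s}(E)$. By the Hitchin-Simpson theorem in the Gauduchon setting (\cite{H,S1,bu,LY,MA}), poly-stability together with the vanishing Chern number \eqref{CC} yields a Hermitian-Einstein metric $H$ with $\sqrt{-1}\Lambda_\omega(F_H+[\theta,\theta^{*H}])=\lambda\,\Id_E$. The vanishing of \eqref{CC} upgrades this: combined with the Bogomolov-Lübke-Simpson argument it forces $F_H+[\theta,\theta^{*H}]=\alpha'\otimes\Id_E$ for a real $(1,1)$-form $\alpha'$ with $\Lambda_\omega\alpha'=\lambda$, and $\partial_H\theta=0$. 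Then the Hitchin-Simpson connection $D_{H,\theta}=D_H+\theta+\theta^{*H}$ has curvature a scalar-valued $(1,1)$-form, hence is projectively flat; it is semi-simple because poly-stability of the Higgs bundle corresponds to a direct-sum decomposition of $D_{H,\theta}$ into simple pieces. After adjusting by the conformal change from the Remark (replacing $H$ by $e^{-2\varphi}H$) so that the curvature lands exactly in the prescribed class $\frac{2\pi}{r}c_1(E)$ represented by $\alpha$, we get $[D_{H,\theta}]\in\mathcal{C}_{DR}^{s}(E,\alpha)$. Again one verifies independence of the Hermitian-Einstein metric and gauge-equivariance.

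Finally I would show the two maps are inverse to each other. Going De Rham $\to$ Dolbeault $\to$ De Rham: starting from $D$ with harmonic metric $H$ (so $G_H=0$), the resulting Higgs bundle $(\bar\partial_H,\psi_H^{1,0})$ already carries $H$ as a Hermitian-Einstein metric, and $D_{H,\psi_H^{1,0}}=D_H+\psi_H^{1,0}+\psi_H^{0,1}=D_H+\psi_H=D$ by the very definition \eqref{de1} of the decomposition; so we recover $D$ on the nose. Conversely, starting from $(\bar\partial_E,\theta)$ with Hermitian-Einstein metric $H$, the connection $D=D_{H,\theta}$ is projectively flat and $H$ satisfies $\sqrt{-1}\Lambda_\omega G_H=0$ with the $(0,1)$-part of $D_H$ equal to $\bar\partial_E$ and $\psi_H^{1,0}=\theta$ by construction; by the uniqueness in Theorem \ref{thm0} this is \emph{the} harmonic metric, so the Dolbeault data read off from it is $(\bar\partial_E,\theta)$ back again. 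Compatibility of the conformal normalizations on the two sides must be tracked, but it only shifts metrics by a global factor and does not affect the underlying holomorphic/flat structures.

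The main obstacle is the passage from the Hermitian-Einstein equation to genuine projective flatness in the reverse direction: the Hitchin-Simpson theorem only gives $\sqrt{-1}\Lambda_\omega(F_H+[\theta,\theta^{*H}])=\lambda\Id_E$, and promoting this to $F_H+[\theta,\theta^{*H}]=\alpha'\otimes\Id_E$ (i.e. killing the trace-free part entirely, not just its $\omega$-trace) requires the vanishing Chern-number hypothesis \eqref{CC} through a Bogomolov-Lübke type pointwise estimate together with the astheno-K\"ahler condition $\partial\bar\partial\omega^{n-2}=0$, paralleling the argument already invoked for Theorem \ref{thm0}. Making this step rigorous — and checking that the resulting decomposition into simple pieces is compatible on both sides so that "semi-simple" and "poly-stable" correspond exactly — is where the real work lies; the rest is bookkeeping with gauge equivalence and conformal factors.
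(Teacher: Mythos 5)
Your two constructions coincide with the paper's: De Rham $\to$ Dolbeault via the global $\partial\bar{\partial}$-lemma, $BC(E,D)=0$ and the harmonic metric of Theorem \ref{thm0}; Dolbeault $\to$ De Rham via the Hermitian--Einstein metric on Gauduchon manifolds and the Chern--Weil identity \eqref{Bo1}, which under the vanishing condition \eqref{CC} kills $\partial_{H}\theta$ and the trace-free part of the curvature pointwise, exactly as you describe. The genuine difference is how the bijection at the level of moduli spaces is closed. You argue that the two maps are mutually inverse on chosen representatives and defer ``independence of the choice of harmonic (resp.\ Hermitian--Einstein) metric and gauge-equivariance'' as bookkeeping. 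That deferral hides the one nontrivial point: for a semi-simple but non-simple $(E,D)$ the harmonic metric is far from unique (e.g.\ on $S\oplus S$ one may rescale and twist the factors independently), so the uniqueness clause of Theorem \ref{thm0} on simple factors does not by itself show that different harmonic metrics yield isomorphic Higgs structures. The paper closes this with Lemma \ref{lem:l1}: for the induced Higgs/flat bundle $(E,\bar{\partial}_{1},\theta_{1})\otimes(E,\bar{\partial}_{2},\theta_{2})^{\ast}$, which is Hermitian flat, one has $H^{0}_{DR}=H^{0}_{Dol}$, so morphisms (in particular isomorphisms) of flat bundles and of Higgs bundles coincide. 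This single Bochner-type identity simultaneously delivers well-definedness on equivalence classes and injectivity, which is cleaner than checking the two composites separately. Your outline is correct in structure, but you should either prove an analogue of Lemma \ref{lem:l1} or otherwise supply the argument that the assignment is independent of the (non-unique) harmonic metric in the reducible case; as written that step is asserted rather than established.
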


\begin{remark}
 In \cite[Proposition 2.1]{OUV}, the authors constructed some nontrivial examples of Hermitian metric $\omega$ satisfying strongly Gauduchon and astheno-K\"ahler conditions. It is easy to see that these non-K\"ahler metrics must satisfy the conditions in Theorem (\ref{thm:2-1}).
\end{remark}

 In \cite{S2}, Simpson proved  that there is an equivalence of categories  between the category of semi-stable Higgs bundles with vanishing Chern numbers and the category of flat bundles over smooth projective varieties.  The key in the proof of Simpson's correspondence is to prove that every semi-stable Higgs bundle with vanishing Chern numbers is an extension of stable Higgs bundles with vanishing Chern numbers. This  has been generalized to the K\"ahler case by Nie and the third author (\cite{NZ1}). In this paper, we extend this key theorem to semi-stable Higgs bundles with the vanishing condition (\ref{CC}) and to the non-K\"ahler case.  We combine the continuous method and the heat flow method to solve this problem. The key technique here is to use $\epsilon$-regularity theorem (Theorem \ref{thm:5}) of the Hermitian-Yang-Mills flow. Let's first consider the solution $H_{\epsilon}$ of  the perturbed equation (\ref{eq}) for every $0<\epsilon \leq 1$. In \cite{NZ}, under the assumption that the Higgs bundle $(E,\bar{\partial}_{E},\theta)$ is semistable, Nie and the third author have proved that \begin{equation}\sup_X|\sqrt{-1}\Lambda_{\omega }(F_{H_{\epsilon}}+[\theta,\theta^{*{H_{\epsilon}}}])-\lambda \cdot \textmd{Id}_E|_{H_{\epsilon}}\rightarrow 0\end{equation} as $\epsilon \rightarrow 0$. The Chern-Weil theory and the  vanishing  condition (\ref{CC}) imply (see (\ref{Bo1}))
 \begin{equation}
2||\partial_{H_{\epsilon}}\theta ||_{L^{2}}^{2}+||(F_{H_{\epsilon},\theta}^{1,1})^{\perp}||_{L^{2}}^{2}\rightarrow 0,
\end{equation}
as $\epsilon \rightarrow 0$, where $(F_{H,\theta}^{1,1})^{\perp}$ is the trace free part of $F_{H,\theta}^{1,1}$. Now, let's  evolve the metrics $H_{\epsilon}$ along the Hermitian-Yang-Mills flow (\ref{hymf2}), and denote $H_{\epsilon}(t)$ is the long time solution of (\ref{hymf2}).  By the $\epsilon$-regularity theorem (Theorem \ref{thm:5}) and the parabolic inequality (\ref{k02}), we can show that for some positive $t_{0}$,
 \begin{equation}
2||\partial_{H_{\epsilon}(t_{0})}\theta ||_{L^{\infty }}^{2}+||(F_{H_{\epsilon}(t_{0}),\theta}^{1,1})^{\perp}||_{L^{\infty }}^{2}\rightarrow 0,
\end{equation}
 as $\epsilon \rightarrow 0$, i.e. we obtain the existence of approximate Higgs-projectively flat structure. Using this and following  Demailly-Peternell-Schneider's argument (\cite{DPM}), we can show that the  Harder-Narasimhan-Seshadri filtration of the Higgs bundle $(E,\bar{\partial}_{E},\theta)$ is by subbundles. So
 we prove that, on some non-K\"ahler manifolds, every semi-stable Higgs bundle with the vanishing  condition (\ref{CC}) must be an extension of stable Higgs bundles, i.e. we conclude the following theorem.

\begin{theorem}\label{thm:1}
Let $(X,\omega)$ be a compact Hermitian manifold of dimension $n$ satisfying $\partial\bar{\partial}\omega^{n-1}=\partial\bar{\partial}\omega^{n-2}=0$, and $\mathfrak{E}=(E,\bar{\partial}_{E},\theta)$ be a Higgs bundle over $X$. Then $\mathfrak{E}$ is semi-stable with the vanishing  condition (\ref{CC}) if and only if $\mathfrak{E}$ admits a filtration
\begin{equation}
0=\mathfrak{E}_{0}\subset \mathfrak{E}_{1}\subset\cdots\subset \mathfrak{E}_{l}=\mathfrak{E}
\end{equation}
by Higgs subbundles such that the quotients $\mathfrak{Q}_{k}=(Q_{k}, \overline{\partial}_{Q_{k}}, \theta_{k})=\mathfrak{E}_{k}/\mathfrak{E}_{k-1}$ are stable Higgs bundles with vanishing Chern number (\ref{CC}) and $\frac{1}{\mbox{rank}(Q_{k})}c_{1}(Q_{k}, \overline{\partial}_{Q_{k}})=\frac{1}{\mbox{rank}(E)}c_{1}(E, \overline{\partial}_{E})$ for every $0\leq k\leq l-1$.
\end{theorem}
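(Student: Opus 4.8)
The ``if'' direction is formal. Suppose $\mathfrak{E}$ carries such a filtration with stable Higgs quotients $\mathfrak{Q}_k$. Under the Gauduchon condition the slope $\mu(\mathcal{F})=\deg_\omega(\mathcal{F})/\rank(\mathcal{F})$ is well defined for torsion-free Higgs subsheaves, and the normalization hypothesis $\tfrac{1}{\rank(Q_k)}c_1(Q_k)=\tfrac{1}{\rank(E)}c_1(E)$ gives $\mu(\mathfrak{Q}_k)=\mu(\mathfrak{E})$ for every $k$; since each $\mathfrak{Q}_k$ is stable, hence semi-stable of this common slope, the usual induction on the length of the filtration shows every Higgs subsheaf of $\mathfrak{E}$ has slope $\le\mu(\mathfrak{E})$, i.e. $\mathfrak{E}$ is semi-stable. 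For the vanishing condition, write $\Delta(E,\bar\partial_E)=\tfrac{1}{2r}\mathrm{ch}_1^2-\mathrm{ch}_2$; since the Chern character is additive in short exact sequences of coherent sheaves, $\mathrm{ch}_1(E)=\sum_k\mathrm{ch}_1(Q_k)$ and $\mathrm{ch}_2(E)=\sum_k\mathrm{ch}_2(Q_k)$. Substituting $\mathrm{ch}_1(Q_k)=\tfrac{\rank(Q_k)}{r}\,c_1(E)$ and, from $\Delta(\mathfrak{Q}_k)\cdot[\omega^{n-2}]=0$, $\mathrm{ch}_2(Q_k)\cdot[\omega^{n-2}]=\tfrac{1}{2\rank(Q_k)}\mathrm{ch}_1(Q_k)^2\cdot[\omega^{n-2}]$, a short computation collapses $\Delta(E,\bar\partial_E)\cdot[\omega^{n-2}]$ to $0$.

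For the ``only if'' direction I would follow the route sketched in the Introduction. The Harder--Narasimhan--Seshadri filtration of a semi-stable Higgs bundle exists automatically as a filtration by $\theta$-invariant coherent subsheaves whose successive quotients are stable Higgs sheaves of slope $\mu(\mathfrak{E})$; the entire content is to promote these subsheaves to Higgs subbundles and to see that the quotients satisfy the two numerical conditions. The analytic input is as follows. For each $0<\epsilon\le1$ let $H_\epsilon$ solve the perturbed equation (\ref{eq}); by the theorem of Nie and the third author \cite{NZ}, $\sup_X|\sqrt{-1}\Lambda_\omega(F_{H_\epsilon}+[\theta,\theta^{*H_\epsilon}])-\lambda\cdot\Id_E|_{H_\epsilon}\to0$ as $\epsilon\to0$. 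Inserting this into the Chern--Weil identity (\ref{Bo1}) and using $\Delta(E,\bar\partial_E)\cdot[\omega^{n-2}]=0$ yields $2\|\partial_{H_\epsilon}\theta\|_{L^2}^2+\|(F^{1,1}_{H_\epsilon,\theta})^\perp\|_{L^2}^2\to0$. Evolving $H_\epsilon$ by the Hermitian--Yang--Mills flow (\ref{hymf2}) to its long-time solution $H_\epsilon(t)$ and combining the $\epsilon$-regularity theorem (Theorem \ref{thm:5}) with the parabolic inequality (\ref{k02}), one upgrades this to uniform smallness at a fixed time $t_0>0$, namely $2\|\partial_{H_\epsilon(t_0)}\theta\|_{L^\infty}^2+\|(F^{1,1}_{H_\epsilon(t_0),\theta})^\perp\|_{L^\infty}^2\to0$. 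This is exactly an approximate Higgs-projectively-flat structure on $(E,\bar\partial_E,\theta)$.

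With such a structure in hand, the Demailly--Peternell--Schneider argument \cite{DPM}, adapted to Higgs bundles, shows that the weakly holomorphic projection onto each step of the Harder--Narasimhan--Seshadri filtration satisfies a curvature estimate forcing it to be smooth, so the filtration is by Higgs subbundles; call the first step $\mathfrak{E}_1$ and iterate. It remains to check the numerical conditions on the quotients $\mathfrak{Q}_k=\mathfrak{E}_k/\mathfrak{E}_{k-1}$: they are stable of slope $\mu(\mathfrak{E})$ by Jordan--Hölder, the Bogomolov-type inequality for semi-stable Higgs sheaves on astheno-Kähler manifolds gives $\Delta(\mathfrak{E}_k)\cdot[\omega^{n-2}]\ge0$ and $\Delta(\mathfrak{Q}_k)\cdot[\omega^{n-2}]\ge0$, and combining these with additivity of the characteristic classes and the Hermitian--Einstein metrics carried by the stable quotients (Gauduchon version of the Hitchin--Simpson correspondence) forces $\Delta(\mathfrak{Q}_k)\cdot[\omega^{n-2}]=0$ and the exact proportionality $\tfrac{1}{\rank(Q_k)}c_1(Q_k)=\tfrac{1}{r}c_1(E)$. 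This yields the desired filtration.

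The main obstacle is the passage from $L^2$- to $L^\infty$-control along the flow, \emph{uniformly in} $\epsilon$: one must ensure that the long-time solutions $H_\epsilon(t)$ remain regular up to the single fixed time $t_0$, and that the local estimate of the $\epsilon$-regularity theorem (Theorem \ref{thm:5}), which bounds the curvature on a parabolic ball once its scale-invariant $L^2$-energy is below a threshold, can be applied with \emph{one} threshold and \emph{one} $t_0$ valid for all sufficiently small $\epsilon$. This hinges on the parabolic inequality (\ref{k02}) for the quantity $2|\partial_{H_\epsilon(t)}\theta|^2+|(F^{1,1}_{H_\epsilon(t),\theta})^\perp|^2$ together with a uniform-in-$\epsilon$ a priori bound on its total energy coming from Chern--Weil theory. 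A secondary difficulty is that the Demailly--Peternell--Schneider construction was designed for holomorphic bundles on compact Kähler manifolds: here the Gauduchon and astheno-Kähler hypotheses are precisely what make $\deg_\omega$ and $\Delta(\cdot)\cdot[\omega^{n-2}]$ well defined and the relevant integration-by-parts identities valid, and one must verify that the weakly holomorphic subsheaves produced by the projections are saturated and preserved by $\theta$, so that they indeed realize the Harder--Narasimhan--Seshadri filtration.
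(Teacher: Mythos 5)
Your overall architecture coincides with the paper's: the ``if'' direction by additivity of Chern characters, and the ``only if'' direction by producing approximate Higgs-projectively flat metrics via the perturbed equation, the Hermitian--Yang--Mills flow, the $\epsilon$-regularity theorem and the parabolic inequality (this is exactly Theorem \ref{app01}), followed by a Demailly--Peternell--Schneider argument and Bogomolov-type inequalities. However, the step you compress into ``the DPS argument shows that the weakly holomorphic projection satisfies a curvature estimate forcing it to be smooth'' misdescribes the mechanism, and as stated it would not close the argument. A curvature estimate on a weakly holomorphic $L_{1}^{2}$-projection is the Uhlenbeck--Yau mechanism: it only produces a coherent subsheaf that is a subbundle outside an analytic set of codimension at least two, which is where you already are. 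The mechanism actually used is different: take a destabilizing Higgs subsheaf $S$ of \emph{minimal} rank $p$ with $\mu_{\omega}(S)=\mu_{\omega}(E)$ (so $S$ is automatically stable), regard the inclusion as a $\theta_{p}$-invariant holomorphic section of $\wedge^{p}E\otimes(\det S)^{-1}$, show that this section has no zeros by the vanishing lemma for approximately Higgs-Hermitian flat bundles (Lemma \ref{l:1}), and conclude that $S$ is a subbundle by \cite[Lemma 1.20]{DPM}. To invoke Lemma \ref{l:1} one must first know that $\wedge^{p}E\otimes(\det S)^{-1}$ is approximately Higgs-Hermitian flat, which requires identifying the first Chern \emph{form} of $\det S$ exactly, not merely its degree.

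That identification is the genuinely missing step in your sketch, and it is also what underlies the proportionality $\frac{1}{\rank(Q_{k})}c_{1}(Q_{k})=\frac{1}{\rank(E)}c_{1}(E)$ of classes, which you attribute to the Hermitian--Einstein metrics on the stable quotients; stability plus the Hitchin--Kobayashi correspondence only yields equality of slopes, not of first Chern classes. The paper gets it by combining the Gauss--Codazzi decomposition of the approximate metrics with the sign $\sqrt{-1}\tr(-\gamma_{\epsilon}\wedge\gamma_{\epsilon}^{*}+\zeta_{\epsilon}\wedge\zeta_{\epsilon}^{*})\geq 0$ to obtain $\sqrt{-1}\tr F_{H_{S,\epsilon}}\rightharpoonup\frac{\rank(S)}{r}\tr F_{K}$ as currents, and then the signed quadratic form $\int_{X}\eta\wedge\eta\wedge\omega^{n-2}=-\int_{X}|\eta|^{2}\frac{\omega^{n}}{n!}$ applied to the discrepancy $\eta$ to upgrade weak convergence to the exact identity (\ref{chern02}). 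Without this, neither the no-zeros lemma nor the class proportionality is available. The remaining ingredients of your plan --- the uniform-in-$\epsilon$ threshold and time $t_{0}$ in Theorem \ref{thm:5}, the Bogomolov inequalities from Theorem \ref{thm:3} forcing the vanishing of (\ref{CC}) on each quotient, and the induction on rank --- do match the paper.
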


As an application, we establish a correspondence between  $\mathcal{C}_{Dol}(E)$ and $\mathcal{C}_{DR}(E, \alpha )$, i.e. we prove the following theorem.

\begin{theorem}\label{thm:2-2}
Let $(X,\omega)$ be a compact Hermitian manifold of dimension $n$ satisfying $\partial\bar{\partial}\omega^{n-1}=\partial\bar{\partial}\omega^{n-2}=0$ and $\int_{X}\partial [\eta]\wedge\frac{\omega^{n-1}}{(n-1)!}=0$ for any Dolbeault class $[\eta]\in H^{0,1}(X)$, $E$ be a complex vector bundle over $X$ with rank $r$ and $c_{1}(E)\cap \Omega^{1,1}(M, \mathbb{C})\cap \Omega^{2}(M, \mathbb{R})\neq \emptyset $, i.e. there is a real $(1, 1)$-form $\alpha \in \frac{2\pi}{ r} c_{1}(E)$.
Then there is a one-to-one correspondence between the moduli spaces $\mathcal{C}_{Dol}(E)$ and $\mathcal{C}_{DR}(E, \alpha )$ in the case $\mbox{rank}(E)=2$.
\end{theorem}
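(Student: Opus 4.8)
The strategy is to upgrade the two inputs already available --- Theorem~\ref{thm:2-1}, which handles the poly-stable/semi-simple part of the correspondence, and Theorem~\ref{thm:1}, which says that a semi-stable Higgs bundle $\mathfrak{E}$ with the vanishing condition (\ref{CC}) is an iterated extension of stable Higgs bundles with the vanishing condition (\ref{CC}) and the same normalized first Chern class --- to a bijection of the full moduli spaces, by matching extension data. Given $(\bar\partial_E,\theta)\in\mathcal{C}_{Dol}(E)$, apply Theorem~\ref{thm:1} to produce a filtration $0=\mathfrak{E}_0\subset\mathfrak{E}_1\subset\cdots\subset\mathfrak{E}_l=\mathfrak{E}$ with stable quotients $\mathfrak{Q}_k$. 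When $\mathrm{rank}(E)=2$ there are only two cases: either $l=1$ and $\mathfrak{E}$ is already stable, hence poly-stable, so Theorem~\ref{thm:2-1} directly gives a point of $\mathcal{C}_{DR}(E,\alpha)$; or $l=2$ and there is a single short exact sequence $0\to\mathfrak{Q}_1\to\mathfrak{E}\to\mathfrak{Q}_2\to 0$ with $\mathfrak{Q}_1,\mathfrak{Q}_2$ of rank one. Each $\mathfrak{Q}_k$ is stable with vanishing (\ref{CC}), so by the Hitchin-Simpson theorem it carries a Hermitian-Einstein metric, and the Chern-Weil argument behind (\ref{HE0}) forces $\partial_{H_k}\theta_k=0$ together with the vanishing of the trace-free part of $F_{H_k}+[\theta_k,\theta_k^{*H_k}]$; thus the Hitchin-Simpson connection $D_k=D_{H_k}+\theta_k+\theta_k^{*H_k}$ satisfies $\sqrt{-1}F_{D_k}=\alpha_k\otimes\Id_{Q_k}$, and since the normalized first Chern classes of the $Q_k$ all equal that of $E$ one may normalize the $D_k$ so that $\alpha_k=\alpha$. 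Hence $(Q_1,D_1)\oplus(Q_2,D_2)$ is a semi-simple projectively flat structure on the $C^\infty$ bundle underlying $\mathrm{Gr}(\mathfrak{E})$.

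To promote the extension of Higgs bundles to an extension of projectively flat bundles I would establish a natural isomorphism
\[
\mathrm{Ext}^1_{\mathrm{Higgs}}(\mathfrak{Q}_2,\mathfrak{Q}_1)\;\cong\;\mathrm{Ext}^1_{\mathrm{flat}}\bigl((Q_2,D_2),(Q_1,D_1)\bigr),
\]
obtained by identifying both sides with one and the same cohomology group of the bundle $\mathcal{H}=\mathrm{Hom}(\mathfrak{Q}_2,\mathfrak{Q}_1)$, which again admits a harmonic metric because the $\mathfrak{Q}_k$ are stable with vanishing Chern numbers: the left side is the degree-one hypercohomology of the Dolbeault-Higgs complex of $\mathcal{H}$, the right side is the first de Rham cohomology of the corresponding flat bundle, and under the harmonic metric both are represented by the kernel of the same Laplacian. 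This is where the geometric hypotheses enter: the conditions $\partial\bar\partial\omega^{n-1}=\partial\bar\partial\omega^{n-2}=0$ make the K\"ahler-type identities needed for this Hodge-theoretic comparison available, while $\int_X\partial[\eta]\wedge\frac{\omega^{n-1}}{(n-1)!}=0$ yields the $\partial\bar\partial$-lemma and hence $BC(E,D)=0$, so that the harmonic metrics with $G_H=0$ used above exist at all (exactly as in the proof of Theorem~\ref{thm:2-1}). Transporting the extension class of $\mathfrak{E}$ through this isomorphism produces a connection $D$ on $E$ with $\sqrt{-1}F_D=\alpha\otimes\Id_E$; since $\mathrm{Gr}(\mathfrak{E})$ and the class of $\mathfrak{E}$ over it are well-defined, this gives a well-defined map $\mathcal{C}_{Dol}(E)\to\mathcal{C}_{DR}(E,\alpha)$.

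For the inverse, take $(E,D)\in\mathcal{C}_{DR}(E,\alpha)$. If $D$ is semi-simple, Theorem~\ref{thm:2-1} returns a poly-stable (hence semi-stable) Higgs bundle with (\ref{CC}); if not, then in rank two $(E,D)$ is a non-split extension $0\to(Q_1,D_1)\to(E,D)\to(Q_2,D_2)\to 0$ of flat line bundles, and applying Theorem~\ref{thm:2-1} to $(Q_1,D_1)\oplus(Q_2,D_2)$ and running the displayed isomorphism backwards produces a Higgs structure on $E$ whose associated graded is the corresponding poly-stable Higgs bundle. The resulting Higgs bundle is semi-stable with (\ref{CC}): its graded pieces have vanishing Chern numbers and equal normalized first Chern class, and an extension of stable objects of the same slope with poly-stable graded is semi-stable. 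Naturality of the $\mathrm{Ext}^1$ isomorphism, equivariance under automorphisms of the graded object (compatible on both sides since Theorem~\ref{thm:2-1} is really an equivalence of categories on poly-stable/semi-simple objects), and uniqueness of the Jordan-H\"older graded show that the two maps are mutually inverse. The main obstacle is the cohomological comparison $\mathrm{Ext}^1_{\mathrm{Higgs}}\cong\mathrm{Ext}^1_{\mathrm{flat}}$ on the non-K\"ahler manifold $X$: on a compact K\"ahler manifold it is Simpson's Hodge theory for harmonic bundles and rests squarely on the K\"ahler identities, so here it has to be re-derived from the Gauduchon and astheno-K\"ahler conditions (plus the $\partial\bar\partial$-lemma hypothesis); and it is precisely because only a single extension class must be transported --- with no iterated-extension or Yoneda-product bookkeeping across the two categories --- that the argument is confined to rank two.
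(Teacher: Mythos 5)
Your architecture is the same as the paper's: use Theorem \ref{thm:1} to reduce a strictly semi-stable rank-two Higgs bundle to a single extension $0\to\mathfrak{Q}_1\to\mathfrak{E}\to\mathfrak{Q}_2\to 0$ of stable pieces, put Hermitian--Einstein metrics on the pieces so that $\mathrm{Hom}(\mathfrak{Q}_2,\mathfrak{Q}_1)$ becomes Higgs--Hermitian flat, and transport the extension class between the Dolbeault and de Rham $H^1$ of that bundle (the paper's Section 8 does exactly this, in both directions). The problem is that the entire analytic content of the theorem sits in the comparison $H^1_{Dol}(X,Q^*\otimes S)\cong H^1_{DR}(X,Q^*\otimes S)$, which is the paper's Lemma \ref{nl:1}, and you do not prove it: you assert that "under the harmonic metric both are represented by the kernel of the same Laplacian," which is precisely the K\"ahler-identities argument, and then concede in your last sentence that this argument is unavailable here and "has to be re-derived." That is a genuine gap, not a routine verification. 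The paper's proof of Lemma \ref{nl:1} is not a same-Laplacian argument: given a Dolbeault class $[\beta]$ it solves $\sqrt{-1}\Lambda_{\omega}D'_H D''_E\gamma=-\sqrt{-1}\Lambda_{\omega}D'_H\beta$, where solvability uses the hypothesis $\int_X\partial[\eta]\wedge\frac{\omega^{n-1}}{(n-1)!}=0$ through the orthogonality condition against parallel sections; it then shows $D'_H\tilde\beta=0$ by an integration by parts in which the torsion corrections $\tau^*,\bar\tau^*$ appearing in Demailly's non-K\"ahler adjoint formulas (\ref{z:1}) are eliminated via a Stokes argument involving $\bar\partial\omega^{n-2}$, and the converse direction needs the further observation that a certain torsion term $A$ is purely imaginary so that it cancels when the two identities are added. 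Without supplying this lemma (or a substitute valid under the Gauduchon/astheno-K\"ahler hypotheses), your transport of extension classes, and hence both maps, is unjustified at its central step.

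A secondary weakness: you dispose of well-definedness by appealing to naturality, equivariance, and uniqueness of the Jordan--H\"older graded. The paper instead verifies explicitly that the comparison map $P$ between the data coming from two filtrations is parallel for the Hitchin--Simpson connections ($D'P_i^j=D''P_i^j=0$), and this verification again leans on the same vanishing arguments as Lemma \ref{nl:1} (harmonicity of the chosen representatives $\tilde\beta$, $\tilde\rho$ plus the Higgs--Hermitian flatness of the Hom-bundles); in this non-K\"ahler setting the independence of choices is not purely formal, so this step also needs the analytic input you have deferred. (Your remark that one may "normalize $\alpha_k=\alpha$" is unnecessary: equality of the normalized trace curvatures is forced, as in (\ref{chern03})--(\ref{chern06}).)
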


We also find a nature one-to-one map between $\mathcal{C}_{Dol}(E)$ and $\mathcal{C}_{DR}(E, \alpha )$ for higher rank case, but as pointed in Remark \ref{rmk:7-1},  this map only exists in K\"ahler case, i.e. we deduce the following result.

\begin{theorem}\label{thm:2-3}
Let $(X,\omega)$ be a K\"ahler manifold of dimension $n$. Then there is a one-to-one map between $\mathcal{C}_{Dol}(E)$ and $\mathcal{C}_{DR}(E, \alpha )$.
\end{theorem}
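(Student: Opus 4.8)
The plan is to produce the map as the moduli-level shadow of an extension-preserving construction, reducing via Theorem \ref{thm:1} to the poly-stable case treated in Theorem \ref{thm:2-1}. Let $\mathfrak{E}=(E,\bar{\partial}_{E},\theta)$ be $\omega$-semi-stable with the vanishing condition (\ref{CC}). By Theorem \ref{thm:1}, $\mathfrak{E}$ admits a filtration $0=\mathfrak{E}_{0}\subset\mathfrak{E}_{1}\subset\cdots\subset\mathfrak{E}_{l}=\mathfrak{E}$ by Higgs subbundles whose quotients $\mathfrak{Q}_{k}=(Q_{k},\bar{\partial}_{Q_{k}},\theta_{k})$ are stable Higgs bundles satisfying (\ref{CC}) and having the same $\omega$-slope as $\mathfrak{E}$. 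Since each $\mathfrak{Q}_{k}$ is stable and $X$ is K\"ahler, the Hitchin--Simpson theorem provides a Hermitian--Einstein metric $H_{k}$ on $Q_{k}$; as $X$ is K\"ahler the $\partial\bar{\partial}$-lemma holds (so the relevant Bott--Chern classes vanish), whence the computation of (\ref{HE0}), combined with the conformal normalisation of the Remark following Theorem \ref{thm0} and the equality of slopes, shows that the Hitchin--Simpson connection $D_{k}=D_{H_{k}}+\theta_{k}+\theta_{k}^{*H_{k}}$ is a simple projectively flat connection on $Q_{k}$ with $\sqrt{-1}F_{D_{k}}=\alpha\otimes\Id_{Q_{k}}$. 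As the filtration is by $C^{\infty}$ subbundles, $E\cong\bigoplus_{k}Q_{k}$ smoothly, and $\bigoplus_{k}D_{k}$ is precisely the semi-simple projectively flat connection attached by Theorem \ref{thm:2-1} to the associated graded $\mathrm{Gr}(\mathfrak{E})$.

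To obtain a one-to-one map on $\mathcal{C}_{Dol}(E)$ itself, and not merely on $S$-equivalence classes, one must retain the extension data of the filtration, and here the K\"ahler hypothesis enters a second and essential time. On a K\"ahler manifold the Dolbeault description of $\mathrm{Ext}^{1}$ in the category of Higgs bundles agrees, through Hodge theory for the $\theta$-twisted Dolbeault complex and the $\partial\bar{\partial}$-lemma, with the de Rham description of $\mathrm{Ext}^{1}$ in the category of projectively flat bundles $(F,D_{F})$ with $\sqrt{-1}F_{D_{F}}$ a scalar $(1,1)$-form times $\Id$ --- the projectively flat analogue of the $\mathrm{Ext}$-comparison underpinning Simpson's equivalence of categories in \cite{S2}. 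Lifting the successive extension classes of $\mathfrak{E}_{1}\subset\cdots\subset\mathfrak{E}$ along this comparison yields a projectively flat connection $D$ on $E$ --- an iterated extension of the $(Q_{k},D_{k})$ --- with $\sqrt{-1}F_{D}=\alpha\otimes\Id_{E}$, and we set $\Phi([\mathfrak{E}])=[D]\in\mathcal{C}_{DR}(E,\alpha)$. Well-definedness follows from the Jordan--H\"older uniqueness of $\mathrm{Gr}(\mathfrak{E})$ in the Higgs category, the uniqueness up to unitary automorphism of the Hermitian--Einstein metrics $H_{k}$ (\cite{S1}), and the naturality of the $\mathrm{Ext}$-comparison. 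For injectivity one runs the construction in reverse: from $[D]$ one recovers its graded summands and successive extension classes --- applying the harmonic-metric theorem of Corlette \cite{Cor} and Donaldson \cite{Don3} (equivalently Theorem \ref{thm0}) to the simple summands and inverting the $\mathrm{Ext}$-comparison --- which reconstructs $\mathfrak{E}$ up to isomorphism; equivalently the two assignments are quasi-inverse on the underlying categories, so $\Phi$ is the map on isomorphism classes induced by a fully faithful functor. Remark \ref{rmk:7-1} then records why this collapses without the K\"ahler hypothesis: on a general astheno-K\"ahler Gauduchon manifold $BC(E,D)$ may be non-zero and the $\partial\bar{\partial}$-lemma fails, so one can neither force $\sqrt{-1}F_{D_{k}}=\alpha\otimes\Id$ on the nose nor invoke the $\mathrm{Ext}$-comparison.

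I expect the main obstacle to be the $\mathrm{Ext}^{1}$-comparison just invoked, made precise in the projectively flat setting: for stable Higgs bundles $\mathfrak{Q}_{i},\mathfrak{Q}_{j}$ of the common slope satisfying (\ref{CC}), with Hitchin--Simpson connections $D_{i},D_{j}$, one must show that the harmonic representatives of classes in the first cohomology of the $\theta$-twisted Dolbeault complex of $\mathcal{H}om(\mathfrak{Q}_{j},\mathfrak{Q}_{i})$ correspond bijectively and naturally to the de Rham cohomology of the associated flat $\mathcal{H}om$-bundle, and --- crucially --- that the connection glued from such representatives has curvature exactly $\alpha\otimes\Id_{E}$, not merely modulo a Bott--Chern-exact term. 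The remaining ingredients are either cited (Theorems \ref{thm:1}, \ref{thm:2-1}, \ref{thm0}, Hitchin--Simpson, Corlette--Donaldson) or a direct transcription of the K\"ahler Higgs-bundle dictionary.
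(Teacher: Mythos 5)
Your plan is essentially the paper's proof: Theorem \ref{thm:1} supplies the filtration, Hermitian--Einstein metrics on the stable quotients give the diagonal blocks with $\sqrt{-1}F_{D_k}=\alpha\otimes\Id_{Q_k}$, and the $\mathrm{Ext}^1$-comparison you single out as the main obstacle is exactly what the paper's Lemma \ref{l:7-1} establishes concretely, by choosing a smooth splitting whose second fundamental forms $\tilde{\beta}_i^j$ satisfy $D'\tilde{\beta}_i^j=0$ in addition to the structure equations, with the K\"ahler hypothesis entering precisely through the integrability condition for $\sqrt{-1}\Lambda_{\omega}D'D''\gamma=-\sqrt{-1}\Lambda_{\omega}D'\beta$ as you anticipated (Remark \ref{rmk:7-1}), and well-definedness checked by the same gauge-comparison you describe. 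The only refinement worth noting is that for $j-i\geq 2$ the entries $\beta_i^j$ are not individually $D''$-closed (they satisfy $D''\beta_i^j+\sum_k\beta_i^k\wedge\beta_k^j=0$), so the paper inducts on the distance $j-i$ from the diagonal rather than treating each entry as a bona fide extension class; this is a technical reorganization of the same idea.
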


Actually, using Theorem \ref{thm:1}, one may follow Simpson's argument in (\cite{S2}) to get a correspondence between $\mathcal{C}_{Dol}(E)$ and $\mathcal{C}_{DR}(E, \alpha )$ over K\"ahler manifolds. However, Simpson's argument (\cite{S2}) is totally algebraic and highly abstract. We construct the correspondence map more directly, so we write down here. It should be pointed out that, by using Nie and the third author's result (\cite{NZ1}), Deng also gives a constructing proof of this correspondence for the flat bundle case in his thesis (\cite{De}).

\medskip

This paper is organized as follows. In Section 2, we introduce some basic concepts and results about projectively flat bundles. In section 3, we study the equation (\ref{h0}) and give a proof of Theorem \ref{thm0}. In Section 4, we consider the characteristic classes of projectively flat bundles and give a proof of Theorem  \ref{thm:2-4}. In Section 5, we recall some basic results of Higgs bundles over non-K\"ahler manifolds and give a proof of Theorem \ref{thm:2-1}. In Section 6, we give some basic estimates and an $\epsilon$-regularity theorem about the  Yang-Mills-Higgs flow in the non-K\"ahler case. In Section 7, we  prove Theorem \ref{thm:1}. In Section 8 and Section 9, we  prove Theorem \ref{thm:2-2} and Theorem \ref{thm:2-3}.
\medskip

{\bf  Acknowledgement:}  The authors would like to thank Professor Jixiang Fu and Professor Xiangwen Zhang to point out the examples satisfying our assumptions in Theorem \ref{thm:2-1}. The authors are partially supported by NSF in China No.11625106, 11571332 and 11721101. The second author is also supported by NSF in China No.11801535, the China Postdoctoral Science Foundation (No.2018M642515) and the Fundamental Research Funds for the Central Universities.

\medskip

\section{Preliminaries}

Let $(X, \omega )$ be a compact Hermitian manifold of dimension $n$, $E$ be a projectively flat bundle over $X$ with  rank $r$ and $c_{1}(E)\cap \Omega^{1,1}(M, \mathbb{C})\cap \Omega^{2}(M, \mathbb{R})\neq \emptyset $.   We can choose a real $(1, 1)$-form $\alpha \in \frac{2\pi}{ r} c_{1}(E)$. Furthermore, if $\partial\bar{\partial}\omega^{n-1}=0$, we can assume
\begin{equation}\Lambda_{\omega}\alpha =\lambda , \end{equation} where $\lambda $ is a real number. It is easy to check that there is a connection in $E$ such that:
\begin{equation}\label{condition2}
\sqrt{-1}F_{D}=\alpha \otimes \Id_{E}
\end{equation}
and
\begin{equation}
2c_{2}(E, D)-\frac{r-1}{r}c_{1}^{2}(E,D)=0.
\end{equation}

For any metric $H$ on $(E, D)$, there is a unique decomposition
\begin{equation}
D=D_{H}+\psi_{H},
\end{equation}
where $D_{H}$ is a unitary connection, $\psi_{H}\in \Omega^{1}(\mbox{End}(E))$ is self-adjoint and
\begin{equation}\label{ex1}
H(\psi_{H}e_{1}, e_{2})=\frac{1}{2}\{H(De_{1}, e_{2})+H(e_{1}, De_{2})-dH(e_{1}, e_{2})\}
\end{equation}
for any $e_{1}, e_{2}\in \Gamma (E)$.
 We rewrite (\ref{condition2}) as
\begin{equation}\label{condition3}
\alpha \otimes \Id_{E}=\sqrt{-1}F_{D}=\sqrt{-1}(D_{H}^{2}+\psi_{H}\wedge \psi_{H}+D_{H}\circ \psi_{H}+ \psi_{H}\circ D_{H}).
\end{equation}
Considering the self-adjoint and anti-self-adjoint parts of the above identity, since $\alpha $ is a real $(1,1)$-form, we have
\begin{equation}\label{co1}
D_{H}(\psi_{H})=0,
\end{equation}
and
\begin{equation}
\sqrt{-1}(D_{H}^{2}+\psi_{H}\wedge \psi_{H})=\alpha \otimes \Id_{E}.
\end{equation}
Then, we get
\begin{equation}\label{dd:1}
\left\{\begin{split}
&\bar{\partial}_{H}^{2}+\psi_{H}^{0,1}\wedge\psi_{H}^{0,1}=0, \quad \partial_{H}^{2}+\psi_{H}^{1, 0}\wedge\psi_{H}^{1,0}=0,\\
&\bar{\partial}_{H}\psi_{H}^{1,0}+\partial_{H}\psi_{H}^{0,1}=0,\\
&\partial_{H}\psi_{H}^{1,0}=0, \quad \bar{\partial}_{H}\psi_{H}^{0,1}=0, \\
&\partial_{H}\bar{\partial}_{H}+\bar{\partial}_{H}\partial_{H}+[\psi_{H}^{1,0},\psi_{H}^{0,1}]=-\sqrt{-1}\alpha \otimes \Id_{E}.
\end{split}\right.
\end{equation}

Denote by $\partial_{H}$ (resp. $\bar{\partial}_{H}$) the $(1,0)$-part (resp. $(0,1)$-part) of $D_{H}$. Define $D_{H}^{''}$, $D_{H}^{'}$ and $G_{H}$ as that in (\ref{de1}) and (\ref{de2}). Obviously (\ref{dd:1}) implies
\begin{equation}\label{bc1}
(G_{H}^{1,1})^{\ast H}=-G_{H}^{1,1}, \quad (G_{H}^{2,0})^{\ast H}=G_{H}^{0,2},
\end{equation}
\begin{equation}\label{bc2}
\tr G_{H}=\tr (\bar{\partial }_{H}\psi_{H}^{1,0})=\bar{\partial}\tr (\psi_{H}^{1,0})=d \tr (\psi_{H}^{1,0}),
\end{equation}
\begin{equation}\label{bc3}
\partial \tr (\bar{\partial }_{H}\psi_{H}^{1,0})=-\bar{\partial} \tr (\partial_{H}\psi_{H}^{1,0})=0.
\end{equation}
If $G_{H}=0$, then $(E,\bar{\partial}_{H},\psi_{H}^{1,0})$ will be a Higgs bundle and $H$ is a Higgs-Hermitian projectively flat metric.

\begin{lemma}\label{lem:x1}
Assume $\bar{\partial}\partial\omega^{n-2}=0$, then
\begin{equation}\sqrt{-1}\Lambda_{\omega}(G_{H})=0\iff G_{H}=0\end{equation}
and
\begin{equation}\sqrt{-1}\Lambda_{\omega}(G_{H}^{\perp})=0\iff G_{H}^{\perp}=0.\end{equation}
\end{lemma}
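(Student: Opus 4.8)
\emph{Proof sketch.} Each ``$\Leftarrow$'' is trivial, so the task is to prove ``$\Rightarrow$'' in both parts, and I would run the two arguments in parallel. The plan is to produce a Bochner-type identity by integrating a $(2,2)$-form manufactured from $G_H$ against $\omega^{n-2}$, and to use the astheno-K\"ahler condition to discard an exact term.

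\textbf{Pointwise step.} From (\ref{de2}) and (\ref{dd:1}) one has $G_H^{2,0}=\psi_H^{1,0}\wedge\psi_H^{1,0}$, $G_H^{1,1}=\bar\partial_H\psi_H^{1,0}$, $G_H^{0,2}=\bar\partial_H^{2}$; by (\ref{bc1}), $(G_H^{2,0})^{\ast H}=G_H^{0,2}$ and $(G_H^{1,1})^{\ast H}=-G_H^{1,1}$, and clearly $\Lambda_\omega G_H=\Lambda_\omega G_H^{1,1}$. The usual pointwise Hermitian linear algebra on $(X,\omega)$ then gives
\begin{equation*}
2\,\tr(G_H^{2,0}\wedge G_H^{0,2})\wedge\tfrac{\omega^{n-2}}{(n-2)!}=2c\,|G_H^{2,0}|_{H}^{2}\,\tfrac{\omega^{n}}{n!}\qquad(c>0),
\end{equation*}
\begin{equation*}
\tr(G_H^{1,1}\wedge G_H^{1,1})\wedge\tfrac{\omega^{n-2}}{(n-2)!}=(\,|G_H^{1,1}|_{H}^{2}-|\Lambda_\omega G_H^{1,1}|_{H}^{2}\,)\,\tfrac{\omega^{n}}{n!},
\end{equation*}
so that, extracting the $(2,2)$-component of $\tr(G_H\wedge G_H)$,
\begin{equation*}
[\tr(G_H\wedge G_H)]^{(2,2)}\wedge\tfrac{\omega^{n-2}}{(n-2)!}=(\,2c|G_H^{2,0}|^{2}+|G_H^{1,1}|^{2}-|\Lambda_\omega G_H^{1,1}|^{2}\,)\,\tfrac{\omega^{n}}{n!}.
\end{equation*}
Since $(G_H^{\perp})^{2,0}=G_H^{2,0}$, $(G_H^{\perp})^{0,2}=G_H^{0,2}$ and $(G_H^{\perp})^{1,1}$ is again $\ast H$-anti-self-adjoint, the same identity holds verbatim with $G_H$ replaced by $G_H^{\perp}$.

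\textbf{Global step.} Using (\ref{dd:1}) (hence $\partial_H\psi_H^{1,0}=0$ and $\partial_H\psi_H^{0,1}=-\bar\partial_H\psi_H^{1,0}$) together with the Bianchi identity $\bar\partial_H G_H^{1,1}=-[\psi_H^{1,0},G_H^{0,2}]$ (a consequence of $G_H=(D_H'')^{2}$), a direct computation should give
\begin{equation*}
\partial\,\tr(\psi_H^{1,0}\wedge\psi_H^{0,1})=\tr(\psi_H^{1,0}\wedge G_H^{1,1}),\qquad \bar\partial\partial\,\tr(\psi_H^{1,0}\wedge\psi_H^{0,1})=[\tr(G_H\wedge G_H)]^{(2,2)},
\end{equation*}
so the $(2,2)$-form above is $\partial\bar\partial$-exact. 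Because $\partial\bar\partial\omega^{n-2}=0$, integrating by parts twice shows $\int_X\partial\bar\partial\zeta\wedge\omega^{n-2}=0$ for every $(1,1)$-form $\zeta$; hence $\int_X\tr(G_H\wedge G_H)\wedge\frac{\omega^{n-2}}{(n-2)!}=0$. For the trace-free version one has $\tr(G_H^{\perp}\wedge G_H^{\perp})=\tr(G_H\wedge G_H)-\tfrac1r(\tr G_H)\wedge(\tr G_H)$, so one needs in addition $\int_X(\tr G_H)\wedge(\tr G_H)\wedge\omega^{n-2}=0$; by (\ref{bc2}) and (\ref{dd:1}) one has $\tr G_H=\bar\partial\tr\psi_H^{1,0}=-\partial\tr\psi_H^{0,1}$ with $\partial\tr\psi_H^{1,0}=\bar\partial\tr\psi_H^{0,1}=0$, and two more integrations by parts (the second again using $\partial\bar\partial\omega^{n-2}=0$) give this vanishing.

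\textbf{Conclusion and the hard part.} Combining the two steps, $0=\int_X(\,2c|G_H^{2,0}|^{2}+|G_H^{1,1}|^{2}-|\Lambda_\omega G_H^{1,1}|^{2}\,)\frac{\omega^{n}}{n!}$, and similarly for $G_H^{\perp}$. If $\sqrt{-1}\Lambda_\omega G_H=0$ then $\Lambda_\omega G_H^{1,1}=0$, and the identity forces $G_H^{2,0}=0=G_H^{1,1}$; then $G_H^{0,2}=(G_H^{2,0})^{\ast H}=0$, i.e. $G_H=0$. The same computation with $G_H^{\perp}$ gives $G_H^{\perp}=0$ once $\sqrt{-1}\Lambda_\omega G_H^{\perp}=0$. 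The conceptual core is short; I expect the main obstacle to be bookkeeping, namely verifying the two displayed identities of the global step (especially the graded-Leibniz signs in expressions such as $\tr(\psi_H^{1,0}\wedge[\psi_H^{1,0},G_H^{0,2}])$) and checking that every integration by parts leaves behind exactly a multiple of $\partial\bar\partial\omega^{n-2}$ and nothing else, which is precisely where the astheno-K\"ahler hypothesis is used.
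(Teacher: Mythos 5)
Your proposal is correct and follows essentially the same route as the paper: the ``Riemann bilinear relations'' identity you derive componentwise from (\ref{bc1}) is exactly the paper's first display, and your claimed exactness $[\tr(G_H\wedge G_H)]^{(2,2)}=\bar\partial\partial\,\tr(\psi_H^{1,0}\wedge\psi_H^{0,1})$ does check out using (\ref{dd:1}) (the paper performs the same computation, just keeping a remainder term $\tr(\partial_H\bar\partial_H\psi_H^{1,0}\wedge\psi_H^{0,1})$ which it then cancels via the centrality of $\alpha\otimes\Id_E$), while the trace part is handled identically via $\partial\bar\partial$-exactness and the astheno-K\"ahler condition.
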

\begin{proof}
The Riemann bilinear relations assert that
\begin{equation}
\int_{X}\tr(G_{H}^{\perp}\wedge G_{H}^{\perp})\wedge\frac{\omega^{n-2}}{(n-2)!}
=\int_{X}(|G_{H}^{\perp}|^{2}-|\Lambda_{\omega}G_{H}^{\perp}|^{2})\frac{\omega^{n}}{n!}.
\end{equation}
On the other hand, by (\ref{dd:1}), we have
\begin{equation}
\begin{split}
&\tr (\bar{\partial }_{H}\psi_{H}^{1,0})\wedge \tr (\bar{\partial }_{H}\psi_{H}^{1,0})\\
=&-\tr (\partial _{H}\psi_{H}^{0, 1})\wedge \tr (\bar{\partial }_{H}\psi_{H}^{1,0})\\
=&\partial \bar{\partial }(\tr \psi_{H}^{0, 1} \wedge \tr \psi_{H}^{1, 0} ),\\
\end{split}
\end{equation}
and
\begin{equation}
\begin{split}
&\int_{X}\tr(G_{H}^{\perp}\wedge G_{H}^{\perp})\wedge\frac{\omega^{n-2}}{(n-2)!}\\
=&\int_{X}\tr(G_{H}\wedge G_{H})\wedge\frac{\omega^{n-2}}{(n-2)!}\\
=&\int_{X}\{-2\tr(\psi_{H}^{1,0}\wedge\psi_{H}^{1,0}\wedge\psi_{H}^{0,1}\wedge\psi_{H}^{0,1})-\tr(\bar{\partial}_{H}\psi_{H}^{1,0}\wedge\partial_{H}\psi_{H}^{0,1})\}\wedge\frac{\omega^{n-2}}{(n-2)!}\\
=&\int_{X}\{\bar{\partial}\partial \tr(\psi_{H}^{1,0}\wedge\psi_{H}^{0,1})-2\tr(\psi_{H}^{1,0}\wedge\psi_{H}^{1,0}\wedge\psi_{H}^{0,1}\wedge\psi_{H}^{0,1})\}\wedge\frac{\omega^{n-2}}{(n-2)!}\\
 & +\int_{X}\tr(\partial_{H}(\bar{\partial}_{H}\psi_{H}^{1,0})\wedge\psi_{H}^{0,1})\wedge\frac{\omega^{n-2}}{(n-2)!}\\
=& -\int_{X}\tr((\sqrt{-1}\alpha \otimes \Id_{E}\wedge \psi_{H}^{1,0}-\psi_{H}^{1,0}\wedge (\sqrt{-1}\alpha \otimes \Id_{E}))\wedge\psi_{H}^{0,1})\wedge\frac{\omega^{n-2}}{(n-2)!}\\
=&0.
\end{split}
\end{equation}
\end{proof}


For any two Hermitian metrics $H$ and $K$, let $h=K^{-1}H$ and $D_{K}^{c}=D_{K}^{''}-D_{K}^{'}$. By the expression (\ref{ex1}), one can deduce that
\begin{equation}\label{ex2}
\psi_{H}=\frac{1}{2}h^{-1}\circ \psi_{K}\circ h +\frac{1}{2}\psi_{K}+\frac{1}{2}(D_{K}-h^{-1}\circ D_{K} \circ h),
\end{equation}
and
\begin{equation}
D_{H}^{''}-D_{K}^{''}=\frac{1}{2}h^{-1}D_{K}^{c}h.
\end{equation}
Using the equalities (\ref{dd:1}), we know
\begin{equation}
D_{K}^{''}\circ D_{K}^{''}+ D_{K}^{'}\circ D_{K}^{'}=0,
\end{equation}
\begin{equation}
D_{K}^{''}\circ D_{K}^{'}+ D_{K}^{''}\circ D_{K}^{'}=-\sqrt{-1}\alpha \otimes \Id_{E},
\end{equation}
and
\begin{equation}
D_{K}^{c}( D_{K}^{c}h)=D_{K}^{c}\circ D_{K}^{c}\circ h- h\circ D_{K}^{c}\circ D_{K}^{c}=\sqrt{-1}(\alpha  h-h\alpha)=0.
\end{equation}
Then
\begin{equation}\label{dd:2}
\begin{split}
&G_{H}-G_{K}=D_{H}^{''}\circ D_{H}^{''}-D_{K}^{''}\circ D_{K}^{''}\\
=&\frac{1}{4}\{2D_{K}^{''}(h^{-1}D_{K}^{c}h)-D_{K}^{c}(h^{-1}D_{K}^{c}h)+h^{-1}D_{K}^{c}(D_{K}^{c}h)\}\\
=&\frac{1}{4}D(h^{-1}D_{K}^{c}h).\\
\end{split}
\end{equation}
This together with (\ref{ex2}) gives us that
\begin{equation}\label{bc4}
\tr (\psi_{H})=\tr (\psi_{K})-\frac{1}{2}d\log \det(h)
\end{equation}
and
\begin{equation}\label{bc5}
\tr(\bar{\partial}_{H}\psi_{H}^{1,0})-\tr(\bar{\partial}_{K}\psi_{K}^{1,0})=\tr(G_{H})-\tr(G_{K})=\frac{1}{2}\partial\bar{\partial}\log\det h.
\end{equation}
By (\ref{bc1}), (\ref{bc2}), (\ref{bc3}) and (\ref{bc5}),
we can define the following class in the Bott-Chern cohomology.

\medskip

\begin{definition} Let $(E, D)$ be a projectively flat complex vector bundle satisfying the condition (\ref{condition1}) over a compact complex manifold $X$. We define
\begin{equation}
BC(E,D)=[\sqrt{-1}\tr(\bar{\partial}_{H}\psi_{H}^{1,0})]_{BC}\in H_{BC}^{1,1}(X,\mathbb{R}).
\end{equation}
\end{definition}

\medskip

Set $s=\log(h)=\log (K^{-1}H)$, then by a similar discussion with \cite{NZ} (or \cite{ZZZ}), we have
\begin{prop}\label{d1:3}
\begin{equation}\label{key1}
\begin{split}
\langle\sqrt{-1}\Lambda_{\omega}D(h^{-1}D_{K}^{c}h),s\rangle_{K}=-\langle\Theta(s)(Ds),Ds\rangle_{K}+\sqrt{-1}\Lambda_{\omega}\partial\bar{\partial}|s|_{K}^{2},
\end{split}
\end{equation}
where
\begin{equation}
\Theta(x,y)=
\left\{\begin{split}
&\frac{e^{y-x}-1}{y-x},   &x\neq y;\\
&\ \ \ \  1,  &x=y.
\end{split}\right.
\end{equation}
\end{prop}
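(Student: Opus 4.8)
The plan is to establish the identity (\ref{key1}) by a direct computation in which one expands both sides using the decomposition $D = D_K + \psi_K$ together with the relation $D_H'' - D_K'' = \tfrac12 h^{-1} D_K^c h$. First I would recall the Bochner-type formalism from \cite{NZ,ZZZ}: for a positive self-adjoint endomorphism $h$ and $s = \log h$, one has pointwise expansions of $h^{-1} D_K^c h$ in terms of $Ds$ (more precisely $D_K^c s$) via the entire function $\Theta$, using an eigenbasis of $s$ at a point. The operator $\Theta(s)$ is defined by functional calculus: if $s = \sum_a \mu_a \pi_a$ is the spectral decomposition, then $\Theta(s)(A) = \sum_{a,b} \Theta(\mu_a,\mu_b)\,\pi_a A \pi_b$ for an endomorphism-valued form $A$. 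The content of the proposition is the particular shape the curvature term takes when the "curvature'' in question is $G_K$-related and, crucially, $D^2$ acts as a scalar (since $\sqrt{-1}F_D = \alpha\otimes\Id_E$), so the non-scalar part of the connection's curvature that would normally appear is absent.

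Second, the key algebraic steps: pair (\ref{dd:2}) against $s$ with respect to $K$ and apply $\sqrt{-1}\Lambda_\omega$. The left side is $\langle \sqrt{-1}\Lambda_\omega D(h^{-1} D_K^c h), s\rangle_K$. I would integrate by parts formally at the level of the pointwise Weitzenböck identity — i.e. move one $D$ off the first slot — which produces a term $\langle h^{-1}D_K^c h, \bar\partial_\omega\text{-adjoint applied to } s\rangle$ that recombines into $\sqrt{-1}\Lambda_\omega \partial\bar\partial |s|_K^2$ (the Hermitian/Gauduchon version of this manipulation, where torsion terms of $\omega$ do not enter because one works with the full $\partial\bar\partial$ of a scalar function), plus a quadratic remainder in $D_K^c s$. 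Expanding $h^{-1} D_K^c h$ and collecting, the remainder is exactly $-\langle \Theta(s)(Ds), Ds\rangle_K$; the sign and the appearance of $\Theta$ rather than some other function come from differentiating the matrix exponential $h = e^s$, $D_K^c(e^s) = \int_0^1 e^{ts}(D_K^c s) e^{(1-t)s}\,dt$, whose functional-calculus kernel is precisely $\Theta$. Here one must also use that $Ds$ and $D_K^c s$ differ by a term involving $\psi_K$ and $s$, and that the $\psi$-contributions organize themselves into the same $\Theta(s)$-quadratic form because of the relations (\ref{dd:1}) — in particular $D_H(\psi_H)=0$ and $D^2$ scalar. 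This is the step where the projectively-flat hypothesis is genuinely used: the terms $\sqrt{-1}(\alpha h - h\alpha) = 0$ and $D_K^c(D_K^c h)=\sqrt{-1}(\alpha h - h\alpha)=0$, already recorded just before (\ref{dd:2}), are what kill the would-be curvature term.

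I expect the main obstacle to be bookkeeping the $\psi_H$ versus $\psi_K$ terms correctly so that the final quadratic form is the clean $-\langle\Theta(s)(Ds),Ds\rangle_K$ with $D$ (the full projectively flat connection) rather than with $D_K$ or $\bar\partial_K$ alone. Concretely, one uses (\ref{ex2}) to write $\psi_H - \psi_K$ in terms of $h$ and $s$, substitutes into $G_H - G_K$ before invoking (\ref{dd:2}), and checks that all the cross terms between $\psi_K$ and $D_K^c s$ assemble — via the spectral decomposition of $s$ and the cocycle property of the heat-kernel-type weights — into the stated expression. A secondary technical point is justifying the integration-by-parts/Weitzenböck step purely algebraically (pointwise), since $\omega$ is only Hermitian; but because the terms being differentiated are either a global scalar function ($|s|_K^2$) or appear under $\Lambda_\omega D(\cdot)$ of a globally defined form, no integration over $X$ is needed for the pointwise identity (\ref{key1}) itself, and the $\omega$-torsion terms cancel in the combination $\sqrt{-1}\Lambda_\omega\partial\bar\partial$ of a scalar. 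Once (\ref{key1}) is in hand it will be applied, as in \cite{NZ}, to the continuity-method equation for (\ref{h0}) to extract a maximum-principle estimate on $s$.
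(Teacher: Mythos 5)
Your proposal is correct and follows essentially the same route as the paper: a pointwise computation in a local $K$-orthonormal eigenbasis diagonalizing $h$, splitting $\langle\sqrt{-1}\Lambda_{\omega}D(h^{-1}D_{K}^{c}h),s\rangle_{K}$ via the Leibniz rule into $\sqrt{-1}\Lambda_{\omega}d\,\tr\big((h^{-1}D_{K}^{c}h)s\big)=\sqrt{-1}\Lambda_{\omega}\partial\bar{\partial}|s|_{K}^{2}$ plus a quadratic remainder whose kernel $\Theta$ comes from differentiating $h=e^{s}$. One minor correction: the projectively flat hypothesis (the vanishing of $D_{K}^{c}(D_{K}^{c}h)$) is not actually needed for the identity (\ref{key1}) itself, which holds for any connection $D$ and metrics $H,K$ by the frame computation; that hypothesis enters only in deriving (\ref{GHK01})/(\ref{dd:2}), so your attribution is off but harmless.
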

\begin{proof}
In an open dense subset $U$ of $X$, choosing a suitable orthonormal basis $\{e_{1},\cdots,e_{r}\}$ of $E$ with respect to the metric $K$, we can assume
\begin{equation}
h=\sum_{\alpha=1}^{r}e^{\lambda_{\alpha}}e_{\alpha}\otimes e^{\alpha}
\end{equation}
and
\begin{equation}
s=\sum_{\alpha=1}^{r}\lambda_{\alpha}e_{\alpha}\otimes e^{\alpha}.
\end{equation}
Let $D=d+A$, then $D_{K}=d+\frac{1}{2}(A-\bar{A}^{T})$ and $\psi_{K}=\frac{1}{2}(A+\bar{A}^{T})$. Denote the $(1,0)$-part and $(0,1)$-part of $A$ by $B$ and $C$. Computing directly shows
\begin{equation}
\begin{split}
h^{-1}D_{K}^{c}h&=\sum_{\alpha=1}^{r}d^{c}\lambda_{\alpha}e_{\alpha}\otimes e^{\alpha}+\sum_{\alpha\neq\beta}(e^{\lambda_{\beta}-\lambda_{\alpha}}-1)(\overline{C_{\alpha}^{\beta}}-\overline{B_{\alpha}^{\beta}})e_{\alpha}\otimes e^{\beta},\\
Ds&=\sum_{\alpha=1}^{r}d\lambda_{\alpha}e_{\alpha}\otimes e^{\alpha}+\sum_{\alpha\neq\beta}(\lambda_{\beta}-\lambda_{\alpha})(B_{\beta}^{\alpha}+C_{\beta}^{\alpha})e_{\alpha}\otimes e^{\beta}.
\end{split}
\end{equation}
Then
\begin{equation}
\begin{split}
\sqrt{-1}\Lambda_{\omega}d\tr(h^{-1}D_{K}^{c}hs)=&\sum_{\alpha=1}^{r}\sqrt{-1}\Lambda_{\omega}d(\lambda_{\alpha}d^{c}\lambda_{\alpha})\\
=&\sum_{\alpha=1}^{r}2\sqrt{-1}\Lambda_{\omega}\lambda_{\alpha}\partial\bar{\partial}\lambda_{\alpha}+\sum_{\alpha=1}^{r}2|\partial\lambda_{\alpha}|^{2}\\
=&\sqrt{-1}\Lambda_{\omega}\partial\bar{\partial}|s|_{K}^{2}
\end{split}
\end{equation}
and
\begin{equation}
\begin{split}
&\sqrt{-1}\Lambda_{\omega}\tr(h^{-1}D_{K}^{c}h\wedge Ds)\\
=&\sum_{\alpha=1}^{r}\sqrt{-1}\Lambda_{\omega}(d^{c}\lambda_{\alpha}\wedge d\lambda_{\alpha})\\
&+\sum_{\alpha\neq\beta}(e^{\lambda_{\beta}-\lambda_{\alpha}}-1)(\lambda_{\alpha}-\lambda_{\beta})\sqrt{-1}\Lambda_{\omega}(\overline{C_{\alpha}^{\beta}}-\overline{B_{\alpha}^{\beta}})\wedge(B^{\beta}_{\alpha}+C_{\alpha}^{\beta})\\
=&-\sum_{\alpha=1}^{r}|d\lambda_{\alpha}|^{2}-\sum_{\alpha\neq\beta}\frac{(e^{\lambda_{\beta}-\lambda_{\alpha}}-1)}{(\lambda_{\beta}-\lambda_{\alpha})}(\lambda_{\beta}-\lambda_{\alpha})^{2}(|B_{\alpha}^{\beta}|^{2}+|C_{\alpha}^{\beta}|^{2})\\
=&-\sum_{\alpha,\beta}\Theta(\lambda_{\alpha},\lambda_{\beta})|(Ds)_{\alpha}^{\beta}|^{2}.
\end{split}
\end{equation}
Combining all of the above, we complete this proof.
\end{proof}

\begin{prop}\label{pp:0}For any real number $\lambda $, we have
\begin{equation}
\sqrt{-1}\Lambda_{\omega}dd^{c}\log(\tr(h)+\tr(h^{-1}))\geq -4(|\sqrt{-1}\Lambda_{\omega}G_{H}-\lambda \Id_{E}|_{H}+|\sqrt{-1}\Lambda_{\omega}G_{K}-\lambda \Id_{E}|_{K})
\end{equation}
and $\sup_{X}|\log(h)|_{K}\leq \grave{C}_{1}\|\log(h)\|_{L^{2}(X,K)}+\grave{C}_{2}$, where $\grave{C}_{1}$ and $\grave{C}_{2}$ are positive constants depending only on $\sup_{X}|\sqrt{-1}\Lambda_{\omega}G_{H}-\lambda \Id_{E}|_{H}$, $\sup_{X}|\sqrt{-1}\Lambda_{\omega}G_{K}-\lambda \Id_{E}|_{K}$ and the geometry of $(X, \omega)$.
\end{prop}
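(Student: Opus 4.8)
The plan is to establish the differential inequality by a pointwise computation in an eigenframe of $h$, and then to deduce the $C^{0}$ estimate from it by a Moser-type iteration; this parallels \cite{NZ} and the proof of Proposition \ref{d1:3}. First I would fix the notation $A_{H}=\sqrt{-1}\Lambda_{\omega}G_{H}-\lambda\,\Id_{E}$ and $A_{K}=\sqrt{-1}\Lambda_{\omega}G_{K}-\lambda\,\Id_{E}$, which by (\ref{bc1}) are self-adjoint with respect to $H$ and to $K$ respectively, and record that $\sqrt{-1}\Lambda_{\omega}dd^{c}f=2\sqrt{-1}\Lambda_{\omega}\partial\bar{\partial}f$ for functions $f$.

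\emph{Step 1: a pointwise bound for $\log\tr(h)$.} Starting from (\ref{GHK01}), i.e. $G_{H}-G_{K}=\tfrac{1}{4} D(h^{-1}D_{K}^{c}h)$, I would expand $\sqrt{-1}\Lambda_{\omega}\partial\bar{\partial}\tr(h)$ in a local $K$-orthonormal frame diagonalizing $h$, exactly as in the proof of Proposition \ref{d1:3}; the key point where projective flatness enters is that $\alpha$ is a scalar form, so $D_{K}^{c}(D_{K}^{c}h)=\sqrt{-1}(\alpha h-h\alpha)=0$. The outcome should be that $\sqrt{-1}\Lambda_{\omega}\partial\bar{\partial}\tr(h)$ equals a manifestly nonnegative ``gradient'' term plus $\tr\bigl(h\,\sqrt{-1}\Lambda_{\omega}G_{H}\bigr)-\tr\bigl(\sqrt{-1}\Lambda_{\omega}G_{K}\,h\bigr)$. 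Writing $\sqrt{-1}\Lambda_{\omega}G_{H}=A_{H}+\lambda\,\Id_{E}$ and $\sqrt{-1}\Lambda_{\omega}G_{K}=A_{K}+\lambda\,\Id_{E}$, the two scalar contributions $\lambda\tr(h)$ cancel, so $\lambda$ drops out; combining the gradient term with $\sqrt{-1}\Lambda_{\omega}\partial(\tr h)\wedge\bar{\partial}(\tr h)$ via a Kato-type inequality and dividing by $\tr(h)$, I expect $\sqrt{-1}\Lambda_{\omega}\partial\bar{\partial}\log\tr(h)\geq-(|A_{H}|_{H}+|A_{K}|_{K})$, and, exchanging the roles of $H$ and $K$ (so that $h$ becomes $h^{-1}=H^{-1}K$), the same bound for $\log\tr(h^{-1})$.

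\emph{Step 2: combine, and Step 3: the $C^{0}$ bound.} Since $(u,v)\mapsto\log(e^{u}+e^{v})$ is convex, nondecreasing in each variable, with gradient summing to $1$, applying it to $u=\log\tr(h)$ and $v=\log\tr(h^{-1})$ and using Step 1 gives $\sqrt{-1}\Lambda_{\omega}\partial\bar{\partial}\log(\tr(h)+\tr(h^{-1}))\geq-(|A_{H}|_{H}+|A_{K}|_{K})$; multiplying by $2$ yields the first assertion of the proposition (with the constant $4$ not optimized). For the second assertion I would set $\psi=\log(\tr(h)+\tr(h^{-1}))$. Every eigenvalue $\mu$ of $h$ satisfies $\mu+\mu^{-1}\geq2$, so $\tr(h)+\tr(h^{-1})\geq2r$ and $\psi\geq\log(2r)>0$; and if $\lambda_{1},\dots,\lambda_{r}$ are the eigenvalues of $s=\log(h)$, the elementary estimates $e^{\max_{a}|\lambda_{a}|}\leq\tr(h)+\tr(h^{-1})\leq2r\,e^{\max_{a}|\lambda_{a}|}$ give $|s|_{K}\leq\sqrt{r}\,\psi$ and $\psi\leq|s|_{K}+\log(2r)$ pointwise. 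Now $f\mapsto\sqrt{-1}\Lambda_{\omega}\partial\bar{\partial}f$ is a second-order elliptic operator on $X$ with no zeroth-order term, and the first assertion says that $\psi$ is a subsolution of $\sqrt{-1}\Lambda_{\omega}\partial\bar{\partial}f=-C_{0}$ with $C_{0}=2(\sup_{X}|A_{H}|_{H}+\sup_{X}|A_{K}|_{K})$; the De Giorgi--Nash--Moser local boundedness estimate for subsolutions, applied over a finite cover of $X$ by coordinate charts, then gives $\sup_{X}\psi\leq C(\|\psi\|_{L^{2}(X)}+C_{0})$ with $C=C(X,\omega)$. Combining with the pointwise comparisons above, $\sup_{X}|\log(h)|_{K}\leq\sqrt{r}\,\sup_{X}\psi\leq\sqrt{r}\,C\bigl(\|\log(h)\|_{L^{2}(X,K)}+\log(2r)\,\Vol(X)^{1/2}+C_{0}\bigr)$, which is the claimed inequality with $\grave{C}_{1}=\sqrt{r}\,C$ and $\grave{C}_{2}=\sqrt{r}\,C\,(\log(2r)\,\Vol(X)^{1/2}+C_{0})$; these depend only on $r$, on the geometry of $(X,\omega)$, and on $\sup_{X}|\sqrt{-1}\Lambda_{\omega}G_{H}-\lambda\Id_{E}|_{H}$ and $\sup_{X}|\sqrt{-1}\Lambda_{\omega}G_{K}-\lambda\Id_{E}|_{K}$.

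The hard part is Step 1. Because the self-adjoint part $\psi_{H}$ makes $(D_{H}^{''})^{2}=G_{H}\neq0$, so that $D_{H}^{''}$ does not define a genuine holomorphic structure, the classical Donaldson--Simpson computation of $\sqrt{-1}\Lambda_{\omega}\partial\bar{\partial}\tr(h)$ has to be redone keeping track of the $G$-terms (precisely where (\ref{GHK01}) and the vanishing $D_{K}^{c}(D_{K}^{c}h)=0$, coming from projective flatness, are used), and the Kato inequality must be arranged so that the bad gradient term is exactly absorbed while the two $\lambda$-contributions cancel. Steps 2 and 3 are then routine.
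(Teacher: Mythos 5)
Your proposal is correct and follows essentially the same route as the paper: both take the trace of (\ref{GHK01}) (as in (\ref{unique00})), cancel the resulting gradient term against the Cauchy--Schwarz/Kato term, and finish with the pointwise comparison between $|\log(h)|_{K}$ and $\log(\tr(h)+\tr(h^{-1}))$ followed by Moser iteration. The only (cosmetic) difference is that you bound $\sqrt{-1}\Lambda_{\omega}\partial\bar{\partial}\log\tr(h)$ and $\sqrt{-1}\Lambda_{\omega}\partial\bar{\partial}\log\tr(h^{-1})$ separately and combine them via convexity of $(u,v)\mapsto\log(e^{u}+e^{v})$, whereas the paper computes $dd^{c}\log(\tr(h)+\tr(h^{-1}))$ directly and absorbs the cross term by Young's inequality with $\varepsilon=\tr(h^{-1})/\tr(h)$; both give the stated constant $4$ (your intermediate bound for $\sqrt{-1}\Lambda_{\omega}\partial\bar{\partial}\log\tr(h)$ should be $-2(|A_{H}|_{H}+|A_{K}|_{K})$ rather than $-(|A_{H}|_{H}+|A_{K}|_{K})$, but this does not affect the conclusion).
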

\begin{proof}
Clearly (\ref{dd:2}) yields
\begin{equation}\label{unique00}
\begin{split}
h(\sqrt{-1}\Lambda_{\omega}(G_{H}-G_{K}))=&\frac{1}{4}\sqrt{-1}\Lambda_{\omega}DD^{c}_{K}h-\frac{1}{4}\sqrt{-1}\Lambda_{\omega}D(h)h^{-1}D_{K}^{c}(h).
\end{split}
\end{equation}
Taking the trace on both sides, we see
\begin{equation}\label{unique10}
\begin{split}
&\frac{1}{4}\sqrt{-1}\Lambda_{\omega}dd^{c}\tr(h)-\frac{1}{4}|Dh\cdot h^{-1/2}|_{K}^{2}\\
\geq &-\tr(h)(|\sqrt{-1}\Lambda_{\omega}G_{H}-\lambda \Id_{E}|_{H}+|\sqrt{-1}\Lambda_{\omega}G_{K}-\lambda \Id_{E}|_{K}).
\end{split}
\end{equation}
On the other hand, it holds that
\begin{equation}
\begin{split}
&(\tr(h)+\tr(h^{-1}))\sqrt{-1}\Lambda_{\omega}dd^{c}\log(\tr(h)+\tr(h^{-1}))\\
=&-\frac{|\tr(Dh)+\tr(Dh^{-1})|^{2}}{\tr(h)+\tr(h^{-1})}+\sqrt{-1}\Lambda_{\omega}dd^{c}(\tr(h)+\tr(h^{-1})).
\end{split}
\end{equation}
According to Young's inequality, we have
\begin{equation}
\begin{split}
\frac{|\tr(Dh)+\tr(Dh^{-1})|^{2}}{\tr(h)+\tr(h^{-1})}\leq &\frac{(1+\varepsilon)|\tr(Dh)|^{2}+(1+\frac{1}{\varepsilon})|\tr(Dh^{-1})|^{2}}{\tr(h)+\tr(h^{-1})}.
\end{split}
\end{equation}
Take $\varepsilon=\tr(h^{-1})/\tr(h)$, then
\begin{equation}
\begin{split}
\frac{|\tr(Dh)+\tr(Dh^{-1})|^{2}}{\tr(h)+\tr(h^{-1})}\leq&|Dh\cdot h^{-1/2}|_{K}^{2}+|Dh^{-1}\cdot h^{1/2}|_{H}^{2} .
\end{split}
\end{equation}
So
\begin{equation}
\sqrt{-1}\Lambda_{\omega}dd^{c}\log(\tr(h)+\tr(h^{-1}))\geq -4(|\sqrt{-1}\Lambda_{\omega}G_{H}-\lambda \Id_{E}|_{H}+|\sqrt{-1}\Lambda_{\omega}G_{K}-\lambda \Id_{E}|_{K}).
\end{equation}
Noting
\begin{equation}
\log\big(\frac{1}{2r}(\tr(h)+\tr(h^{-1}))\big)\leq |\log(h)|_{K}\leq r^{1/2}\log(\tr(h)+\tr(h^{-1}))
\end{equation}
and applying Moser's iteration, we finish the proof.
\end{proof}

\medskip


\section{Proof of Theorem \ref{thm0}}
We first study the equation (\ref{h0}) by using the continuous method. Let $(E,D)$  be a rank $r$ projectively flat bundle over  a compact Hermitian manifold $(X, \omega )$,  and satisfy $\sqrt{-1}F_{D}=\alpha \otimes \Id_{E}$ for a real $(1, 1)$-form $\alpha $. Given a Hermitian metric $H$ on $E$, we set
\begin{equation}
\mbox{Herm}(E,H)=\{\eta\in\mbox{End}(E)\ | \ \eta^{*H}=\eta\}
\end{equation}
and
\begin{equation}
\mbox{Herm}^{+}(E,H)=\{\rho\in \mbox{Herm}(E,H)\ | \ \rho\ \text{is positive definite}\}.
\end{equation}

If the metric $\omega$ is Gauduchon, i.e. $\partial\bar{\partial}\omega^{n-1}=0$, we can define
\begin{equation}
\deg_{\omega}(E,D)=\int_{X}BC(E,D)\wedge\frac{\omega^{n-1}}{(n-1)!}.
\end{equation}
Using conformal transformation, we have the following lemma.

\medskip

\begin{lemma}\label{lemma31}
Suppose $(X, \omega )$ is a compact Gauduchon manifold, then there exists a Hermitian metric $\tilde{K}$ on $(E, D)$ such that \begin{equation}\tr(\sqrt{-1}\Lambda_{\omega}G_{\tilde{K}})=r\lambda ,\end{equation} where the constant $\lambda=\frac{\deg_{\omega }(E,D)}{r\Vol(X,\omega)}$.
\end{lemma}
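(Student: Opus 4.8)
The plan is to obtain $\tilde K$ from an arbitrary background Hermitian metric $K$ on $(E,D)$ by a conformal rescaling $\tilde K=e^{-2\varphi}K$ for a suitable $\varphi\in C^{\infty}(X,\mathbb{R})$. First I would record the effect of such a rescaling on the trace of $G$: with $h=K^{-1}\tilde K=e^{-2\varphi}\Id_{E}$ we have $\det h=e^{-2r\varphi}$, so formula (\ref{bc5}) gives
\[
\tr G_{\tilde K}=\tr G_{K}+\tfrac{1}{2}\partial\bar\partial\log\det h=\tr G_{K}-r\,\partial\bar\partial\varphi ,
\]
and hence $\tr(\sqrt{-1}\Lambda_{\omega}G_{\tilde K})=\tr(\sqrt{-1}\Lambda_{\omega}G_{K})-r\,\sqrt{-1}\Lambda_{\omega}\partial\bar\partial\varphi$. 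Thus the asserted identity $\tr(\sqrt{-1}\Lambda_{\omega}G_{\tilde K})=r\lambda$ is equivalent to finding $\varphi$ solving the linear second-order equation
\[
r\,\sqrt{-1}\Lambda_{\omega}\partial\bar\partial\varphi=\tr(\sqrt{-1}\Lambda_{\omega}G_{K})-r\lambda .
\]

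Next I would verify the integrability condition for this equation. Since $\omega$ is Gauduchon, for every $\psi\in C^{\infty}(X)$ one has, after integrating by parts twice,
\[
\int_{X}\sqrt{-1}\Lambda_{\omega}\partial\bar\partial\psi\,\frac{\omega^{n}}{n!}=\int_{X}\sqrt{-1}\,\psi\,\partial\bar\partial\frac{\omega^{n-1}}{(n-1)!}=0 ,
\]
so the image of the operator $\psi\mapsto\sqrt{-1}\Lambda_{\omega}\partial\bar\partial\psi$ lies in the space of functions of mean zero with respect to $dV_{\omega}=\omega^{n}/n!$. On the right-hand side of our equation, the definition of $BC(E,D)$ together with (\ref{bc2}) gives $\int_{X}\tr(\sqrt{-1}\Lambda_{\omega}G_{K})\,dV_{\omega}=\int_{X}\sqrt{-1}\tr(\bar\partial_{K}\psi_{K}^{1,0})\wedge\frac{\omega^{n-1}}{(n-1)!}=\deg_{\omega}(E,D)$, while $\int_{X}r\lambda\,dV_{\omega}=r\lambda\,\Vol(X,\omega)=\deg_{\omega}(E,D)$ by the choice $\lambda=\deg_{\omega}(E,D)/(r\Vol(X,\omega))$. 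Hence the right-hand side has mean zero and the equation is compatible.

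It then remains to invoke the solvability theory for the Gauduchon Laplacian $P\varphi:=\sqrt{-1}\Lambda_{\omega}\partial\bar\partial\varphi$: this is a real, second-order, elliptic operator with no zeroth-order term, so by the strong maximum principle $\ker P$ consists only of constants; its formal adjoint $P^{*}$ is elliptic of the same order, and because $\omega$ is Gauduchon one has $P^{*}1=0$ (equivalently, $P^{*}$ has no zeroth-order term), whence the maximum principle applied to $P^{*}$ shows $\ker P^{*}$ is again one-dimensional. By the Fredholm alternative $P$ maps onto the $L^{2}(dV_{\omega})$-orthogonal complement of the constants, which by the previous paragraph contains the right-hand side of our equation; elliptic regularity then produces a smooth real solution $\varphi$, and $\tilde K=e^{-2\varphi}K$ is the required metric.

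The only step that is not pure bookkeeping is this surjectivity of $P$ onto the mean-zero functions on a compact \emph{Gauduchon} (possibly non-K\"ahler) manifold; this is precisely where the hypothesis $\partial\bar\partial\omega^{n-1}=0$ enters essentially, both to make $\deg_{\omega}(E,D)$ and hence $\lambda$ well-defined and to make the linear equation solvable. Once that classical fact is granted, the lemma follows immediately from the two displayed computations above.
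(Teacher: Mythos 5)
Your proof is correct and is exactly the argument the paper intends: the paper states the lemma with only the remark ``Using conformal transformation,'' and your reduction via $\tilde K=e^{-2\varphi}K$ and formula (\ref{bc5}) to the equation $r\sqrt{-1}\Lambda_{\omega}\partial\bar\partial\varphi=\tr(\sqrt{-1}\Lambda_{\omega}G_{K})-r\lambda$, with the Gauduchon condition supplying both the mean-zero compatibility and the solvability of the Gauduchon Laplacian, is the standard route. No gaps.
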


\medskip

Let $K$ be a fixed background metric on $E$. Consider the following perturbed equation
\begin{equation}\label{dd:3}
L_{\epsilon}(h):=4\sqrt{-1}\Lambda_{\omega}G_{K}+\sqrt{-1}\Lambda_{\omega}D(h^{-1}D_{K}^{c}h)-\epsilon\log(h)-4\lambda \Id_{E}=0,
\end{equation}
where $h\in \mbox{Herm}^{+}(E,K)$ and $\epsilon\in(0,1]$.



\begin{lemma}\label{lemma32}
	Assume that there exists a Hermitian metric $\tilde{K}$ on $(E, D)$ with $\tr(\sqrt{-1}\Lambda_{\omega}G_{\tilde{K}})=r\lambda $, then we can choose a background metric $K$ such that $\tr(\sqrt{-1}\Lambda_{\omega}G_{K})=r\lambda $, $\det (K^{-1}\tilde{K})=1$ and the equation (\ref{dd:3}) has a solution for $\epsilon=1$.
\end{lemma}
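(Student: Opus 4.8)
\medskip
\noindent\emph{Sketch of a possible approach.}
One may take the background metric to be $\tilde{K}$ itself, $K:=\tilde{K}$. Then the first two requirements hold tautologically, namely $\tr(\sqrt{-1}\Lambda_{\omega}G_{K})=\tr(\sqrt{-1}\Lambda_{\omega}G_{\tilde{K}})=r\lambda$ and $\det(K^{-1}\tilde{K})=\det(\Id_{E})=1$, so the whole content of the lemma is to solve the perturbed equation (\ref{dd:3}) for $\epsilon=1$, i.e.\ to produce $h\in\mbox{Herm}^{+}(E,K)$ with
\[
L_{1}(h)=4\sqrt{-1}\Lambda_{\omega}G_{K}+\sqrt{-1}\Lambda_{\omega}D(h^{-1}D_{K}^{c}h)-\log(h)-4\lambda\,\Id_{E}=0 .
\]
Observe that $h=\Id_{E}$ is not a solution in general; one only has $L_{1}(\Id_{E})=4(\sqrt{-1}\Lambda_{\omega}G_{K}-\lambda\,\Id_{E})$, which is merely trace-free thanks to the normalization of $K$. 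So a genuine existence argument is still required.

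\medskip
The plan is to solve $L_{1}(h)=0$ by a further continuity argument in an auxiliary parameter $\tau\in[0,1]$ that joins $L_{1}$ to an equation with the evident solution $\Id_{E}$. Set
\[
L^{(\tau)}(h):=\sqrt{-1}\Lambda_{\omega}D(h^{-1}D_{K}^{c}h)-\log(h)+\tau\big(4\sqrt{-1}\Lambda_{\omega}G_{K}-4\lambda\,\Id_{E}\big),
\]
so that $L^{(0)}(\Id_{E})=0$ (since $D_{K}^{c}\Id_{E}=0$) and $L^{(1)}=L_{1}$, and let $S=\{\tau\in[0,1]\;:\;L^{(\tau)}(h)=0\text{ for some }h\in\mbox{Herm}^{+}(E,K)\}$; it suffices to show $S$ is nonempty, open and closed. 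Nonemptiness is clear. For openness, I would linearize $L^{(\tau)}$ at a solution $h$: its second-order part is a negative elliptic operator (as in Proposition \ref{d1:3}), while the derivative of $-\log(h)$ is $-D(\log)|_{h}$, a self-adjoint zeroth-order operator that is strictly negative (as $t\mapsto\log t$ is increasing) with spectrum bounded away from $0$ once $|\log h|_{K}$ is bounded; hence the linearization is a strictly negative-definite elliptic operator, so it is an isomorphism on the relevant H\"older spaces of $K$-Hermitian endomorphisms, and the implicit function theorem gives openness. For closedness one needs a priori bounds uniform in $\tau$, and the $-\log(h)$ term supplies them: pairing $L^{(\tau)}(h)=0$ with $\xi:=\log(h)$ and using Proposition \ref{d1:3} together with the positivity of $\Theta$ gives
\[
\sqrt{-1}\Lambda_{\omega}\partial\bar{\partial}|\xi|_{K}^{2}\ \geq\ |\xi|_{K}^{2}-C_{0}\,|\xi|_{K},\qquad C_{0}:=\sup_{X}\big|4\sqrt{-1}\Lambda_{\omega}G_{K}-4\lambda\,\Id_{E}\big|_{K},
\]
and evaluating at a maximum point of $|\xi|_{K}$ forces $\sup_{X}|\log(h)|_{K}\leq C_{0}$, a bound independent of $\tau$. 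Thus the eigenvalues of $h$ stay in a fixed compact subset of $(0,\infty)$, so the equation $\sqrt{-1}\Lambda_{\omega}D(h^{-1}D_{K}^{c}h)=\log(h)-\tau(4\sqrt{-1}\Lambda_{\omega}G_{K}-4\lambda\,\Id_{E})$ is uniformly elliptic with $C^{0}$-bounded coefficients, and Schauder bootstrapping yields uniform $C^{k,\alpha}$ bounds; a standard limiting argument then shows $S$ is closed. Hence $1\in S$, which produces the required solution of (\ref{dd:3}) at $\epsilon=1$.

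\medskip
I do not foresee a serious obstacle here: this is the ``easy endpoint'' of the continuity scheme. The points that need care are (i) the sign and invertibility bookkeeping for the linearization --- in particular checking, via the identities (\ref{dd:1}), that the projectively flat structure $\sqrt{-1}F_{D}=\alpha\otimes\Id_{E}$ enters only through lower-order terms and so spoils neither the ellipticity nor the sign --- and (ii) verifying that the $C^{0}$ estimate above is genuinely uniform in $\tau$. Should one prefer to bypass the auxiliary continuity, the same solvability of (\ref{dd:3}) at $\epsilon=1$ also follows by a parabolic argument: running the associated Hermitian--Yang--Mills-type flow from the initial metric $K$, the $-\log(h)$ term again gives time-uniform $C^{0}$ bounds, hence long-time existence and convergence to a solution.
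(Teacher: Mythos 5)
Your argument is workable, but it is a genuinely different and much heavier route than the one the paper takes, because you give up the one piece of freedom the lemma is really about: the choice of $K$. You fix $K=\tilde{K}$ and are then forced to actually solve $L_{1}(h)=0$ by a second continuity scheme in $\tau$. The paper instead chooses the background metric so that the equation at $\epsilon=1$ is solved tautologically: it sets $h_{1}=\exp\big(4\sqrt{-1}\Lambda_{\omega}G_{\tilde{K}}-4\lambda\,\Id_{E}\big)$ and $K=\tilde{K}h_{1}^{-1}$, so that $K^{-1}\tilde{K}=h_{1}$, $\det h_{1}=1$ (by the trace normalization of $\tilde{K}$), $\tr(\sqrt{-1}\Lambda_{\omega}G_{K})=r\lambda$ by (\ref{bc5}), and, by the transformation formula (\ref{dd:2}),
\begin{equation*}
\sqrt{-1}\Lambda_{\omega}D(h_{1}^{-1}D_{K}^{c}h_{1})=4\sqrt{-1}\Lambda_{\omega}(G_{\tilde{K}}-G_{K}),
\end{equation*}
whence $L_{1}(h_{1})=4\sqrt{-1}\Lambda_{\omega}G_{\tilde{K}}-4\lambda\,\Id_{E}-\log h_{1}=0$ by the very definition of $h_{1}$. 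This is the classical Uhlenbeck--Yau trick for initializing the continuity method, and it reduces the whole lemma to three lines of algebra. Your scheme does go through --- the $C^{0}$ bound you extract from Proposition \ref{d1:3} is exactly Lemma \ref{pp:1} at $\epsilon=1$ and is indeed uniform in $\tau$, and the invertibility of the linearization via the strictly negative $-d\log|_{h}$ term is the same mechanism the paper uses for openness of $J_{\delta}$ --- but two points deserve more care than you give them: (i) your ``Schauder bootstrapping from $C^{0}$-bounded coefficients'' is too quick, since the equation is quasilinear (the term $\sqrt{-1}\Lambda_{\omega}D(h^{-1}D_{K}^{c}h)$ is quadratic in $Dh$), so one first needs the gradient estimate of Lemma \ref{pp:2} (or Donaldson's argument) before elliptic regularity applies; and (ii) the negativity of the second-order part of the linearization after integration by parts uses the Gauduchon condition on $\omega$, which should be stated. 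In short, what you buy with your approach is nothing the paper needs, at the cost of reproving the openness/closedness machinery; the paper's choice of $K$ makes the endpoint $\epsilon=1$ free.
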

\begin{proof}
 Define $h_{1}=\exp(4\sqrt{-1}\Lambda_{\omega}G_{\tilde{K}}-4\lambda \Id_{E})$ and choose $K=\tilde{K}h_{1}^{-1}$ as the fixed background metric, then
 \begin{equation}
 \log \det (K^{-1}\tilde{K})=\log \det (h_{1})=\tr (4\sqrt{-1}\Lambda_{\omega}G_{\tilde{K}}-4\lambda \Id_{E})=0,
 \end{equation}
\begin{equation}
\begin{split} \tr(\sqrt{-1}\Lambda_{\omega}G_{K})&=\tr(\sqrt{-1}\Lambda_{\omega}G_{\tilde{K}})+\frac{1}{2}\sqrt{-1}\Lambda_{\omega}\partial\bar{\partial}\log\det(h_{1}^{-1})\\ 
&=r\lambda
\end{split}
\end{equation}
and
\begin{equation} 4\sqrt{-1}\Lambda_{\omega}G_{K}+\sqrt{-1}\Lambda_{\omega}D(h_{1}^{-1}D_{K}^{c}h_{1})-\log(h_{1})-4\lambda \Id_{E}=0.
\end{equation}
\end{proof}

\begin{prop}\label{p01}
	Let $\tr(\sqrt{-1}\Lambda_{\omega}G_{K})=r\lambda $. If $h$ is a solution of equation (\ref{dd:3}) for $\epsilon\in(0,1]$. Then $\det(h)=1$.
\end{prop}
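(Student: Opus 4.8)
The plan is to take the pointwise trace of equation (\ref{dd:3}), reduce the problem to a scalar elliptic equation for the function $f:=\log\det(h)$, and then apply the maximum principle on the compact manifold $X$.

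First I would apply $\tr$ to both sides of $L_{\epsilon}(h)=0$. By the normalization hypothesis $\tr(\sqrt{-1}\Lambda_{\omega}G_{K})=r\lambda$, the term $\tr(4\sqrt{-1}\Lambda_{\omega}G_{K})=4r\lambda$ cancels against $\tr(4\lambda\Id_{E})=4r\lambda$. For the second-order term, combining (\ref{dd:2}) and (\ref{bc5}) (applied with $H=Kh$, so that $h=K^{-1}H$) gives
\[
\tr\bigl(D(h^{-1}D_{K}^{c}h)\bigr)=4\bigl(\tr G_{H}-\tr G_{K}\bigr)=2\,\partial\bar{\partial}\log\det(h),
\]
hence $\sqrt{-1}\Lambda_{\omega}\tr(D(h^{-1}D_{K}^{c}h))=2\sqrt{-1}\Lambda_{\omega}\partial\bar{\partial}f$. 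Finally $\tr(\epsilon\log(h))=\epsilon\log\det(h)=\epsilon f$. Putting these together, the trace of (\ref{dd:3}) collapses to the scalar equation
\[
2\sqrt{-1}\Lambda_{\omega}\partial\bar{\partial}f=\epsilon f\qquad\text{on }X.
\]

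Next I would run the maximum principle. At a point $x_{0}$ where the real function $f$ attains its maximum, the $(1,1)$-form $\sqrt{-1}\partial\bar{\partial}f$ is negative semidefinite, so $\sqrt{-1}\Lambda_{\omega}\partial\bar{\partial}f(x_{0})\le 0$; since $\epsilon>0$, the displayed equation forces $f(x_{0})\le 0$, i.e. $\max_{X}f\le 0$. Applying the same reasoning at a minimum point gives $\min_{X}f\ge 0$. Therefore $f\equiv 0$, that is $\det(h)\equiv 1$.

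I do not expect a genuine obstacle here. The only points requiring a little care are the bookkeeping of the constants $4$ and $\tfrac12$ in the trace identity for $D(h^{-1}D_{K}^{c}h)$ --- which is precisely what formulas (\ref{dd:2}) and (\ref{bc5}) encode --- and the sign convention making $\sqrt{-1}\Lambda_{\omega}\partial\bar{\partial}$ behave like the (negative) $\bar{\partial}$-Laplacian on functions, so that the maximum-principle inequality has the stated sign. It is worth noting that no Gauduchon or astheno-K\"ahler hypothesis on $\omega$ is needed: the $\partial\bar{\partial}$-term is handled directly by the maximum principle, with no integration by parts.
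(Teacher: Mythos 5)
Your proposal is correct and follows essentially the same route as the paper: take the trace of (\ref{dd:3}), use the normalization and the identity $\tr(D(h^{-1}D_K^c h))=2\partial\bar\partial\log\det h$ coming from (\ref{dd:2}) and (\ref{bc5}) to reduce to $2\sqrt{-1}\Lambda_\omega\partial\bar\partial\log\det(h)=\epsilon\log\det(h)$, and conclude by the maximum principle. The constants and the sign of $\sqrt{-1}\Lambda_\omega\partial\bar\partial$ at an extremum are handled correctly, so nothing is missing.
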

\begin{proof}
	Taking the trace on both sides of equation (\ref{dd:3}) yields
	\begin{equation}
	2\sqrt{-1}\Lambda_{\omega}\partial\bar{\partial}\log\det(h)-\epsilon\log\det(h)=0.
	\end{equation}
	Due to the maximum principle, we have $\log\det(h)=0$ and $\det(h)=1$.
\end{proof}

\medskip

For any $0<\delta<1$, define
\begin{equation}
J_{\delta}=\{\epsilon\in[\delta,1]\ |\  \text{there is a smooth solution to}\ L_{\epsilon}(h)=0 \}
\end{equation}
and
\begin{equation}
J=\{\epsilon\in(0,1]\ |\  \text{there is a smooth solution to}\ L_{\epsilon}(h)=0 \}.
\end{equation}
We will use the continuity method to show that $J_{\delta}=[\delta,1]$ for any $0<\delta<1$, hence $J=(0,1]$. It is obvious that $J_{\delta}\neq\emptyset$. By the Fredholmness of an elliptic operator over a compact manifold and the implicit function theorem, we know that $J_{\delta}$ is open. Next, we conclude that $J_{\delta}$ is closed.
\begin{lemma}\label{pp:1}
	Let $h$ be the solution of $L_{\epsilon}(h)=0$ for $\epsilon\in[\delta,1]$. Then
	\begin{equation}
	\begin{split}
	\sqrt{-1}\Lambda_{\omega}\partial\bar{\partial}|\log(h)|_{K}^{2}\geq \epsilon|\log(h)|_{K}^{2}-4|\sqrt{-1}\Lambda_{\omega}G_{K}-\lambda \Id_{E}|_{K}|\log(h)|_{K},
	\end{split}
	\end{equation}
	and \begin{equation}\sup_{X}|\log(h)|_{K}\leq\frac{4}{\epsilon}\sup_{X}|\sqrt{-1}\Lambda_{\omega}G_{K}-\lambda \Id_{E}|_{K}\leq C_{12},\end{equation} where $C_{12}$ is a constant depending only on $\delta^{-1}$, $\lambda $ and $K$.
\end{lemma}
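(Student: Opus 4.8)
The plan is to test the equation $L_{\epsilon}(h)=0$ against $s:=\log(h)\in\mbox{Herm}(E,K)$ using the pointwise Hermitian inner product $\langle\cdot,\cdot\rangle_{K}$, and then apply Proposition \ref{d1:3}. Rewriting (\ref{dd:3}) as
\begin{equation*}
\sqrt{-1}\Lambda_{\omega}D(h^{-1}D_{K}^{c}h)=\epsilon s+4\lambda\,\Id_{E}-4\sqrt{-1}\Lambda_{\omega}G_{K}
\end{equation*}
and pairing both sides with $s$, the left-hand side becomes, by Proposition \ref{d1:3},
\begin{equation*}
-\langle\Theta(s)(Ds),Ds\rangle_{K}+\sqrt{-1}\Lambda_{\omega}\partial\bar{\partial}|s|_{K}^{2},
\end{equation*}
while the right-hand side equals $\epsilon|s|_{K}^{2}-\langle 4\sqrt{-1}\Lambda_{\omega}G_{K}-4\lambda\,\Id_{E},s\rangle_{K}$; here we use that $\sqrt{-1}\Lambda_{\omega}G_{K}=\sqrt{-1}\Lambda_{\omega}G_{K}^{1,1}$ is $K$-self-adjoint by (\ref{bc1}), so this pairing is real. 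The structural point is that $\Theta(x,y)=\frac{e^{y-x}-1}{y-x}>0$ for all real $x,y$; hence in the local orthonormal frame used in the proof of Proposition \ref{d1:3} one has $\langle\Theta(s)(Ds),Ds\rangle_{K}=\sum_{\alpha,\beta}\Theta(\lambda_{\alpha},\lambda_{\beta})|(Ds)_{\alpha}^{\beta}|^{2}\ge0$.

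Dropping this non-negative term and applying the Cauchy--Schwarz inequality to the last pairing, we obtain the first claimed differential inequality
\begin{equation*}
\sqrt{-1}\Lambda_{\omega}\partial\bar{\partial}|s|_{K}^{2}\ge\epsilon|s|_{K}^{2}-4|\sqrt{-1}\Lambda_{\omega}G_{K}-\lambda\,\Id_{E}|_{K}\,|s|_{K}.
\end{equation*}
Note that $|s|_{K}^{2}=|\log h|_{K}^{2}=\tr(s^{2})$ is a smooth function on all of $X$, since the matrix logarithm of a positive-definite Hermitian endomorphism depends smoothly on it, so the computation above is valid pointwise everywhere on $X$.

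For the $C^{0}$ bound, let $x_{0}\in X$ be a point where the smooth function $|s|_{K}^{2}$ attains its maximum. At $x_{0}$ the complex Hessian of $|s|_{K}^{2}$ is negative semi-definite, so $\sqrt{-1}\Lambda_{\omega}\partial\bar{\partial}|s|_{K}^{2}(x_{0})\le0$ for any Hermitian metric $\omega$. Feeding this into the differential inequality at $x_{0}$ yields $\epsilon|s|_{K}^{2}(x_{0})\le4|\sqrt{-1}\Lambda_{\omega}G_{K}-\lambda\,\Id_{E}|_{K}(x_{0})\,|s|_{K}(x_{0})$; if $|s|_{K}(x_{0})=0$ the desired bound is trivial, and otherwise dividing by $|s|_{K}(x_{0})$ gives
\begin{equation*}
\sup_{X}|\log(h)|_{K}=|s|_{K}(x_{0})\le\frac{4}{\epsilon}\sup_{X}|\sqrt{-1}\Lambda_{\omega}G_{K}-\lambda\,\Id_{E}|_{K}.
\end{equation*}
Since $\epsilon\ge\delta$, the right-hand side is at most $C_{12}:=\frac{4}{\delta}\sup_{X}|\sqrt{-1}\Lambda_{\omega}G_{K}-\lambda\,\Id_{E}|_{K}$, a constant depending only on $\delta^{-1}$, $\lambda$, and the background metric $K$ (the latter through $G_{K}$ and the fixed $\omega$).

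There is no serious obstacle here: this is a standard $C^{0}$-estimate of Donaldson--Simpson type. The only points requiring attention are the positivity of the function $\Theta$, which makes the gradient-type term drop out with the correct sign, and the observation that $|\log h|_{K}^{2}$ is globally smooth, so that the maximum-principle argument applies directly on the compact manifold $X$ without any regularization.
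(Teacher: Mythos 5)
Your proof is correct and follows exactly the route the paper takes: pair the equation $L_{\epsilon}(h)=0$ with $s=\log h$, invoke Proposition \ref{d1:3}, drop the nonnegative term $\langle\Theta(s)(Ds),Ds\rangle_{K}$, and conclude with Cauchy--Schwarz and the maximum principle at a maximum point of $|s|_{K}^{2}$. The paper compresses all of this into one sentence; you have simply written out the same argument in full, and the details (positivity of $\Theta$, reality of the pairing with $\sqrt{-1}\Lambda_{\omega}G_{K}$, smoothness of $|\log h|_{K}^{2}$) all check out.
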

\begin{proof}
	Since the term $\langle\Theta(s)(Ds),Ds\rangle_{K}$ in (\ref{key1}) is nonnegative, this lemma comes from Proposition \ref{d1:3} and the maximum principle.
\end{proof}

\medskip

Using the above $C^{0}$-estimate, we can derive the $C^{1}$-estimate and $L_{2}^{p}$-estimate by Donaldson's arguments in \cite[Lemma 19]{Don2}. But now we will give another proof by the maximum principle.
\begin{lemma}\label{pp:2}
	Let $h$ be the solution of $L_{\epsilon}(h)=0$ for $\epsilon\in[\delta,1]$. Then
	\begin{equation}
	\sup_{X}|\psi_{H}^{1,0}|_{H}\leq C_{13},
	\end{equation}
	where $H=Kh$, $C_{13}$ is a constant depending only on the background metric $K$, $\lambda $, the bound of $\sup_{X}|\log{h}|_{K}$ and the geometry of $(X, \omega)$.
\end{lemma}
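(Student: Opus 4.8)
The plan is to obtain the $C^1$-estimate from the already established $C^0$-estimate (Lemma 3.5) via a Bochner-type computation followed by the maximum principle, avoiding the elliptic-regularity route of Donaldson. First I would record that along the solution $h$ of $L_\epsilon(h)=0$ we control $\sup_X|\log h|_K$, hence $\sup_X|\log h|_H$ and the eigenvalue ratios of $h$, uniformly. Writing $H=Kh$, one has from the structure equations \eqref{dd:1} that $\partial_H\psi_H^{1,0}=0$ and $\bar\partial_H\psi_H^{0,1}=0$, so $\psi_H^{1,0}$ is, in the appropriate sense, $\partial_H$-closed; the idea is to apply a Weitzenböck/Bochner identity to the norm $|\psi_H^{1,0}|_H^2$. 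Concretely I would compute $\sqrt{-1}\Lambda_\omega\partial\bar\partial|\psi_H^{1,0}|_H^2$ and show it is bounded below by $-C(|\psi_H^{1,0}|_H^2+1)$ plus a nonnegative term involving $|\nabla_H\psi_H^{1,0}|_H^2$, where the constant $C$ depends only on $K$, $\lambda$, the geometry of $(X,\omega)$, and the bound from Lemma 3.5. The curvature terms that appear are $F_{D_H}$ and $G_H$; the former is controlled because $\sqrt{-1}(D_H^2+\psi_H\wedge\psi_H)=\alpha\otimes\Id_E$ forces $\sqrt{-1}D_H^2=\alpha\otimes\Id_E-\sqrt{-1}\psi_H\wedge\psi_H$, so $|F_{D_H}|_H\lesssim 1+|\psi_H|_H^2$, while $\Lambda_\omega G_H$ is controlled by $\Lambda_\omega G_K$ and $\epsilon\log h$ through the equation itself.

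The subtlety is that the naive Bochner estimate will produce a term quadratic in $|\psi_H^{1,0}|_H^2$ on the right-hand side (coming from $\psi_H\wedge\psi_H$ inside the curvature), which the maximum principle cannot absorb directly. The standard remedy, which I would follow here, is to estimate instead the quantity $\log(1+|\psi_H^{1,0}|_H^2)$ or, following Simpson's trick for Higgs bundles, to exploit that $|\psi_H^{1,0}\wedge\psi_H^{1,0}|$ and $[\psi_H^{1,0},\psi_H^{0,1}]$ are themselves bounded by Chern--Weil/integral identities — but more robustly, one uses the fact that $\psi_H=\psi_H^{1,0}+\psi_H^{0,1}$ with $\psi_H^{0,1}=(\psi_H^{1,0})^{*H}$, so $\psi_H\wedge\psi_H$ is controlled by $|\psi_H^{1,0}|_H^2$, and then one applies the maximum principle to $u:=|\psi_H^{1,0}|_H^2$ at its maximum point $x_0$: there $\sqrt{-1}\Lambda_\omega\partial\bar\partial u(x_0)\le 0$, so the Bochner inequality gives $0\le -|\nabla_H\psi_H^{1,0}|_H^2(x_0)-c\,u(x_0)^2+C\,u(x_0)+C$, whence $u(x_0)\le C_{13}$. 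The point is that the genuinely bad quadratic term must enter with a \emph{good} (negative) sign, which it does precisely because it is $-|\psi_H^{1,0}\wedge\psi_H^{1,0}|^2$-type — this is the Higgs-bundle analogue of the curvature term $-|F|^2$ in the Yang--Mills Bochner formula, and it is the structural reason the estimate closes.

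The main obstacle I anticipate is getting the signs and the precise form of the Bochner identity right: one must carefully track the commutators $[\Lambda_\omega,\partial_H]$, $[\Lambda_\omega,\bar\partial_H]$ (the Kähler identities, which on a general Hermitian manifold acquire torsion correction terms since $\omega$ is only Gauduchon/astheno-Kähler here, not Kähler), and verify that the torsion corrections contribute only terms linear in $|\nabla_H\psi_H^{1,0}|_H$ that can be absorbed by the good gradient term via Cauchy--Schwarz. Once the differential inequality
\[
\sqrt{-1}\Lambda_\omega\partial\bar\partial|\psi_H^{1,0}|_H^2 \;\ge\; -C_1|\psi_H^{1,0}|_H^2 - C_2
\]
is in hand (with $C_1,C_2$ depending only on $K$, $\lambda$, $\sup_X|\log h|_K$, and the geometry), the maximum principle finishes the proof immediately. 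I would also remark that, as the authors note, an alternative is simply to cite \cite[Lemma 19]{Don2}: the $C^0$-bound of Lemma 3.5 feeds into Donaldson's elliptic $L^p_2$ and $C^1$ estimates verbatim, since the operator $h\mapsto\sqrt{-1}\Lambda_\omega D(h^{-1}D_K^c h)$ has the same leading-order structure as in the holomorphic bundle case.
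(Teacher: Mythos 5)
Your overall strategy (a Bochner identity for $|\psi_H^{1,0}|_H^2$ plus the maximum principle, fed by the $C^0$-bound of Lemma \ref{pp:1}) is the one the paper follows, and your observation that the dangerous curvature contributions reorganize into the nonnegative quantities $|[\psi_H^{1,0},\psi_H^{0,1}]|_H^2$ and $|\psi_H^{1,0}\wedge\psi_H^{1,0}|_H^2$ is correct. But the way you propose to close the argument has a genuine gap. First, these good terms are merely nonnegative; they are \emph{not} bounded below by $c\,|\psi_H^{1,0}|_H^4$ (for a line bundle both vanish identically, and in higher rank they vanish on the ``diagonal'' part of $\psi_H^{1,0}$), so the inequality $0\le -c\,u^2+Cu+C$ you invoke at the maximum point is not available. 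Second, and decisively, your fallback inequality $\sqrt{-1}\Lambda_\omega\partial\bar\partial|\psi_H^{1,0}|_H^2\ge -C_1|\psi_H^{1,0}|_H^2-C_2$ gives nothing under the maximum principle: at an interior maximum the left-hand side is $\le 0$ and the resulting relation $0\ge -C_1u-C_2$ is vacuous (any constant function satisfies such a differential inequality). A positive multiple of $u$ or $u^2$ must appear on the right-hand side with a \emph{plus} sign for the argument to close, and the Bochner formula alone does not supply it.

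The paper's fix is an auxiliary function. One computes
\begin{equation*}
\sqrt{-1}\Lambda_{\omega}dd^{c}\tr(h)=|Dh\cdot h^{-1/2}|_{K}^{2}-4\tr\bigl(h(\sqrt{-1}\Lambda_{\omega}G_{K}-\lambda \Id_{E})\bigr)+\epsilon \tr(h\log h)\ \ge\ \breve{C}_{8}|\psi_{H}^{1,0}|_{H}^{2}-\breve{C}_{10},
\end{equation*}
where the lower bound $|Dh\cdot h^{-1/2}|_{K}^{2}\ge \breve{C}_{8}|\psi_{H}^{1,0}|_{H}^{2}-\breve{C}_{9}$ follows from (\ref{ex2}) together with the uniform bound on $\log h$. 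Applying the maximum principle to $f=|\psi_{H}^{1,0}|_{H}^{2}+A\tr(h)$ with $A$ large enough then yields $0\ge \breve{C}_{11}|\psi_{H}^{1,0}|_{H}^{2}-\breve{C}_{12}$ at the maximum of $f$, and the $C^{0}$-bound on $\tr(h)$ converts this into a global bound on $|\psi_{H}^{1,0}|_{H}$. This coupling with $\tr(h)$ is the ingredient missing from your proposal. Your closing remark that one could instead quote the elliptic estimates of \cite[Lemma 19]{Don2} is fine, and is precisely the alternative route the authors mention before giving their maximum-principle proof.
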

\begin{proof}
	After a straightforward calculation, we can show
\begin{equation}
\begin{split}
\sqrt{-1}\Lambda_{\omega}\partial\bar{\partial}|\psi_{H}^{1,0}|_{H}^{2}\geq &|\nabla \psi_{H}^{1,0}|_{H}^{2}+|[\psi_{H}^{1,0},\psi_{H}^{0,1}]|_{H}^{2}+2|\psi_{H}^{1,0}\wedge\psi_{H}^{1,0}|_{H}^{2}\\ &-2Re\langle\partial_{H}(\sqrt{-1}\Lambda_{\omega}G_{H}),\psi_{H}^{1,0}\rangle_{H}-\breve{C}|\nabla\psi_{H}^{1,0}|_{H}|\psi_{H}^{1,0}|_{H}-|R|\cdot|\psi_{H}^{1,0}|_{H}^{2},
\end{split}
\end{equation}
where $\breve{C}$ is a constant depending only  on the geometry of $(X, \omega )$, $R$ is the curvature of Chern connection. According to the equation (\ref{dd:3}), we obtain
	\begin{equation}
	\begin{split}
	\sqrt{-1}\Lambda_{\omega}\partial\bar{\partial}|\psi_{H}^{1,0}|_{H}^{2}\geq &-\breve{C}_{1}|\psi_{H}^{1,0}|_{H}^{2}-\frac{\epsilon}{2}|\partial_{H}(\log(h))|_{H}|\psi_{H}^{1,0}|_{H},
	\end{split}
	\end{equation}
where $\breve{C}_{1}$ is a uniform constant depending only on $(X, \omega ) $.
	Notice that
	\begin{equation}
	|\partial_{H}(\log(h))|_{H}\leq \breve{C}_{2}|h^{-1}\partial_{H}h|_{H}\leq \breve{C}_{3}|\psi_{H}^{1,0}|_{H}+\breve{C}_{4},
	\end{equation}
	then
	\begin{equation}
	\begin{split}
	\sqrt{-1}\Lambda_{\omega}\partial\bar{\partial}|\psi_{H}^{1,0}|_{H}^{2}\geq &-\breve{C}_{5}|\psi_{H}^{1,0}|_{H}^{2}-\breve{C}_{6}|\psi_{H}^{1,0}|_{H},
	\end{split}
	\end{equation}
	 where $\breve{C}_{2},\breve{C}_{3}$, $\breve{C}_{4}$, $\breve{C}_{5}$ and $\breve{C}_{6}$ are the constants depending only on $\sup_{X}|\log{h}|_{K}$ and $K$.  On the other hand, we have
	\begin{equation}
	\begin{split}
	\sqrt{-1}\Lambda_{\omega}dd^{c}\tr(h)=&\sqrt{-1}\Lambda_{\omega}\tr(Dh\wedge h^{-1}D_{K}^{c}h)+\sqrt{-1}\Lambda_{\omega}\tr(hD(h^{-1}D_{K}^{c}h))\\
	=&|Dh\cdot h^{-1/2}|_{K}^{2}-4\tr(h(\sqrt{-1}\Lambda_{\omega}G_{K}-\lambda \Id_{E}))+\epsilon \tr(h\log(h)),
	\end{split}
	\end{equation}
	and
	\begin{equation}
	|Dh\cdot h^{-1/2}|_{K}^{2}\geq \breve{C}_{7}|Dh\cdot h^{-1}|_{H}^{2}\geq \breve{C}_{8}|\psi_{H}^{1,0}|_{H}^{2}-\breve{C}_{9},
	\end{equation}
	hence it follows that
	\begin{equation}
	\begin{split}
	\sqrt{-1}\Lambda_{\omega}dd^{c}\tr(h)\geq &\breve{C}_{8}|\psi_{H}^{1,0}|_{H}^{2}-\breve{C}_{10},
	\end{split}
	\end{equation}
where $\breve{C}_{7},\breve{C}_{8}$, $\breve{C}_{9}$ and $\breve{C}_{10}$ are the constants depending only on $\sup_{X}|\log{h}|_{K}$, $\lambda $ and $K$.
	Let $f=|\psi_{H}^{1,0}|_{H}^{2}+A\tr(h)$, where $A$ is large enough such that
	\begin{equation}
	\sqrt{-1}\Lambda_{\omega}\partial\bar{\partial}f\geq \breve{C}_{11}|\psi_{H}^{1,0}|_{H}^{2}-\breve{C}_{12},
	\end{equation}
	where $\breve{C}_{11}$ and $\breve{C}_{12}$ are positive constants depending only on the background metric $K$, the bound of $\sup_{X}|\log{h}|_{K}$ and the geometry of $(X, \omega)$.
	The maximum principle implies:
	\begin{equation}
	\max_{X}|\psi_{H}^{1,0}|^{2}_{H}\leq C_{13},
	\end{equation}
	where $C_{13}$ is a positive constant depending only on $\breve{C}_{11}$ and $\breve{C}_{12}$.
\end{proof}

\medskip

Combining Lemma \ref{pp:1}, Lemma \ref{pp:2} and the $L_{2}^{p}$-estimate of elliptic operator, we immediately know $h_{\epsilon}$ is uniformly bounded in $L^{p}_{2}$ for $\epsilon\in[\delta,1]$. Thus $J_{\delta}$ is closed for $[\delta,1]$ and $J=(0,1]$.

\begin{lemma}\label{lem:1}
	Let $(X,\omega)$ be an $n$-dimensional compact Gauduchon manifold,  $(E,D)$  be a  projectively flat bundle over  $(X, \omega )$ with  rank $r$  and satisfying $\sqrt{-1}F_{D}=\alpha \otimes \Id_{E}$ for a real $(1, 1)$-form $\alpha $, $h_{\epsilon}$ be a solution of equation (\ref{dd:3}), $s_{\epsilon}=\log(h_{\epsilon})$. Then
	\begin{equation}
	\begin{split}
	&-\int_{X}4\tr((\sqrt{-1}\Lambda_{\omega}G_{K}-\lambda \Id_{E})s_{\epsilon})\frac{\omega^{n}}{n!}+\int_{X}\langle\Theta(s_{\epsilon})(Ds_{\epsilon}),Ds_{\epsilon}\rangle_{K}\frac{\omega^{n}}{n!}+\epsilon\|s_{\epsilon}\|_{L^{2}}^{2}=0.
	\end{split}
	\end{equation}
\end{lemma}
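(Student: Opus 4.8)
The plan is to integrate the pointwise identity (\ref{key1}) from Proposition \ref{d1:3} over $X$ against the Gauduchon volume form $\frac{\omega^{n}}{n!}$, after substituting in the equation $L_{\epsilon}(h_{\epsilon})=0$. First I would rewrite equation (\ref{dd:3}), which reads $4\sqrt{-1}\Lambda_{\omega}G_{K}+\sqrt{-1}\Lambda_{\omega}D(h_{\epsilon}^{-1}D_{K}^{c}h_{\epsilon})-\epsilon s_{\epsilon}-4\lambda\Id_{E}=0$, so that $\sqrt{-1}\Lambda_{\omega}D(h_{\epsilon}^{-1}D_{K}^{c}h_{\epsilon})=-4(\sqrt{-1}\Lambda_{\omega}G_{K}-\lambda\Id_{E})+\epsilon s_{\epsilon}$. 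Taking the pointwise Hermitian inner product with $s_{\epsilon}$ with respect to $K$ gives
\begin{equation}
\langle\sqrt{-1}\Lambda_{\omega}D(h_{\epsilon}^{-1}D_{K}^{c}h_{\epsilon}),s_{\epsilon}\rangle_{K}=-4\langle(\sqrt{-1}\Lambda_{\omega}G_{K}-\lambda\Id_{E}),s_{\epsilon}\rangle_{K}+\epsilon|s_{\epsilon}|_{K}^{2}.
\end{equation}
On the other hand, Proposition \ref{d1:3} equates the left-hand side with $-\langle\Theta(s_{\epsilon})(Ds_{\epsilon}),Ds_{\epsilon}\rangle_{K}+\sqrt{-1}\Lambda_{\omega}\partial\bar{\partial}|s_{\epsilon}|_{K}^{2}$.

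Next I would integrate the resulting equality over $X$. The only nontrivial point is that $\int_{X}\sqrt{-1}\Lambda_{\omega}\partial\bar{\partial}|s_{\epsilon}|_{K}^{2}\,\frac{\omega^{n}}{n!}=0$. This is exactly where the Gauduchon hypothesis $\partial\bar{\partial}\omega^{n-1}=0$ enters: for any smooth function $f$ on $X$ one has $\int_{X}\sqrt{-1}\Lambda_{\omega}\partial\bar{\partial}f\,\frac{\omega^{n}}{n!}=\int_{X}\sqrt{-1}\partial\bar{\partial}f\wedge\frac{\omega^{n-1}}{(n-1)!}=\int_{X}\sqrt{-1}f\,\partial\bar{\partial}\frac{\omega^{n-1}}{(n-1)!}=0$ after integrating by parts twice (using that $X$ is compact without boundary), since $\partial\bar{\partial}\omega^{n-1}=0$. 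Applying this with $f=|s_{\epsilon}|_{K}^{2}$ kills that term. Also note $|s_{\epsilon}|_{K}^{2}$ is smooth because $h_{\epsilon}$ is a smooth positive-definite solution, so the integration by parts is legitimate. What remains after integration is
\begin{equation}
-4\int_{X}\langle(\sqrt{-1}\Lambda_{\omega}G_{K}-\lambda\Id_{E}),s_{\epsilon}\rangle_{K}\frac{\omega^{n}}{n!}+\epsilon\int_{X}|s_{\epsilon}|_{K}^{2}\frac{\omega^{n}}{n!}=-\int_{X}\langle\Theta(s_{\epsilon})(Ds_{\epsilon}),Ds_{\epsilon}\rangle_{K}\frac{\omega^{n}}{n!},
\end{equation}
which rearranges to exactly the claimed identity, once one observes $\langle(\sqrt{-1}\Lambda_{\omega}G_{K}-\lambda\Id_{E}),s_{\epsilon}\rangle_{K}=\tr((\sqrt{-1}\Lambda_{\omega}G_{K}-\lambda\Id_{E})s_{\epsilon})$ since $s_{\epsilon}$ is $K$-self-adjoint and both factors are self-adjoint endomorphisms (so the Hermitian inner product is the real trace of the product), and $\|s_{\epsilon}\|_{L^{2}}^{2}=\int_{X}|s_{\epsilon}|_{K}^{2}\frac{\omega^{n}}{n!}$.

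I do not anticipate a genuine obstacle here; the statement is essentially a bookkeeping consequence of Proposition \ref{d1:3}, the equation (\ref{dd:3}), and the Gauduchon condition. The one point that deserves care, and which I would flag as the \emph{main} thing to verify, is the vanishing of $\int_{X}\sqrt{-1}\Lambda_{\omega}\partial\bar{\partial}|s_{\epsilon}|_{K}^{2}\frac{\omega^{n}}{n!}$: one must make sure the Gauduchon identity $\partial\bar\partial\omega^{n-1}=0$ is available (it is, since $(X,\omega)$ is assumed Gauduchon in the hypothesis of the lemma) and that the two integrations by parts are applied correctly, keeping track of the signs coming from moving $\partial$ and $\bar\partial$ past a wedge with $\omega^{n-1}$. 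Everything else is direct substitution and the identification of the pointwise inner product with a trace.
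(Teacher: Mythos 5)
Your proposal is correct and follows exactly the route the paper intends: the paper's own proof is the one-line remark that the lemma ``follows directly from Proposition \ref{d1:3}'', and what you have written is precisely the expansion of that remark (substitute equation (\ref{dd:3}) into the pointwise identity (\ref{key1}), integrate against $\frac{\omega^{n}}{n!}$, and use $\partial\bar{\partial}\omega^{n-1}=0$ to discard the $\sqrt{-1}\Lambda_{\omega}\partial\bar{\partial}|s_{\epsilon}|_{K}^{2}$ term). The point you flag as needing care — the vanishing of that integral under the Gauduchon hypothesis — is indeed the only nontrivial step, and your treatment of it is right.
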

\begin{proof}
	This follows directly from Proposition \ref{d1:3}.
\end{proof}

\begin{lemma}\label{lemmau}
Let $(E,D)$  be a simple projectively flat complex vector bundle over  a compact Hermitian manifold $(X, \omega )$ satisfying $\sqrt{-1}F_{D}=\alpha \otimes \Id_{E}$ for a real $(1, 1)$-form $\alpha $, $H$ and $K$ be two Hermitian metrics on $E$. If $\Lambda_{\omega}G_{H}=\Lambda_{\omega}G_{K}$, then $H=cK$ for some positive constant $c$.
\end{lemma}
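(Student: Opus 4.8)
The plan is to prove the uniqueness statement by the standard Bochner-identity-plus-maximum-principle argument, now using Proposition \ref{d1:3}. Set $h = K^{-1}H$ and $s = \log(h)$, a $K$-self-adjoint section of $\End(E)$. By (\ref{dd:2}) we have $G_{H} - G_{K} = \tfrac14 D(h^{-1}D_{K}^{c}h)$, and since $\Lambda_{\omega}$ annihilates the $(2,0)$ and $(0,2)$ parts, the hypothesis $\Lambda_{\omega}G_{H} = \Lambda_{\omega}G_{K}$ is exactly $\sqrt{-1}\Lambda_{\omega}D(h^{-1}D_{K}^{c}h) = 0$.

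First I would substitute this into the identity (\ref{key1}). Its left-hand side becomes the $K$-pairing of the zero endomorphism with $s$, hence vanishes, so pointwise on $X$
\begin{equation*}
\sqrt{-1}\Lambda_{\omega}\partial\bar{\partial}|s|_{K}^{2} = \langle\Theta(s)(Ds),Ds\rangle_{K} \ge 0,
\end{equation*}
because $\Theta(x,y) = \frac{e^{y-x}-1}{y-x} > 0$ for all real $x,y$, which makes $\langle\Theta(s)(\cdot),\cdot\rangle_{K}$ a positive definite quadratic form on $\End(E)$-valued $1$-forms (compare the computation in the proof of Proposition \ref{d1:3}, where it equals $\sum_{\alpha,\beta}\Theta(\lambda_{\alpha},\lambda_{\beta})|(Ds)_{\alpha}^{\beta}|^{2}$). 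Since $X$ is compact and $f\mapsto\sqrt{-1}\Lambda_{\omega}\partial\bar{\partial}f$ is a real second order elliptic operator with no zeroth order term, the maximum principle forces $|s|_{K}^{2}$ to be constant; then the left-hand side above is identically zero, hence so is the right-hand side, and strict positivity of $\Theta$ gives $Ds = 0$. Note that only compactness is used at this stage — no integration by parts — so the Gauduchon condition is not needed.

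Finally I would invoke simplicity of $(E,D)$. Because the trace is $D$-parallel, $d\,\tr(s^{k}) = \tr(k\,s^{k-1}Ds) = 0$ for every $k\ge 1$, so all power sums of the eigenvalues of the $K$-self-adjoint endomorphism $s$ are constant; hence its real eigenvalues are globally constant functions, their multiplicities are then locally constant on the connected manifold $X$, and each eigenbundle $E_{\mu} = \ker(s - \mu\,\Id_{E})$ is a smooth subbundle of constant rank. Each $E_{\mu}$ is $D$-invariant: for $e\in\Gamma(E_{\mu})$ one has $s(De) = D(se) - (Ds)(e) = D(\mu e) = \mu\,De$, so $De\in\Omega^{1}(E_{\mu})$. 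Since $(E,D)$ is simple it has no proper $D$-invariant subbundle, so $s$ has a single eigenvalue $\mu$, i.e. $s = \mu\,\Id_{E}$ with $\mu$ a real constant; therefore $h = e^{\mu}\,\Id_{E}$ and $H = cK$ with $c = e^{\mu} > 0$.

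The argument is essentially mechanical once Proposition \ref{d1:3} is available; the two points that deserve a word of care are the implication $\langle\Theta(s)(Ds),Ds\rangle_{K}\equiv 0\Rightarrow Ds=0$, which rests on the strict positivity of $\Theta$, and the constancy of the eigenvalue multiplicities of $s$, which is what guarantees that the eigenbundles $E_{\mu}$ are genuine subbundles to which the simplicity hypothesis can be applied.
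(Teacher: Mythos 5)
Your proof is correct and is essentially the same maximum-principle argument as the paper's: the paper derives $\sqrt{-1}\Lambda_{\omega}dd^{c}\tr(h)\geq |Dh\cdot h^{-1/2}|_{K}^{2}$ from (\ref{unique00}) and concludes $Dh=0$, while you derive $\sqrt{-1}\Lambda_{\omega}\partial\bar{\partial}|s|_{K}^{2}=\langle\Theta(s)(Ds),Ds\rangle_{K}\geq 0$ from Proposition \ref{d1:3} and conclude $Ds=0$; in both cases the maximum principle plus simplicity then force $h$ to be a positive constant multiple of the identity. Your write-up merely makes explicit the strict positivity of $\Theta$ and the eigenbundle argument that the paper leaves implicit.
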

\begin{proof}
 By (\ref{unique00}), we have
\begin{equation}\label{unique2}
\sqrt{-1}\Lambda_{\omega}dd^{c}\tr(h)\geq |Dh\cdot h^{-1/2}|_{K}^{2},
\end{equation}
where $h=K^{-1}H$. Hence $Dh=0$. Since $(E, D)$ is simple and $h^{\ast K}=h$, then $h=c \Id_{E}$ for some positive constant $c$.
\end{proof}

\medskip

\begin{theorem}\label{thm:x1}
	Let $(X,\omega)$ be an $n$-dimensional compact Gauduchon manifold,  $(E,D)$  be a simple projectively flat bundle over  $(X, \omega )$ with  rank $r$  and satisfying $\sqrt{-1}F_{D}=\alpha \otimes \Id_{E}$ for a real $(1, 1)$-form $\alpha $.   Then there exists a unique Hermitian metric $H$ on $(E, D)$ satisfying \begin{equation}\label{hh1}\sqrt{-1}\Lambda_{\omega}G_{H}=\lambda \Id_{E},\end{equation} where the constant $\lambda=\frac{\deg_{\omega }(E,D)}{r \Vol(X,\omega)}$.
\end{theorem}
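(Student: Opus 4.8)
The plan is to solve equation (\ref{hh1}) by taking the limit of the solutions $h_\epsilon$ of the perturbed equation (\ref{dd:3}) as $\epsilon\to 0$. By Lemma \ref{lemma31} there is a metric $\tilde K$ with $\tr(\sqrt{-1}\Lambda_\omega G_{\tilde K})=r\lambda$, and by Lemma \ref{lemma32} we may fix a background metric $K$ with $\tr(\sqrt{-1}\Lambda_\omega G_K)=r\lambda$, $\det(K^{-1}\tilde K)=1$, so that (\ref{dd:3}) is solvable at $\epsilon=1$. The continuity argument preceding this theorem already shows $J=(0,1]$, i.e. for every $\epsilon\in(0,1]$ there is a smooth solution $h_\epsilon\in\mbox{Herm}^+(E,K)$; moreover Proposition \ref{p01} gives $\det(h_\epsilon)=1$. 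First I would record the uniform bounds: Lemma \ref{pp:1} gives $\sup_X|\log h_\epsilon|_K\le \frac{4}{\epsilon}\sup_X|\sqrt{-1}\Lambda_\omega G_K-\lambda\Id_E|_K$, which unfortunately degenerates as $\epsilon\to 0$, so the key point is to upgrade this to an $\epsilon$-independent bound using simplicity of $(E,D)$.

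The main obstacle, as usual in this Donaldson-type scheme, is precisely this uniform $C^0$ estimate: ruling out $\sup_X|\log h_\epsilon|_K\to\infty$. The standard dichotomy (cf. Simpson, Uhlenbeck--Yau) is to set $m_\epsilon=\sup_X|s_\epsilon|_K$ with $s_\epsilon=\log h_\epsilon$; if $m_\epsilon$ stays bounded we are done, so suppose $m_\epsilon\to\infty$ and set $u_\epsilon=s_\epsilon/m_\epsilon$, so $\sup_X|u_\epsilon|_K=1$ and $\int_X\tr u_\epsilon\,\omega^n/n!=0$ (from $\det h_\epsilon=1$, since $\tr s_\epsilon=\log\det h_\epsilon=0$). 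Dividing Lemma \ref{lem:1} by $m_\epsilon^2$ and letting $\epsilon\to 0$, together with a Fatou/weak-convergence argument on the nonnegative term $\langle\Theta(s_\epsilon)(Ds_\epsilon),Ds_\epsilon\rangle_K$ (here one uses that $\Theta(x,y)\to\infty$ when $x\neq y$ along directions where $x$ or $y$ scales like $m_\epsilon$, while $\Theta\ge$ a fixed positive constant on bounded sets), forces a nonzero weak limit $u_\infty\in L^2_1$ with $Du_\infty=0$ in a suitable weak sense and with eigenvalues in $[-1,1]$, non-constant since $\tr u_\infty\equiv 0$ but $u_\infty\neq 0$. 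Applying the standard construction of Uhlenbeck--Yau (the spectral-projection argument) to $u_\infty$ then produces a nontrivial $D$-invariant subbundle of $E$, contradicting simplicity. This yields $\sup_X|\log h_\epsilon|_K\le C$ uniformly in $\epsilon$.

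With the uniform $C^0$ bound in hand, Lemma \ref{pp:2} gives a uniform bound on $|\psi_{H_\epsilon}^{1,0}|_{H_\epsilon}$ with $H_\epsilon=Kh_\epsilon$, hence (via (\ref{ex2})) a uniform bound on $|D_K h_\epsilon|_K$; then the $L^p_2$ elliptic estimate applied to (\ref{dd:3}) (the $-\epsilon\log h_\epsilon$ term is now uniformly bounded) gives a uniform $L^p_2$ bound on $h_\epsilon$, and bootstrapping gives uniform $C^\infty_{\rm loc}$ bounds. Passing to a subsequence, $h_\epsilon\to h_\infty$ smoothly with $h_\infty\in\mbox{Herm}^+(E,K)$, and taking $\epsilon\to 0$ in (\ref{dd:3}) together with (\ref{dd:2}) shows that $H=Kh_\infty$ satisfies $4\sqrt{-1}\Lambda_\omega G_H-4\lambda\Id_E=0$, i.e. (\ref{hh1}). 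Finally, uniqueness is exactly Lemma \ref{lemmau}: if $H'$ is another solution then $\Lambda_\omega G_{H'}=\Lambda_\omega G_H$, so $H'=cH$ for a positive constant $c$ by simplicity, and the normalization $\det(K^{-1}H)=1$ (inherited from $\det h_\epsilon=1$ in the limit) pins down $c=1$. I expect the spectral-subbundle step in the $C^0$ estimate to be the only genuinely delicate part; everything else is a routine assembly of the preceding lemmas.
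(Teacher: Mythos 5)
Your overall scheme is the same as the paper's: solve the perturbed equation (\ref{dd:3}) for all $\epsilon\in(0,1]$ by continuity, rule out blow-up of $\log h_\epsilon$ using simplicity via a Simpson/Uhlenbeck--Yau spectral-projection argument, pass to the limit, and get uniqueness from Lemma \ref{lemmau}. Two remarks, one minor and one substantive.

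Minor: you normalize by $m_\epsilon=\sup_X|s_\epsilon|_K$, whereas the paper normalizes by $l_\epsilon=\|s_\epsilon\|_{L^2}$ and then uses Proposition \ref{pp:0} to get $\|u_\epsilon\|_{L^\infty}\le C^{*}$. With your normalization you still need Proposition \ref{pp:0} to see that $\|u_\epsilon\|_{L^2}=l_\epsilon/m_\epsilon$ stays bounded below; otherwise the weak limit $u_\infty$ could be zero and no destabilizing object is produced. Also the correct scaling of Lemma \ref{lem:1} is division by the \emph{first} power of the norm, so that $l\,\Theta(l\,\cdot)$ dominates an arbitrary $\Upsilon$ with $\Upsilon(\lambda_1,\lambda_2)<(\lambda_1-\lambda_2)^{-1}$ and yields the uniform $L_1^2$ bound on $u_\epsilon$; dividing by $m_\epsilon^2$ sends $\Theta(m\lambda_\alpha,m\lambda_\beta)\to 0$ in the directions $\lambda_\beta<\lambda_\alpha$ and loses that control.

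Substantive gap: the sentence ``the standard construction of Uhlenbeck--Yau \dots produces a nontrivial $D$-invariant subbundle of $E$, contradicting simplicity'' is too quick. The regularity theorem for $L_1^2$ weakly holomorphic subbundles only gives that $V_a=\pi_a(E)$ is a subbundle of $E$ on $X\setminus\Sigma$ for an analytic set $\Sigma$ of codimension at least $2$. Simplicity of $(E,D)$, as defined in this paper, is a statement about $D$-invariant subbundles of $E$ over all of $X$, so one must extend $V_a$ across $\Sigma$ as a genuine smooth $D$-invariant subbundle before any contradiction is reached. This is where the paper's proof does real work specific to the projectively flat setting: locally $\alpha=\sqrt{-1}\,d\beta$, so $D-\beta\otimes\Id_E$ is flat; in a corresponding local frame $\{e_1,\dots,e_r\}$ the $D$-invariance forces $V_a$ to be spanned by constant linear combinations of the $e_i$ on each small ball minus $\Sigma$, and these constant-coefficient spans patch and extend over $\Sigma$. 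Without this extension step (or a substitute adapted to $D$-invariance rather than coherent subsheaves), the contradiction with simplicity is not established. Everything else in your outline matches the paper's proof.
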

\begin{proof}
Lemma \ref{lemmau} gives the uniqueness. Lemma \ref{lemma31} guarantees that there exists a Hermitian metric $\tilde{K}$ on $(E, D)$ with $\tr(\sqrt{-1}\Lambda_{\omega}G_{\tilde{K}})=r\lambda$.
By choosing a background metric $K$ with $\tr(\sqrt{-1}\Lambda_{\omega}G_{K})=r\lambda $,  as above we  know that there exists a solution $h_{\epsilon}$  of the equation (\ref{dd:3}) for any $\epsilon\in(0,1]$. Under the assumption that $(E,D)$ is simple, we conclude that there is a uniform constant $\breve{C}_{13}$ independent of $\epsilon$ such that \begin{equation}\label{c0}\|\log(h_{\epsilon})\|_{L^{2}}<\breve{C}_{13}.\end{equation} Then Proposition \ref{pp:0} implies that $\|\log(h_{\epsilon})\|_{L^{\infty}}$ are uniformly bounded. By Lemma \ref{pp:2}, we can get the uniform $C^{1}$-estimate and also the uniform $L_{2}^{p}$-estimate. After choosing a subsequence of $\epsilon$, we can obtain a limiting metric $H$ satisfying $\sqrt{-1}\Lambda_{\omega}G_{H}=\lambda \Id_{E}$.
	
	Suppose $\overline{\lim\limits_{\epsilon\rightarrow 0}}\|\log(h_{\epsilon})\|_{L^{2}}=+\infty$. Then there is a sequence $\epsilon_{j}\rightarrow 0$, such that
	\begin{equation}
	\|\log(h_{\epsilon_{j}})\|_{L^{2}} \rightarrow +\infty.
	\end{equation}
	Set
	\begin{equation}
	s_{j}=\log(h_{\epsilon_{j}}),\ \ l_{j}=\|s_{j}\|_{L^{2}},\ \ u_{j}=l_{j}^{-1}s_{j},
	\end{equation}
	then
	\begin{equation}
	 \|u_{j}\|_{L^{2}}=1,\ \ \|u_{j}\|_{L^{\infty}}\leq C^{*}.
	\end{equation}
On the other hand, by Proposition \ref{p01}, we have
\begin{equation}
	\tr(u_{j})=0.
	\end{equation}
	Based on Lemma \ref{lem:1}, we derive
	\begin{equation} -\int_{X}4\tr((\sqrt{-1}\Lambda_{\omega}G_{K}-\lambda \Id_{E})u_{j})\frac{\omega^{n}}{n!}+l_{j}\int_{X}\langle\Theta(l_{j}u_{j})(Du_{j}),Du_{j}\rangle_{K}\frac{\omega^{n}}{n!}+\epsilon_{j}l_{j}=0.
	\end{equation}
	According to the argument in \cite[page 637]{NZ}, if $\Upsilon: \mathbb{R}\times \mathbb{R} \rightarrow \mathbb{R}$ is a positive smooth function such that $\Upsilon(\lambda_{1},\lambda_{2})<(\lambda_{1}-\lambda_{2})^{-1}$ whenever $\lambda_{1}>\lambda_{2}$, it holds that
	\begin{equation}
	\begin{split}
	&-\int_{X}4\tr((\sqrt{-1}\Lambda_{\omega}G_{K}-\lambda \Id_{E})u_{j})\frac{\omega^{n}}{n!}+\int_{X}\langle\Upsilon(u_{j})(Du_{j}),Du_{j}\rangle_{K}\frac{\omega^{n}}{n!}+\epsilon_{j}l_{j}\leq 0,\ \ j\gg 0.
	\end{split}
	\end{equation}
	Then we see $u_{j}$ is uniformly bounded in $L_{1}^{2}$. By choosing a subsequence which is also denoted by $u_{j}$, we deduce $u_{j}\rightarrow u_{\infty}$ weakly in $L_{1}^{2}$ as $j\rightarrow +\infty$. Then
	\begin{equation}
	\tr(u_{\infty})=0, \ \ \|u_{\infty}\|_{L^{2}}=1,\ \ \|u_{\infty}\|_{L^{\infty}}\leq C^{*}.
	\end{equation}
	Following Simpson's argument in \cite[Lemma 5.5]{S1}, we know the eigenvalues of $u_{\infty}$ are constants almost everywhere. Let $\lambda_{1}<\lambda_{2}<\cdots<\lambda_{b}$ denote the distinct eigenvalues. The fact that $\tr(u_{\infty})=0$ and $\|u_{\infty}\|_{L^{2}}=1$ yields $b\geq 2$. Define
\begin{equation*}
p_{a}=\left\{\begin{split} 1,\quad & \text{$x\leq \lambda_{a}$}, \\
0,\quad & \text{$x\geq\lambda_{a+1}$}. \end{split}\right.
\end{equation*}
Let $\pi_{a}=p_{a}(u_{\infty})$. From \cite[Lemma 5.6]{S1}, one can see that
\begin{itemize}
\item[(1)]$\pi_{a}\in L_{1}^{2}$;
\item[(2)]$\pi_{a}^{2}=\pi_{a}^{*}=\pi_{a}$;
\item[(3)]$(\Id-\pi_{a})D\pi_{a}=0$.
\end{itemize}
Set $V_{a}=\pi_{a}(E)$. By the condition that  $\sqrt{-1}F_{D}=\alpha \otimes \Id_{E}$ for a real $(1, 1)$-form $\alpha $, we know that $(D^{0, 1})^{2}=0$, i.e. $D^{0,1}$ determines a holomorphic structure on $E$.  Since $(\Id-\pi_{a})D^{0,1}\pi_{a}=0$, the Uhlenbeck-Yau's regularity theorem of $L_{1}^{2}$-subbundle (\cite{UhYau}) states that $V_{a}$ is smooth outside an analytic subset $\Sigma$ which is of co-dimension at least $2$.
	Then $V_{a}$ is a $D$-invariant subbundle of $(E, D)$ on $X\setminus\Sigma$.  In the following, we can extend $V_{a}$ to the whole $X$ as a $D$-invariant subbundle of $E$, which is contradict with the simpleness of $E$. Then, we get the uniform $C^{0}$-estimate (\ref{c0}).

For any $x\in \Sigma$, there exists a domain $\hat{B}\subset X$ containing $x$ such that $\alpha= \sqrt{-1}d\beta$ for some $1$-form $\beta$ defined on $\hat{B}$. From
\begin{equation}
\sqrt{-1}(D- \beta\otimes \Id_E)^2= \sqrt{-1}D^2- \sqrt{-1}d\beta\otimes \Id_E^2= 0,
\end{equation}
we know that $D- \beta\otimes \Id_E$ is a flat connection on $E|_{\hat{B}}$.
On a small domain $B$ containing $x$, we can choose a local basis $\{e_1, \cdots, e_r\}$ of $E$ such that
\begin{equation}\label{IMPO2}
D\left(\begin{array}{c}
e_1 \\ \vdots \\ e_r
\end{array}\right)
=\beta\left(\begin{array}{c}
e_1 \\ \vdots \\ e_r
\end{array}\right).
\end{equation}
On $X\setminus \Sigma$, we have the following  exact sequence
\begin{equation}
0\longrightarrow V_a \longrightarrow E\longrightarrow Q_a \longrightarrow 0,
\end{equation}
and
\begin{equation}
D=\left(\begin{array}{cc}
D_{V_a} &   \eta\\
0 &  D_{Q_a}
\end{array}\right),
\end{equation}
\begin{equation}
D^2=\left(\begin{array}{cc}
D_{V_a}^2 &   D_{V_a}\circ\eta+ \eta\circ D_{V_a}\\
0 &  D_{Q_a}^2
\end{array}\right)
=-\sqrt{-1}\alpha\otimes \Id_{E},
\end{equation}
where $\eta$ is a $1$-form valued in $Q_{a}^{\ast}\otimes V_{a }$. So  $D_{V_{a }}- \beta\otimes \Id_{V_{a }}$ is also a flat connection on $V_{a }|_{\hat{B}\setminus \Sigma}$.
For any point $y\in B\setminus \Sigma$, there exist a small domain $\tilde{B}_y\subset B\setminus \Sigma$, and  a basis $\{\tilde{e}_1, \cdots, \tilde{e}_l\}$ of $V_a$ on $\tilde{B}_y$, where $l= \mbox{rank}(V_a)$, such that
\begin{equation}\label{IMPO3}
D_{V_a}\left(\begin{array}{c}
\tilde{e}_1 \\ \vdots \\ \tilde{e}_l
\end{array}\right)
=D\left(\begin{array}{c}
\tilde{e}_1 \\ \vdots \\ \tilde{e}_l
\end{array}\right)
=\beta\left(\begin{array}{c}
\tilde{e}_1 \\ \vdots \\ \tilde{e}_l
\end{array}\right).
\end{equation}
On the other hand, since $V_{a }\subset E$, we can suppose
\begin{equation}
\left(\begin{array}{c}
\tilde{e}_1 \\ \vdots \\ \tilde{e}_l
\end{array}\right)
=A_{l\times r}\left(\begin{array}{c}
e_1 \\ \vdots \\ e_r
\end{array}\right).
\end{equation}
Then (\ref{IMPO2})   and (\ref{IMPO3}) imply that $dA=0$, i.e. $A$ is a constant matrix. For another point $y_{1}\in B\setminus \Sigma$, if  $\tilde{B}_{y_1}\cap\tilde{B}_{y}\neq \emptyset$, it is easy to see that $A_{y_{1}}=C\cdot A$ for some constant matrix $C$, and then
$
A\left(\begin{array}{c}
e_1 \\ \vdots \\ e_r
\end{array}\right)
$
is a basis of $V_a$ on $\tilde{B}_{y_1}\cup\tilde{B}_{y}$.
Since $B\setminus \Sigma$ is connected,
$
A\left(\begin{array}{c}
e_1 \\ \vdots \\ e_r
\end{array}\right)
$
is also a basis of $V_a$ on  $B\setminus \Sigma$. Then $V_a |_{B\setminus \Sigma}$ can be extended to $B$.
Because $x$ is arbitrary,  $V_a$ can be extended to the whole $X$.
\end{proof}

\medskip

\begin{proof}[Proof of Theorem \ref{thm0}]
It has been proved by Gauduchon (\cite{Gaud}) that  there must exist a Gauduchon metric $\tilde{\omega}$  in the conformal class of $\omega $. By Theorem \ref{thm:x1}, we have a  metric $H$ such that $\sqrt{-1}\Lambda_{\tilde{\omega }}(G_{H}^{\perp})=0$, and equivalently \begin{equation}\sqrt{-1}\Lambda_{\omega}(G_{H}^{\perp})=0.\end{equation} If $\partial \overline{\partial } \omega ^{n-2}=0$, from Lemma \ref{lem:x1}, we know that the metric $H$ satisfies $G_{H}^{\perp}=0$.

\end{proof}

\medskip

Taking conformal transformation, we have the following corollary.

\medskip

\begin{corollary}\label{q1}
Let $(X, \omega )$ be a compact Hermitian manifold and $(E, D)$  a simple projectively flat complex vector bundle over $X$ satisfying $\sqrt{-1}F_{D}=\alpha \otimes \Id_{E}$ for a real $(1, 1)$-form $\alpha $. Then for any real $(1,1)$-form $\zeta \in BC(E, D)$, there must exist a unique Hermitian metric $H$ on $E$ such that	
\begin{equation}
\sqrt{-1}\Lambda_{\omega}(G_{H}+\frac{\sqrt{-1}}{r}\zeta \otimes \Id_{E})=0.
\end{equation}
 If $\partial \overline{\partial } \omega ^{n-2}=0$ or $\dim^{\mathbb{C}} X=2$, then the metric $H$ satisfies
 \begin{equation}
\sqrt{-1}G_{H}=\frac{1}{r}\zeta \otimes \Id_{E}.
\end{equation}
\end{corollary}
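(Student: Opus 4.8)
The plan is to obtain the statement from Theorem \ref{thm0} by a conformal change of the Hermitian metric, making precise the remark that follows Theorem \ref{thm0}. First I would apply Theorem \ref{thm0} to produce a Hermitian metric $H_{0}$ on $(E,D)$ with $\sqrt{-1}\Lambda_{\omega}G_{H_{0}}^{\perp}=0$. Since $\tr G_{H_{0}}=\tr(\bar{\partial}_{H_{0}}\psi_{H_{0}}^{1,0})$ by (\ref{bc2}), the real $(1,1)$-form $\sqrt{-1}\tr G_{H_{0}}$ is a representative of the Bott--Chern class $BC(E,D)$; as $\zeta$ is another representative of the same class, the difference $\frac{1}{r}\zeta-\frac{\sqrt{-1}}{r}\tr G_{H_{0}}$ is $\partial\bar{\partial}$-exact, so there exists a real function $\varphi$ with $\frac{\sqrt{-1}}{r}\tr G_{H_{0}}=\frac{1}{r}\zeta+\sqrt{-1}\partial\bar{\partial}\varphi$.

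Then I would set $H=e^{-2\varphi}H_{0}$ and compute $G_{H}$ using formula (\ref{GHK01}) with $h=H_{0}^{-1}H=e^{-2\varphi}\,\Id_{E}$. Since $h$ is a scalar multiple of $\Id_{E}$ and $D_{H_{0}}^{c}\Id_{E}=0$, one gets $h^{-1}D_{H_{0}}^{c}h=-2\,d^{c}\varphi\otimes\Id_{E}$ and hence $G_{H}-G_{H_{0}}=\frac{1}{4}D(h^{-1}D_{H_{0}}^{c}h)=-\tfrac{1}{2}\,dd^{c}\varphi\otimes\Id_{E}$, a form with values in $\mathbb{C}\cdot\Id_{E}$. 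In particular the trace-free parts coincide, $G_{H}^{\perp}=G_{H_{0}}^{\perp}$, while taking traces and using (\ref{bc5}) (with $\log\det h=-2r\varphi$) gives $\sqrt{-1}\tr G_{H}=\sqrt{-1}\tr G_{H_{0}}-r\sqrt{-1}\partial\bar{\partial}\varphi=\zeta$. Therefore $\sqrt{-1}G_{H}=\sqrt{-1}G_{H}^{\perp}+\frac{1}{r}\zeta\otimes\Id_{E}$; applying $\Lambda_{\omega}$ and using $\sqrt{-1}\Lambda_{\omega}G_{H}^{\perp}=\sqrt{-1}\Lambda_{\omega}G_{H_{0}}^{\perp}=0$ yields $\sqrt{-1}\Lambda_{\omega}\big(G_{H}+\frac{\sqrt{-1}}{r}\zeta\otimes\Id_{E}\big)=0$. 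For uniqueness, if two metrics $H_{1},H_{2}$ satisfy this equation then $\Lambda_{\omega}G_{H_{1}}=\Lambda_{\omega}G_{H_{2}}$, so Lemma \ref{lemmau} gives $H_{1}=cH_{2}$ for a positive constant $c$ (which does not affect $G_{H}$), exactly as in Theorem \ref{thm:x1}.

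Finally, for the last assertion: if $\partial\bar{\partial}\omega^{n-2}=0$ then Theorem \ref{thm0} already gives $G_{H_{0}}^{\perp}=0$, hence $G_{H}^{\perp}=0$ and $\sqrt{-1}G_{H}=\frac{1}{r}\zeta\otimes\Id_{E}$; the case $\dim_{\mathbb{C}}X=2$ is subsumed here since then $\omega^{n-2}=\omega^{0}$ is constant and so $\partial\bar{\partial}\omega^{n-2}=0$ automatically. I do not anticipate a real obstacle; the only step needing a moment of care is checking that the conformal factor leaves $G_{H}^{\perp}$ unchanged, which reduces to the observation that $D(h^{-1}D_{H_{0}}^{c}h)$ is valued in $\mathbb{C}\cdot\Id_{E}$ whenever $h\in\mathbb{C}^{*}\cdot\Id_{E}$, the rest being bookkeeping with the Bott--Chern class and the trace identity (\ref{bc5}).
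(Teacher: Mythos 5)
Your proposal is correct and follows the paper's own route: the paper obtains the corollary from Theorem \ref{thm0} precisely by the conformal change $\tilde{H}=e^{-2\varphi}H$ described in the remark after Theorem \ref{thm0}, using (\ref{GHK01})--(\ref{bc5}) to see that the conformal factor shifts only the trace part, with uniqueness (up to the harmless constant scale) coming from Lemma \ref{lemmau}. Your extra observations — that $G_{H}^{\perp}$ is unchanged since $D(h^{-1}D_{H_{0}}^{c}h)$ is valued in $\mathbb{C}\cdot\Id_{E}$ for scalar $h$, and that the case $\dim^{\mathbb{C}}X=2$ is subsumed in $\partial\bar{\partial}\omega^{n-2}=0$ — are exactly the bookkeeping the paper leaves implicit.
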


\medskip

Let $(E, D)$ be an $SL(r, \mathbb{C})$-flat bundle over a compact Hermitian manifold $(X, \omega)$, i.e. the corresponding representation $\rho : \pi_{1}(X)\rightarrow GL(r, \mathbb{C})$ has image in $SL(r, \mathbb{C})$. There must exist a Hermitian metric $K$ on $(E, D)$ such that $\tr \psi_{K}=0$, and hence $\tr(\sqrt{-1}\Lambda_{\omega}G_{K})=0$ and  $BC(E, D)=0$.  Then the following corollary holds.

\medskip

\begin{corollary}\label{coro1}
	Let $(E, D)$ be a simple $SL(r, \mathbb{C})$-flat bundle over a compact Hermitian manifold $(X, \omega)$, then there must exist a unique Hermitian metric $H$ such that $\sqrt{-1}\Lambda_{\omega}G_{H}=0$. If $\partial \overline{\partial } \omega ^{n-2}=0$ or $\dim^{\mathbb{C}} X=2$, then the metric $H$ satisfies $G_{H}=0$, and there exists a Hermitian-flat Higgs bundle $(E, D_{H}^{0,1}, \psi_{H}^{1, 0})$ with $\tr \psi_{H}^{1, 0}=0$.
\end{corollary}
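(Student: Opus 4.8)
The plan is to deduce Corollary \ref{coro1} directly from Corollary \ref{q1} and Theorem \ref{thm:x1}, observing only that the $SL(r,\mathbb{C})$ condition provides the normalizations that make $\lambda=0$ and $BC(E,D)=0$. First I would construct a background metric $K$ with $\tr\psi_K=0$. Since $\rho:\pi_1(X)\to SL(r,\mathbb{C})$, the determinant line bundle $\det E$ is trivial as a flat bundle, so it carries a flat metric; choosing any Hermitian metric $K_0$ on $E$, its induced metric on $\det E$ differs from the flat one by a positive function $e^{f}$, and replacing $K_0$ by $K=e^{-2f/r}K_0$ (a conformal change) kills the trace, i.e. $\tr\psi_K=0$. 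From the decomposition $D=D_K+\psi_K$ and formula (\ref{bc2}), $\tr G_K=\tr(\bar\partial_K\psi_K^{1,0})=\bar\partial\,\tr(\psi_K^{1,0})=0$, hence $\tr(\sqrt{-1}\Lambda_\omega G_K)=0$, so the constant $\lambda=\frac{\deg_\omega(E,D)}{r\Vol(X,\omega)}$ vanishes; moreover $\sqrt{-1}\tr(\bar\partial_K\psi_K^{1,0})=0$ gives $BC(E,D)=0$.

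Next I would apply Theorem \ref{thm:x1} (after passing to a Gauduchon metric in the conformal class of $\omega$ via Gauduchon's theorem, exactly as in the proof of Theorem \ref{thm0}), which yields a unique Hermitian metric $H$ on the simple projectively flat bundle $(E,D)$ with $\sqrt{-1}\Lambda_\omega G_H=\lambda\,\Id_E=0$; uniqueness comes from Lemma \ref{lemmau}. When $\partial\bar\partial\omega^{n-2}=0$, Lemma \ref{lem:x1} upgrades $\sqrt{-1}\Lambda_\omega G_H=0$ to $G_H=0$ (here I would take $\zeta=0\in BC(E,D)$ in Corollary \ref{q1}, or argue directly). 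In the case $\dim_{\mathbb C}X=2$ the condition $\partial\bar\partial\omega^{n-2}=\partial\bar\partial\omega^0=0$ is automatic, so the same conclusion holds; this is the content of the parenthetical "$\dim^{\mathbb C}X=2$" in the statement.

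Finally, once $G_H=0$ I would read off the Higgs bundle structure: by definition (\ref{de2}), $G_H=(D_H'')^2=\bar\partial_H^2+\bar\partial_H\psi_H^{1,0}+\psi_H^{1,0}\wedge\psi_H^{1,0}$, so $G_H=0$ together with the self-adjointness/anti-self-adjointness splitting in (\ref{dd:1}) forces $\bar\partial_H^2=0$, $\bar\partial_H\psi_H^{1,0}=0$ and $\psi_H^{1,0}\wedge\psi_H^{1,0}=0$. Thus $(E,\bar\partial_E:=D_H^{0,1},\theta:=\psi_H^{1,0})$ is a Higgs bundle, and from (\ref{dd:1}) one also gets $\sqrt{-1}(F_H+[\theta,\theta^{*H}])=\alpha\otimes\Id_E=0$ (since $\alpha=0$ when $\deg$ and $c_1$ vanish in the $SL$ case — more precisely $\sqrt{-1}F_D=\alpha\otimes\Id_E$ with $\tr\psi_K=0$ forces the harmonic representative $\alpha=0$), so $H$ is Hermitian-flat; and $\tr\theta=\tr\psi_H^{1,0}=\tr\psi_K^{1,0}-\frac12\partial\log\det h$, which I would check stays zero, e.g. by noting $\det h\equiv1$ as in Proposition \ref{p01} or by the normalization of $K$. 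The only genuinely nontrivial input is Theorem \ref{thm:x1}, already proved; the main obstacle in writing this corollary is simply bookkeeping — verifying the $SL$ normalization really does give $\lambda=0$ and $\tr\psi_H^{1,0}=0$, and correctly invoking the $n=2$ case where the astheno-K\"ahler hypothesis is vacuous.
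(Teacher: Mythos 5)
Your proposal is correct and follows essentially the same route as the paper, which simply constructs a background metric $K$ with $\tr\psi_{K}=0$ (hence $\tr(\sqrt{-1}\Lambda_{\omega}G_{K})=0$, $\lambda=0$ and $BC(E,D)=0$) and then invokes Theorem \ref{thm:x1} and Corollary \ref{q1}; you in fact supply more detail than the paper does, e.g.\ the explicit conformal normalization via the flat metric on $\det E$ and the verification that $\det(K^{-1}H)$ is constant so $\tr\psi_{H}^{1,0}=0$. The only cosmetic slip is the remark about the ``harmonic representative'' of $\alpha$: since $D$ is flat here, $F_{D}=0$ and $\alpha=0$ outright, so no such argument is needed.
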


\medskip

\section{Characteristic classes of projectively flat bundle}

Let $(E, D)$ be a projectively flat vector bundle over a compact complex  manifold $X$ with $\sqrt{-1}F_{D}=\alpha \otimes \Id_{E}$, where $\alpha $ is a real $(1, 1)$-form.  Letting $H$ be a Hermitian metric on the projectively flat bundle $(E, D)$, we have the unique decomposition $D=D_{H}+\psi_{D, H}$. On the other hand, one can define $D^{\ast H}$ by
\begin{equation}
dH(X, Y)=H(DX, Y)+H(X, D^{\ast H}Y)
\end{equation}
for any $X, Y \in \Gamma(E)$, and then
\begin{equation}\label{ch1}
\psi_{D, H}=\frac{1}{2}(D-D^{\ast H}).
\end{equation}
Setting
\begin{equation}
v_{2j+1}(E, D, H)=(2\pi \sqrt{-1})^{-j}\tr \psi_{D, H}^{2j+1},
\end{equation}
and recalling that $\psi_{D,H}$ is self-adjoint and (\ref{co1}), we know that $v_{2j+1}(E, D, H)$ is a closed real form.

For any two Hermitian metrics $H$ and $K$ on $E$, we choose  a smooth path $H_{t}$ connecting $K$ and $H$ with $H_0=K$ and $H_1=H$, let $h_{t}=K^{-1}H_{t}$. From (\ref{ex2}), it follows that
\begin{equation}\label{ex03}
\psi_{D, H_{t}}=\frac{1}{2}h_{t}^{-1}\circ \psi_{D, K}\circ h_{t} +\frac{1}{2}\psi_{D,K}+\frac{1}{2}(D_{K}-h_{t}^{-1}\circ D_{K} \circ h_{t}),
\end{equation}
and
\begin{equation}\label{ex04}
\frac{\partial }{\partial t}\psi_{D, H_{t}}=\frac{1}{2}\{\psi_{D, H_{t}} \circ h_{t}^{-1}\frac{\partial h_{t}}{\partial t}-h_{t}^{-1}\frac{\partial h_{t}}{\partial t}\circ \psi_{D, H_{t}} -D_{H_{t}}(h_{t}^{-1}\frac{\partial h_{t}}{\partial t})\}.
\end{equation}
By direct calculation, we obtain
\begin{equation}
\begin{split}
&\frac{\partial }{\partial t} v_{2j+1}(E, D, H_{t})=(2\pi \sqrt{-1})^{-j}\tr (\frac{\partial }{\partial t}\psi_{D, H_{t}}^{2j+1})\\
=& (2j+1)(2\pi \sqrt{-1})^{-j}\tr (\frac{\partial \psi_{D, H_{t}}}{\partial t}\wedge \psi_{D, H_{t}}^{2j})\\
=& -\frac{2j+1}{2}(2\pi \sqrt{-1})^{-j}\tr (D_{H_{t}}(h_{t}^{-1}\frac{\partial h_{t}}{\partial t})\wedge \psi_{D, H_{t}}^{2j})\\
=& -\frac{2j+1}{2}(2\pi \sqrt{-1})^{-j}d\tr (h_{t}^{-1}\frac{\partial h_{t}}{\partial t}\wedge \psi_{D, H_{t}}^{2j}),\\
\end{split}
\end{equation}
\begin{equation}\label{ccc1}
v_{2j+1}(E, D, H)-v_{2j+1}(E, D, K)=
 -\frac{2j+1}{2}(2\pi \sqrt{-1})^{-j}d\int_{0}^{1}\tr (h_{t}^{-1}\frac{\partial h_{t}}{\partial t}\wedge \psi_{D, H_{t}}^{2j})dt
\end{equation}
and the class $[v_{2j+1}(E, D, H)]$ is independent of the choice of Hermitian metrics. So we define the following class:
\begin{equation}
v_{2j+1}(E, D)=[(2\pi \sqrt{-1})^{-j}\tr \psi_{H}^{2j+1}]
\end{equation}
for $j\geq 0$ .

Let $f:(\tilde{E}, \tilde{D})\rightarrow (E, D)$ be an isomorphism, i.e. $\tilde{D}=f^{-1}\circ D \circ f$. Given a Hermitian metric on $E$, we set
a Hermitian metric $\tilde{H}$ on $\tilde{E}$ by
\begin{equation}
\tilde{H}(\cdot , \cdot )=H (f(\cdot ) , f(\cdot ) ).
\end{equation}
According to the definition, we have
\begin{equation}
\begin{split}
&\tilde{H}(\psi_{\tilde{D}, \tilde{H}}(X) , Y )=\frac{1}{2}\{\tilde{H}(\tilde{D}X , Y )+\tilde{H}(X, \tilde{D}Y )-d\tilde{H}(X , Y )\}\\
=& \frac{1}{2}\{H(D\circ f (X) , f(Y) )+H(f(X), D\circ f(Y) )-dH(f(X) , f(Y) )\}\\
=& H(\psi_{D, H}\circ f(X) , f(Y) )\\
=& \tilde{H}(f^{-1}\circ \psi_{D, H}\circ f(X) , Y ),
\end{split}
\end{equation}
i.e.
\begin{equation}
\psi_{\tilde{D}, \tilde{H}}= f^{-1}\circ \psi_{D, H}\circ f,
\end{equation}
and then
\begin{equation}\label{ccc2}
v_{2j+1}(E, D, H)=v_{2j+1}(\tilde{E}, \tilde{D}, \tilde{H})
\end{equation}
for $j\geq 0$ .

\begin{proof}[Proof of Theorem \ref{thm:2-4}]
If the projectively flat bundle $(E,D)$ is simple and $\omega$ is astheno-K\"ahler, by Theorem \ref{thm0}, there is a Hermitian metric H such that $G_{H}^{\perp}=0$, i.e.
\begin{equation}
(D_{H}^{0, 1})^{2}=0, \quad D_{H}^{0, 1}\psi_{D, H}^{1,0}=\frac{1}{r}\bar{\partial}\tr \psi_{D, H}^{1,0}, \quad and \quad \psi_{D, H}^{1,0} \wedge \psi_{D, H}^{1,0}=0.
\end{equation}
Then
\begin{equation}\label{ch2}
\begin{split}
&v_{2j+1}(E, D, H)=(2\pi \sqrt{-1})^{-j}\tr (\psi_{D, H}^{1,0}+\psi_{D, H}^{0,1})^{2j+1}\\
=& (2\pi \sqrt{-1})^{-j}\tr \{ ((\psi_{D, H}^{1,0}\wedge \psi_{D, H}^{0,1})^{j}+(\psi_{D, H}^{0,1}\wedge \psi_{D, H}^{1,0})^{j})\wedge (\psi_{D, H}^{1,0}+\psi_{D, H}^{0,1})\}\\
=& 0.
\end{split}
\end{equation}

When $(E, D)$ is not simple, we choose a $D$-invariant subbundle $S$ of minimal rank and we have  the following exact sequence of bundles:
\begin{equation}
0\rightarrow S\rightarrow E\rightarrow Q\rightarrow 0.
\end{equation}
Take  a bundle isomorphism $f:S\oplus Q\rightarrow E$, then the pull-back of $D$ can be expressed as
\begin{equation}
f^{*}(D)=\left(\begin{split}
  &D_{S}  &\beta \\
  &0  &D_{Q}
\end{split}\right),
\end{equation}
and one can check that
\begin{equation}\label{ccc3}
\sqrt{-1}D_{S}^{2}=\alpha \otimes \Id_{S}, \quad \sqrt{-1}D_{Q}^{2}=\alpha \otimes \Id_{Q}, \quad D_{Q^{*}\otimes S}\beta=0.\end{equation}
Since $S$ is of minimal rank, it must be simple and then
\begin{equation}
v_{2j+1}(S, D_{S})=0.
\end{equation}
In the following, denote
\begin{equation}
\tilde{E}=S\oplus Q, \quad \tilde{D}=f^{*}(D), \quad H=(f^{-1})^{\ast}\tilde{H}, \quad  \tilde{H}=\left(\begin{split}
  &H_{S}  &0 \\
  &0  &H_{Q}
\end{split}\right),
\end{equation}
where $H_{S}$  and $H_{Q}$ are Hermitian metrics on S and $Q$.
It is easy to get that
\begin{equation}
\tilde{D}^{\ast \tilde{H}}=\left(\begin{split}
  &D_{S}^{\ast H_{S}}  &0 \\
  &-\beta^{\ast \tilde{H}}  &D_{Q}^{\ast H_{Q}}
\end{split}\right),
\end{equation}
and
\begin{equation}
\psi_{\tilde{D}, \tilde{H}}=\frac{1}{2}(\tilde{D}-\tilde{D}^{\ast \tilde{H}})=\left(\begin{split}
  &\psi_{D_{S}, H_{S}}  &\frac{1}{2}\beta \\
  &\frac{1}{2}\beta^{\ast \tilde{H}}  &\psi_{D_{Q}, H_{Q}}
\end{split}\right).
\end{equation}
Setting
\begin{equation}
\tilde{H}_{t}=\left(\begin{split}
  &tH_{S}  &0 \\
  &0  &\frac{1}{t}H_{Q}
\end{split}\right),
\end{equation}
one can find that
\begin{equation}
\beta^{\ast \tilde{H}_{t}}=t^{2}\beta^{\ast \tilde{H}}
\end{equation}
and
\begin{equation}
\psi_{\tilde{D}, \tilde{H}_{t}}=\left(\begin{split}
  &\psi_{D_{S}, H_{S}}  &\frac{1}{2}\beta \\
  &\frac{1}{2}t^{2}\beta^{\ast \tilde{H}}  &\psi_{D_{Q}, H_{Q}}
\end{split}\right).
\end{equation}
For simplicity, we denote:
\begin{equation}
\Xi =\left(\begin{split}
  &\psi_{D_{S}, H_{S}}  &0 \\
  &0  &\psi_{D_{Q}, H_{Q}}
\end{split}\right),
\end{equation}
\begin{equation}
M_{t}=\left(\begin{split}
  &0  &\frac{1}{2}\beta \\
  &\frac{1}{2}t^{2}\beta^{\ast \tilde{H}}  &0
\end{split}\right)=\left(\begin{split}
  &1  &0 \\
  &0  &t^{2}
\end{split}\right)\left(\begin{split}
  &0  &\frac{1}{2}\beta \\
  &\frac{1}{2}\beta^{\ast \tilde{H}}  &0
\end{split}\right)=\left(\begin{split}
  &0  &\frac{1}{2}\beta \\
  &\frac{1}{2}\beta^{\ast \tilde{H}}  &0
\end{split}\right)\left(\begin{split}
  &t^{2}  &0 \\
  &0  &1
\end{split}\right),
\end{equation}
\begin{equation}
\Psi _{2j+1, 2k, t}=\sum_{\alpha_{1}+\cdots +\alpha_{2k+1}=2j-2k+1}\Xi^{\alpha_{1}}M_{t}\cdots \Xi^{\alpha_{2k}}M_{t}\cdot \Xi^{\alpha_{2k+1}}
\end{equation}
and
\begin{equation}
\Psi _{2j+1, 2k+1, t}=\sum_{\alpha_{1}+\cdots +\alpha_{2k+2}=2j-2k}\Xi^{\alpha_{1}}M_{t}\cdots \Xi^{\alpha_{2k+1}}M_{t}\cdot \Xi^{\alpha_{2k+2}},
\end{equation}
where $\alpha_{i}$ is a nonnegative integer.
It is not hard to observe that
\begin{equation}
\Xi^{\alpha_{i}}M_{t}\cdot \Xi^{\alpha_{i+1}}M_{t}=t^{2}\Xi^{\alpha_{i}}M_{1}\cdot \Xi^{\alpha_{i+1}}M_{1},
\end{equation}
\begin{equation}
\Psi _{2j+1, 2k, t}=t^{2k}\Psi _{2j+1, 2k, 1},
\end{equation}
and
\begin{equation}
\tr (\Psi _{2j+1, 2k+1, t})=0.
\end{equation}
Then
\begin{equation}
\psi_{\tilde{D}, \tilde{H}_{t}}^{2j+1}=\sum_{k=0}^{j}\Psi _{2j+1, 2k, t}+\sum_{k=0}^{j}\Psi _{2j+1, 2k+1, t},
\end{equation}
and
\begin{equation}
\tr \{\psi_{\tilde{D}, \tilde{H}_{t}}^{2j+1}\}=\tr \{\psi_{D_{s}, H_{s}}^{2j+1}\}+\tr \{\psi_{D_{Q}, H_{Q}}^{2j+1}\}+\sum_{k=1}^{j}t^{2k}\tr \{\Psi _{2j+1, 2k, 1}\}.
\end{equation}
By (\ref{ccc1}) and (\ref{ccc2}), we know that
\begin{equation}
[\tr \{\psi_{D, H}^{2j+1}\}]=[\tr \{\psi_{\tilde{D}, \tilde{H}_{t}}^{2j+1}\}]=[\tr \{\psi_{\tilde{D}, \tilde{H}}^{2j+1}\}],
\end{equation}
and
\begin{equation}
\sum_{k=1}^{j}(t^{2k}-1)[\tr \{\Psi _{2j+1, 2k, 1}\}]=0
\end{equation}
for any $t>0$. This implies
\begin{equation}
[\tr \{\Psi _{2j+1, 2k, 1}\}]=0
\end{equation}
for all $1\leq k\leq j$, then
\begin{equation}
[\tr \{\psi_{D, H}^{2j+1}\}]=[\tr \{\psi_{D_{s}, H_{s}}^{2j+1}\}]+[\tr \{\psi_{D_{Q}, H_{Q}}^{2j+1}\}],
\end{equation}
i.e.
\begin{equation}
v_{2j+1}(E, D)=v_{2j+1}(S, D_{S})+v_{2j+1}(Q, D_{Q})=v_{2j+1}(Q, D_{Q})
\end{equation}
for $j\geq 0$ . Using (\ref{ccc3}),  we can prove the vanishing of $v_{2j+1}(E, D)$  by induction on the rank.
\end{proof}

\section{Proof of Theorem \ref{thm:2-1}}

Let $(X,\omega)$ be a compact Hermitian manifold of dimension $n$ and  suppose that $\partial\bar{\partial}\omega^{n-1}=0$ and $\partial\bar{\partial}\omega^{n-2}=0$. Let $(E,\bar{\partial}_{E},\theta)$ be a rank $r$ Higgs bundle over $X$, $H$ be a Hermitian metric on $E$. Define the Hitchin-Simpson connection by $D_{H,\theta}=D_{H}+\theta+\theta^{*H}$, where $D_{H}$ is the Chern connection and $\theta^{*H}$ is the adjoint of $\theta$ with respect to $H$. The related curvature of $D_{H,\theta}$ is
\begin{equation}
F_{H,\theta}=F_{H}+[\theta,\theta^{*H}]+\partial_{H}\theta+\bar{\partial}_{E}\theta^{*H}.
\end{equation}
The Chern character forms $ch_{k}(E,\bar{\partial}_{E},H)\in\mathcal{A}^{k,k}(X)$ are defined by
\begin{equation}
ch_{k}(E,\bar{\partial}_{E},H)=\frac{1}{k!}\tr((\frac{\sqrt{-1}}{2\pi}F_{H})^{k}).
\end{equation}
Donaldson (\cite[Proposition 6]{Don2}) proved that, given two metrics $H_{1}$ and $H_{2}$ on $E$,  there exists $R_{k-1}(H_1,H_2)\in A^{k-1,k-1}(X)$ such that
\begin{equation}
ch_k(E,\bar{\partial}_{E}, H_1)-ch_k(E,\bar{\partial}_{E}, H_2)=\sqrt{-1}\bar{\partial}\partial R_{k-1}(H_1, H_2),
\end{equation} for every $1\leq k\leq \min{\{r,n\}}$. This means that every $ch_{k}(E,\bar{\partial}_{E},H)$ determines a Bott-Chern cohomology class $ch_{k}(E,\bar{\partial}_{E})\in H_{BC}^{k, k}(X)$.

Since $\partial\bar{\partial}\omega^{n-1}=\partial\bar{\partial}\omega^{n-2}=0$, the following  first and second Chern numbers are well defined,
\begin{equation}
ch_{1}(E,\bar{\partial}_{E})[\omega^{n-1}]=\int_{X}ch_{1}(E,\bar{\partial}_{E},H)\wedge\frac{\omega^{n-1}}{(n-1)!}
\end{equation}
and
\begin{equation}
ch_{2}(E,\bar{\partial}_{E})[\omega^{n-2}]=\int_{X}ch_{2}(E,\bar{\partial}_{E},H)\wedge\frac{\omega^{n-2}}{(n-2)!}.
\end{equation}

By the direct calculations, we have
\begin{equation}
ch_{1}(E,\bar{\partial}_{E})[\omega^{n-1}]=\int_{X}\frac{\sqrt{-1}}{2\pi}\tr(F_{H,\theta})\wedge\frac{\omega^{n-1}}{(n-1)!},
\end{equation}
\begin{equation}
\begin{split}
&\frac{-1}{8\pi^{2}}\int_{X}\tr(F_{H,\theta}\wedge F_{H,\theta})\wedge\frac{\omega^{n-2}}{(n-2)!}\\
=&\frac{-1}{8\pi^{2}}\int_{X}\{\tr(F_{H}\wedge F_{H})+2\tr(F_{H}\wedge[\theta,\theta^{*H}])\\
&+\tr([\theta,\theta^{*H}]\wedge[\theta,\theta^{*H}])+2\tr(\partial_{H}\theta\wedge\bar{\partial}_{E}\theta^{*H})\}\wedge\frac{\omega^{n-2}}{(n-2)!}\\
=&\frac{-1}{8\pi^{2}}\int_{X}\{\tr(F_{H}\wedge F_{H})+2\tr(F_{H}\wedge[\theta,\theta^{*H}])\\
&-2\partial\bar{\partial}\tr(\theta\wedge\theta^{*H})+2\tr(\theta\wedge[F_{H},\theta^{*H}])\}\wedge\frac{\omega^{n-2}}{(n-2)!}\\
=&ch_{2}(E,\bar{\partial}_{E})[\omega^{n-2}],
\end{split}
\end{equation}
and
\begin{equation}\label{Bo1}
\begin{split}
&8\pi^{2}(c_{2}(E,\bar{\partial}_{E})-\frac{r-1}{2r}c_{1}^{2}(E,\bar{\partial}_{E}))\cdot [\omega^{n-2}]\\
= &\int_{X}(2|\partial_{H}\theta |^{2}+|(F_{H,\theta}^{1,1})^{\perp}|^{2}-|\Lambda_{\omega}(F_{H,\theta}^{1,1})^{\perp}|^{2})\frac{\omega^{n}}{n!},
\end{split}
\end{equation}
where $(F_{H,\theta}^{1,1})^{\perp}$ is the trace free part of $F_{H,\theta}^{1,1}$.

For any torsion free coherent Higgs sheaf $(\mathcal{F},\theta_{\mathcal{F}})$, define the $\omega$-degree and $\omega$-slope by
\begin{equation}
\deg_{\omega}(\mathcal{F})=\int_{X}c_{1}(\det{\mathcal{F}})\wedge\frac{\omega^{n-1}}{(n-1)!}
\end{equation}
and
\begin{equation}
\mu_{\omega}(\mathcal{F})=\frac{\deg_{\omega}(\mathcal{F})}{\mbox{rank}(\mathcal{F})}.
\end{equation}
We call $(E,\bar{\partial}_{E},\theta)$ is stable (semi-stable) if $\mu_{\omega}(\mathcal{F})<(\leq)\mu_{\omega}(E)$ for every proper coherent Higgs subsheaf $\mathcal{F}\subset E$. A Higgs bundle $(E,\bar{\partial}_{E},\theta)$ is said to be admitting an approximate Hermitian-Einstein structure, if for every $\delta >0$, there exists a Hermitian metric $H_{\delta}$ such that
\begin{equation}
\sup_X|\sqrt{-1}\Lambda_{\omega }(F_{H_{\delta}}+[\theta,\theta^{*{H_{\delta}}}])-\lambda \cdot \textmd{Id}_E|_{H_{\delta}}<\delta.
\end{equation}

Let's consider the following perturbed equation on $(X, \omega)$:
\begin{equation} \label{eq}
\sqrt{-1}\Lambda_{\omega } (F_{H}+[\theta,\theta^{*H}])-\lambda \cdot \textmd{Id}_E+\epsilon \log (K^{-1}H)=0,
\end{equation}
where $K$ is a fixed Hermitian metric on $E$.
Making use of the continuous method in \cite{UhYau} and applying the Fredholmness of the elliptic operators,  one can see that the above perturbed equation can be solved for any $\epsilon \in (0, 1]$. Letting $H_{\epsilon}$ be the solution of the equation (\ref{eq}), we can conclude that (\cite[Lemma 2.2]{NZ})
\begin{equation}
\frac{1}{2}\sqrt{-1}\Lambda_{\omega }\partial \overline{\partial }(|\log (K^{-1}H_{\epsilon})|^2_{K})\geq \epsilon |\log (K^{-1}H_{\epsilon})|^2_{K}-|\sqrt{-1}\Lambda_{\omega }F_{K, \theta}-\lambda \Id_{E}|_{K}|\log (K^{-1}H_{\epsilon})|_{K},
\end{equation}
and then
\begin{equation}\label{t2}
\sup_{X} \epsilon |\log (K^{-1}H_{\epsilon})|_{K}\leq \sup_{X}|\sqrt{-1}\Lambda_{\omega }F_{K, \theta}-\lambda \Id_{E}|_{K}.
\end{equation}
By conformal transformation, we can assume that the background metric $K$ satisfies
\begin{equation}
\tr (\sqrt{-1}\Lambda_{\omega } (F_{K}+[\theta,\theta^{*K}])-\lambda \cdot \textmd{Id}_E)=0.
\end{equation}
Then, the equation (\ref{eq}) and the maximum principle imply
\begin{equation}\label{det1}
\det (K^{-1}H_{\epsilon })=1
\end{equation}
for every $\epsilon \in (0, 1]$.

If the Higgs bundle $(E,\bar{\partial}_{E},\theta)$ is semi-stable, we have (\cite[Theorem 3.2]{NZ})
\begin{equation}
\sup_{X} \epsilon |\log (K^{-1}H_{\epsilon})|_{K}=\sup_{X} \epsilon |\log (K^{-1}H_{\epsilon})|_{H_{\epsilon}} \rightarrow 0,
\end{equation}
as $\epsilon \rightarrow 0$. Furthermore, we have the following theorem:

\medskip

\begin{theorem}[\cite{NZ}]\label{thm:3}
Let $(X,\omega)$ be a compact Hermitian manifold of dimension $n$ satisfying $\partial\bar{\partial}\omega^{n-1}=0$, and $(E,\bar{\partial}_{E},\theta)$ be a Higgs bundle over $X$. Then $(E,\bar{\partial}_{E},\theta)$ is semi-stable if and only if it admits an approximate Hermitian-Einstein structure.
\end{theorem}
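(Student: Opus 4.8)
The plan is to prove the two implications of the equivalence separately. Throughout, after a conformal change I may and do assume the fixed background metric $K$ satisfies $\tr\big(\sqrt{-1}\Lambda_{\omega}(F_{K}+[\theta,\theta^{*K}])-\lambda\,\Id_{E}\big)=0$, which is possible because $\partial\bar{\partial}\omega^{n-1}=0$. For the direction \emph{semi-stable $\Rightarrow$ approximate Hermitian-Einstein}, I would work with the perturbed equation (\ref{eq}): for each $\epsilon\in(0,1]$ it has a unique smooth solution $H_{\epsilon}$ (solvable by the continuity method of \cite{UhYau} together with the Fredholmness of the elliptic linearization, the $C^{0}$-bound being (\ref{t2}) via the maximum principle and the higher-order bounds coming from \cite[Lemma~19]{Don2}), and the normalization of $K$ forces $\det(K^{-1}H_{\epsilon})=1$ as in (\ref{det1}). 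Rewriting (\ref{eq}) gives
\[
\sup_{X}\big|\sqrt{-1}\Lambda_{\omega}(F_{H_{\epsilon}}+[\theta,\theta^{*H_{\epsilon}}])-\lambda\,\Id_{E}\big|_{H_{\epsilon}}=\sup_{X}\epsilon\,|\log(K^{-1}H_{\epsilon})|_{H_{\epsilon}}=\sup_{X}\epsilon\,|\log(K^{-1}H_{\epsilon})|_{K}=:\gamma(\epsilon),
\]
so once one knows $\gamma(\epsilon)\to0$ as $\epsilon\to0$ (this is \cite[Theorem~3.2]{NZ}), for any $\delta>0$ the metric $H_{\epsilon}$ with $\epsilon$ small enough realizes the $\delta$-approximate Hermitian-Einstein condition, and this implication is finished.

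The actual content of that implication — and the step I expect to be the main obstacle — is exactly the convergence $\gamma(\epsilon)\to0$, which I would prove by contradiction in the spirit of Simpson. Since (\ref{t2}) only bounds $\gamma$, failure of $\gamma(\epsilon)\to0$ yields a sequence with $\|s_{\epsilon_{j}}\|_{L^{2}}\to\infty$, where $s_{\epsilon}:=\log(K^{-1}H_{\epsilon})$; normalize $u_{j}:=\|s_{\epsilon_{j}}\|_{L^{2}}^{-1}s_{\epsilon_{j}}$. Pairing (\ref{eq}) with $s_{\epsilon}$ and integrating produces an identity of the type in Proposition~\ref{d1:3} in which the quadratic term in $\bar{\partial}_{E}s+[\theta,s]$ is nonnegative; this forces a uniform $L^{2}_{1}$ bound on the $u_{j}$, hence a weak $L^{2}_{1}$ limit $u_{\infty}$ which is nonzero, trace-free and bounded, and whose eigenvalues are (a.e.) constant by Simpson's argument (\cite[Lemmas~5.5 and 5.6]{S1}). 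The spectral projections $\pi$ of $u_{\infty}$ are then $L^{2}_{1}$ self-adjoint idempotents with $(\Id_{E}-\pi)\bar{\partial}_{E}\pi=0$ and, because the whole construction is carried out $\theta$-equivariantly, $(\Id_{E}-\pi)\theta\pi=0$. By the Uhlenbeck-Yau regularity theorem (\cite{UhYau}) each such weak subbundle underlies a coherent $\theta$-invariant subsheaf, and a degree computation shows that at least one of them destabilizes $(E,\bar{\partial}_{E},\theta)$, contradicting semi-stability. The features specific to the present setting are that this runs on a merely Gauduchon manifold, where $\partial\bar{\partial}\omega^{n-1}=0$ is precisely the hypothesis guaranteeing that the degrees entering the contradiction are well-defined, and that the Higgs field must be transported faithfully through the weak limit.

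For the direction \emph{approximate Hermitian-Einstein $\Rightarrow$ semi-stable}, let $\{H_{\delta}\}_{\delta>0}$ be an approximate Hermitian-Einstein structure and let $\mathcal{F}\subset E$ be a proper coherent $\theta$-invariant subsheaf; I must show $\mu_{\omega}(\mathcal{F})\le\mu_{\omega}(E)$. By Uhlenbeck-Yau (\cite{UhYau}), $\mathcal{F}$ is realized, off an analytic subset of codimension at least two, by an $L^{2}_{1}$ orthogonal projection $\pi=\pi^{*}=\pi^{2}$ with $(\Id_{E}-\pi)\bar{\partial}_{E}\pi=0$ and, by $\theta$-invariance, $(\Id_{E}-\pi)\theta\pi=0$. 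Since $\partial\bar{\partial}\omega^{n-1}=0$, the Chern-Weil identity for such singular subsheaves — the Higgs analogue of Simpson's bundle computation — reads, with the metric $H_{\delta}$,
\[
\deg_{\omega}(\mathcal{F})=\frac{1}{2\pi}\int_{X}\tr\big(\pi\,\sqrt{-1}\Lambda_{\omega}(F_{H_{\delta}}+[\theta,\theta^{*H_{\delta}}])\big)\frac{\omega^{n}}{n!}-\frac{1}{2\pi}\big(\|\bar{\partial}_{E}\pi\|^{2}_{L^{2},H_{\delta}}+\|[\theta,\pi]\|^{2}_{L^{2},H_{\delta}}\big).
\]
Discarding the two nonnegative correction terms, using $\tr\pi=\rank(\mathcal{F})$ a.e. together with $|\tr(\pi A)|\le\rank(\mathcal{F})\,|A|_{H_{\delta}}$ pointwise, and writing $\sqrt{-1}\Lambda_{\omega}(F_{H_{\delta}}+[\theta,\theta^{*H_{\delta}}])$ as $\lambda\,\Id_{E}$ plus a term bounded by $\delta$ in $H_{\delta}$-norm, one gets $\deg_{\omega}(\mathcal{F})\le\frac{1}{2\pi}\rank(\mathcal{F})\,\Vol(X,\omega)\,(\lambda+\delta)$; since $\frac{\lambda}{2\pi}\Vol(X,\omega)=\mu_{\omega}(E)$ by the choice of $\lambda$, letting $\delta\to0$ gives $\mu_{\omega}(\mathcal{F})\le\mu_{\omega}(E)$, so $(E,\bar{\partial}_{E},\theta)$ is semi-stable. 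The only delicate point here is the validity of the Chern-Weil identity for a genuinely singular coherent subsheaf, which goes through as in the bundle case once $\partial\bar{\partial}\omega^{n-1}=0$ is in force.
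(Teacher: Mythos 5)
Your proposal is correct and follows essentially the same route as the source: the paper does not reprove this statement but quotes it from \cite{NZ}, where the forward direction is exactly the perturbed equation (\ref{eq}) plus the Simpson-style blow-up/weak-limit contradiction with $\theta$-invariant $L^2_1$ spectral projections, and the converse is the Chern--Weil degree estimate for saturated Higgs subsheaves under the Gauduchon condition. The only places you compress are the final destabilization inequality in the contradiction argument (Simpson's weighted sum over the eigenvalue gaps) and the degree formula for genuinely singular subsheaves, both of which you correctly identify and which go through as in \cite{S1} and \cite{NZ}.
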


\medskip

For any Hermitian metric $H$ on $E$, applying Lemma 2.7 in \cite{S2}, we obtain
\begin{equation}\label{eqn:2}
|\sqrt{-1}\Lambda_\omega[\theta, \theta^{\ast H}]|_{H}=|[\theta , \theta^{\ast H}]|_{H, \omega}\geq a_1|\theta |_{H, \omega}^2-a_2|\theta |_{K, \omega}^2,
\end{equation}
where $a_1$ and $a_2$ are positive constants depending only on $r$ and $n$. By choosing  complex coordinates $\{z^{1}, \cdots , z^{n}\}$ at the considered point, we deduce:
\begin{equation}\label{t1}
\begin{split}
\sqrt{-1}\Lambda_{\omega }\partial \bar{\partial }|\theta |_{H_{\epsilon}, \omega }^{2} =&|\nabla \theta |_{H_{\epsilon}, \omega }^{2} +Re \langle  g^{\alpha \bar{\beta}}(\nabla_{\bar{\partial}_{\beta }}\nabla_{\partial_{\alpha } }-\nabla_{\partial_{\alpha } }\nabla_{\bar{\partial}_{\beta }})\theta  , \theta\rangle \\
&+ 2Re \langle  g^{\alpha \bar{\beta}}\nabla_{\partial_{\alpha } }\nabla_{\bar{\partial}_{\beta }} \theta  , \theta \rangle \\
=& Re\langle-[\sqrt{-1}\Lambda_{\omega}F_{H_{\epsilon}},\theta],\theta\rangle-Re\langle\theta\sharp R,\theta\rangle +|\nabla\theta|_{H_{\epsilon},\omega }^{2}\\
& + 2Re \langle  g^{\alpha \bar{\beta}}\nabla_{\partial_{\alpha } }\nabla_{\overline{\partial}_{\beta }} \theta  , \theta
\rangle \\
\geq & Re \langle [\sqrt{-1}\Lambda_{\omega}[\theta , \theta^{\ast H_{\epsilon }}],\theta ] , \theta \rangle+Re \langle [\epsilon \log (K^{-1}H_{\epsilon}),\theta ] , \theta \rangle\\
&+|\nabla\theta |_{H_{\epsilon}, \omega }^{2}-\check{C}_{1}|\theta |_{H_{\epsilon}, \omega }^{2}-\check{C}_{2}|\nabla\theta |_{H_{\epsilon}, \omega }|\theta |_{H_{\epsilon}, \omega }, \\
\end{split}
\end{equation}
where we have used that $H_{\epsilon}$ satisfies the perturbed equation (\ref{eq}) and $\overline{\partial}_{E}\theta =0$, $\check{C}_{1}$ and $\check{C}_{2}$ are positive constants depending only on the geometry of $(X, \omega )$. Of course (\ref{eqn:2}) implies:
\begin{equation}\label{eqn:3}
\begin{split}
Re \langle [\sqrt{-1}\Lambda_{\omega}[\theta , \theta^{\ast H_{\epsilon }}],
\theta ] , \theta \rangle &=
|\sqrt{-1}\Lambda_\omega[\theta, \theta^{\ast H_{\epsilon}}]|_{H_{\epsilon}, \omega }^{2}\\
&\geq  \frac{a_1^2}{2}|\theta |_{H_{\epsilon }, \omega}^4-a_2^2|\theta |_{K, \omega}^4.
\end{split}
\end{equation}
Combining this and (\ref{t1}), (\ref{t2}) gives us that
\begin{equation}\label{t3}
\sqrt{-1}\Lambda_{\omega }\partial \overline{\partial }|\theta |_{H_{\epsilon}, \omega }^{2}
\geq \frac{1}{2}|\nabla\theta |_{H_{\epsilon}, \omega }^{2}+\frac{a_1^2}{2}|\theta |_{H_{\epsilon }, \omega}^4-\check{C}_{3}|\theta |_{H_{\epsilon}, \omega }^{2}-a_2^2|\theta |_{K, \omega}^4,
\end{equation}
where $\check{C}_{3}$ is a positive constant depending only on $\sup_{X}|\Lambda_{\omega }F_{K, \theta}|_{K}$ and the geometry of $(X, \omega )$. Then the maximum principle means:
\begin{equation}\label{t4}
\sup_{X}|\theta |_{H_{\epsilon}, \omega }^{2}
\leq \check{C}_{4},
\end{equation}
where $\check{C}_{4}$ is a positive constant depending only on $\sup_{X}|\theta |_{K, \omega}$, $\sup_{X}|\Lambda_{\omega }F_{K, \theta}|_{K}$ and the geometry of $(X, \omega )$.

For a Higgs bundle $(\tilde{E},\bar{\partial}_{\tilde{E}},\tilde{\theta })$ and a Hermitian metric $\tilde{H}$ on $\tilde{E}$, set $D_{\tilde{E}}^{''}=\bar{\partial}_{\tilde{E}}+\tilde{\theta }$ and $D_{\tilde{H}}^{'}=\partial_{\tilde{H}}+\tilde{\theta }^{*\tilde{H}}$, sometimes we omit the subscript for simplicity. Then $(D_{\tilde{E}}^{''})^{2}=0$ and $(D_{\tilde{H}}^{'})^{2}=0$. The Dolbeault cohomology groups are defined by
\begin{equation}
H^{i}_{Dol}(\tilde{E})=\frac{\mbox{Ker} (D_{\tilde{E}}^{''}:\mathcal{A}^{i}(\tilde{E})\rightarrow\mathcal{A}^{i+1}(\tilde{E}))}{\mbox{Im} (D_{\tilde{E}}^{''}:\mathcal{A}^{i-1}(\tilde{E})\rightarrow\mathcal{A}^{i}(\tilde{E}))}.
\end{equation}
For a flat bundle $(V,D)$, the de Rham cohomology groups are defined by
\begin{equation}
H^{i}_{DR}(V)=\frac{\mbox{Ker} (D:\mathcal{A}^{i}(V)\rightarrow\mathcal{A}^{i+1}(V))}{\mbox{Im} (D:\mathcal{A}^{i-1}(V)\rightarrow\mathcal{A}^{i}(V))}.
\end{equation}

\begin{lemma}\label{lem:l1}
Suppose $(\tilde{E},\bar{\partial}_{\tilde{E}},\tilde{\theta }, \tilde{H})$ is a Higgs bundle over $(X, \omega )$ with $F_{\tilde{H},\tilde{\theta }}=(D_{\tilde{H},\tilde{\theta}})^{2}=0$. If $\partial\bar{\partial}\omega^{n-1}=0$, then $H_{DR}^{0}(X,\tilde{E})= H_{Dol}^{0}(X,\tilde{E})$.
\end{lemma}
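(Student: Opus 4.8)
\emph{Proof idea.} The plan is to show that, in degree zero, both cohomology groups coincide with the subspace
\[
W=\{s\in\Gamma(X,\tilde{E})\ :\ \partial_{\tilde{H}}s=\bar{\partial}_{\tilde{E}}s=\tilde{\theta}s=\tilde{\theta}^{*\tilde{H}}s=0\}.
\]
First I would record the type decomposition $D_{\tilde{H},\tilde{\theta}}=(\partial_{\tilde{H}}+\tilde{\theta})+(\bar{\partial}_{\tilde{E}}+\tilde{\theta}^{*\tilde{H}})$ into its $(1,0)$- and $(0,1)$-parts. Since these parts land in complementary bundles, a section $s$ satisfies $D_{\tilde{H},\tilde{\theta}}s=0$ if and only if $\partial_{\tilde{H}}s+\tilde{\theta}s=0$ and $\bar{\partial}_{\tilde{E}}s+\tilde{\theta}^{*\tilde{H}}s=0$; likewise $D_{\tilde{E}}^{''}s=\bar{\partial}_{\tilde{E}}s+\tilde{\theta}s=0$ if and only if $\bar{\partial}_{\tilde{E}}s=0$ and $\tilde{\theta}s=0$. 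Because there are no coboundaries in degree $0$, we have $H_{DR}^{0}(X,\tilde{E})=\ker(D_{\tilde{H},\tilde{\theta}}|_{\Gamma(\tilde{E})})$ and $H_{Dol}^{0}(X,\tilde{E})=\ker(D_{\tilde{E}}^{''}|_{\Gamma(\tilde{E})})$; the inclusions $W\subseteq\ker D_{\tilde{H},\tilde{\theta}}$ and $W\subseteq\ker D_{\tilde{E}}^{''}$ are immediate, so it suffices to prove the two reverse inclusions, and both will follow from a Bochner-type identity combined with the Gauduchon hypothesis.

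For $\ker D_{\tilde{E}}^{''}\subseteq W$, take $s$ with $\bar{\partial}_{\tilde{E}}s=0$ and $\tilde{\theta}s=0$. I would apply the Bochner--Kodaira identity for a holomorphic section,
\[
\sqrt{-1}\Lambda_{\omega}\partial\bar{\partial}|s|_{\tilde{H}}^{2}=|\partial_{\tilde{H}}s|_{\tilde{H},\omega}^{2}-\langle\sqrt{-1}\Lambda_{\omega}F_{\tilde{H}}s,s\rangle_{\tilde{H}}.
\]
The hypothesis $(D_{\tilde{H},\tilde{\theta}})^{2}=0$ forces the $(1,1)$-part of $F_{\tilde{H},\tilde{\theta}}=F_{\tilde{H}}+[\tilde{\theta},\tilde{\theta}^{*\tilde{H}}]+\partial_{\tilde{H}}\tilde{\theta}+\bar{\partial}_{\tilde{E}}\tilde{\theta}^{*\tilde{H}}$ to vanish, i.e. $F_{\tilde{H}}=-[\tilde{\theta},\tilde{\theta}^{*\tilde{H}}]$; since $\tilde{\theta}s=0$ annihilates the $\tilde{\theta}^{*\tilde{H}}\tilde{\theta}$-term of the commutator, a short pointwise computation gives $\langle\sqrt{-1}\Lambda_{\omega}[\tilde{\theta},\tilde{\theta}^{*\tilde{H}}]s,s\rangle_{\tilde{H}}=|\tilde{\theta}^{*\tilde{H}}s|_{\tilde{H},\omega}^{2}$, so that
\[
\sqrt{-1}\Lambda_{\omega}\partial\bar{\partial}|s|_{\tilde{H}}^{2}=|\partial_{\tilde{H}}s|_{\tilde{H},\omega}^{2}+|\tilde{\theta}^{*\tilde{H}}s|_{\tilde{H},\omega}^{2}\geq 0.
\]
Integrating over $X$ against $\tfrac{\omega^{n}}{n!}$ and using $\int_{X}\sqrt{-1}\,\partial\bar{\partial}f\wedge\omega^{n-1}=\int_{X}\sqrt{-1}\,f\,\partial\bar{\partial}\omega^{n-1}=0$ (here $\partial\bar{\partial}\omega^{n-1}=0$), the left-hand side integrates to zero, hence $\partial_{\tilde{H}}s=0$ and $\tilde{\theta}^{*\tilde{H}}s=0$; together with $\bar{\partial}_{\tilde{E}}s=0$ and $\tilde{\theta}s=0$ this gives $s\in W$.

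For $\ker D_{\tilde{H},\tilde{\theta}}\subseteq W$, take $s$ with $D_{\tilde{H},\tilde{\theta}}s=0$, i.e. $\partial_{\tilde{H}}s=-\tilde{\theta}s$ and $\bar{\partial}_{\tilde{E}}s=-\tilde{\theta}^{*\tilde{H}}s$. Starting from $\bar{\partial}|s|_{\tilde{H}}^{2}=\tilde{H}(\bar{\partial}_{\tilde{E}}s,s)+\tilde{H}(s,\partial_{\tilde{H}}s)$, I would substitute these two relations, apply $\partial$, and use the Higgs condition $\bar{\partial}_{\tilde{E}}\tilde{\theta}=0$ to commute $\bar{\partial}_{\tilde{E}}$ past $\tilde{\theta}$; because the first-order equations are inserted directly, no curvature term survives, and contracting with $\omega$ gives
\[
\sqrt{-1}\Lambda_{\omega}\partial\bar{\partial}|s|_{\tilde{H}}^{2}=2\big(|\tilde{\theta}s|_{\tilde{H},\omega}^{2}+|\tilde{\theta}^{*\tilde{H}}s|_{\tilde{H},\omega}^{2}\big)\geq 0.
\]
The same integration argument forces $\tilde{\theta}s=0$ and $\tilde{\theta}^{*\tilde{H}}s=0$, and then $\partial_{\tilde{H}}s=\bar{\partial}_{\tilde{E}}s=0$, so $s\in W$. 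Combining the four inclusions yields $H_{DR}^{0}(X,\tilde{E})=W=H_{Dol}^{0}(X,\tilde{E})$.

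The step I expect to be most delicate is the sign and index bookkeeping in the two Bochner-type identities: in particular verifying $\langle\sqrt{-1}\Lambda_{\omega}[\tilde{\theta},\tilde{\theta}^{*\tilde{H}}]s,s\rangle_{\tilde{H}}=|\tilde{\theta}^{*\tilde{H}}s|_{\tilde{H},\omega}^{2}$ when $\tilde{\theta}s=0$, and confirming that in the second identity the curvature contribution really does cancel once the relations $\partial_{\tilde{H}}s=-\tilde{\theta}s$, $\bar{\partial}_{\tilde{E}}s=-\tilde{\theta}^{*\tilde{H}}s$ and $\bar{\partial}_{\tilde{E}}\tilde{\theta}=0$ are used. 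Once these are in hand, the only further ingredients are Stokes' theorem and the Gauduchon condition.
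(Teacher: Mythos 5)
Your proposal is correct and follows essentially the same route as the paper: in each direction the paper establishes exactly your two Bochner identities, namely $\sqrt{-1}\Lambda_{\omega}\partial\bar{\partial}|s|_{\tilde{H}}^{2}=|D_{\tilde{E}}^{''}s|^{2}+|D_{\tilde{H}}^{'}s|^{2}$ for $D$-flat sections (which is your $2(|\tilde{\theta}s|^{2}+|\tilde{\theta}^{*\tilde{H}}s|^{2})$ after substituting the flatness relations) and $\sqrt{-1}\Lambda_{\omega}\partial\bar{\partial}|s|_{\tilde{H}}^{2}=|\partial_{\tilde{H}}s|^{2}+|\tilde{\theta}^{*\tilde{H}}s|^{2}$ for $D''$-closed sections, then integrates against $\omega^{n}/n!$ using the Gauduchon condition. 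Your explicit intermediate space $W$ and the detailed sign bookkeeping are just a more expanded write-up of the same argument.
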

\begin{proof}
For any $e\in H^{0}_{DR}(X,\tilde{E})$, we have
\begin{equation}\label{x:2}
\sqrt{-1}\Lambda_{\omega}\partial\bar{\partial}|e|_{\tilde{H}}^{2}=|D_{E}^{''}e|_{\tilde{H}}^{2}+|D^{'}_{\tilde{H}}e|_{\tilde{H}}^{2}.
\end{equation}
Integrating both sides of this equation derives $e\in H^{0}_{Dol}(X,\tilde{E})$. Similarly, if $e\in H^{0}_{Dol}(X,\tilde{E})$, we get
\begin{equation}
\sqrt{-1}\Lambda_{\omega}\partial\bar{\partial}|e|_{\tilde{H}}^{2}=|\partial_{\tilde{H}}e|_{\tilde{H}}^{2}+|\theta^{*}(e)|_{\tilde{H}}^{2}.
\end{equation}
Hence $e\in H_{DR}^{0}(X,\tilde{E})$.
\end{proof}

\medskip

\begin{proof}[Proof of Theorem \ref{thm:2-1}]
	Suppose $(E,\bar{\partial}_{E},\theta)$ is a polystable Higgs bundle with the vanishing Chern number (\ref{CC}). By the Hitchin-Kobayashi correspondence for Higgs bundles over compact Gauduchon manifolds(\cite{bu,LY,MA}),  there is a Hermitian-Einstein metric $H$ on $(E,\bar{\partial}_{E},\theta)$. The identity (\ref{Bo1}) implies $\partial _{H}\theta =0$ and $F_{H}+[\theta , \theta ^{\ast H}]=\frac{1}{r}\tr F_{H}\otimes \Id_{E}$, then the curvature of the related Hitchin-Simpson connection $D_{H,\theta}$ satisfies
\begin{equation}
\sqrt{-1}F_{D_{H, \theta }}=\frac{\sqrt{-1}}{r}\tr F_{H}\otimes \Id_{E}.
\end{equation}
Since $[\alpha]_{DR}=[\frac{\sqrt{-1}}{r}\tr F_{H}]_{DR} = \frac{2\pi}{ r} c_{1}(E)$, there exists a $1$-form $\eta $ such that $\alpha = \frac{\sqrt{-1}}{r}\tr F_{H}+ d\eta $. Set $\hat{D}_{H, \theta }=D_{H,\theta} -\sqrt{-1}\eta \otimes \Id_{E}$, then
\begin{equation}
\sqrt{-1}F_{\hat{D}_{H, \theta }}=\alpha \otimes \Id_{E}.
\end{equation}
If assume that $\Lambda_{\omega}\alpha =\lambda $ in advance, we have \begin{equation}\label{alpha}\frac{\sqrt{-1}}{r}\tr F_{H}=\alpha .\end{equation} In fact, the conditions that $\partial\bar{\partial}\omega^{n-1}=\partial\bar{\partial}\omega^{n-2}=0$ and $\int_{X}\partial [\eta]\wedge\frac{\omega^{n-1}}{(n-1)!}=0$ for any Dolbeault class $[\eta]\in H^{0,1}(X)$ ensure the global $\partial \overline{\partial }$-lemma holds on $(X, \omega)$. Because $\frac{\sqrt{-1}}{r}\tr F_{H}$ and $\alpha$ are both real $(1, 1)$-forms in the same class,  there exists a real function $f$ such that
\begin{equation}\frac{\sqrt{-1}}{r}\tr F_{H}=\alpha +\sqrt{-1} \partial \overline{\partial }f.\end{equation}
The Hermitian-Einstein equation means that $f$ is constant, so we have (\ref{alpha}).

	Conversely, assume $(E,D)$ is a semi-simple projectively flat bundle. Since $(X, \omega )$ satisfies the  global $\partial \overline{\partial }$-lemma,  $BC(E,D)=0$. By Theorem \ref{thm0}, there is a unique harmonic metric $H$ such that $G_{H}=0$. Thus $(E,\bar{\partial}_{H},\psi_{H}^{1,0})$ is a polystable Higgs bundle with vanishing Chern number (\ref{CC}).
	
	Let $(\bar{\partial}_{1}, \theta_{1} )$ and $(\bar{\partial}_{2}, \theta_{2} )$ be two poly-stable Higgs structures on $E$ with vanishing Chern number (\ref{CC}), $H_{1}$ and $H_{2}$ be the corresponding Hermitian-Einstein metrics, $D_{1}$ and $D_{2}$ be the corresponding Hitchin-Simpson connections.  Set the induced Higgs bundle \begin{equation}(\tilde{E}, \bar{\partial}_{\tilde{E}}, \tilde{\theta})=(E, \bar{\partial}_{1}, \theta_{1} )\otimes (E, \bar{\partial}_{2}, \theta_{2} )^{\ast},\end{equation}
and the induced bundle
\begin{equation}(\tilde{E},  \tilde{D})=(E, D_{1} )\otimes (E, D_{2} )^{\ast}.\end{equation}
It is easy to see that $(\tilde{E},  \tilde{D})$ is a flat bundle. From Lemma \ref{lem:l1}, we know $H^{0}_{DR}(X,\tilde{E})=H^{0}_{Dol}(X,\tilde{E})$. Hence there is an equivalence of categories between the category of poly-stable Higgs bundles with vanishing Chern number (\ref{CC}) and the category of semi-simple projectively flat bundles. So we have constructed a one-to-one correspondence between the moduli spaces $\mathcal{C}_{Dol}^{s}(E)$ and $\mathcal{C}_{DR}^{s}(E, \alpha )$.
\end{proof}

\section{The Yang-Mills-Higgs flow}
Suppose $(X, J, g)$ is a compact Hermitian manifold of dimension $n$ with the associated K\"ahler form $\omega$ satisfying $\partial\bar{\partial}\omega^{n-1}=\partial\bar{\partial}\omega^{n-2}=0$. Let $(E,H_{0})$ be a Hermitian vector bundle over $X$,
${\mathcal A}_{H_{0}}$ be the space of connections on $E$
compatible with the metric $H_{0}$, and ${\mathcal A}^{1,1}_{H_{0}}$
be the space of unitary integrable connections on $E$. Set
\begin{equation}
\mathcal{B}_{E,H_{0}}=\{(A,\phi)\in\mathcal{A}^{1,1}_{0}\times\Omega^{1,0}(\mbox{End}(E))\ | \ \bar{\partial}_{A}\phi=0,\phi\wedge\phi=0\}.
\end{equation}
 The element $(A,\phi)\in\mathcal{B}_{E,H_{0}}$ is called a Higgs pair which determines a Higgs bundle $(E, D^{0,1}_{A}, \phi )$.
Define the Yang-Mills-Higgs functional on  the space of Higgs pairs $\mathcal{B}_{E,H_{0}}$ by
\begin{equation}
\textrm{YMH}(A,\phi)=\int_{X}|F_{A}+[\phi,\phi^{*H_{0}}]|_{H_{0},\omega}^{2}+2|\partial_{A}\phi|_{H_{0},\omega}^{2}dv_{g},
\end{equation}
where $dv_g$ is the volume form.

Consider the following modified Yang-Mills-Higgs flow (it is the gradient flow of Yang-Mills-Higgs functional in K\"ahler manifolds case):
\begin{equation}\label{mymh}
\left\{
\begin{split}
 \frac{\partial A(t)}{\partial t}&=-\sqrt{-1}(\partial_{A(t)}-\bar{\partial}_{A(t)})\Lambda_{\omega}(F_{A(t)}+[\phi(t),\phi^{*H_{0}}(t)]),  \\
 \frac{\partial\phi(t)}{\partial t}&=-[\sqrt{-1}\Lambda_{\omega}(F_{A(t)}+[\phi(t),\phi^{*H_{0}}(t)]),\phi(t)],  \\
 A(0)&=A_{0},\ \ \phi(0)=\phi_{0}.
\end{split}\right.
\end{equation}
As that in \cite{LZ}, we can obtain the long time existence and uniqueness of solution to the modified Yang-Mills-Higgs flow. Indeed, let $H(t)$ be the long time solution of the following Hermitian-Yang-Mills flow on the Higgs bundle $(E,\bar{\partial}_{A_{0}},\phi_{0})$
\begin{equation}\label{HYMF}
\left\{
\begin{split}
 &H^{-1}(t)\frac{\partial H(t)}{\partial t}=-2(\sqrt{-1}\Lambda_{\omega}(F_{H(t)}+[\phi_{0},\phi_{0}^{*H(t)}])-\lambda \Id),  \\
 &H(0)=H_{0},
\end{split}
\right.
\end{equation}
where $\lambda=\frac{2\pi}{\mbox{Vol}(X,\omega)}\mu_{\omega}(E,\bar{\partial}_{A_{0}}).$
There is a family of complex gauge transformations $\sigma(t)$ satisfying $\sigma^{*H_{0}}(t)\sigma(t)=H_{0}^{-1}H(t)$, such that $(A(t),\phi(t))=\sigma(t)(A_{0},\phi_{0})$ is a solution of the modified Yang-Mills-Higgs flow, where
 \begin{equation}
\overline{\partial }_{\sigma(A_{0})}=\sigma \circ \overline{\partial
}_{A_{0}}\circ \sigma^{-1}, \quad \partial _{\sigma (A_{0})}=(\sigma^{\ast
H_{0}})^{-1} \circ \partial _{A_{0}}\circ \sigma^{\ast H_{0}};
\end{equation}
\begin{equation}
\sigma (\phi_{0} )=\sigma \circ \phi_{0} \circ \sigma^{-1}.
\end{equation}
When there is no confusion, we denote $(A(t),\phi(t))$, $H(t)$ and $\sigma(t)$ by $(A,\phi)$, $H$ and $\sigma$ for simplicity. It is straightforward to check that
\begin{equation}
\begin{split}
\bar{\partial}_{A}\phi^{*H_{0}}&=\sigma\circ\bar{\partial}_{A_{0}}\phi_{0}^{*H}\circ \sigma^{-1},\\
\partial_{A}\phi&=\sigma\circ\partial_{H}\phi_{0}\circ \sigma^{-1},\\
F_{A}+[\phi,\phi^{*H_{0}}]&=\sigma\circ(F_{H}+[\phi_{0},\phi_{0}^{*H}])\circ \sigma^{-1},\\
D_{A,\phi}(\Lambda_{\omega}(F_{A}+[\phi,\phi^{*H_{0}}]))&=\sigma\circ(D_{H,\phi_{0}}(\Lambda_{\omega}(F_{H}+[\phi_{0},\phi_{0}^{*H}])))\circ \sigma^{-1}.
\end{split}
\end{equation}

Along the flow, we have
\begin{equation}\label{re1}
\begin{split}
&\textrm{YMH}(A(t),\phi(t))\\
=&\int_{X}|F_{A(t)}+[\phi(t),\phi^{*H_{0}}(t)]|_{H_{0}}^{2}+2|\partial_{A(t)}\phi(t)|_{H_{0}}^{2}dv_{g}\\
=&\int_{X}|F_{H(t)}+[\phi_{0},\phi_{0}^{*H(t)}]|_{H(t)}^{2}+2|\partial_{A_{0}}\phi_{0}|_{H(t)}^{2}dv_{g}\\
=&\int_{X}|\Lambda_{\omega}(F_{H(t)}+[\phi_{0},\phi_{0}^{*H(t)}])|_{H(t)}^{2}dv_{g}-8\pi^{2}ch_{2}(E,\bar{\partial}_{A_{0}})[\omega^{n-2}]\\
=&\int_{X}|\sqrt{-1}\Lambda_{\omega}(F_{H(t)}+[\phi_{0},\phi_{0}^{*H(t)}])-\lambda \Id_{E}|_{H(t)}^{2}dv_{g}\\
&+\lambda^{2}\mbox{rank}(E)\mbox{Vol}(X,\omega)-8\pi^{2}ch_{2}(E,\bar{\partial}_{A_{0}})[\omega^{n-2}]\\
=&\int_{X}|\sqrt{-1}\Lambda_{\omega}(F_{A(t)}+[\phi(t),\phi^{*H_{0}}(t)])-\lambda \Id_{E}|_{H_{0}}^{2}dv_{g}\\
&+\lambda^{2}\mbox{rank}(E)\mbox{Vol}(X,\omega)-8\pi^{2}ch_{2}(E,\bar{\partial}_{A_{0}})[\omega^{n-2}].
\end{split}
\end{equation}
Furthermore, if $H(t)$ is a solution of the Hermitian-Yang-Mills flow (\ref{HYMF}), by the same discussion as that in \cite[Lemma 6.1]{S1}, we get (\cite[Lemma 1]{Jac1} or \cite[Proposition 2.1]{ZZZ})
\begin{equation}\label{mc10}
(\frac{\partial}{\partial t}-2\sqrt{-1}\Lambda_{\omega}\partial\bar{\partial})\tr (\Psi (t))=0,
\end{equation}
\begin{equation}\label{mc1}
(\frac{\partial}{\partial t}-2\sqrt{-1}\Lambda_{\omega}\partial\bar{\partial})|\Psi (t)|_{H(t)}^{2}=-2|D_{H(t),\phi_{0}}\Psi (t)|_{H(t)}^{2},
\end{equation}
where we have set $\Psi (t)=\sqrt{-1}\Lambda_{\omega}(F_{H(t)}+[\phi_{0},\phi_{0}^{*H(t)}])-\lambda \Id$ for simplicity. By the conformal transformation, we can suppose that $\tr \Psi (0)=0$, then we have
\begin{equation}\label{tr01}
\tr \Psi (t)=0,
\end{equation}
\begin{equation}\label{tr02}
\det (H_{0}^{-1}H(t))=1, \quad \text{and} \quad \tr F_{H(t)}=\tr F_{H_{0}}
\end{equation}
for all $t\geq 0$.
Based on (\ref{re1}) and (\ref{mc1}), we establish the following proposition.

\begin{prop}\label{p}
Let $(A,\phi)$ be a solution of the modified Yang-Mills-Higgs flow (\ref{mymh}), then
$
\sup_{X}|\sqrt{-1}\Lambda_{\omega}(F_{A}+[\phi,\phi^{*H_{0}}])-\lambda \Id|_{H_{0}}
$ is uniformly bounded
and $\rm{YMH}(t)$ is decreasing along the flow. Furthermore, if $\tr \Psi (0)=0$, then $\tr F_{A(t)}=\tr F_{A_{0}}$ for all $t\geq 0$.
\end{prop}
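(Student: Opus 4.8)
The plan is to reduce all three assertions to properties of the Hermitian-Yang-Mills flow $H(t)$ on the fixed Higgs bundle $(E,\bar{\partial}_{A_{0}},\phi_{0})$, using the family of complex gauge transformations $\sigma(t)$ with $\sigma^{*H_{0}}(t)\sigma(t)=H_{0}^{-1}H(t)$ and $(A(t),\phi(t))=\sigma(t)(A_{0},\phi_{0})$ recalled above. Writing $\Psi(t)=\sqrt{-1}\Lambda_{\omega}(F_{H(t)}+[\phi_{0},\phi_{0}^{*H(t)}])-\lambda\Id$, the gauge identities listed above give
\[
\sqrt{-1}\Lambda_{\omega}(F_{A(t)}+[\phi(t),\phi^{*H_{0}}(t)])-\lambda\Id=\sigma(t)\circ\Psi(t)\circ\sigma(t)^{-1},
\]
and a direct computation using $\sigma^{*H_{0}}(t)\sigma(t)=H_{0}^{-1}H(t)$ shows that the pointwise $H_{0}$-norm of the left-hand side coincides with $|\Psi(t)|_{H(t)}$. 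Hence it is enough to control $\Psi(t)$, for which we already have the evolution equations (\ref{mc10}) and (\ref{mc1}).

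For the uniform bound I would apply the parabolic maximum principle to (\ref{mc1}): the operator $\frac{\partial}{\partial t}-2\sqrt{-1}\Lambda_{\omega}\partial\bar{\partial}$ has second-order elliptic spatial part, and its right-hand side $-2|D_{H(t),\phi_{0}}\Psi(t)|_{H(t)}^{2}$ is non-positive, so $t\mapsto\max_{X}|\Psi(t)|_{H(t)}^{2}$ is non-increasing. Consequently
\[
\sup_{X}|\sqrt{-1}\Lambda_{\omega}(F_{A(t)}+[\phi(t),\phi^{*H_{0}}(t)])-\lambda\Id|_{H_{0}}=\sup_{X}|\Psi(t)|_{H(t)}\leq\sup_{X}|\Psi(0)|_{H_{0}}
\]
for all $t\ge 0$, which is the claimed uniform estimate, with the bound given by the corresponding quantity for the initial data.

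For the monotonicity of $\textrm{YMH}$ I would start from the chain of identities (\ref{re1}), which shows $\textrm{YMH}(A(t),\phi(t))=\int_{X}|\Psi(t)|_{H(t)}^{2}\,dv_{g}+C$ with $C$ independent of $t$. Differentiating and inserting (\ref{mc1}), the contribution $\int_{X}2\sqrt{-1}\Lambda_{\omega}\partial\bar{\partial}|\Psi(t)|_{H(t)}^{2}\,dv_{g}$ vanishes: writing $f=|\Psi(t)|_{H(t)}^{2}$, one has $\int_{X}\sqrt{-1}\Lambda_{\omega}\partial\bar{\partial}f\,dv_{g}=\int_{X}\sqrt{-1}\,\partial\bar{\partial}f\wedge\frac{\omega^{n-1}}{(n-1)!}=\int_{X}\sqrt{-1}\,f\,\partial\bar{\partial}\frac{\omega^{n-1}}{(n-1)!}=0$ by the Gauduchon condition $\partial\bar{\partial}\omega^{n-1}=0$. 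This leaves
\[
\frac{d}{dt}\textrm{YMH}(A(t),\phi(t))=-2\int_{X}|D_{H(t),\phi_{0}}\Psi(t)|_{H(t)}^{2}\,dv_{g}\leq 0,
\]
so $\textrm{YMH}$ is non-increasing along the flow.

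Finally, after the conformal normalization of $H_{0}$ so that $\tr\Psi(0)=0$, equations (\ref{tr01}) and (\ref{tr02}) give $\tr F_{H(t)}=\tr F_{H_{0}}$ for every $t\ge 0$. Taking the trace of $F_{A(t)}+[\phi(t),\phi^{*H_{0}}(t)]=\sigma(t)\circ(F_{H(t)}+[\phi_{0},\phi_{0}^{*H(t)}])\circ\sigma(t)^{-1}$, and using that the trace of a commutator vanishes while the trace is invariant under conjugation, yields $\tr F_{A(t)}=\tr F_{H(t)}$; the same identity at $t=0$ gives $\tr F_{A_{0}}=\tr F_{H_{0}}$, and combining these gives $\tr F_{A(t)}=\tr F_{A_{0}}$. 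The calculations are routine once the reduction to $H(t)$ is in place; the only points that require care are checking that the two pictures are isometric---so that $|\sigma(t)\Psi(t)\sigma(t)^{-1}|_{H_{0}}=|\Psi(t)|_{H(t)}$ and the evolution equations (\ref{mc10})--(\ref{mc1}) for the metric flow transfer to the gauge flow---and invoking the Gauduchon condition, rather than a K\"ahler hypothesis, to justify the integration by parts in the monotonicity step.
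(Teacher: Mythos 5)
Your proposal is correct and follows essentially the same route the paper intends: the paper's (implicit) proof is exactly the reduction to the metric flow via the gauge identities, with the sup bound coming from the maximum principle applied to (\ref{mc1}), the monotonicity from (\ref{re1}) together with integrating (\ref{mc1}) against $dv_g$ (where the Gauduchon condition kills the $\Lambda_\omega\partial\bar\partial$ term), and the trace statement from (\ref{tr01})--(\ref{tr02}). Your verification that conjugation by $\sigma(t)$ is an isometry from $(\mathrm{End}(E),H(t))$ to $(\mathrm{End}(E),H_0)$ is the one point the paper leaves unstated, and you handle it correctly.
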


It is well known that there are two connections on the tangent bundle $TX$: Chern connection and Levi-Civita connection. They are coincide on K\"ahler manifolds, but they are different in non-K\"ahler manifolds case. In the rest of this section, we use $\hat{\nabla}$ ($\nabla$) to denote the Chern connection (the Levi-Civita connection) on $X$ with respect to $\omega $, and $\hat{\nabla}_{A}$ ($\nabla_{A} $) to denote the induced connection in $\Lambda ^{\bullet} X \otimes \mbox{End}(E)$ by $D_{A}$ and $\hat{\nabla}$ ($\nabla$). Notice that for a smooth function $f$ on a non-K\"ahler manifold, there holds
\begin{equation}\label{la1}
\Delta f=2\sqrt{-1}\Lambda_{\omega}\partial\bar{\partial} f+\langle df,V\rangle,
\end{equation}
where $V=*(-\sqrt{-1}(\bar{\partial}-\partial)\omega^{n-1})$ and $\Delta $ is the Beltrami-Laplacian. One has already known that
\begin{equation}
\hat{\nabla } g=0, \quad \hat{\nabla }J=0, \quad and \quad T^{1,1}=0,
\end{equation}
where $T$ is the torsion tensor of $\hat{\nabla }$. In a local complex coordinate $\{z^{1}, \cdots , z^{n}\}$, we have
\begin{equation}
\hat{\nabla}_{\frac{\partial }{\partial \bar{z}^{\beta }}}\frac{\partial }{\partial z^{\alpha }}=\hat{\nabla}_{\frac{\partial }{\partial z^{\alpha }}}\frac{\partial }{\partial \bar{z}^{\beta }}=0.
\end{equation}
Let us recall the Bianchi identity $D_{A}F_{A}=0$, equivalently we have
\begin{equation}\label{bianchi1}
\sum \hat{\nabla}_{A,X}F_{A}(Y, Z)=-\sum F_{A}(T(X, Y), Z),
\end{equation}
where $\sum$ is the rotation sum of $X, Y, Z$ and $\hat{\nabla}_{A, X}$ is the covariant derivative in the direction $X$ with respect to the connection $\hat{\nabla}_{A}$. In fact, for any $\theta \in \Omega^{2}(X)$, it holds that
\begin{equation}\label{bianchi2}
d\theta (X, Y, Z)= \sum \hat{\nabla}_{X}\theta(Y, Z)+\sum \theta (T(X, Y), Z).
\end{equation}
For every Higgs pair $(A, \phi )$, we know that $\overline{\partial }_{A}\phi=0$, this is equivalent to
\begin{equation}\label{bianchi3}
 \hat{\nabla}_{A}^{0,1}\phi =0.
\end{equation}
For this reason, in the following calculations, we will choose the Chern connection $\hat{\nabla }$ on the base manifold.

As that in  \cite[Section 2.2]{LZ}, the following result comes from a direct computation.
\begin{prop}\label{p:10}
Let $(A,\phi)$ be a solution of the modified Yang-Mills-Higgs flow (\ref{mymh}), then
\begin{equation}
(2\sqrt{-1}\Lambda_{\omega}\partial\bar{\partial}-\frac{\partial}{\partial t})|\phi|^{2}\geq2|\hat{\nabla}_{A}\phi|^{2}+C_{1}(|\phi|^{2}+1)^{2}-C_{2}(|\phi|^{2}+1)-C_{3}|\hat{\nabla}_{A}\phi|\cdot|\phi|,
\end{equation}
where the constants $C_{1},C_{2},C_{3}$ depend only on $\sup_{X}|\phi_{0}|_{H_{0}}$ and the geometry of $(X,\omega)$. So we get $\sup_{X}|\phi|^{2}\leq \max\{\sup_{X}|\phi_{0}|^{2},C_{2}/C_{1}+C_{3}^{2}/4C_{1}\}$ for every $t\geq 0$.
\end{prop}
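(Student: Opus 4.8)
The plan is to establish the differential inequality from a Bochner-type identity for $|\phi|^{2}=|\phi|^{2}_{H_{0},\omega}$ adapted to the modified flow \eqref{mymh}, and then to read off the $C^{0}$-bound from the parabolic maximum principle. First I would compute the time derivative: differentiating the $\phi$-equation of \eqref{mymh} and using that $H_{0}$ is fixed gives
\[
\frac{\partial}{\partial t}|\phi|^{2}=-2\,\mathrm{Re}\big\langle[\sqrt{-1}\Lambda_{\omega}(F_{A}+[\phi,\phi^{*H_{0}}]),\phi],\phi\big\rangle .
\]
Next I would compute $2\sqrt{-1}\Lambda_{\omega}\partial\bar{\partial}|\phi|^{2}$. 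Working with the Chern connection $\hat{\nabla}$ on $(X,\omega)$, so that $\bar{\partial}_{A}\phi=0$ becomes $\hat{\nabla}_{A}^{0,1}\phi=0$ (cf. \eqref{bianchi3}), and using the Bianchi identities \eqref{bianchi1}--\eqref{bianchi2} for $D_{A}F_{A}=0$ together with the relation \eqref{la1} between $\Delta$ and $2\sqrt{-1}\Lambda_{\omega}\partial\bar{\partial}$, a direct computation in the spirit of \cite[Section~2.2]{LZ} should yield a formula of the form
\[
2\sqrt{-1}\Lambda_{\omega}\partial\bar{\partial}|\phi|^{2}=2|\hat{\nabla}_{A}\phi|^{2}-2\,\mathrm{Re}\big\langle[\sqrt{-1}\Lambda_{\omega}F_{A},\phi],\phi\big\rangle-\mathcal{E},
\]
where the error $\mathcal{E}$ collects the curvature of $X$, its torsion, and the term $\langle d|\phi|^{2},V\rangle$ from \eqref{la1}, and satisfies $\mathcal{E}\le \check{C}_{1}|\phi|^{2}+\check{C}_{2}|\hat{\nabla}_{A}\phi|\,|\phi|$ with $\check{C}_{1},\check{C}_{2}$ depending only on the geometry of $(X,\omega)$; the first-order part of $\mathcal{E}$ is produced precisely by the torsion of the Chern connection and by $\langle d|\phi|^{2},V\rangle$.

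Subtracting these two identities, the terms containing $\sqrt{-1}\Lambda_{\omega}F_{A}$ cancel --- this is exactly the point of the modification in \eqref{mymh} --- and, since $\phi\wedge\phi=0$ gives the identity $\mathrm{Re}\big\langle[\sqrt{-1}\Lambda_{\omega}[\phi,\phi^{*H_{0}}],\phi],\phi\big\rangle=|\sqrt{-1}\Lambda_{\omega}[\phi,\phi^{*H_{0}}]|^{2}$ (as used in \eqref{eqn:3}), one is left with
\[
\Big(2\sqrt{-1}\Lambda_{\omega}\partial\bar{\partial}-\frac{\partial}{\partial t}\Big)|\phi|^{2}\ \ge\ 2|\hat{\nabla}_{A}\phi|^{2}+2|\sqrt{-1}\Lambda_{\omega}[\phi,\phi^{*H_{0}}]|^{2}-\check{C}_{1}|\phi|^{2}-\check{C}_{2}|\hat{\nabla}_{A}\phi|\,|\phi|.
\]
To convert the middle term into the quartic term $C_{1}(|\phi|^{2}+1)^{2}$, I would pass to the gauge-equivalent description $(A,\phi)=\sigma(A_{0},\phi_{0})$ of Section~6: conjugation by $\sigma$ is an isometry $(\mathrm{End}(E),H)\to(\mathrm{End}(E),H_{0})$, so $|\phi|_{H_{0}}=|\phi_{0}|_{H}$ and $|[\phi,\phi^{*H_{0}}]|_{H_{0}}=|[\phi_{0},\phi_{0}^{*H}]|_{H}$, whence Simpson's commutator estimate \eqref{eqn:2}, applied with the evolving metric $H(t)$ and the fixed reference metric $H_{0}$, yields
\[
|\sqrt{-1}\Lambda_{\omega}[\phi,\phi^{*H_{0}}]|_{H_{0}}\ \ge\ a_{1}|\phi|^{2}-a_{2}\sup_{X}|\phi_{0}|_{H_{0}}^{2}.
\]
Squaring this (using $(x-y)^{2}\ge\tfrac12 x^{2}-y^{2}$ for $x,y\ge0$, and $|\sqrt{-1}\Lambda_{\omega}[\phi,\phi^{*H_{0}}]|^{2}\ge0$ in the opposite range) and substituting gives the asserted inequality with $C_{1},C_{2},C_{3}$ depending only on $\sup_{X}|\phi_{0}|_{H_{0}}$, $r$, $n$, and the geometry of $(X,\omega)$.

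Finally, setting $f(t)=\max_{X}|\phi(\cdot,t)|^{2}$, at a spatial maximum point one has $2\sqrt{-1}\Lambda_{\omega}\partial\bar{\partial}|\phi|^{2}\le0$ and $2|\hat{\nabla}_{A}\phi|^{2}-C_{3}|\hat{\nabla}_{A}\phi|\,|\phi|\ge-\tfrac{C_{3}^{2}}{8}|\phi|^{2}$, so the inequality together with the standard lemma on the derivative of $\max$ gives $f'(t)\le -C_{1}(f(t)+1)^{2}+(C_{2}+\tfrac{C_{3}^{2}}{8})(f(t)+1)$, which is negative as soon as $f(t)+1$ is large; hence $\sup_{X}|\phi(\cdot,t)|^{2}\le\max\{\sup_{X}|\phi_{0}|^{2},\,C_{2}/C_{1}+C_{3}^{2}/(4C_{1})\}$ for every $t\ge0$, as claimed. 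I expect the main obstacle to be the Bochner computation in the second step: because $(X,\omega)$ is only Gauduchon and astheno-K\"ahler, one must carry the torsion of the Chern connection through \eqref{bianchi1}--\eqref{bianchi2} and through \eqref{la1}, check that the first-order error genuinely has the shape $\check{C}_{2}|\hat{\nabla}_{A}\phi|\,|\phi|$, and --- most importantly --- verify that the $\sqrt{-1}\Lambda_{\omega}F_{A}$-terms produced by the Bochner curvature operator match exactly (in sign and normalization) those coming from $\partial_{t}\phi$ along \eqref{mymh}, so that they cancel.
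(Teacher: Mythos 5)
Your proposal is correct and is exactly the ``direct computation'' the paper has in mind: the paper gives no proof of Proposition \ref{p:10} (it only cites \cite[Section 2.2]{LZ}), but your Bochner identity, the cancellation of the $\sqrt{-1}\Lambda_{\omega}F_{A}$ terms against $\partial_t\phi$, the identity $\mathrm{Re}\langle[\sqrt{-1}\Lambda_{\omega}[\phi,\phi^{*H_0}],\phi],\phi\rangle=|\sqrt{-1}\Lambda_{\omega}[\phi,\phi^{*H_0}]|^2$, and the use of Simpson's estimate (\ref{eqn:2}) transported through the gauge isometry $\sigma$ reproduce precisely the paper's own analogous elliptic computation (\ref{t1})--(\ref{t4}) in Section 5, adapted to the flow. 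The only cosmetic difference is that your Young's inequality yields a slightly sharper constant than the stated $C_2/C_1+C_3^2/(4C_1)$, which of course still implies the claim.
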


\medskip

\begin{prop}\label{p:1}
Let $(A,\phi)$ be a solution of the modified Yang-Mills-Higgs flow (\ref{mymh}), and $\xi$ be a closed $(1, 1)$-form on $(X, \omega)$ with $\Lambda_{\omega }\xi =\lambda $ for some constant $\lambda$. Then
\begin{equation}\label{k01}
\begin{split}
&(2\sqrt{-1}\Lambda_{\omega}\partial\bar{\partial}-\frac{\partial}{\partial t})|\hat{\nabla}_{A}\phi|^{2}-2|\hat{\nabla}_{A}\hat{\nabla}_{A}\phi|^{2}\\
\geq&-C_{4}(|F_{A}|+|R_{\omega }|+|\phi|^{2})|\hat{\nabla}_{A}\phi|^{2}\\
&-C_{5}(|\hat{\nabla} R_{\omega }|+|T|\cdot |F_{A}|)|\phi|\cdot |\hat{\nabla}_{A}\phi|,\\
\end{split}
\end{equation}
 and
\begin{equation}\label{k02}
\begin{split}
&(2\sqrt{-1}\Lambda_{\omega}\partial\bar{\partial}-\frac{\partial}{\partial t})(|F_{A}+[\phi,\phi^{*H_{0}}]-\xi\otimes \Id_{E}|^{2}+2|\partial_{A}\phi|^{2})\\
\geq &2|\hat{\nabla}_{A}\partial_{A}\phi|^{2}+|\hat{\nabla}_{A}(F_{A}+[\phi,\phi^{*H_{0}}]-\xi\otimes \Id_{E})|^{2}\\
&-C_{6}(|T|^{2}+|\hat{\nabla}_{A}T|+|F_{A}|+|R_{\omega}|+|\hat{\nabla}_{A}\phi|+|\phi|^{2})(|F_{A}+[\phi,\phi^{*H_{0}}]-\xi\otimes \Id_{E}|^{2}+2|\partial_{A}\phi|^{2}),
\end{split}
\end{equation}
where the constants $C_{4},C_{5},C_{6}$ depend only on the dimension of $X$ and the rank of $E$, $R_{\omega }$ is the curvature tensor of the Chern connection $\hat{\nabla }$ with respect to $\omega$.
\end{prop}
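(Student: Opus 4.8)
The plan is to prove both inequalities by direct Bochner--Weitzenb\"ock computations along the flow (\ref{mymh}), in the spirit of \cite[Section 2.2]{LZ}, the only genuinely new feature being the careful bookkeeping of the torsion terms produced by the non-K\"ahler structure. Throughout I would use the Chern connection $\hat{\nabla}$ on the base, since in that gauge the Higgs condition reads cleanly as $\hat{\nabla}_{A}^{0,1}\phi=0$ (see (\ref{bianchi3})), and I would pass between $\sqrt{-1}\Lambda_{\omega}\partial\bar{\partial}$ and the rough Laplacian by means of (\ref{la1}), absorbing the resulting first-order term built from $V=\ast(-\sqrt{-1}(\bar{\partial}-\partial)\omega^{n-1})$ into the torsion-type error.

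\emph{Step 1: the inequality (\ref{k01}).} I would compute $\frac{\partial}{\partial t}|\hat{\nabla}_{A}\phi|^{2}$ using the flow equations: the $\dot{\phi}$-part contributes $2\,\mathrm{Re}\langle\hat{\nabla}_{A}\dot{\phi},\hat{\nabla}_{A}\phi\rangle$ with $\dot{\phi}=-[\sqrt{-1}\Lambda_{\omega}(F_{A}+[\phi,\phi^{\ast H_{0}}]),\phi]$, and the $\dot{A}$-part contributes $2\,\mathrm{Re}\langle(\partial_{t}\hat{\nabla}_{A})\phi,\hat{\nabla}_{A}\phi\rangle$ with $\partial_{t}\hat{\nabla}_{A}$ determined by $\dot{A}$. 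On the other side I would expand $\sqrt{-1}\Lambda_{\omega}\partial\bar{\partial}|\hat{\nabla}_{A}\phi|^{2}$ via Bochner--Kodaira: this produces the good term $|\hat{\nabla}_{A}\hat{\nabla}_{A}\phi|^{2}$, the bundle-curvature contractions $\langle[\sqrt{-1}\Lambda_{\omega}F_{A},\hat{\nabla}_{A}\phi],\hat{\nabla}_{A}\phi\rangle$ together with the corresponding $[\phi,\phi^{\ast H_{0}}]$-term, the base-curvature terms of schematic shape $R_{\omega}\ast\hat{\nabla}_{A}\phi\ast\hat{\nabla}_{A}\phi$ coming from commuting the two base derivatives, and, crucially, torsion terms of shape $F_{A}\ast T\ast\phi$ and $\hat{\nabla}R_{\omega}\ast\phi$ obtained by differentiating $\hat{\nabla}_{A}^{0,1}\phi=0$ and invoking the torsion-corrected Bianchi identities (\ref{bianchi1})--(\ref{bianchi2}). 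Subtracting the two computations, the terms involving $\sqrt{-1}\Lambda_{\omega}(F_{A}+[\phi,\phi^{\ast H_{0}}])$ produced by $\dot{\phi}$ and $\dot{A}$ exactly cancel the bundle-curvature contractions coming from the Bochner side --- this is the reason for working with the \emph{modified} flow --- and what is left is precisely the right-hand side of (\ref{k01}), with $C_{4},C_{5}$ depending only on $\dim X$ and $\rank E$.

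\emph{Step 2: the inequality (\ref{k02}).} Write $W=F_{A}+[\phi,\phi^{\ast H_{0}}]-\xi\otimes\Id_{E}$; since $\xi$ is closed with $\Lambda_{\omega}\xi=\lambda$, one has $D_{A}W=D_{A}(F_{A}+[\phi,\phi^{\ast H_{0}}])$, whose off-diagonal part is governed by $D_{A,\phi}$ applied to $F_{A,\phi}$ together with $\partial_{A}\phi$ and $\bar{\partial}_{A}\phi^{\ast H_{0}}$ (here $\bar{\partial}_{A}\phi=0$ kills one of these), while $\Lambda_{\omega}W$ is controlled by the moment-map quantity $\sqrt{-1}\Lambda_{\omega}(F_{A}+[\phi,\phi^{\ast H_{0}}])-\lambda\,\Id_{E}$, which stays bounded along the flow by Proposition \ref{p}. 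I would then differentiate $|W|^{2}$ and $|\partial_{A}\phi|^{2}$ separately along (\ref{mymh}), using $\dot{F}_{A}=D_{A}\dot{A}=-\sqrt{-1}D_{A}(\partial_{A}-\bar{\partial}_{A})\Lambda_{\omega}(F_{A}+[\phi,\phi^{\ast H_{0}}])$ and $\partial_{t}(\partial_{A}\phi)=\partial_{A}\dot{\phi}+(\partial_{t}\partial_{A})\phi$, and compare with the Bochner expansion of $\sqrt{-1}\Lambda_{\omega}\partial\bar{\partial}$ of the same two quantities, which delivers the good terms $|\hat{\nabla}_{A}W|^{2}$ and $2|\hat{\nabla}_{A}\partial_{A}\phi|^{2}$. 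The cross terms cancel again by the structure of the modified flow, and every remaining lower-order term is estimated by Cauchy--Schwarz to give the stated bound, the factors $|T|^{2}$ and $|\hat{\nabla}_{A}T|$ being exactly the residue of the non-K\"ahler corrections in the Bianchi identity and in commuting $\Lambda_{\omega}$ past $D_{A}$; the bound $\sup_{X}|\phi|^{2}\le C$ from Proposition \ref{p:10} may be used to simplify, although the statement keeps $|\phi|^{2}$ explicit.

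I expect the only real difficulty to be organizational rather than conceptual: over a K\"ahler base these formulas are classical, so the task is to check that every extra term produced by the torsion $T$ of $\hat{\nabla}$, by the non-commutation of $\Lambda_{\omega}$ with $d$, and by the vector field $V$ in (\ref{la1}) is of one of the schematic shapes appearing on the right-hand sides of (\ref{k01}) and (\ref{k02}), with no leftover uncontrolled second-order contribution. I would therefore organize the computation so that the positive terms $|\hat{\nabla}_{A}\hat{\nabla}_{A}\phi|^{2}$, $|\hat{\nabla}_{A}W|^{2}$ and $2|\hat{\nabla}_{A}\partial_{A}\phi|^{2}$ are isolated at the outset and every other term is absorbed by Young's inequality into the displayed error terms; the comparison between $\hat{\nabla}_{A}$ and $\nabla_{A}$ on the base, and between $\partial_{A}\phi$ and $\hat{\nabla}_{A}\phi$, is harmless and gets folded into the constants $C_{4},C_{5},C_{6}$.
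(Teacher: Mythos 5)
Your plan coincides with the paper's proof: a pointwise computation in normal complex coordinates that expands $2\sqrt{-1}\Lambda_{\omega}\partial\bar{\partial}$ of each quantity via Bochner--Kodaira and the torsion-corrected Bianchi identity (\ref{bianchi1}), computes $\partial/\partial t$ from (\ref{mymh}), and observes that the $\partial_{A}(\sqrt{-1}\Lambda_{\omega}(F_{A}+[\phi,\phi^{*H_{0}}]))$ contributions cancel between the two sides, leaving only the displayed positive terms and torsion/curvature errors. One small remark: the conversion to the rough Laplacian via (\ref{la1}) is unnecessary (the statement is already phrased with $2\sqrt{-1}\Lambda_{\omega}\partial\bar{\partial}$) and would cost you part of the coefficient $2$ on $|\hat{\nabla}_{A}\hat{\nabla}_{A}\phi|^{2}$ when absorbing the $V$-term, so it is best omitted.
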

\begin{proof}
For any point $p\in X$, we choose a local complex coordinate $\{z^{1}, \cdots , z^{n}\}$ centered at $p$ and with $g_{\alpha \bar{\beta}}(p)=\delta_{\alpha \beta}$. Using the Bianchi identity (\ref{bianchi1}), $\overline{\partial}_{A}\phi =0$ and $\phi \wedge \phi =0$, we obtain
\begin{equation}\label{k011}
\begin{split}
&2\sqrt{-1}\Lambda_{\omega}\partial\bar{\partial}(|\hat{\nabla}_{A}\phi|^{2})\\
=&2|\hat{\nabla}_{A}\hat{\nabla}_{A}\phi|^{2}-2Re\langle[\sqrt{-1}\Lambda_{\omega}F_{A},\hat\nabla_{A, \gamma }\phi],\hat\nabla_{A, \gamma }\phi\rangle\\
&-4Re\langle[\partial_{A,\gamma }(\sqrt{-1}\Lambda_{\omega}(F_{A}+[\phi,\phi^{*H_{0}}])),\phi],\hat\nabla_{A, \gamma }\phi\rangle\\
&+4Re\langle[[\hat{\nabla}_{A,\gamma }\phi_{\alpha},\phi_{\bar{\alpha}}^{*H_{0}}],\phi],\hat\nabla_{A, \gamma }\phi\rangle\\
&+2Re\langle[F_{l\bar{\alpha}},\phi]T_{\alpha\gamma}^{l},\hat\nabla_{A, \gamma }\phi\rangle -4Re\langle[F_{\gamma \bar{\alpha}},\hat\nabla_{A, \alpha }\phi],\hat\nabla_{A, \gamma }\phi\rangle\\
&+2Re\langle(\hat\nabla_{A, \alpha }(\phi_{l}R^{l}_{\beta \gamma \bar{\alpha }})+\hat\nabla_{A, \alpha }\phi_{l}R^{l}_{\beta \gamma \bar{\alpha }}+\hat\nabla_{A, l }\phi_{\beta}R^{l}_{\alpha \gamma \bar{\alpha }}) dz^{\beta},\hat\nabla_{A, \gamma }\phi\rangle,\\
\end{split}
\end{equation}
\begin{equation}\label{k021}
\begin{split}
&2\sqrt{-1}\Lambda_{\omega}\partial\bar{\partial}(2|\partial_{A}\phi|^{2})\\
=&4|\hat{\nabla}_{A}\partial_{A}\phi|^{2}-4Re\langle [\sqrt{-1}\Lambda_{\omega}F_{A},\partial_{A}\phi],\partial_{A}\phi\rangle+4Re\langle(D_{\gamma}^{A}\phi_{\beta}-D_{\beta}^{A}\phi_{\gamma})\hat{\nabla}_{\bar{\alpha}}\hat{\nabla}_{\alpha}dz^{\alpha}\wedge dz^{\beta},\partial_{A}\phi\rangle\\
&-8Re\langle[\partial_{A}(\sqrt{-1}\Lambda_{\omega}(F_{A}+[\phi,\phi^{*H_{0}}])),\phi],\partial_{A}\phi\rangle+8Re\langle[[\phi_{\gamma},\phi_{\bar{\alpha}}^{*}],D_{\alpha}^{A}\phi_{\beta}-D_{\beta}^{A}\phi_{\alpha}]dz^{\gamma}\wedge dz^{\beta},\partial_{A}\phi\rangle\\
&+8Re\langle[F_{l\bar{\alpha}}+[\phi_{l},\phi_{\bar{\alpha}}^{*}]-\xi_{l\bar{\alpha}}\otimes \Id_{E},\phi_{\beta}]T_{\alpha\gamma}^{l}dz^{\gamma}\wedge dz^{\beta},\partial_{A}\phi\rangle\\
&-8Re\langle[(F_{\gamma\bar{\alpha}}+[\phi_{\gamma},\phi_{\bar{\alpha}}^{*}]-\xi_{\gamma\bar{\alpha}}\otimes \Id_{E}),\hat{\nabla}_{A, \alpha}\phi_{\beta}]dz^{\gamma}\wedge dz^{\beta},\partial_{A}\phi\rangle\\
&-8Re\langle[[\phi_{\alpha},\partial_{A}\phi],\phi_{\bar{\alpha}}^{*}],\partial_{A}\phi\rangle ,
\end{split}
\end{equation}
\begin{equation}\label{k022}
\begin{split}
&2\sqrt{-1}\Lambda_{\omega}\partial\bar{\partial}|F_{A}+[\phi,\phi^{*}]-\xi\otimes \Id_{E}|^{2}\\
=&2|\hat{\nabla}_{A}(F_{A}+[\phi,\phi^{*}]-\xi\otimes \Id_{E})|^{2}\\
&+2Re\langle(\hat{\nabla}_{A,\alpha}\hat{\nabla}_{A,\bar{\alpha}}+\hat{\nabla}_{A,\bar{\alpha}}\hat{\nabla}_{A,\alpha})(F_{A}+[\phi,\phi^{*H_{0}}]-\xi\otimes \Id_{E}), F_{A}+[\phi,\phi^{*}]-\xi\otimes \Id_{E}\rangle ,\\
\end{split}
\end{equation}
\begin{equation}\label{k023}
\begin{split}
&\hat{\nabla}_{A,\alpha}\hat{\nabla}_{A,\bar{\alpha}}(F_{A}+[\phi,\phi^{*H_{0}}]-\xi\otimes \Id_{E})\\
=&-\hat{\nabla}_{A,\alpha}((F_{\beta\bar{l}}+[\phi_{\beta},\phi_{\bar{l}}^{*}]-\xi_{\beta\bar{l}}\otimes \Id_{E})T_{\bar{\alpha}\bar{\gamma}}^{\bar{l}})dz^{\beta}\wedge d\bar{z}^{\gamma}\\
&+\hat{\nabla}_{A,\alpha}([\phi_{\beta},D_{\bar{\alpha}}^{A}\phi_{\bar{\gamma}}^{*}-D_{\bar{\gamma}}^{A}\phi_{\bar{\alpha}}^{*}])dz^{\beta}\wedge d\bar{z}^{\gamma}\\
&+\hat{\nabla}_{A,\bar{\gamma}}([D_{\alpha}^{A}\phi_{\beta}-D_{\beta}^{A}\phi_{\alpha},\phi_{\bar{\alpha}}^{*}])dz^{\beta}\wedge d\bar{z}^{\gamma}\\
&+ [F_{\alpha\bar{\gamma}},(F_{\beta\bar{\alpha}}+[\phi_{\beta},\phi_{\bar{\alpha}}^{*}]-\xi_{\beta\bar{\alpha}}\otimes \Id_{E})]dz^{\beta}\wedge d\bar{z}^{\gamma}\\
&- R_{\beta\alpha\bar{\gamma}}^{l}(F_{l\bar{\alpha}}+[\phi_{l},\phi_{\bar{\alpha}}^{*}]-\xi_{l\bar{\alpha}}\otimes \Id_{E})dz^{\beta}\wedge d\bar{z}^{\gamma}\\
&- R_{\bar{\alpha}\alpha\bar{\gamma}}^{\bar{l}}(F_{\beta\bar{l}}+[\phi_{\beta},\phi_{\bar{l}}^{*}]-\xi_{\beta\bar{l}}\otimes \Id_{E})dz^{\beta}\wedge d\bar{z}^{\gamma}\\
&+ \hat{\nabla}_{A,\bar{\gamma}}(\hat{\nabla}_{A,\beta}(F_{\alpha\bar{\alpha}}+[\phi_{\alpha},\phi_{\bar{\alpha}}^{*}]-\xi_{\alpha\bar{\alpha}}\otimes \Id_{E}))dz^{\beta}\wedge d\bar{z}^{\gamma}\\
&- \hat{\nabla}_{A,\bar{\gamma}}(T_{\alpha\beta}^{l}(F_{l\bar{\alpha}}+[\phi_{l},\phi_{\bar{\alpha}}^{*}]-\xi_{l\bar{\alpha}}\otimes \Id_{E}))dz^{\beta}\wedge d\bar{z}^{\gamma},\\
\end{split}
\end{equation}
and
\begin{equation}\label{k024}
\begin{split}
&\hat{\nabla}_{A,\bar{\alpha}}\hat{\nabla}_{A,\alpha}(F_{A}+[\phi,\phi^{*H_{0}}]-\xi\otimes \Id_{E})\\
=& \hat{\nabla}_{A,\bar{\alpha}}(T_{\alpha\beta}^{l}(F_{l\bar{\gamma}}+[\phi_{l},\phi_{\bar{\gamma}}^{*}]-\xi_{l\bar{\gamma}}\otimes \Id_{E}))dz^{\beta}\wedge d\bar{z}^{\gamma}\\
&+ \hat{\nabla}_{A,\bar{\alpha}}([D_{\alpha}^{A}\phi_{\beta}-D_{\beta}^{A}\phi_{\alpha},\phi_{\bar{\gamma}}^{*}])dz^{\beta}\wedge d\bar{z}^{\gamma}\\
&- [F_{\beta\bar{\alpha}},(F_{\alpha\bar{\gamma}}+[\phi_{\alpha},\phi_{\bar{\gamma}}^{*}]-\xi_{\alpha\bar{\gamma}}\otimes \Id_{E})]dz^{\beta}\wedge d\bar{z}^{\gamma}\\
&+ R_{\alpha\beta\bar{\alpha}}^{l}(F_{l\bar{\gamma}}+[\phi_{l},\phi_{\bar{\gamma}}^{*}]-\xi_{l\bar{\gamma}}\otimes \Id_{E})dz^{\beta}\wedge d\bar{z}^{\gamma}\\
&- R_{\bar{\gamma}\beta\bar{\alpha}}^{\bar{l}}(F_{\alpha\bar{l}}+[\phi_{\alpha},\phi_{\bar{l}}^{*}]-\xi_{\alpha\bar{l}}\otimes \Id_{E})dz^{\beta}\wedge d\bar{z}^{\gamma}\\
&+ \hat{\nabla}_{A,\beta}(\hat{\nabla}_{A,\bar{\gamma}}(F_{\alpha\bar{\alpha}}+[\phi_{\alpha},\phi_{\bar{\alpha}}^{*}]-\xi_{\alpha\bar{\alpha}}\otimes \Id_{E}))dz^{\beta}\wedge d\bar{z}^{\gamma}\\
&- \hat{\nabla}_{A,\beta}(T_{\bar{\alpha}\bar{\gamma}}^{\bar{l}}(F_{\alpha\bar{l}}+[\phi_{\alpha},\phi_{\bar{l}}^{*}]-\xi_{\alpha\bar{l}}\otimes \Id_{E}))dz^{\beta}\wedge d\bar{z}^{\gamma}\\
&+ \hat{\nabla}_{A,\beta}([\phi_{\alpha},D_{\bar{\alpha}}^{A}\phi_{\bar{\gamma}}^{*}-D_{\bar{\gamma}}^{A}\phi_{\bar{\alpha}}^{*}])dz^{\beta}\wedge d\bar{z}^{\gamma}.\\
\end{split}
\end{equation}
On the other hand, by the Yang-Mills-Higgs flow (\ref{mymh}), we have
\begin{equation}\label{k012}
\begin{split}
&\frac{\partial }{\partial t}|\hat{\nabla}_{A}\phi|^{2}
=-4Re\langle[\partial_{A,\gamma }(\sqrt{-1}\Lambda_{\omega}(F_{A}+[\phi,\phi^{*H_{0}}])),\phi],\hat\nabla_{A, \gamma }\phi\rangle\\
&-2Re\langle[\sqrt{-1}\Lambda_{\omega}(F_{A}+[\phi , \phi^{\ast H_{0}}]),\hat\nabla_{A, \gamma }\phi],\hat\nabla_{A, \gamma }\phi\rangle,\\
\end{split}
\end{equation}
and
\begin{equation}\label{k025}
\begin{split}
&\frac{\partial}{\partial t}(|F_{A}+[\phi,\phi^{*H_{0}}]-\xi\otimes \Id_{E}|^{2}+2|\partial_{A}\phi|^{2})\\
=&-2Re\langle\sqrt{-1}(\bar{\partial}_{A}\partial_{A}-\partial_{A}\bar{\partial}_{A})\Lambda_{\omega}(F_{A}+[\phi,\phi^{*H_{0}}]),F_{A}+[\phi,\phi^{*H_{0}}]-\xi\otimes \Id_{E}\rangle\\
&-4Re\langle[[\sqrt{-1}\Lambda_{\omega}(F_{A}+[\phi,\phi^{*H_{0}}]),\phi],\phi^{*H_{0}}],F_{A}+[\phi,\phi^{*H_{0}}]-\xi\otimes \Id_{E}\rangle\\
&+2Re\langle[\sqrt{-1}\Lambda_{\omega}(F_{A}+[\phi,\phi^{*H_{0}}]),[\phi,\phi^{*H_{0}}]],F_{A}+[\phi,\phi^{*H_{0}}]-\xi\otimes \Id_{E}\rangle\\
&-4Re\langle 2[\partial_{A}(\sqrt{-1}\Lambda_{\omega}(F_{A}+[\phi,\phi^{*H_{0}}])),\phi]+[\sqrt{-1}\Lambda_{\omega}(F_{A}+[\phi,\phi^{*H_{0}}]),\partial_{A}\phi],\partial_{A}\phi\rangle .
\end{split}
\end{equation}
Then, (\ref{k011}) and (\ref{k012}) mean (\ref{k01}), at the same time, (\ref{k021}), (\ref{k022}), (\ref{k023}), (\ref{k024}) and (\ref{k025}) imply (\ref{k02}).
\end{proof}

\subsection{Monotonicity inequality and $\epsilon$-regularity}
Regard $X$ as a compact Riemannian manifold of dimension $2n$. For any $x_{0}\in X$, choose normal geodesic coordinates $\{x^{i}\}_{i=1}^{2n}$ in the geodesic ball $B_{r}(x_{0})$($r\leq i_X$, where $i_X$ is the injective radius of $X$) such that $x_{0}=(0,\cdots,0)$. Then it follows that
\begin{equation}\label{a}
|g_{ij}(x)-\delta_{ij}|\leq C|x|^{2},\ \ |\partial_{k}g_{ij}(x)|\leq C|x|,\ \ \forall x\in B_{r}(x_0),
\end{equation}
where $C$ is a positive constant depending only on $x_{0}$.

Let $u=(x,t)\in X\times\mathbb{R}$. For any $u_{0}=(x_{0},t_{0})\in X\times\mathbb{R}^{+}$, set
\begin{equation}
\begin{split}
&S_{r}(u_{0})=X\times\{t=t_{0}-r^{2}\},\\
&T_{r}(u_{0})=X\times [t_{0}-4r^{2},t_{0}-r^{2}],\\
&P_{r}(u_{0})=B_{r}(x_{0})\times [t_{0}-r^{2},t_{0}+r^{2}].
\end{split}
\end{equation}
For simplicity, we denote $S_{r}(0,0),T_{r}(0,0),P_{r}(0,0)$ by $S_{r},T_{r},P_{r}$.

\begin{prop}\label{p:2}
Let $(A,\phi)$ be a solution of the modified Yang-Mills-Higgs flow with the initial value $(A_{0},\phi_{0})$. For any $u_{0}=(x_{0},t_{0})\in X\times\mathbb{R}^{+}$, $r\in(0,\min\{\sqrt{t_{0}},i_{X}/2\})$, we have
\begin{equation}
\int_{P_{r}(u_{0})}|\hat{\nabla}_{A}\phi|^{2}dv_{g}dt\leq C_{9}r^{2n},
\end{equation}
where the constant $C_{9}$ depends only on the geometry of $(X,\omega)$ and $\sup_{X}|\phi_{0}|_{H_{0}}$.
\end{prop}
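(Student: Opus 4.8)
The plan is to deduce this local $\hat{\nabla}_{A}\phi$-energy bound from the uniform $C^{0}$-estimate of Proposition \ref{p:10} together with the differential inequality proved there, by testing against a parabolic cut-off. Recall that Proposition \ref{p:10} gives $\sup_{X}|\phi|^{2}\le\Lambda_{0}$ for all $t\ge0$, where $\Lambda_{0}=\max\{\sup_{X}|\phi_{0}|^{2},\,C_{2}/C_{1}+C_{3}^{2}/(4C_{1})\}$ depends only on $\sup_{X}|\phi_{0}|_{H_{0}}$ and the geometry of $(X,\omega)$. Rearranging the inequality of Proposition \ref{p:10}, discarding the nonnegative term $C_{1}(|\phi|^{2}+1)^{2}$, using $C_{3}|\hat{\nabla}_{A}\phi|\,|\phi|\le|\hat{\nabla}_{A}\phi|^{2}+\tfrac14 C_{3}^{2}|\phi|^{2}$, and inserting $|\phi|^{2}\le\Lambda_{0}$, I would first obtain
\begin{equation*}
|\hat{\nabla}_{A}\phi|^{2}\ \le\ 2\sqrt{-1}\Lambda_{\omega}\partial\bar{\partial}|\phi|^{2}-\frac{\partial}{\partial t}|\phi|^{2}+C_{0},
\end{equation*}
where $C_{0}$ depends only on $\Lambda_{0}$ and $(X,\omega)$; then by (\ref{la1}), writing $2\sqrt{-1}\Lambda_{\omega}\partial\bar{\partial}|\phi|^{2}=\Delta|\phi|^{2}-\langle d|\phi|^{2},V\rangle$,
\begin{equation*}
|\hat{\nabla}_{A}\phi|^{2}\ \le\ \Big(\Delta-\frac{\partial}{\partial t}\Big)|\phi|^{2}-\langle d|\phi|^{2},V\rangle+C_{0}.
\end{equation*}

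Next, for the given $u_{0}=(x_{0},t_{0})$ I would take a smooth cut-off $\varphi$ supported in $P_{2r}(u_{0})$ with $\varphi\equiv1$ on $P_{r}(u_{0})$, $0\le\varphi\le1$, $|\hat{\nabla}\varphi|\le C/r$, $|\partial_{t}\varphi|\le C/r^{2}$ and $|\Delta\varphi|\le C/r^{2}$; such $\varphi$ exists because (\ref{a}) makes the geodesic coordinates uniformly comparable to Euclidean ones, and then $\Vol(P_{2r}(u_{0}))\le Cr^{2n+2}$. One may assume $P_{2r}(u_{0})$ lies in the domain of the long-time flow, the remaining values of $r$ being reduced to this case by covering $P_{r}(u_{0})$ with boundedly many smaller parabolic cylinders. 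Multiplying the last inequality by $\varphi^{2}$, integrating over $X\times\mathbb{R}$, and integrating by parts in space and time (there are no boundary terms, since $\varphi$ vanishes on the parabolic boundary of $P_{2r}(u_{0})$), the left-hand side becomes $\int\varphi^{2}|\hat{\nabla}_{A}\phi|^{2}\,dv_{g}\,dt\ge\int_{P_{r}(u_{0})}|\hat{\nabla}_{A}\phi|^{2}\,dv_{g}\,dt$, and the right-hand side is a sum of integrals of $|\phi|^{2}$ against $\Delta(\varphi^{2})$, $\partial_{t}(\varphi^{2})$, $\langle d(\varphi^{2}),V\rangle$ and $\varphi^{2}\,\mathrm{div}\,V$, plus the term $C_{0}\int\varphi^{2}\,dv_{g}\,dt$. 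Bounding $|\phi|^{2}\le\Lambda_{0}$, using the derivative estimates on $\varphi$, the smoothness of $V$, and $\Vol(P_{2r}(u_{0}))\le Cr^{2n+2}$, every such term is at most $C(\Lambda_{0},X,\omega)\,r^{2n}$: the factors $|\Delta(\varphi^{2})|$ and $|\partial_{t}(\varphi^{2})|$ are $O(r^{-2})$ and are integrated over a region of volume $O(r^{2n+2})$, giving $O(r^{2n})$, while the remaining terms carry strictly higher powers of $r$ and are absorbed because $r$ is bounded. This yields $\int_{P_{r}(u_{0})}|\hat{\nabla}_{A}\phi|^{2}\,dv_{g}\,dt\le C_{9}r^{2n}$.

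The argument is essentially routine once the $C^{0}$-bound $\sup_{X}|\phi|\le\Lambda_{0}^{1/2}$ of Proposition \ref{p:10} is in hand; the only feature beyond the K\"ahler situation is the drift term $-\langle d|\phi|^{2},V\rangle$ coming from (\ref{la1}), but since $V$ is a fixed smooth vector field and $|\phi|^{2}$ is already controlled, one integration by parts converts it into terms of order $r^{2n+1}$ and $r^{2n+2}$, dominated by $r^{2n}$. The one point that needs a little care --- and is not a genuine obstacle --- is the range of $r$ near the threshold $\min\{\sqrt{t_{0}},i_{X}/2\}$: the cut-off argument directly handles $r$ below half the threshold, and the remaining values follow by covering $P_{r}(u_{0})$ with boundedly many smaller parabolic cylinders on which the estimate has already been established.
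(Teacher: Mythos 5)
Your proposal is correct and is essentially the paper's own argument: both rest on the $C^{0}$-bound and differential inequality of Proposition \ref{p:10}, tested against a cut-off and integrated, with the drift term from (\ref{la1}) absorbed harmlessly. The only (cosmetic) difference is that the paper uses a purely spatial cut-off $f\in C^{\infty}_{0}(B_{2r}(x_{0}))$ and integrates in time over $[t_{0}-r^{2},t_{0}+r^{2}]$, accepting the $O(\Vol(B_{2r}))=O(r^{2n})$ temporal boundary term; this handles the whole range $r<\min\{\sqrt{t_{0}},i_{X}/2\}$ directly and lets you drop your covering step, which as written is slightly delicate when $t_{0}-r^{2}$ is small compared to $r^{2}$.
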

\begin{proof}
Choose $f\in C^{\infty}_{0}(B_{2r}(x_{0}))$ satisfying $0\leq f\leq 1$, $f\equiv 1$ on $B_{r}(x_{0})$, $|d f|\leq 4/r$ and $|\Delta f|\leq 16/r^{2}$ on $B_{2r}(x_{0})\setminus B_{r}(x_{0})$. According to Proposition \ref{p:10}, we obtain
\begin{equation}
\begin{split}
&(\Delta-\frac{\partial}{\partial t})f^{2}|\phi|^{2}\\
=&2|df|^{2}|\phi|^{2}+2f|\phi|^{2}\Delta f+4f\langle d f,d|\phi|^{2}\rangle+f^{2}(\Delta-\frac{\partial}{\partial t})|\phi|^{2}\\
\geq&2|df|^{2}|\phi|^{2}+2f|\phi|^{2}\Delta f-8f|df|\cdot |\phi|\cdot |\hat{\nabla}_{A}\phi|+2f^{2}|\hat{\nabla}_{A}\phi|^{2}\\
&-C_{2}f^{2}(|\phi|^{2}+1)-C_{3}f^{2}|\phi|\cdot |\hat{\nabla}_{A}\phi|-f^{2}|V|\cdot |\phi|\cdot |\hat{\nabla}_{A}\phi|\\
\geq& f^{2}|\hat{\nabla}_{A} \phi|^{2}-\bar{C}_{1}-\bar{C}_{2}(|df|^{2}+|\Delta f|),
\end{split}
\end{equation}
where the constants $\bar{C}_{1},\bar{C}_{2}$ depend only on the geometry of $(X,\omega)$ and $\sup_{X}|\phi_{0}|_{H_{0}}$, and $V$ is the $1$-form defined in (\ref{la1}). One can conclude this proposition by integrating both sides of the inequality over $X\times[t_{0}-r^{2},t_{0}+r^{2}]$.
\end{proof}

The fundamental solution of (backward) heat equation with singularity at $u_{0}=(x_{0}, t_{0})$ is
\begin{equation}
G_{u_{0}}(x,t)=G_{(x_{0},t_{0})}(x,t)=\frac{1}{(4\pi(t_{0}-t))^{2n}}\exp\Big(-\frac{|x-x_{0}|^{2}}{4(t_{0}-t)}\Big),\ \ t\leq t_{0}.
\end{equation}
For simplicity, denote $G_{(0,0)}(x,t)$ by $G(x,t)$.

Given $0<R\leq i_{X}$, we take $f\in\ C^{\infty}_{0}(B_{R})$ satisfying $0\leq f\leq 1$, $f\equiv 1$ on $B_{R/2}$ and $|\nabla f|\leq 2/R$ on $B_{R}\setminus B_{R/2}$. Let $(A,\phi)$ be a solution of the modified Yang-Mills-Higgs flow with initial value $(A_{0},\phi_{0})$ and $A^{'}=A+\phi+\phi^{*}$. For any $(x,t)\in X\times [0,+\infty)$, set
\begin{equation}
e(A,\phi)(x,t)=|F_{A^{'}}|^{2}
\end{equation}
and
\begin{equation}
\Phi(r)=r^{2}\int_{T_{r}(u_{0})}e(A,\phi)f^{2}G_{u_{0}}dv_{g}dt.
\end{equation}

\begin{theorem}\label{thm:4}
Let $(A,\phi)$ be a solution of the modified Yang-Mills-Higgs flow. For any $u_{0}=(x_{0},t_{0})\in X\times [0,T]$ and $0<r_{1}\leq r_{2}\leq \min\{R/2,\sqrt{t_{0}}/2\}$, we have
\begin{equation}
\begin{split}
\Phi(r_{1})\leq &C_{10}\exp(C_{10}(r_{2}-r_{1}))\Phi(r_{2})+C_{10}(r_{2}^{2}-r_{1}^{2})\rm{YMH}(A_{0},\phi_{0})\\
&+C_{10}R^{2-2n}\int_{P_{R}(u_{0})}e(A,\phi)dv_{g}dt,
\end{split}
\end{equation}
where the constant $C_{10}$ depends only on the geometry of $(X,\omega)$.
\end{theorem}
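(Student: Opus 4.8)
The plan is to prove the stated inequality by differentiating $\Phi(r)$ in the scale $r$ and establishing an almost-monotonicity differential inequality, following the scheme of Struwe and Chen--Struwe for the harmonic map and Yang--Mills heat flows, adapted here to the Hermitian (possibly non-K\"ahler) base and to the Higgs field. First I would fix $u_{0}=(x_{0},t_{0})$, work in the normal geodesic coordinates around $x_{0}$ in which (\ref{a}) holds, and use that along the modified Yang--Mills--Higgs flow the density $e(A,\phi)=|F_{A'}|^{2}=|F_{A}+[\phi,\phi^{*H_{0}}]|^{2}+2|\partial_{A}\phi|^{2}$ is gauge invariant, so it coincides with the Hermitian--Yang--Mills flow quantity $|F_{H(t)}+[\phi_{0},\phi_{0}^{*H(t)}]|_{H(t)}^{2}+2|\partial_{H(t)}\phi_{0}|_{H(t)}^{2}$ and one may invoke the uniform estimates already in hand: $\sup_{X}|\phi|^{2}\le C$ (Proposition \ref{p:10}), $\sup_{X}|\sqrt{-1}\Lambda_{\omega}(F_{A}+[\phi,\phi^{*H_{0}}])-\lambda\Id_{E}|\le C$ (Proposition \ref{p}), the monotonicity $\textrm{YMH}(A(t),\phi(t))\le\textrm{YMH}(A_{0},\phi_{0})$ together with $\int_{X}e(A,\phi)\,dv_{g}=\textrm{YMH}(A(t),\phi(t))$ from (\ref{re1}), and the local bound $\int_{P_{r}(u_{0})}|\hat{\nabla}_{A}\phi|^{2}\le C\,r^{2n}$ (Proposition \ref{p:2}).

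Next I would perform the parabolic rescaling $s=(t_{0}-t)/r^{2}\in[1,4]$, which turns $T_{r}(u_{0})$ into a fixed slab, and differentiate the rescaled integral in $r$. The differentiation yields a nonnegative Struwe-type term measuring the deviation of $(A,\phi)$ from self-similarity near $u_{0}$ (the term that would force strict monotonicity in the model flat case), together with error terms of three kinds: those obtained when $\frac{\partial}{\partial t}e(A,\phi)$ is replaced via the evolution of the flow, those coming from the cutoff $f$, and those coming from the curvature and torsion of $(X,\omega)$, which enter through $\Delta f=2\sqrt{-1}\Lambda_{\omega}\partial\bar{\partial}f+\langle df,V\rangle$ (see (\ref{la1})) and through (\ref{a}). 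For the first kind the crucial input is the parabolic Bochner inequality of Proposition \ref{p:1}: combining (\ref{k01}) and (\ref{k02}) gives a lower bound for $(2\sqrt{-1}\Lambda_{\omega}\partial\bar{\partial}-\frac{\partial}{\partial t})e(A,\phi)$ of the form $2|\hat{\nabla}_{A}\partial_{A}\phi|^{2}+|\hat{\nabla}_{A}(F_{A}+[\phi,\phi^{*H_{0}}])|^{2}$ minus terms controlled by $(|T|^{2}+|\hat{\nabla}_{A}T|+|F_{A}|+|R_{\omega}|+|\hat{\nabla}_{A}\phi|+|\phi|^{2})\,e(A,\phi)$ and by $|\hat{\nabla}_{A}\phi|^{2}$; after multiplying by the weight $r^{2}f^{2}G_{u_{0}}$, integrating by parts and applying Young's inequality, the second-order squares on the right are absorbed, the trace-type pieces of the cubic terms are controlled by the uniform bound on $\Lambda_{\omega}F_{A}$, and the remaining lower-order pieces are either absorbed into $C\,\Phi(r)$ or pushed into the global and local energy.

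For the error terms coming from $f$ and from the geometry I would argue as follows. On $\mathrm{supp}(df)\cup\mathrm{supp}(\Delta f)\subset B_{R}\setminus B_{R/2}$ one has $|df|^{2}+|\Delta f|\le C R^{-2}$, and since there $|x-x_{0}|\ge R/2$ while $t_{0}-t\in[r^{2},4r^{2}]$ the weight obeys $r^{2}G_{u_{0}}\le C\,r^{2-4n}e^{-R^{2}/(Cr^{2})}\le C\,R^{2-4n}$ uniformly in $r\le R/2$; integrating over $P_{R}(u_{0})$ yields exactly the contribution $C_{10}R^{2-2n}\int_{P_{R}(u_{0})}e(A,\phi)\,dv_{g}\,dt$. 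The zeroth-order errors $C(1+|\phi|^{2}+|\phi|^{4})$ and the term $\langle de(A,\phi),V\rangle$ (after one more integration by parts, costing a factor $\nabla G_{u_{0}}$), together with the $|\hat{\nabla}_{A}\phi|^{2}$-pieces, integrate against the weight to at most $C\,r^{2}\textrm{YMH}(A(t),\phi(t))+C\,r^{2}\le C\,r^{2}\textrm{YMH}(A_{0},\phi_{0})+C\,r^{2}$ by the $\textrm{YMH}$ monotonicity and Proposition \ref{p:2}, which, after differentiating in $r$ and absorbing constants, accounts for the term $C_{10}(r_{2}^{2}-r_{1}^{2})\textrm{YMH}(A_{0},\phi_{0})$. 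Putting everything together one obtains $\frac{d}{dr}\Phi(r)\ge -C\,\Phi(r)-C\,r\,\textrm{YMH}(A_{0},\phi_{0})-C\,R^{1-2n}\!\int_{P_{R}(u_{0})}e(A,\phi)\,dv_{g}\,dt$; multiplying by $e^{Cr}$, integrating from $r_{1}$ to $r_{2}$ and using $r_{2}\le\min\{R/2,\sqrt{t_{0}}/2\}$ to bound $e^{Cr}$ gives the stated inequality.

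The hard part will be the bookkeeping of the lower-order error terms: many of them are produced by $T$, $\hat{\nabla}_{A}T$, $R_{\omega}$ and the $1$-form $V$, and for each one must check that, once multiplied by $r^{2}f^{2}G_{u_{0}}$ and (where needed) integrated by parts, it is either absorbed by the good squares $|\hat{\nabla}_{A}\partial_{A}\phi|^{2}$ and $|\hat{\nabla}_{A}(F_{A}+[\phi,\phi^{*H_{0}}])|^{2}$ via Young's inequality, or dominated by $C\,\Phi(r)$, or estimated by the global energy together with the local energy over $P_{R}(u_{0})$; the Gaussian decay of $G_{u_{0}}$ is what makes the polynomial-in-$r^{-1}$ factors harmless, and one must track the cubic-in-$F_{A}$ terms with some care since only the trace part $\Lambda_{\omega}F_{A}$ is bounded a priori. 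A secondary technical point is that $\Phi(r)$ depends on $r$ simultaneously through the coordinate ball, the cutoff and the heat kernel, so the differentiation is cleanest if one first differentiates the rescaled integral over the fixed slab $s\in[1,4]$ and only afterwards returns to the original variables.
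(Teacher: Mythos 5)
Your overall scheme (rescale to the fixed slab $\tilde t\in[-4,-1]$, differentiate $\Phi$ in $r$, derive $\frac{d\Phi}{dr}\ge -C\Phi-Cr\,\mathrm{YMH}-CR^{-2n}r\int_{P_R}e$, then integrate) matches the paper, and your treatment of the cutoff and curvature-of-$X$ errors is essentially what is done. But the central mechanism you propose does not work. You want to control the terms produced by $x^{i}\partial_{i}e(A,\phi)$ and $\partial_{t}e(A,\phi)$ by invoking the parabolic Bochner inequality (\ref{k02}) of Proposition \ref{p:1}, multiplying by $r^{2}f^{2}G_{u_{0}}$, integrating by parts and absorbing via Young. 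The right-hand side of (\ref{k02}) contains the error term $-C_{6}\,|F_{A}|\,\bigl(|F_{A}+[\phi,\phi^{*H_{0}}]-\xi\otimes\mathrm{Id}_{E}|^{2}+2|\partial_{A}\phi|^{2}\bigr)$, i.e.\ a term of order $e(A,\phi)^{3/2}$. Along the flow only the trace part $\Lambda_{\omega}F_{A}$ is bounded (Proposition \ref{p}); the full norm $|F_{A}|$ is not, so this cubic term can be absorbed neither into $C\,\Phi(r)$ nor into the good squares $|\hat{\nabla}_{A}\partial_{A}\phi|^{2}$, $|\hat{\nabla}_{A}(F_{A}+\cdots)|^{2}$, nor controlled by the global or local energy. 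This is not a bookkeeping issue, as you suggest at the end; it is the reason the Bochner inequality is reserved for the $\epsilon$-regularity step (Theorem \ref{thm:5}), where a pointwise bound $e_{\rho_{0}}\le 16$ is available after rescaling, and cannot serve as the engine of the monotonicity formula, which must hold with no a priori pointwise curvature bound.

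What the paper does instead is the Struwe/Chen--Struwe stress-energy argument: for the term $I_{2}$ involving $x^{i}\partial_{i}e$, it uses the Bianchi identity $D_{A'}F_{A'}=0$ for the full Hitchin--Simpson connection $A'=A+\phi+\phi^{*}$ to trade $x^{i}\nabla_{i}F_{A'}(\partial_{j},\partial_{k})$ for $D_{A}(x\odot F_{A'})$ plus commutators with $\phi+\phi^{*}$, then converts $D_{A}^{*}F_{A'}$ into $\frac{\partial A'}{\partial t}$ via the flow equations and Demailly's formula (\ref{z:1}) (which supplies the torsion corrections $\tau^{*},\bar{\tau}^{*}$). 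Combined with $I_{1}$ and $I_{3}$ and the identity $\partial_{i}G_{u_{0}}=\frac{x^{i}G_{u_{0}}}{2(t-t_{0})}$, this produces the nonnegative square $\frac{1}{|t-t_{0}|}\bigl||t-t_{0}|\frac{\partial A'}{\partial t}-x\odot F_{A'}\bigr|^{2}$ exactly, leaving residual errors that are only quadratic in $F_{A'}$ with small weights $|x|^{2}$, $|x|^{6}/|t-t_{0}|$, $|t-t_{0}|$; these are then handled by the kernel estimates $r^{-1}|t-t_{0}|\,|x|^{6}G_{u_{0}}\le C(1+G_{u_{0}})$ and $r^{-1}|x|^{2}G_{u_{0}}\le C(1+G_{u_{0}})$, giving $C\Phi(r)+Cr\,\mathrm{YMH}(A_{0},\phi_{0})$. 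To repair your proof you would need to replace the Bochner step by this Pohozaev-type identity; the rest of your outline then goes through.
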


\begin{proof}
Choose normal geodesic coordinates $\{x^{i}\}_{i=1}^{2n}$ in the geodesic ball $B_{R}(x_{0})$. Let $x=r\tilde{x}$, $t=t_{0}+r^{2}\tilde{t}$. There holds that
\begin{equation}
\begin{split}
\Phi(r)&=r^{2}\int_{T_{r}(u_{0})}e(A,\phi)f^{2}G_{u_{0}}dv_{g}dt\\
&=r^{2}\int_{t_{0}-4r^{2}}^{t_{0}-r^{2}}\int_{\mathbb{R}^{2n}}e(A,\phi)(x,t)f^{2}(x)G_{u_{0}}(x,t)\sqrt{\det{(g_{ij})}}(x)dxdt\\
&=r^{4}\int_{-4}^{-1}\int_{\mathbb{R}^{2n}}e(A,\phi)(r\tilde{x},t_{0}+r^{2}\tilde{t})f^{2}(r\tilde{x})G(\tilde{x},\tilde{t})\sqrt{\det{(g_{ij})}}(r\tilde{x})d\tilde{x}d\tilde{t}.
\end{split}
\end{equation}
Then one can see that
\begin{equation}
\begin{split}
&\frac{d\Phi(r)}{dr}=4r^{3}\int_{-4}^{-1}\int_{\mathbb{R}^{2n}}e(A,\phi)(r\tilde{x},t_{0}+r^{2}\tilde{t})f^{2}(r\tilde{x})G(\tilde{x},\tilde{t})\sqrt{\det{(g_{ij})}}(r\tilde{x})d\tilde{x}d\tilde{t}\\
&+r^{3}\int_{-4}^{-1}\int_{\mathbb{R}^{2n}}\{x^{i}\partial_{i}e(A,\phi)(r\tilde{x},t_{0}+r^{2}\tilde{t})\}f^{2}(r\tilde{x})G(\tilde{x},\tilde{t})\sqrt{\det{(g_{ij})}}(r\tilde{x})d\tilde{x}d\tilde{t}\\
&+r^{3}\int_{-4}^{-1}\int_{\mathbb{R}^{2n}}\{2(t-t_{0})\partial_{t}e(A,\phi)(r\tilde{x},t_{0}+r^{2}\tilde{t})\}f^{2}(r\tilde{x})G(\tilde{x},\tilde{t})\sqrt{\det{(g_{ij})}}(r\tilde{x})d\tilde{x}d\tilde{t}\\
&+r^{4}\int_{-4}^{-1}\int_{\mathbb{R}^{2n}}e(A,\phi)(r\tilde{x},t_{0}+r^{2}\tilde{t})\frac{d}{dr}\{f^{2}(r\tilde{x})\sqrt{\det{(g_{ij})}}(r\tilde{x})\}G(\tilde{x},\tilde{t})d\tilde{x}d\tilde{t}\\
=&I_{1}+I_{2}+I_{3}+I_{4}.
\end{split}
\end{equation}
First, we have
\begin{equation}\label{eqn:31}
\begin{split}
I_{2}&=r\int_{T_{r}(u_{0})}\{x^{i}\partial_{i}e(A,\phi)(x,t)\}f^{2}(x)G_{u_{0}}(x,t)dv_{g}dt\\
&=2rRe\int_{T_{r}(u_{0})}\langle x^{i}\nabla_{i}F_{A^{'}},F_{A^{'}}\rangle f^{2}G_{u_{0}}dv_{g}dt\\
&=2rRe\int_{T_{r}(u_{0})}\langle x^{i}\nabla_{i}F_{A^{'}}(\partial_{j},\partial_{k})dx^{j}\wedge dx^{k}, F_{A^{'}}\rangle f^{2}G_{u_{0}}dv_{g}dt.
\end{split}
\end{equation}
From the Bianchi identity
\begin{equation}
D_{A^{'}}F_{A^{'}}=D_{A}F_{A^{'}}+[\phi+\phi^{*}, F_{A^{'}}]=0,
\end{equation}
it follows that
\begin{equation}\label{eqn:32}
\begin{split}
\nabla_{i}F_{A^{'}}(\partial_{j},\partial_{k})=&-[\phi_{i}+\phi^{*}_{i},F_{A^{'},jk}]+\nabla_{j}F_{A^{'}}(\partial_{i},\partial_{k})+[\phi_{j}+\phi^{*}_{j},F_{A^{'},ik}]\\
&-\nabla_{k}F_{A^{'}}(\partial_{i},\partial_{j})-[\phi_{k}+\phi^{*}_{k}, F_{A^{'},ij}],
\end{split}
\end{equation}
and
\begin{equation}\label{eqn:33}
\begin{split}
&\langle x^{i}\nabla_{j}F_{A^{'}}(\partial_{i},\partial_{k})dx^{j}\wedge dx^{k}, F_{A^{'}}\rangle\\
=&\langle x^{i}D_{A}(F_{A^{'},ik}dx^{k})-x^{i}(F_{A^{'}}(\nabla_{j}\partial_{i},\partial_{k})+F_{A^{'}}(\partial_{i},\nabla_{j}\partial_{k}))dx^{j}\wedge dx^{k},F_{A^{'}}\rangle\\
=&\langle D_{A}(x^{i}F_{A^{'},ik}dx^{k})-x^{i}F_{A^{'}}(\nabla_{j}\partial_{i},\partial_{k})dx^{j}\wedge dx^{k},F_{A^{'}}\rangle-|F_{A^{'}}|^{2},
\end{split}
\end{equation}
and
\begin{equation}\label{eqn:34}
\begin{split}
&-\langle x^{i}\nabla_{k}F_{A^{'}}(\partial_{i},\partial_{j})dx^{j}\wedge dx^{k}, F_{A^{'}}\rangle\\
=&\langle x^{i}D_{A}(F_{A^{'},ij}dx^{j})+x^{i}(F_{A^{'}}(\nabla_{k}\partial_{i},\partial_{j})+F_{A^{'}}(\partial_{i},\nabla_{k}\partial_{j}))dx^{j}\wedge dx^{k},F_{A^{'}}\rangle\\
=&\langle D_{A}(x^{i}F_{A^{'},ij}dx^{j})+x^{i}F_{A^{'}}(\nabla_{k}\partial_{i},\partial_{j})dx^{j}\wedge dx^{k},F_{A^{'}}\rangle-|F_{A^{'}}|^{2}.
\end{split}
\end{equation}
Under the condition $\partial\bar{\partial}\omega^{n-2}=\partial\bar{\partial}\omega^{n-1}=0$, Demailly (\cite{D}) proved
\begin{equation}\label{z:1}
\partial_{A}^{*}=\sqrt{-1}[\Lambda_{\omega},\bar{\partial}]-\tau^{*},\ \ \bar{\partial}_{A}^{*}=-\sqrt{-1}[\Lambda_{\omega},\partial_{A}]-\bar{\tau}^{*},
\end{equation}
where $\tau=[\Lambda_{\omega},\partial\omega]$. Letting $x\odot F_{A^{'}}=x^{i}F_{A^{'},ij}dx^{j}$, we derive
\begin{equation}\label{eqn:35}
\begin{split}
&Re\langle x\odot F_{A^{'}},D_{A}^{*}F_{A^{'}}\rangle \\
=&-Re\langle x\odot F_{A^{'}},\frac{\partial A}{\partial t}\rangle
+Re\langle x\odot F_{A^{'}},\sqrt{-1}\Lambda_{\omega}((\bar{\partial}_{A}-\partial_{A})[\phi,\phi^{*}])\rangle \\
&+Re\langle x\odot F_{A^{'}},\sqrt{-1}\Lambda_{\omega}([F_{A},\phi]-[F_{A},\phi^{*}])\rangle \\
&-Re\langle x\odot F_{A^{'}},(\tau^{*}+\bar{\tau}^{*})F_{A^{'}}\rangle.
\end{split}
\end{equation}
It is not hard to find that
\begin{equation}\label{eqn:36}
\begin{split}
&\langle x^{i}[\phi_{i}+\phi^{*}_{i},F_{A^{'}}],F_{A^{'}}\rangle=0.
\end{split}
\end{equation}
Because of $(F_{A}+[\phi,\phi^{*}])^{*}=-(F_{A}+[\phi,\phi^{*}]),(\partial_{A}\phi+\bar{\partial}_{A}\phi^{*})^{*}=\partial_{A}\phi+\bar{\partial}_{A}\phi^{*}$, we know
\begin{equation}\label{eqn:37}
\begin{split}
&Re\langle[\phi+\phi^{*},x\odot F_{A^{'}}],F_{A^{'}}\rangle\\
=&Re\langle[\phi+\phi^{*},x\odot(F_{A}+[\phi,\phi^{*}])],\partial_{A}\phi+\bar{\partial}_{A}\phi^{*}\rangle\\
&+Re\langle[\phi+\phi^{*},x\odot(\partial_{A}\phi+\bar{\partial}_{A}\phi^{*})],F_{A}+[\phi,\phi^{*}]\rangle,
\end{split}
\end{equation}
where
\begin{equation}\label{eqn:38(1)}
\begin{split}
\langle[\phi,x\odot (F_{A}+[\phi,\phi^{*}])],\partial_{A}\phi\rangle=&\langle x\odot (F_{A}+[\phi,\phi^{*}]),\sqrt{-1}\Lambda_{\omega}\partial_{A}[\phi,\phi^{*}]\rangle,\\
\langle[\phi^{*},x\odot (F_{A}+[\phi,\phi^{*}])],\bar{\partial}_{A}\phi^{*}\rangle=&-\langle x\odot (F_{A}+[\phi,\phi^{*}]),\sqrt{-1}\Lambda_{\omega}\bar{\partial}_{A}[\phi,\phi^{*}]\rangle\\
\end{split}
\end{equation}
and
\begin{equation}\label{eqn:38(2)}
\begin{split}
&\langle[\phi^{*},x\odot (\partial_{A}\phi+\bar{\partial}_{A}\phi^{*})],F_{A}+[\phi,\phi^{*}]\rangle\\
=&-\langle x\odot (\partial_{A}\phi+\bar{\partial}_{A}\phi^{*}),\sqrt{-1}\Lambda_{\omega}[F_{A}+[\phi,\phi^{*}],\phi]\rangle\\
&+\langle x\odot (\partial_{A}\phi+\bar{\partial}_{A}\phi^{*}),[\sqrt{-1}\Lambda_{\omega}(F_{A}+[\phi,\phi^{*}]),\phi]\rangle,\\
&\langle[\phi,x\odot (\partial_{A}\phi+\bar{\partial}_{A}\phi^{*})],F_{A}+[\phi,\phi^{*}]\rangle\\
=&\langle x\odot (\partial_{A}\phi+\bar{\partial}_{A}\phi^{*}),\sqrt{-1}\Lambda_{\omega}[F_{A}+[\phi,\phi^{*}],\phi^{*}]\rangle\\
&-\langle x\odot (\partial_{A}\phi+\bar{\partial}_{A}\phi^{*}),[\sqrt{-1}\Lambda_{\omega}(F_{A}+[\phi,\phi^{*}]),\phi^{*}]\rangle.
\end{split}
\end{equation}
So
\begin{equation}\label{eqn:39}
\begin{split}
&Re\langle[\phi+\phi^{*},x\odot F_{A^{'}}],F_{A^{'}}\rangle\\
=&-Re\langle x\odot F_{A^{'}},\sqrt{-1}\Lambda_{\omega}((\bar{\partial}_{A}-\partial_{A})[\phi,\phi^{*}])\rangle \\
&-Re\langle x\odot F_{A^{'}},\sqrt{-1}\Lambda_{\omega}([F_{A},\phi]-[F_{A},\phi^{*}])\rangle \\
&-Re\langle x\odot F_{A^{'}},\frac{\partial\phi}{\partial t}+\frac{\partial\phi^{*}}{\partial t}\rangle.
\end{split}
\end{equation}
Combining (\ref{eqn:32}), (\ref{eqn:33}), (\ref{eqn:34}), (\ref{eqn:35}), (\ref{eqn:36}) and (\ref{eqn:39}), and using $\phi\wedge\phi=0$, we deduce
\begin{equation}\label{310}
\begin{split}
I_{1}+I_{2}=&-4rRe\int_{T_{r}(u_{0})}\langle d(f^{2}G_{u_{0}})\wedge x\odot F_{A^{'}},F_{A^{'}}\rangle dv_{g}dt\\
&-4rRe\int_{T_{r}(u_{0})}\langle x\odot F_{A^{'}},\frac{\partial A^{'}}{\partial t}\rangle f^{2}G_{u_{0}}dv_{g}dt \\
&+4rRe\int_{T_{r}(u_{0})}\langle x^{i}F_{A^{'}}(\nabla_{k}\partial_{i},\partial_{j})dx^{j}\wedge dx^{k},F_{A^{'}}\rangle f^{2}G_{u_{0}}dv_{g}dt\\
&-4rRe\int_{T_{r}(u_{0})}\langle x\odot F_{A^{'}},(\tau^{*}+\bar{\tau}^{*})F_{A^{'}}\rangle f^{2}G_{u_{0}}dv_{g}dt.
\end{split}
\end{equation}
Noting that
\begin{equation}
\begin{split}
\langle F_{A^{'}},\Big[\phi,\frac{\partial A^{'}}{\partial t}\Big]\rangle=&\langle \partial_{A}\phi,\Big[\phi,\frac{\partial A^{'}}{\partial t}\Big]\rangle+\langle F_{A}+[\phi,\phi^{*}],\Big[\phi,\frac{\partial A^{'}}{\partial t}\Big]\rangle\\
=&\langle\sqrt{-1}\Lambda_{\omega}[\partial_{A}\phi,\phi^{*}],\frac{\partial A^{'}}{\partial t}\rangle+\langle\sqrt{-1}\Lambda_{\omega}[F_{A}+[\phi,\phi^{*}],\phi^{*}],\frac{\partial A^{'}}{\partial t}\rangle\\
&-\langle[\sqrt{-1}\Lambda_{\omega}(F_{A}+[\phi,\phi^{*}]),\phi^{*}],\frac{\partial A^{'}}{\partial t}\rangle,
\end{split}
\end{equation}
\begin{equation}
\begin{split}
\langle F_{A^{'}},\Big[\phi^{*},\frac{\partial A^{'}}{\partial t}\Big]\rangle=&-\langle\sqrt{-1}\Lambda_{\omega}[\bar{\partial}_{A}\phi^{*},\phi],\frac{\partial A^{'}}{\partial t}\rangle-\langle\sqrt{-1}\Lambda_{\omega}[F_{A}+[\phi,\phi^{*}],\phi],\frac{\partial A^{'}}{\partial t}\rangle\\
&+\langle[\sqrt{-1}\Lambda_{\omega}(F_{A}+[\phi,\phi^{*}]),\phi],\frac{\partial A^{'}}{\partial t}\rangle,
\end{split}
\end{equation}
and applying $\phi\wedge\phi=0$ again, one can get
\begin{equation}
\begin{split}
&Re\langle D_{A}^{*}F_{A^{'}},\frac{\partial A^{'}}{\partial t}\rangle+Re\langle F_{A^{'}},\Big[\phi+\phi^{*},\frac{\partial A^{'}}{\partial t}\Big]\rangle\\
=&-\Big|\frac{\partial A}{\partial t}\Big|^{2}-2\Big|\frac{\partial \phi}{\partial t}\Big|^{2}-Re\langle(\tau^{*}+\bar{\tau}^{*})F_{A^{'}},\frac{\partial A^{'}}{\partial t}\rangle.
\end{split}
\end{equation}
The flow and Stokes formula imply that
\begin{equation}\label{eqn:311}
\begin{split}
I_{3}=&2r\int_{T_{r}(u_{0})}(t-t_{0})\partial_{t}e(A,\phi)f^{2}G_{u_{0}}dv_{g}dt\\
=&-4rRe\int_{T_{r}(u_{0})}(t-t_{0})\Big(\Big|\frac{\partial A}{\partial t}\Big|^{2}+2\Big|\frac{\partial \phi}{\partial t}\Big|^{2}\Big)dv_{g}dt\\
&-4rRe\int_{T_{r}(u_{0})}(t-t_{0})\langle F_{A^{'}},d(f^{2}G_{u_{0}})\wedge\frac{\partial A^{'}}{\partial t}\rangle dv_{g}dt\\
&-4rRe\int_{T_{r}(u_{0})}(t-t_{0})\langle(\tau^{*}+\bar{\tau}^{*})F_{A^{'}},\frac{\partial A^{'}}{\partial t}\rangle f^{2}G_{u_{0}}dv_{g}dt.
\end{split}
\end{equation}
Let $x\cdot F_{A^{'}}=\frac{1}{2}x^{i}g^{ij}F_{A^{'},jk}dx^{k}$, and notice that $\partial_{i}G_{u_{0}}=\frac{x^{i}G_{u_{0}}}{2(t-t_{0})}$, then
\begin{equation}\label{eqn:312}
\begin{split}
&I_{1}+I_{2}+I_{3}\\
=&4rRe\int_{T_{r}(u_{0})}\frac{1}{|t-t_{0}|}\Big||t-t_{0}|\frac{\partial A^{'}}{\partial t}-x\odot F_{A^{'}}\Big|^{2}f^{2}G_{u_{0}}dv_{g}dt\\
&+4rRe\int_{T_{r}(u_{0})}\frac{1}{|t-t_{0}|}\langle x\cdot F_{A^{'}}-x\odot F_{A^{'}},x\odot F_{A^{'}}-|t-t_{0}|\frac{\partial A^{'}}{\partial t}\rangle f^{2}G_{u_{0}}dv_{g}dt \\
&+4rRe\int_{T_{r}(u_{0})}\langle 2f^{-1}\nabla f\llcorner F_{A^{'}},|t-t_{0}|\frac{\partial A}{\partial t}-x\odot F_{A^{'}}\rangle f^{2}G_{u_{0}}dv_{g}dt\\
&+4rRe\int_{T_{r}(u_{0})}\langle x^{i}F_{A^{'}}(\nabla_{k}\partial_{i},\partial_{j})dx^{j}\wedge dx^{k},F_{A^{'}}\rangle f^{2}G_{u_{0}}dv_{g}dt\\
&+4rRe\int_{T_{r}(u_{0})}\langle |t-t_{0}|\frac{\partial A^{'}}{\partial t}-x\odot F_{A^{'}},(\tau^{*}+\bar{\tau}^{*})F_{A^{'}}\rangle f^{2}G_{u_{0}}dv_{g}dt.
\end{split}
\end{equation}
Calculating straightforward, we obtain
\begin{equation}\label{eqn:313}
\begin{split}
I_{4}=&r\int_{T_{r}(u_{0})}e(A,\phi)x^{i}\partial_{i}(f^{2}\sqrt{\det(g_{ij})})G_{u_{0}}dxdt\\
=&r\int_{T_{r}(u_{0})}e(A,\phi)2fx^{i}\partial_{i}fG_{u_{0}}dv_{g}dt\\
&+\frac{r}{2}\int_{T_{r}(u_{0})}e(A,\phi)x^{i}\tr(g^{-1}\partial_{i}g)f^{2}G_{u_{0}}dv_{g}dt.
\end{split}
\end{equation}
Based on (\ref{a}) and $|\Gamma_{jk}^{i}|\leq C|x|$, there exist constants $\tilde{C}_{1}$ and $\tilde{C}_{2}$, such that
\begin{equation}\label{eqn:314}
\begin{split}
&|x\cdot F_{A^{'}}-x\odot F_{A^{'}}|^{2}=\Big|\frac{1}{2}x^{i}(\delta_{j}^{i}-g^{ij})F_{A^{'},jl}dx^{l}\Big|^{2}\leq \tilde{C}_{1}|x|^{6}|F_{A^{'}}|^{2},\\
&\langle x^{i}F_{A^{'}}(\nabla_{j}\partial_{i},\partial_{k})dx^{j}\wedge dx^{k},F_{A^{'}}\rangle\leq \tilde{C}_{1}|x|^{2}|F_{A^{'}}|^{2},\\
&\tr(g^{-1}\partial_{k}g)\leq \tilde{C}_{1}|x|,
\end{split}
\end{equation}
and $|(\tau^{*}+\bar{\tau}^{*})F_{A^{'}}|^{2}\leq \tilde{C}_{2}|F_{A^{'}}|^{2}$. Due to (\ref{eqn:312}) and (\ref{eqn:313}), it holds
\begin{equation}
\begin{split}
\frac{d\Phi(r)}{dr}\geq& r\int_{T_{r}(u_{0})}\frac{1}{|t-t_{0}|}\Big||t-t_{0}|\frac{\partial A^{'}}{\partial t}-x\odot F_{A^{'}}\Big|^{2}f^{2}G_{u_{0}}dv_{g}dt\\
&-\tilde{C}_{3}r\int_{T_{r}(u_{0})}\frac{|x|^{6}}{|t-t_{0}|}|F_{A^{'}}|^{2}f^{2}G_{u_{0}}dv_{g}dt\\
&-r\int_{T_{r}(u_{0})}|t-t_{0}|\cdot |2f^{-1}\nabla\llcorner F_{A^{'}}|^{2}f^{2}G_{u_{0}}dv_{g}dt\\
&-\tilde{C}_{3}r\int_{T_{r}(u_{0})}|t-t_{0}|\cdot |F_{A^{'}}|^{2}f^{2}G_{u_{0}}dv_{g}dt\\
&-\tilde{C}_{3}r\int_{T_{r}(u_{0})}|x|^{2}|F_{A^{'}}|^{2}f^{2}G_{u_{0}}dv_{g}dt\\
&-2r\int_{T_{r}(u_{0})}|x|\cdot |\nabla f|\cdot |f|\cdot |F_{A^{'}}|^{2}G_{u_{0}}dv_{g}dt.
\end{split}
\end{equation}
According to Chen-Struwe's arguments in \cite{CS}, we know there exists a constant $\tilde{C}_{4}>0$ such that
\begin{equation}
\begin{split}
&r^{-1}|t-t_{0}|\cdot |x|^{6}G_{u_{0}}\leq \tilde{C}_{4}(1+G_{u_{0}}),\\
&r^{-1}|x|^{2}G_{u_{0}}\leq \tilde{C}_{4}(1+G_{u_{0}})
\end{split}
\end{equation}
on $T_{r}(u_{0})$. Hence it follows that
\begin{equation}
\begin{split}
&-\tilde{C}_{3}r\int_{T_{r}(u_{0})}\Big(\frac{|x|^{6}}{|t-t_{0}|}+|t-t_{0}|+|x|^{2}\Big)|F_{A^{'}}|^{2}f^{2}G_{u_{0}}dv_{g}dt\\
&\geq -\tilde{C}_{5}\Phi(r)-\tilde{C}_{5}r\textrm{YMH}(A_{0},\phi_{0}),
\end{split}
\end{equation}
where the constant $\tilde{C}_{5}$ depends only on $\tilde{C}_{3}$ and $\tilde{C}_{4}$. As that shown in \cite{NZ2}(page 15), we have
\begin{equation}
\begin{split}
-r\int_{T_{r}(u_{0})}|t-t_{0}|\cdot |2f^{-1}\nabla f\llcorner F_{A^{'}}|^{2}f^{2}G_{u_{0}}dv_{g}dt&\geq-\frac{C(n)r}{R^{2n}}\int_{P_{R}(u_{0})}|F_{A^{'}}|^{2}dv_{g}dt,\\
-2r\int_{T_{r}(u_{0})}|x|\cdot |\nabla f|\cdot |f|\cdot |F_{A^{'}}|^{2}G_{u_{0}}dv_{g}dt&\geq-\frac{C(n)r}{R^{2n}}\int_{P_{R}(u_{0})}|F_{A^{'}}|^{2}dv_{g}dt.
\end{split}
\end{equation}
From above, it can be seen that
\begin{equation}
\frac{d\Phi(r)}{dr}\geq -\tilde{C}_{6}\Phi(r)-\tilde{C}_{6}r\rm{YMH}(A_{0},\phi_{0})-\frac{\tilde{C}_{6}r}{R^{2n}}\int_{P_{R}(u_{0})}|F_{A^{'}}|^{2}dv_{g}dt.
\end{equation}
Then integrating two sides of this inequality concludes this proof.
\end{proof}

By the monotonicity inequality, we can derive the $\epsilon$-regularity theorem.
\begin{theorem}\label{thm:5}
Let $(A(t),\phi(t))$ be a solution of the modified Yang-Mills-Higgs flow. There exist positive constants $\epsilon_{0},\delta_{0}<1/4$, such that if
\begin{equation}
R^{2-2n}\int_{P_{R}(u_{0})}e(A,\phi)dv_{g}dt<\epsilon_{0}
\end{equation}
holds for some $0<R\leq \min\{i_{X}/2,\sqrt{t_{0}}/2\}$, then for any $\delta\in(0,\delta_{0})$, we have
\begin{equation}
\sup_{P_{\delta R}(u_{0})}e(A,\phi)\leq\frac{16}{(\delta R)^{4}}
\end{equation}
and
\begin{equation}
\sup_{P_{\delta R}(u_{0})}|\hat{\nabla}_{A}\phi|^{2}\leq  C_{11},
\end{equation}
where $C_{11}$ is a positive constant depending only on $R$, $\delta_{0}$, $\sup_{X}|\phi_{0}|_{H_{0}}$ and the geometry of $(X,\omega)$.
\end{theorem}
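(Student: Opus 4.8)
The plan is to follow the Struwe--Chen--Struwe scheme for parabolic $\varepsilon$-regularity, adapted to the Higgs and non-K\"ahler setting. The starting point is a parabolic differential inequality for the energy density. Fix a closed $(1,1)$-form $\xi$ with $\Lambda_{\omega}\xi=\lambda$ (available under the Gauduchon condition) and set $g=|F_{A}+[\phi,\phi^{*H_{0}}]-\xi\otimes\Id_{E}|^{2}+2|\partial_{A}\phi|^{2}$. I would begin from the Bochner inequality (\ref{k02}) of Proposition \ref{p:1}, retain the nonnegative second-order terms $2|\hat{\nabla}_{A}\partial_{A}\phi|^{2}+|\hat{\nabla}_{A}(F_{A}+[\phi,\phi^{*H_{0}}]-\xi\otimes\Id_{E})|^{2}$, and bound the coefficient on the right using: $|\phi|\leq C$ along the flow (Proposition \ref{p:10}); $|F_{A}|\leq g^{1/2}+C$ and $|\hat{\nabla}_{A}\phi|=|\partial_{A}\phi|\leq g^{1/2}$, where $\hat{\nabla}_{A}\phi=\partial_{A}\phi$ since $\bar{\partial}_{A}\phi=0$ by (\ref{bianchi3}); and boundedness of $T$, $\hat{\nabla}_{A}T$, $R_{\omega}$ by the geometry. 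Converting $2\sqrt{-1}\Lambda_{\omega}\partial\bar{\partial}$ to the Beltrami--Laplacian via (\ref{la1}) and absorbing the first-order term $\langle dg,V\rangle$ into the second-order terms by Young's inequality, one gets
\begin{equation*}
\Big(\frac{\partial}{\partial t}-\Delta\Big)g\leq C_{0}\big(1+g^{1/2}\big)g \quad\text{on } X\times[0,T],
\end{equation*}
with $C_{0}$ depending only on the geometry of $(X,\omega)$ and $\sup_{X}|\phi_{0}|_{H_{0}}$; the same holds for $e(A,\phi)=|F_{A'}|^{2}$, comparable to $g$ up to terms of order $g^{1/2}+1$, and $|\hat{\nabla}_{A}\phi|^{2}\leq\tfrac12 e$.

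Next I would run the point-picking argument. Suppose the first conclusion fails, i.e.\ $\sup_{P_{\delta R}(u_{0})}e>16(\delta R)^{-4}$ for some $\delta\in(0,\delta_{0})$. Put $f(r)=(R/2-r)^{4}\sup_{\bar{P}_{r}(u_{0})}e$ for $r\in[0,R/2]$; then $f$ is continuous, $f(R/2)=0$, and, using $\delta_{0}<1/4$, $f(\delta R)\geq(R/2-\delta R)^{4}\cdot 16(\delta R)^{-4}>16$. Let $r_{0}\in[0,R/2)$ attain the maximum of $f$, let $e_{0}=\sup_{\bar{P}_{r_{0}}(u_{0})}e$ be attained at $z_{0}=(x_{1},t_{1})\in\bar{P}_{r_{0}}(u_{0})$, and set $\rho_{0}=\tfrac12(R/2-r_{0})$, $r_{1}=e_{0}^{-1/4}$. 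From $f(r_{0})=16\rho_{0}^{4}e_{0}\geq f(\delta R)>16$ one gets $r_{1}<\rho_{0}\leq R/4$; and since $P_{\rho_{0}}(z_{0})\subset P_{r_{0}+\rho_{0}}(u_{0})$ with $R/2-r_{0}-\rho_{0}=\rho_{0}$, maximality of $f$ gives $\sup_{P_{\rho_{0}}(z_{0})}e\leq\rho_{0}^{-4}f(r_{0})=16e_{0}$. Because $R\leq\sqrt{t_{0}}/2$ one has $t_{1}\geq t_{0}-\rho_{0}^{2}>0$ and $t_{1}-r_{1}^{2}>0$, so the flow is defined on $P_{r_{1}}(z_{0})$, and $r_{1}<\rho_{0}\leq R/4<i_{X}$ permits normal geodesic coordinates there.

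Then comes the rescaling, which I expect to be the main technical point. Rescale the flow at $z_{0}$ by $r_{1}$: with $\tilde{e}(y,s)=r_{1}^{4}e(\exp_{x_{1}}(r_{1}y),t_{1}+r_{1}^{2}s)$ and likewise $\tilde{g}$, one has $\tilde{e}(0,0)=1$, $\sup_{P_{1}(0)}\tilde{e}\leq16$, the rescaled metrics are $\delta_{ij}+O(r_{1}^{2})$ (uniformly elliptic with constants near $1$ by (\ref{a})), and the inequality rescales to $(\partial_{s}-\tilde{\Delta})\tilde{g}\leq C_{0}(r_{1}^{2}+\tilde{g}^{1/2})\tilde{g}\leq C_{1}\tilde{g}$ on $P_{1}(0)$, a \emph{linear} parabolic inequality with $C_{1}$ depending only on the geometry and $\sup|\phi_{0}|_{H_{0}}$ (using $r_{1}\leq i_{X}/8$ and $\tilde{g}\leq C$ on $P_{1}(0)$). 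The parabolic mean value inequality then gives $1\simeq\tilde{g}(0,0)\leq C_{M}\int_{P_{1/2}(0)}\tilde{g}\,dv\,ds=C_{M}\,r_{1}^{2-2n}\int_{P_{r_{1}/2}(z_{0})}g\,dv_{g}\,dt\leq C_{M}'\big(r_{1}^{2-2n}\!\int_{P_{r_{1}/2}(z_{0})}e\,dv_{g}\,dt+r_{1}^{4}\big)$. To bound the right side I would invoke the monotonicity inequality (Theorem \ref{thm:4}) at $z_{0}$ between scales comparable to $r_{1}$ and to $R$, together with the comparison $\Phi_{z_{0}}(r)\simeq r^{2-2n}\int_{P_{r}(z_{0})}e$ (up to universal constants) and $P_{cR}(z_{0})\subset P_{R}(u_{0})$, yielding $r_{1}^{2-2n}\int_{P_{r_{1}/2}(z_{0})}e\leq C\big(\epsilon_{0}+R^{2}\,\mathrm{YMH}(A_{0},\phi_{0})\big)$. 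Choosing $\epsilon_{0}$ small and absorbing the $\mathrm{YMH}$-term and the $e$-versus-$g$ discrepancy by also shrinking $\delta_{0}$ (as in Chen--Struwe and Hong--Tian; here $R$ is bounded by the geometry), the right side is forced below $(2C_{M})^{-1}$, contradicting $\tilde{g}(0,0)\simeq1$; hence $\sup_{P_{\delta R}(u_{0})}e\leq16(\delta R)^{-4}$. The delicate bookkeeping is exactly that every torsion and curvature term in (\ref{k02}) be subcritical under this rescaling, and that the rescaled local energy line up with the Gaussian weight $G_{u_{0}}$ in Theorem \ref{thm:4}.

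Finally, the gradient bound follows quickly. Applying the energy-density estimate just proved with parameter $\delta_{0}/2$ gives $\sup_{P_{\delta_{0}R/2}(u_{0})}e\leq 256(\delta_{0}R)^{-4}=:C'$, so $|F_{A}|$, $|[\phi,\phi^{*H_{0}}]|$ and $|\partial_{A}\phi|$ are all bounded on $P_{\delta_{0}R/2}(u_{0})$ by a constant depending only on $R,\delta_{0},\sup|\phi_{0}|_{H_{0}}$ and the geometry. Since $|\hat{\nabla}_{A}\phi|^{2}\leq\tfrac12 e\leq\tfrac12 C'$ there, $P_{\delta R}(u_{0})\subset P_{\delta_{0}R/2}(u_{0})$ when $\delta\leq\delta_{0}/2$, and $\sup_{P_{\delta R}(u_{0})}e<C'$ also for $\delta\in(\delta_{0}/2,\delta_{0})$, we conclude $\sup_{P_{\delta R}(u_{0})}|\hat{\nabla}_{A}\phi|^{2}\leq C_{11}$ with $C_{11}=C'/2$; alternatively one feeds the bound on $e$ into (\ref{k01}) to get $(\partial_{t}-\Delta)|\hat{\nabla}_{A}\phi|^{2}\leq C(|\hat{\nabla}_{A}\phi|^{2}+1)$ on $P_{\delta_{0}R/2}(u_{0})$ and concludes from Proposition \ref{p:2} and the parabolic mean value inequality on a smaller cylinder.
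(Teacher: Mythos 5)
Your overall scheme (point-picking, blow-up at the almost-maximal point, a linearized parabolic inequality, the parabolic mean value inequality, and the monotonicity formula of Theorem \ref{thm:4} to force a contradiction) is the same as the paper's, and your bookkeeping of scales and cylinders is essentially correct. But there is a genuine gap at the heart of the argument: you claim $\hat{\nabla}_{A}\phi=\partial_{A}\phi$, hence $|\hat{\nabla}_{A}\phi|\leq g^{1/2}$, and later $|\hat{\nabla}_{A}\phi|^{2}\leq\tfrac{1}{2}e(A,\phi)$. This is false. The identity (\ref{bianchi3}) only says $\hat{\nabla}_{A}^{0,1}\phi=0$, so $\hat{\nabla}_{A}\phi$ is the full $(1,0)\otimes(1,0)$ tensor $\hat{\nabla}_{A,\beta}\phi_{\alpha}\,dz^{\beta}\otimes dz^{\alpha}$, whereas $\partial_{A}\phi$ records only its antisymmetrization (plus torsion contractions); the symmetric part $\hat{\nabla}_{A,(\beta}\phi_{\alpha)}$ is invisible both to $\partial_{A}\phi$ and to $e(A,\phi)=|F_{A'}|^{2}$ (locally, $\phi=z^{1}dz^{1}$ has $\partial\phi=0$ but $\hat{\nabla}\phi\neq0$). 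Since the coefficient in the Bochner inequality (\ref{k02}) genuinely contains $|\hat{\nabla}_{A}\phi|$ and not merely $|\partial_{A}\phi|$, your claimed closed inequality $(\partial_{t}-\Delta)g\leq C_{0}(1+g^{1/2})g$ does not follow, and the concluding bound $\sup|\hat{\nabla}_{A}\phi|^{2}\leq\tfrac{1}{2}\sup e$ is unjustified.

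The repair is exactly the two-step bootstrap the paper performs after rescaling by $\rho_{0}$: first apply the separate inequality (\ref{k01}) to $b_{\rho_{0}}=|\hat{\nabla}_{A}\phi|^{2}_{\tilde{g}}$, whose right-hand side is controlled by the already-established bound $\sup_{\tilde{P}_{1}}e_{\rho_{0}}\leq 16$, and combine the parabolic mean value inequality with the \emph{integral} estimate $\int_{P_{r}}|\hat{\nabla}_{A}\phi|^{2}\,dv_{g}dt\leq C_{9}r^{2n}$ of Proposition \ref{p:2} to obtain a uniform pointwise bound on $b_{\rho_{0}}$; only then is (\ref{k02}) linear in $e_{\rho_{0}}$, and the mean value inequality together with Theorem \ref{thm:4} yields the contradiction. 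You invoke Proposition \ref{p:2} and (\ref{k01}) only as an ``alternative'' at the very end for the gradient bound; in fact they are indispensable already in the main blow-up step, and the gradient bound must be obtained this way rather than from $e$ alone. With that insertion your argument coincides with the paper's.
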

\begin{proof}
Regard $X$ as a real manifold. The proof is very similar to the one of Theorem 2.6 in \cite{LZ1}.

For any $\delta\in(0,1/4]$, define
\begin{equation}
\begin{split}
f(r)=(2\delta R-r)^{4}\sup_{P_{r}(x_{0},t_{0})}e(A,\phi).
\end{split}
\end{equation}
Since $f(r)$ is continuous and $f(2\delta R)=0$, we know that $f(r)$ attains its maximum at some point $r_{0}\in[0,2\delta R)$. Suppose $(x_{1},t_{1})\in\bar{P}_{r_{0}}(x_{0},t_{0})$ is a point such that
\begin{equation}
e(A,\phi)(x_{1},t_{1})=\sup_{P_{r}(x_{0},t_{0})}e(A,\phi).
\end{equation}
We claim that when $\epsilon_{0},\delta_{0}$ are small enough, $f(r_{0})\leq 16$. Otherwise, set
\begin{equation}
\rho_{0}=(2\delta R-r_{0})f(r_{0})^{-1/4}=e(A,\phi)(x_{1},t_{1})^{-1/4}<\delta R-\frac{r_{0}}{2}.
\end{equation}
Rescaling the Riemannian metric $\tilde{g}=\rho_{0}^{-2}g$ and $t=t_{1}+\rho_{0}^{2}\tilde{t}$, we get
\begin{equation}
\begin{split}
&|F_{A}+[\phi,\phi^{*H_{0}}]|_{\tilde{g}}^{2}=\rho_{0}^{4}|F_{A}+[\phi,\phi^{*H_{0}}]|_{g}^{2},\\
&|\partial_{A}\phi|_{\tilde{g}}^{2}=\rho_{0}^{4}|\partial_{A}\phi|_{g}^{2},\ \ |\hat{\nabla}_{A}\phi|_{\tilde{g}}^{2}=\rho_{0}^{4}|\hat{\nabla}_{A}\phi|_{g}^{2}.
\end{split}
\end{equation}
Setting
\begin{equation}
\begin{split}
&e_{\rho_{0}}(x,\tilde{t})=|F_{A}+[\phi,\phi^{*H_{0}}]|_{\tilde{g}}^{2}+2|\partial_{A}\phi|_{\tilde{g}}^{2}=\rho_{0}^{4}e(A,\phi)(x,t_{1}+\rho_{0}^{2}\tilde{t}),\\
&b_{\rho_{0}}(x,\tilde{t})=|\hat{\nabla}_{A}\phi|_{\tilde{g}}^{2}=\rho_{0}^{4}|\hat{\nabla}_{A}\phi|_{g}^{2}(x,t_{1}+\rho_{0}^{2}\tilde{t}),\\
&\tilde{P}_{\tilde{r}}(x_{1},0)=B_{\rho_{0}\tilde{r}}(x_{1})\times[-\tilde{r}^{2},\tilde{r}^{2}],
\end{split}
\end{equation}
we have $e_{\rho_{0}}(x_{1},0)=\rho_{0}^{4}e(A,\phi)(x_{1},t_{1})=1$, and
\begin{equation}
\begin{split}
\sup_{\tilde{P}_{1}(x_{1},0)}e_{\rho_{0}}&=\rho_{0}^{4}\sup_{P_{\rho_{0}}(x_{1},t_{1})}e(A,\phi)\leq \rho_{0}^{4}\sup_{P_{\delta R+r_{0}/2}(x_{0},t_{0})}e(A,\phi)\\
&\leq \rho_{0}^{4}f(\delta R+r_{0}/2)(\delta R-r_{0}/2)^{-4}\leq 16.
\end{split}
\end{equation}
Thus
\begin{equation}\label{eqn:315}
\begin{split}
|F_{A}+[\phi,\phi^{*H_{0}}]|_{\tilde{g}}^{2}+2|\partial_{A}\phi|_{\tilde{g}}^{2}\leq 16,\ \ \ \ on\ \ \tilde{P}_{1}(x_{1},0).
\end{split}
\end{equation}
Combining this together with Proposition \ref{p:1} yields that
\begin{equation}
\begin{split}
&(\Delta_{\tilde{g}}-\frac{\partial}{\partial \tilde{t}})(b_{\rho_{0}}+\rho_{0}^{4})=\rho_{0}^{6}(\Delta_{g}-\frac{\partial}{\partial t})(|\hat{\nabla}_{A}\phi|_{g}^{2}+1)\\
\geq &2\rho_{0}^{6}|\hat{\nabla}_{A}\hat{\nabla}_{A}\phi|_{g}^{2}-C_{4}\rho_{0}^{6}(|F_{A}|_{g}+|R_{\omega}|_{g}+|\phi|_{g}^{2})|\hat{\nabla}_{A}\phi|_{g}^{2}\\
&-C_{5}\rho_{0}^{6}(|\hat{\nabla} R_{\omega }|_{g}+|T|_{g}|F_{A}|_{g})|\phi|_{g}|\hat{\nabla}_{A}\phi|_{g}\\
&-C_{6}\rho_{0}^{6}|T|_{g}|\hat{\nabla}_{A}\phi|_{g}|\hat{\nabla}_{A}\hat{\nabla}_{A}\phi|_{g}\\
\geq&-\tilde{C}_{1}(b_{\rho_{0}}+\rho_{0}^{4})
\end{split}
\end{equation}
on $\tilde{P}_{1}(x_{1},0)$, where the constant $\tilde{C}_{1}$ depends only on the geometry of $(X,\omega)$ and $\sup_{X}|\phi_{0}|_{H_{0}}$. Then by the parabolic mean value inequality and Proposition \ref{p:2}, we observe
\begin{equation}\label{eqn:316}
\begin{split}
\sup_{\tilde{P}_{1/2}(x_{1},0)}(b_{\rho_{0}}+\rho_{0}^{4})&\leq C\int_{\tilde{P}_{1}(x_{1},0)}(b_{\rho_{0}}+\rho_{0}^{4})dv_{\tilde{g}}d\tilde{t}\\
&=C\rho_{0}^{2-2n}\int_{P_{\rho_{0}}(x_{1},t_{1})}(|\hat{\nabla}_{A}\phi|_{g}^{2}+1)dv_{g}dt\\
&\leq C^{*}\rho_{0}^{2}\leq \tilde{C}_{2},
\end{split}
\end{equation}
where the constant $\tilde{C}_{2}$ depends only on the geometry of $(X,\omega)$ and $\sup_{X}|\phi_{0}|_{H_{0}}$. Similarly, combining (\ref{eqn:316}), (\ref{eqn:315}) and Proposition \ref{p:1}, one knows that
\begin{equation}
(\Delta_{\tilde{g}}-\frac{\partial}{\partial \tilde{t}})e_{\rho_{0}}\geq -\tilde{C}_{3}e_{\rho_{0}}
\end{equation}
on $\tilde{P}_{1/2}(x_{1},0)$, where the constant $\tilde{C}_{3}$ depends only on the geometry of $(X,\omega)$ and $\sup_{X}|\phi_{0}|_{H_{0}}$. Using the parabolic mean value inequality again, we can see that
\begin{equation}
\begin{split}
1=e_{\rho_{0}}(x_{1},0)\leq\sup_{\tilde{P}_{1/4}(x_{1},0)}e_{\rho_{0}}&\leq C\int_{\tilde{P}_{1/2}(x_{1},0)}e_{\rho_{0}}dv_{\tilde{g}}d\tilde{t}\\
&\leq \tilde{C}_{4}\rho_{0}^{2-2n}\int_{P_{\rho_{0}}(x_{1},t_{1})}e(A,\phi)dv_{g}dt,\\
\end{split}
\end{equation}
where the constant $\tilde{C}_{4}$ depends only on the geometry of $(X,\omega)$ and $\sup_{X}|\phi_{0}|_{H_{0}}$.

Choose normal geodesic coordinates centred at $x_{1}$, and a smooth cut-off function $f\in C^{\infty}_{0}(B_{R/2}(x_{1}))$ such that $0\leq f\leq 1$, $f\equiv 1$ on $B_{R/4}(x_{1})$, $|d f|\leq 8/R$ on $B_{R/2}(x_{1})\setminus B_{R/4}(x_{1})$. Taking $r_{1}=\rho_{0}$ and $r_{2}=\delta_{0} R$, applying the monotonicity inequality, we obtain
\begin{equation}
\begin{split}
&\rho_{0}^{2-2n}\int_{P_{\rho_{0}}(x_{1},t_{1})}e(A,\phi)dv_{g}dt\\
\leq& C\rho_{0}^{2}\int_{P_{\rho_{0}}(x_{1},t_{1})}e(A,\phi)G_{(x_{1},t_{1}+2\rho_{0}^{2})}f^{2}dv_{g}dt\\
\leq& C\rho_{0}^{2}\int_{T_{\rho_{0}}(x_{1},t_{1}+2\rho_{0}^{2})}e(A,\phi)G_{(x_{1},t_{1}+2\rho_{0}^{2})}f^{2}dv_{g}dt\\
\leq& C_{*}r_{2}^{2}\int_{T_{r_{2}}(x_{1},t_{1}+2\rho_{0}^{2})}e(A,\phi)G_{(x_{1},t_{1}+2\rho_{0}^{2})}f^{2}dv_{g}dt+C_{*}\delta_{0}^{2}R^{2}\rm{YMH}(A_{0},\phi_{0})\\
&+C_{*}(R/2)^{2-2n}\int_{P_{R/2}(x_{1},t_{1})}e(A,\phi)dv_{g}dt\\
\leq& C_{*}\delta_{0}^{2-2n}R^{2-2n}\int_{P_{R}(x_{0},t_{0})}e(A,\phi)dv_{g}dt+C_{*}\delta_{0}^{2}R^{2}\rm{YMH}(A_{0},\phi_{0})\\
\leq& \tilde{C}_{5}(\delta_{0}^{2-2n}\epsilon_{0}+\delta_{0}^{2}R^{2}\rm{YMH}(A_{0},\phi_{0})),
\end{split}
\end{equation}
where the constant $\tilde{C}_{5}$ depends only on the geometry of $(X,\omega)$ and $\sup_{X}|\phi_{0}|_{H_{0}}$. Choose $\epsilon_{0},\delta_{0}$ properly such that $\tilde{C}_{4}\tilde{C}_{5}(\delta_{0}^{2-2n}\epsilon_{0}+\delta_{0}^{2}R^{2}\rm{YMH}(A_{0},\phi_{0}))< 1$, and then a contradiction occurs. So $f(r_{0})\leq 16$, which implies
\begin{equation}
\sup_{P_{\delta R}(u_{0})}e(A,\phi)\leq 16/(\delta R)^{4}.
\end{equation}
Let $r=3\delta R/2$, then
\begin{equation}
\sup_{P_{3\delta R/2}(u_{0})}e(A,\phi)=f(3\delta R/2)(\delta R/2)^{-1/4}\leq 256/(\delta R)^{4}.
\end{equation}
On $P_{3\delta R/2}(u_{0})$, from Proposition \ref{p:1}, Proposition \ref{p:2} and  the parabolic mean value inequality, we derive
\begin{equation}
\begin{split}
\sup_{P_{\delta R}(u_{0})}(|\hat{\nabla}_{A}\phi|_{g}^{2}+1)&\leq C\int_{P_{3\delta R/2}(u_{0})}(|\hat{\nabla}_{A}\phi|_{g}^{2}+1)dv_{g}dt\\
&\leq \tilde{C}_{6},
\end{split}
\end{equation}
where the constant $\tilde{C}_{6}$ depends only on $R$, $\delta_{0}$, $\sup_{X}|\phi_{0}|_{H_{0}}$ and the geometry of $(X,\omega)$.
\end{proof}

\section{Proof of Theorem \ref{thm:1}}
\begin{theorem}\label{app01}
Let $(X,\omega)$ be a compact Hermitian manifold of dimension $n$ satisfying $\partial\bar{\partial}\omega^{n-1}=\partial\bar{\partial}\omega^{n-2}=0$,  and $(E,\bar{\partial}_{E},\theta)$ be a semi-stable Higgs bundle of rank $r$ with the vanishing  condition (\ref{CC}). Then $E$ admits a Hermitian metric $K$ and a family of Hermitian metrics ${H_{s}}$ for $s\in (0, 1]$ such that $\sqrt{-1}\Lambda_{\omega }\tr F_{K}$ is a constant and
 \begin{equation}\label{k040}
\sup_{X}(|F_{H_{s}}+[\theta ,\theta^{*H_{s}}]-\frac{1}{r}\tr F_{K}\otimes \Id_{E}|_{H_{s}}^{2}+2|\partial_{H_{s}}\theta |_{H_{s}}^{2})\rightarrow 0
\end{equation}
as $s \rightarrow 0$.
\end{theorem}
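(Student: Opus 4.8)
The plan is to realize the family $\{H_s\}$ as the time-$t_0$ metrics along the Hermitian--Yang--Mills flow \eqref{HYMF} started from the solutions of the perturbed equation \eqref{eq}, for one fixed time $t_0$, with $s$ playing the role of the perturbation parameter $\epsilon$. First I would normalize the background metric: by a conformal change, using $\partial\bar\partial\omega^{n-1}=0$ exactly as in Lemma \ref{lemma31}, one may assume $\tr(\sqrt{-1}\Lambda_\omega(F_K+[\theta,\theta^{*K}])-\lambda\Id_E)=0$, i.e.\ $\sqrt{-1}\Lambda_\omega\tr F_K=r\lambda$ is constant, which is the first assertion of the theorem. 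For each $\epsilon\in(0,1]$ let $H_\epsilon$ solve \eqref{eq}; then \eqref{det1} gives $\det(K^{-1}H_\epsilon)=1$, hence $\tr F_{H_\epsilon}=\tr F_K$ as $(1,1)$-forms. Since $(E,\bar\partial_E,\theta)$ is semi-stable, \cite[Theorem 3.2]{NZ} yields $\beta_\epsilon:=\sup_X|\sqrt{-1}\Lambda_\omega(F_{H_\epsilon}+[\theta,\theta^{*H_\epsilon}])-\lambda\Id_E|_{H_\epsilon}\to0$ as $\epsilon\to0$. Writing $e(H):=|F_H+[\theta,\theta^{*H}]-\tfrac1r\tr F_K\otimes\Id_E|_H^2+2|\partial_H\theta|_H^2$, which equals $|(F^{1,1}_{H,\theta})^{\perp}|^2+2|\partial_H\theta|^2$ whenever $\tr F_H=\tr F_K$, the Bogomolov-type identity \eqref{Bo1} together with the vanishing condition \eqref{CC} gives $\int_X e(H_\epsilon)\tfrac{\omega^n}{n!}=\int_X|\Lambda_\omega(F^{1,1}_{H_\epsilon,\theta})^{\perp}|^2\tfrac{\omega^n}{n!}\le\beta_\epsilon^2\,\Vol(X,\omega)\to0$, so the energy is small in $L^1$.

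Next I would run the flow \eqref{HYMF} from each $H_\epsilon$, obtaining long-time solutions $H_\epsilon(t)$; via the complex gauge transformations of Section 6 this is equivalent to the modified Yang--Mills--Higgs flow \eqref{mymh}, and all curvature and derivative norms below are gauge-invariant. Since $\tr\Psi(0)=\sqrt{-1}\Lambda_\omega\tr F_{H_\epsilon}-r\lambda=0$, relation \eqref{tr02} gives $\tr F_{H_\epsilon(t)}=\tr F_K$ for all $t$, so $e(H_\epsilon(t))$ stays defined and equals $|(F^{1,1}_{H_\epsilon(t),\theta})^{\perp}|^2+2|\partial_{H_\epsilon(t)}\theta|^2$. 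Applying the maximum principle to \eqref{mc1}, with the extra first-order term handled through \eqref{la1}, $\sup_X|\Psi(t)|_{H_\epsilon(t)}$ is non-increasing in $t$, hence $\le\beta_\epsilon$ for every $t\ge0$; feeding this into \eqref{Bo1} at the metric $H_\epsilon(t)$ and using \eqref{CC} again gives the crucial uniform-in-time bound $\int_X e(H_\epsilon(t))\tfrac{\omega^n}{n!}\le\beta_\epsilon^2\,\Vol(X,\omega)$ for all $t\ge0$. Consequently, for the energy density $e(A,\phi)=|F_{A'}|^2=e(H_\epsilon(t))+\tfrac1r|\tr F_K|^2$ of Section 6, every parabolic cylinder obeys $R^{2-2n}\int_{P_R(x_0,t_0)}e(A,\phi)\,dv_g\,dt\le C(R^{4-2n}\beta_\epsilon^2+R^4)$, with $C$ depending only on $\|\tr F_K\|_{L^\infty}$ and the geometry of $(X,\omega)$.

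Now I would fix $R_0\le i_X/2$ with $C R_0^4<\epsilon_0/2$ (where $\epsilon_0,\delta_0$ are from Theorem \ref{thm:5}), set $t_0:=4R_0^2$, and then choose $\epsilon$ small enough that $C R_0^{4-2n}\beta_\epsilon^2<\epsilon_0/2$; the previous estimate then gives $R_0^{2-2n}\int_{P_{R_0}(x_0,t_0)}e(A,\phi)<\epsilon_0$ for every $x_0\in X$. The $\epsilon$-regularity Theorem \ref{thm:5} provides bounds $\sup_{P_{\delta_0 R_0}(x_0,t_0)}(e(A,\phi)+|\hat\nabla_A\phi|^2)\le C$ uniform in $x_0$ and $\epsilon$; combined with the uniform bound on $|\phi|$ from Proposition \ref{p:10} and \eqref{t4}, and with $|F_A|\le|F_{A'}|+C(1+|\phi|^2)$, this makes all coefficients in the parabolic inequality \eqref{k02}, taken with $\xi=\tfrac1r\tr F_K$ (a closed $(1,1)$-form with $\Lambda_\omega\xi=\lambda$), uniformly bounded on $P_{\delta_0 R_0}(x_0,t_0)$. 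Hence $(2\sqrt{-1}\Lambda_\omega\partial\bar\partial-\partial_t)e(H_\epsilon(\cdot))\ge -C\,e(H_\epsilon(\cdot))$ there, and the parabolic mean-value inequality together with the uniform-in-time $L^1$-bound of the previous paragraph gives $e(H_\epsilon(t_0))(x_0)\le\sup_{P_{\delta_0 R_0/2}(x_0,t_0)}e(H_\epsilon(\cdot))\le C(\delta_0 R_0)^{-2n}\beta_\epsilon^2$. Covering $X$ by finitely many such balls and relabelling $H_s:=H_s(t_0)$, we obtain $\sup_X e(H_s)\to0$, which is precisely \eqref{k040}.

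The main obstacle is the third step. A priori the flow might concentrate curvature, so the coefficients in \eqref{k02} need not be controlled and one cannot directly pass from $L^1$- to $L^\infty$-smallness. The point that unlocks the argument is that the energy remains small in $L^1$ \emph{uniformly in time} — a consequence of the monotonicity of $\sup_X|\Psi(t)|$ along \eqref{mc1} — so that at the fixed small time $t_0$ every small parabolic cylinder already lies below the $\epsilon$-regularity threshold; this simultaneously rules out concentration (yielding uniform local bounds on $e$ and $|\hat\nabla_A\phi|$) and, via the mean-value inequality for the resulting honest subsolution, upgrades the $L^1$-smallness to the desired $L^\infty$-smallness with only a fixed, $\epsilon$-independent loss.
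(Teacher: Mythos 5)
Your proposal is correct and follows essentially the same route as the paper: normalize $K$ so that $\tr(\sqrt{-1}\Lambda_\omega F_{K,\theta})$ is constant, solve the perturbed equation (\ref{eq}) to get $L^2$-smallness of $(F^{1,1}_{H_\epsilon,\theta})^\perp$ and $\partial_{H_\epsilon}\theta$ via Theorem \ref{thm:3} and (\ref{Bo1}), run the Hermitian--Yang--Mills flow to propagate this $L^2$-smallness uniformly in time, invoke the $\epsilon$-regularity Theorem \ref{thm:5} (with constants made $\epsilon$-independent by (\ref{t4}) and the uniform $L^2$ curvature bound) to control the coefficients of (\ref{k02}), and upgrade to $L^\infty$-smallness at a fixed positive time by the parabolic mean value inequality. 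The only cosmetic differences are that you derive the time-uniform $L^2$ bound from the sup-norm monotonicity of $\Psi(t)$ rather than from the integral form of (\ref{mc1}) as in Proposition \ref{p}, and you evaluate at $t_0=4R_0^2$ rather than $t=2$; neither affects the argument.
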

\begin{proof}

By conformal transformation, we can choose a background metric $K$ satisfying
\begin{equation}
\tr \{\sqrt{-1}\Lambda_{\omega } (F_{K}+[\theta,\theta^{*K}])-\lambda \cdot \textmd{Id}_E\}=0,
\end{equation}
where the constant $\lambda =(r\text{Vol}(X, \omega))^{-1}ch_{1}(E,\bar{\partial}_{E})[\omega^{n-1}]$. Let $H_{\epsilon}$ be the solution of the equation (\ref{eq}) for $\epsilon \in (0, 1]$. From (\ref{det1}), we see that $\det (K^{-1}H_{\epsilon })=1$ and then
\begin{equation}\label{det2}
\tr F_{H_{\epsilon}}=\tr F_{K}
\end{equation}
for every $\epsilon \in (0, 1]$.
Since the Higgs bundle $(E,\bar{\partial}_{E},\theta)$ is semi-stable, by Theorem \ref{thm:3}, we know that
\begin{equation}
\sup_{X}|\sqrt{-1}\Lambda_{\omega}F_{H_{\epsilon},\theta}-\lambda \Id_{E}|_{H_{\epsilon}}\rightarrow 0
\end{equation}
as $\epsilon \rightarrow 0$. According to the identity (\ref{Bo1}) and (\ref{det2}), we have
\begin{equation}\label{Bo2}
\begin{split}
&\int_{X}|F_{H_{\epsilon},\theta}-\frac{1}{r}\tr F_{K}\otimes \Id_{E}|^{2}\frac{\omega^{n}}{n!}\\
= &\int_{X}(2|\partial_{H_{\epsilon }}\theta |^{2}+|(F_{H_{\epsilon },\theta}^{1,1})^{\perp}|^{2})\frac{\omega^{n}}{n!}\\
= &\int_{X}|\sqrt{-1}\Lambda_{\omega}(F_{H_{\epsilon},\theta})-\lambda \Id_{E}|^{2}\frac{\omega^{n}}{n!}.\\
\end{split}
\end{equation}
Consider the Hermitian-Yang-Mills flow with initial metric $H_{\epsilon}$
\begin{equation}\label{hymf2}
\left\{
\begin{split}
      &H_{\epsilon}^{-1}(t)\frac{\partial H_{\epsilon}(t)}{\partial t}=-2(\sqrt{-1}\Lambda_{\omega}(F_{H_{\epsilon}(t)}+[\theta,\theta^{*H_{\epsilon}(t)}])-\lambda \Id),  \\
      &H_{\epsilon}(0)=H_{\epsilon}.
\end{split}
\right.
\end{equation}
It is gauge equivalent to the following Yang-Mills-Higgs flow
\begin{equation}\label{ymhf2}
\left\{
\begin{split}
             \frac{\partial A_{\epsilon}(t)}{\partial t}&=-\sqrt{-1}(\partial_{A_{\epsilon}(t)}-\bar{\partial}_{A_{\epsilon}(t)})\Lambda_{\omega}(F_{A_{\epsilon}(t)}+[\phi_{\epsilon}(t),\phi_{\epsilon}^{*H_{\epsilon}}(t)]),   \\
             \frac{\partial\phi_{\epsilon}(t)}{\partial t}&=-[\sqrt{-1}\Lambda_{\omega}(F_{A_{\epsilon}(t)}+[\phi_{\epsilon}(t),\phi_{\epsilon}^{*H_{\epsilon}}(t)]),\phi_{\epsilon}(t)],  \\
             A_{\epsilon}(0)&=D_{H_{\epsilon},\bar{\partial}_{E}},\ \ \phi_{\epsilon}(0)=\theta.
\end{split}
\right.
\end{equation}
One can easily  see that
\begin{equation}
\det (K^{-1}H_{\epsilon }(t))=\det (H_{\epsilon }^{-1}H_{\epsilon }(t))=1\end{equation} and then
\begin{equation}
\tr F_{H_{\epsilon}(t)}=\tr F_{K}
\end{equation}
for any $\epsilon \in (0, 1]$ and $t\geq 0$. According to Proposition \ref{p}, we observe
\begin{equation}
\begin{split}
\int_{X}|F_{A_{\epsilon}(t),\phi_{\epsilon}(t)}-\frac{1}{r}\tr F_{K}\otimes \Id_{E}|_{H_{\epsilon}}^{2}\frac{\omega^{n}}{n!}&=\int_{X}|F_{H_{\epsilon}(t),\theta}-\frac{1}{r}\tr F_{K}\otimes \Id_{E}|_{H_{\epsilon}(t)}^{2}\frac{\omega^{n}}{n!}\\
&=\int_{X}|\sqrt{-1}\Lambda_{\omega}F_{H_{\epsilon}(t),\theta}-\lambda \Id_{E}|_{H_{\epsilon}(t)}^{2}\frac{\omega^{n}}{n!}\\ &\leq \int_{X}|\sqrt{-1}\Lambda_{\omega}F_{H_{\epsilon},\theta}-\lambda \Id_{E}|_{H_{\epsilon}}^{2}\frac{\omega^{n}}{n!}.
\end{split}
\end{equation}

Even though we don't have the uniform $C^{0}$-estimate of $H_{\epsilon}$, by (\ref{t4}) and $(\ref{t2})$, we still have \begin{equation}\sup_{X}|\theta|_{H_{\epsilon},\omega}^{2}\leq \check{C}_{4}\end{equation}
and
\begin{equation}
\int_{X}|F_{H_{\epsilon},\theta}|_{H_{\epsilon},\omega}^{2}\frac{\omega^{n}}{n!}\leq \check{C}_{5},
\end{equation}
where $\check{C}_{4}$ and $\check{C}_{5}$  are positive constants depending only on $\sup_{X}|\theta |_{K, \omega}$, $\sup_{X}|\Lambda_{\omega }F_{K, \theta}|_{K}$ and the geometry of $(X, \omega )$. So, when we apply the small energy regularity for the Hermitian-Yang-Mills flow (\ref{hymf2}) (or equivalently the Yang-Mills-Higgs flow (\ref{ymhf2})) with different initial datas $H_{\epsilon}$, all  the constants in Theorem \ref{thm:5} are independent of $\epsilon$, in fact they depend only on $\sup_{X}|\theta |_{K, \omega}$, $\sup_{X}|\Lambda_{\omega }F_{K, \theta}|_{K}$ and the geometry of $(X,\omega)$.

We can fix a pair of positive constants $\epsilon_{0},\delta_{0}$ in Theorem \ref{thm:5} satisfying $\tilde{C}_{4}\tilde{C}_{5}(\delta_{0}^{2-2n}\epsilon_{0}+\check{C}_{5}\delta_{0}^{2}(i_{X}/2)^{2})<1$. Taking $R=\min \{\frac{1}{2}, \frac{i_{X}}{2}, \frac{1}{2}(r\epsilon_{0})^{\frac{1}{4}}(\sup_{X}|\tr F_{K}|^{2}\text{Vol}(X, \omega))^{-\frac{1}{4}} \}$ and $0< \epsilon \leq \epsilon '$ for some $\epsilon '$ small enough, we have
\begin{equation}
\begin{split}
&R^{2-2n}\int_{P_{R}(u_{0})}|F_{A_{\epsilon}(t),\phi_{\epsilon}(t)}|_{H_{\epsilon}}^{2}\frac{\omega^{n}}{n!}\\
=&R^{2-2n}\int_{P_{R}(u_{0})}(|F_{A_{\epsilon}(t),\phi_{\epsilon}(t)}-\frac{1}{r}\tr F_{K}\otimes \Id_{E}|_{H_{\epsilon}}^{2}+r^{-1}|\tr F_{K}|^{2})\frac{\omega^{n}}{n!}\\
\leq &\epsilon_{0},
\end{split}
\end{equation}
for any $u_{0}\in X\times[1,\infty)$.
By Theorem \ref{thm:5}, for any $\delta\in(0,\delta_{0})$, it holds that $\sup_{P_{\delta R}(u_{0})}e(A_{\epsilon}(t),\phi_{\epsilon}(t))\leq\frac{16}{(\delta R)^{4}}$ and $\sup_{P_{\delta R}(u_{0})}|\hat{\nabla}_{A_{\epsilon}(t)}\phi_{\epsilon}(t)|_{H_{\epsilon}}^{2}\leq C_{11}.$
Therefore  for any $0<\epsilon \leq \epsilon '$, we deduce
\begin{equation}\label{d:1}
\sup_{X\times[1,\infty)}e(A_{\epsilon}(t),\phi_{\epsilon}(t))\leq \hat{C}_{1}
\end{equation}
and
\begin{equation}\label{d:2}
\sup_{X\times[1,\infty)}|\hat{\nabla}_{A_{\epsilon}(t)}\phi_{\epsilon}(t)|^{2}\leq \hat{C}_{2},
\end{equation}
where the constants $\hat{C}_{1},\hat{C}_{2}$ are independent of $\epsilon$. Combining (\ref{d:1}), (\ref{d:2}) and the inequality (\ref{k02}) in Proposition \ref{p:1}, we can find a uniform constant $\check{C}_{6}$ depending only on $\sup_{X}|\theta |_{K, \omega}$, $\sup_{X}|\Lambda_{\omega }F_{K, \theta}|_{K}$ and the geometry of $(X,\omega)$, such that
\begin{equation}\label{k03}
\begin{split}
&(\Delta -\frac{\partial}{\partial t})(|F_{A_{\epsilon}(t)}+[\phi_{\epsilon}(t),\phi_{\epsilon}(t)^{*H_{\epsilon}}]-\frac{1}{r}\tr F_{K}\otimes \Id_{E}|^{2}+2|\partial_{A_{\epsilon}(t)}\phi_{\epsilon}(t)|^{2})\\
\geq & -\check{C}_{6}(|F_{A_{\epsilon}(t)}+[\phi_{\epsilon}(t),\phi_{\epsilon}(t)^{*H_{\epsilon}}]-\frac{1}{r}\tr F_{K}\otimes \Id_{E}|^{2}+2|\partial_{A_{\epsilon}(t)}\phi_{\epsilon}(t)|^{2})
\end{split}
\end{equation}
for any $0<\epsilon \leq \epsilon '$ and $t\in [1, \infty )$.
Applying the mean value inequality of parabolic equation, one can see that there is a constant $\check{C}_{7}$ independent of $\epsilon$ and $t$, such that for $t\geq 2$ and $\epsilon $ small enough, there holds
\begin{equation}\label{k04}
\begin{split}
&\sup_{X}(|F_{H_{\epsilon}(t)}+[\theta ,\theta^{*H_{\epsilon}(t)}]-\frac{1}{r}\tr F_{K}\otimes \Id_{E}|_{H_{\epsilon}(t)}^{2}+2|\partial_{H_{\epsilon}(t)}\theta |_{H_{\epsilon}(t)}^{2})\\
= & \sup_{X}(|F_{A_{\epsilon}(t)}+[\phi_{\epsilon} (t),\phi_{\epsilon} (t)^{*H_{\epsilon}}]-\frac{1}{r}\tr F_{K}\otimes \Id_{E}|_{H_{\epsilon}}^{2}+2|\partial_{A_{\epsilon}(t)}\phi_{\epsilon} (t)|_{H_{\epsilon}}^{2})\\
\leq & \check{C}_{7}\int_{X}(|F_{A_{\epsilon}(t-1)}+[\phi_{\epsilon} (t-1),\phi_{\epsilon} (t-1)^{*H_{\epsilon}}]-\frac{1}{r}\tr F_{K}\otimes \Id_{E}|_{H_{\epsilon}}^{2}+2|\partial_{A_{\epsilon}(t-1)}\phi_{\epsilon} (t-1)|_{H_{\epsilon}}^{2})\frac{\omega^{n}}{n!}\\
\leq& \check{C}_{7}\int_{X}|\sqrt{-1}\Lambda_{\omega}F_{H_{\epsilon},\theta}-\lambda \Id_{E}|_{H_{\epsilon}}^{2}\frac{\omega^{n}}{n!}.
\end{split}
\end{equation}
Take $t=2$, then we know $\{H_{\epsilon}(2)\}$ is a family of approximate Higgs-projectively flat metrics, i.e.
\begin{equation}\label{k04}
\sup_{X}(|F_{H_{\epsilon}(2)}+[\theta ,\theta^{*H_{\epsilon}(2)}]-\frac{1}{r}\tr F_{K}\otimes \Id_{E}|_{H_{\epsilon}(2)}^{2}+2|\partial_{H_{\epsilon}(2)}\theta |_{H_{\epsilon}(2)}^{2})\rightarrow 0
\end{equation}
as $\epsilon \rightarrow 0$.
\end{proof}

\medskip

 Replacing the K\"ahler condition by the Gauduchon condition, one can easily get the following lemma by a similar argument as that in \cite[Proposition 2.2]{NZ1}.
\begin{lemma}\label{l:1}
Let $(X, \omega )$ be a compact Hermitian manifold of dimension $n$, satisfying $\partial\bar{\partial}\omega^{n-1}=\partial\bar{\partial}\omega^{n-2}=0$. Let $(\tilde{E},\bar{\partial}_{\tilde{E}},\tilde{\theta})$ be an approximate Higgs-Hermitian flat Higgs bundle of rank $r$, i.e. there exist a sequence of Hermitian metrics  $\tilde{H}_{\epsilon}$ such that $\sup_{X}|F_{\tilde{H}_{\epsilon}, \tilde{\theta }}|_{\tilde{H}_{\epsilon}}\rightarrow 0$ as $\epsilon \rightarrow 0$. If $\varsigma$ is a non-trivial $\tilde{\theta}$-invariant holomorphic section of $E$, then $\varsigma$ has no zeros.
\end{lemma}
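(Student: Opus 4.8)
The plan is to reduce the statement to the elementary fact that a plurisubharmonic function on a compact complex manifold is constant, via an approximation argument in which the curvature hypothesis plays the role of ``approximate flatness''. I read ``$\tilde{\theta}$-invariant holomorphic section'' as $\bar{\partial}_{\tilde{E}}\varsigma=0$ together with $\tilde{\theta}(\varsigma)=0$, i.e. $D''_{\tilde{E}}\varsigma=0$ for $D''_{\tilde{E}}=\bar{\partial}_{\tilde{E}}+\tilde{\theta}$ (if instead $\tilde{\theta}(\varsigma)\in\Omega^{1,0}\otimes\mathbb{C}\varsigma$, the same computation applies after a harmless twist by a line bundle). On $X\setminus Z(\varsigma)$, where $Z(\varsigma)=\{\varsigma=0\}$ is analytic, the line $\mathbb{C}\varsigma$ is a Higgs subbundle of $\tilde{E}$ with vanishing induced Higgs field, so the Griffiths-type curvature comparison for Higgs subbundles gives, for $w_{\epsilon}:=\log|\varsigma|^{2}_{\tilde{H}_{\epsilon}}$,
\[
\sqrt{-1}\,\partial\bar{\partial}w_{\epsilon}\;\geq\;-\frac{\sqrt{-1}\,\langle F^{1,1}_{\tilde{H}_{\epsilon},\tilde{\theta}}\varsigma,\varsigma\rangle_{\tilde{H}_{\epsilon}}}{|\varsigma|^{2}_{\tilde{H}_{\epsilon}}}\;\geq\;-\,C\,|F_{\tilde{H}_{\epsilon},\tilde{\theta}}|_{\tilde{H}_{\epsilon}}\,\omega
\]
as $(1,1)$-forms, with $C=C(n,r)$. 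Thus $w_{\epsilon}$ is $\kappa_{\epsilon}\omega$-plurisubharmonic there, where $\kappa_{\epsilon}:=C\sup_{X}|F_{\tilde{H}_{\epsilon},\tilde{\theta}}|_{\tilde{H}_{\epsilon}}\to 0$; since $|\varsigma|^{2}_{\tilde{H}_{\epsilon}}$ is continuous, hence bounded above, the removable-singularity theorem for quasi-plurisubharmonic functions extends $w_{\epsilon}$ to a $\kappa_{\epsilon}\omega$-plurisubharmonic function on all of $X$, taking the value $-\infty$ precisely on $Z(\varsigma)$.

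Suppose now, for contradiction, that $\varsigma(p)=0$ for some $p$. The crucial point is that the Lelong number $\nu(w_{\epsilon},p)$ is a \emph{fixed} positive number $\nu_{0}$, independent of $\epsilon$. Indeed, in a holomorphic frame of $\tilde{E}$ and holomorphic coordinates centred at $p$ one has $w_{\epsilon}=\log\big(\textstyle\sum_{i}|\varsigma^{i}|^{2}\big)+g_{\epsilon}$, where the $\varsigma^{i}$ are holomorphic (and not all vanishing identically) and $g_{\epsilon}$, being squeezed between the logarithms of the smallest and largest eigenvalues of the \emph{smooth} metric $\tilde{H}_{\epsilon}$, is bounded near $p$; hence $\nu(w_{\epsilon},p)=\nu\big(\log\sum_{i}|\varsigma^{i}|^{2},p\big)=2\,\mathrm{ord}_{p}(\varsigma^{1},\dots,\varsigma^{r})=:\nu_{0}>0$. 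Fix a coordinate ball $B=B_{r_{0}}(p)$ on which $\beta:=\sqrt{-1}\,\partial\bar{\partial}|z-p|^{2}$ satisfies $\omega\leq\lambda\beta$ and $\beta\leq C'\omega$. On $B$ the function $\hat{w}_{\epsilon}:=w_{\epsilon}+\lambda\kappa_{\epsilon}|z-p|^{2}$ is genuinely plurisubharmonic, so $\hat{T}_{\epsilon}:=\sqrt{-1}\,\partial\bar{\partial}\hat{w}_{\epsilon}\geq 0$ is a closed positive current with $\nu(\hat{T}_{\epsilon},p)=\nu(w_{\epsilon},p)=\nu_{0}$, and monotonicity of the Lelong mass ratios yields the uniform lower bound
\[
\int_{B}\hat{T}_{\epsilon}\wedge\frac{\beta^{\,n-1}}{(n-1)!}\;\geq\;c_{n}\,\nu_{0}\,r_{0}^{\,2n-2}\;>\;0 .
\]

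On the other hand, the Gauduchon condition $\partial\bar{\partial}\omega^{n-1}=0$ forces the global $\partial\bar{\partial}$-mass of $w_{\epsilon}$ to be $O(\kappa_{\epsilon})$. Put $T_{\epsilon}:=\sqrt{-1}\,\partial\bar{\partial}w_{\epsilon}+\kappa_{\epsilon}\omega\geq 0$ on $X$; since $w_{\epsilon}\in L^{1}(X)$ and $\omega^{n-1}$ is smooth, integration by parts gives $\int_{X}\sqrt{-1}\,\partial\bar{\partial}w_{\epsilon}\wedge\omega^{n-1}=\int_{X}w_{\epsilon}\,\sqrt{-1}\,\partial\bar{\partial}\omega^{n-1}=0$, hence
\[
\int_{X}T_{\epsilon}\wedge\frac{\omega^{n-1}}{(n-1)!}=\kappa_{\epsilon}\,n\,\Vol(X,\omega)\;\longrightarrow\;0 .
\]
On $B$ we have $\hat{T}_{\epsilon}=T_{\epsilon}+\kappa_{\epsilon}(\lambda\beta-\omega)\leq T_{\epsilon}+\lambda C'\kappa_{\epsilon}\omega$ and $\beta^{\,n-1}\leq (C')^{n-1}\omega^{n-1}$, so, using $T_{\epsilon}\geq 0$,
\[
\int_{B}\hat{T}_{\epsilon}\wedge\frac{\beta^{\,n-1}}{(n-1)!}\;\leq\;(C')^{n-1}\!\int_{X}T_{\epsilon}\wedge\frac{\omega^{n-1}}{(n-1)!}+\lambda (C')^{n}\kappa_{\epsilon}\,n\,\Vol(X,\omega)\;\longrightarrow\;0 ,
\]
which for $\epsilon$ small contradicts the lower bound of the previous paragraph. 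Therefore $\varsigma$ vanishes nowhere.

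I expect the main difficulty to lie not in the Bochner identity but in carrying the current-theoretic bookkeeping cleanly across the zero set $Z(\varsigma)$: the removable-singularity extension of $w_{\epsilon}$, the fact that the Lelong number at a zero of the \emph{fixed} section $\varsigma$ is uniform in $\epsilon$ (here one has no uniform control of $\tilde{H}_{\epsilon}$ itself, only of its local eigenvalue logarithms for each fixed $\epsilon$, which is exactly enough), and the local comparison of positive currents in $B$. Note that only $\partial\bar{\partial}\omega^{n-1}=0$ is used for this lemma; on K\"ahler manifolds the argument is essentially \cite[Proposition~2.2]{NZ1}, and the only modification for the Gauduchon case is to invoke $\partial\bar{\partial}\omega^{n-1}=0$ in the mass identity in place of $d\omega^{n-1}=0$. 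When $X$ carries enough compact curves one can alternatively restrict $\varsigma$ to a curve through $p$ --- where any zero is automatically a divisor --- and run the divisor version of the argument with the automatically Gauduchon induced metric; but a general compact complex manifold satisfying $\partial\bar{\partial}\omega^{n-1}=\partial\bar{\partial}\omega^{n-2}=0$ need not contain such curves, which is why the current-theoretic route above is the one to pursue.
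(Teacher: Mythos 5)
Your proof is correct and follows essentially the approach the paper intends: the paper gives no written proof of this lemma, only the citation to \cite[Proposition 2.2]{NZ1} with the K\"ahler condition replaced by the Gauduchon one, and that argument is exactly the Demailly--Peternell--Schneider-type scheme you carry out --- the quasi-plurisubharmonic potential $\log|\varsigma|^{2}_{\tilde{H}_{\epsilon}}$ with $\partial\bar{\partial}$-defect of order $\sup_{X}|F_{\tilde{H}_{\epsilon},\tilde{\theta}}|_{\tilde{H}_{\epsilon}}$ (the Higgs term being harmless since $\tilde{\theta}(\varsigma)=0$), a uniform positive Lelong number at a putative zero, and the vanishing of the global $\partial\bar{\partial}$-mass against $\omega^{n-1}$ furnished by $\partial\bar{\partial}\omega^{n-1}=0$. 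Your extra bookkeeping (extension of the potential across $Z(\varsigma)$, and the local comparison with the Euclidean form $\beta$ replacing the closedness of $\omega$) is precisely the Gauduchon adaptation the paper leaves to the reader.
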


\medskip

\begin{proof}[Proof of Theorem \ref{thm:1}]
The sufficiency is easy to show by directly calculating. Here we only prove the necessity. Suppose $(E,\bar{\partial}_{E},\theta)$ is strictly semi-stable. There is a torsion free coherent Higgs subsheaf $S$ of minimal rank $p$ with \begin{equation}\label{chern01}\frac{1}{\rank S}\deg_{\omega}(S)=\frac{1}{\rank E}\deg_{\omega}(E).\end{equation}   Thus $(S, \theta_{S})$ is a stable torsion free Higgs sheaf, where $\theta_{S}$ is the induced Higgs field. In the following, we prove that $(S, \theta_{S})$ is a Higgs subbundle, i.e. $S$ is locally free.

Consider the following exact sequence of Higgs sheaves:
\begin{equation}
0\rightarrow (S,\theta_{S})\rightarrow(E,\theta)\rightarrow(Q,\theta_{Q})\rightarrow 0.
\end{equation}
Let $\{H_{\epsilon}\}$ be the approximate Higgs-projectively flat metrics which are constructed in Theorem \ref{app01}. We can define the bundle isomorphisms
\begin{equation}
f_{\epsilon}:S\oplus Q\rightarrow E
\end{equation}
on $X\setminus Z$ with respect to $H_{\epsilon}$ such that $Q \simeq S^{\perp_{H_{\epsilon}}}$, where $Z$ is the singularity set of $S$. Then the pull-back holomorphic structure and Higgs field on $S\oplus Q$ are
\begin{equation}
\begin{split}
f_{\epsilon}^{*}(\bar{\partial}_{E})=\left(\begin{array}{cc}
 \bar{\partial}_{S} &  \gamma_{\epsilon} \\
  0  &  \bar{\partial}_{Q}
\end{array}\right),\ \ f_{\epsilon}^{*}(\theta)=\left(\begin{array}{cc}
 \theta_{S} &  \zeta_{\epsilon} \\
  0  &  \theta_{Q}
\end{array}\right),
\end{split}
\end{equation}
where $\gamma_{\epsilon}\in\Omega^{0,1}(Q^{*}\otimes S)$ is the second fundamental form and $\zeta_{\epsilon}\in\Omega^{1,0}(Q^{*}\otimes S)$.
On the basis of  Gauss-Codazzi equation, we know
\begin{equation}
f_{\epsilon}^{*}(F_{H_{\epsilon},\theta})^{1,1}=\left(\begin{split}
  &F_{H_{S,\epsilon},\theta_{S}}^{1,1}-\gamma_{\epsilon}\wedge\gamma_{\epsilon}^{*}+\zeta_{\epsilon}\wedge\zeta_{\epsilon}^{*} &  D_{S\otimes Q^{*}}^{1,0}\gamma_{\epsilon}+\zeta_{\epsilon}\wedge\theta_{Q}^{*}+\theta_{S}^{*}\wedge\zeta_{\epsilon} \\
  &D_{S^{*}\otimes Q}^{0,1}\gamma_{\epsilon}^{*}+\zeta_{\epsilon}^{*}\wedge\theta_{S}+\theta_{Q}\wedge\zeta_{\epsilon}^{*}  &F_{H_{Q,\epsilon},\theta_{Q}}^{1,1}-\gamma_{\epsilon}^{*}\wedge\gamma_{\epsilon}+\zeta_{\epsilon}^{*}\wedge\zeta_{\epsilon}
\end{split}\right).
\end{equation}
By (\ref{k040}), we have
\begin{equation}
\sqrt{-1}(F_{H_{S,\epsilon},\theta_{S}}^{1,1}-\gamma_{\epsilon}\wedge\gamma_{\epsilon}^{*}+\zeta_{\epsilon}\wedge\zeta_{\epsilon}^{*})-\frac{1}{r}\tr F_{K}\otimes \Id_{S}\leq \delta_{\epsilon} \omega \otimes \Id_{S},
\end{equation}
where $\delta_{\epsilon}\rightarrow 0$ as $\epsilon \rightarrow 0$.
Since $\tr([\theta_{S},\theta_{S}^{*}])=0$ and $\frac{\sqrt{-1}}{2\pi}\tr(-\gamma_{\epsilon}\wedge\gamma_{\epsilon}^{*}+\zeta_{\epsilon}\wedge\zeta_{\epsilon}^{*})\geq 0$, we conclude
\begin{equation}\label{tt:1}
0\leq \rank (S)\delta_{\epsilon} \omega -\sqrt{-1}\tr F_{H_{S,\epsilon}}+\frac{\rank(S)}{r}\tr F_{K}
\end{equation}
on $X\setminus Z$.
The fact (\ref{chern01}) gives us
\begin{equation}
\begin{split}
&\int_{X}( \rank (S)\delta_{\epsilon} \omega -\sqrt{-1}\tr F_{H_{S,\epsilon}}+\frac{\rank(S)}{r}\tr F_{K})\wedge \frac{\omega^{n-1}}{(n-1)!}\\
=& \int_{X} n \cdot \rank (S)\delta_{\epsilon} \frac{\omega^{n}}{n!}\rightarrow 0,\\
\end{split}
\end{equation}
and
\begin{equation}
\sqrt{-1}\tr F_{H_{S,\epsilon}}\rightharpoonup \frac{\rank(S)}{r}\tr F_{K}
\end{equation}
in the sense of current, as $\epsilon\rightarrow 0$.

Let $\hat{K}_{S}$ be a smooth Hermitian metric on  the determinant line bundle $\det(S)=(\wedge^{p}S)^{**}$ such that
\begin{equation}
\sqrt{-1}\Lambda_{\omega }F_{\hat{K}_{S}}=\lambda_{S}=\frac{\rank(S)}{r}\sqrt{-1}\Lambda_{\omega }\tr F_{K}.
\end{equation}
 Set $\eta =\sqrt{-1}F_{\hat{K}_{S}}-\frac{\rank(S)}{r}\sqrt{-1}\tr F_{K}$. We have
\begin{equation}
\begin{split}
0\leftarrow & \int_{X}( \sqrt{-1}\tr F_{H_{S,\epsilon}}-\frac{\rank(S)}{r}\tr F_{K})\wedge \eta \wedge  \frac{\omega^{n-2}}{(n-2)!}\\
=& \int_{X}\eta \wedge \eta \wedge  \frac{\omega^{n-2}}{(n-2)!}\\
=& -\int_{X} |\eta |^{2}\frac{\omega^{n}}{n!}.
\end{split}
\end{equation}
Hence $\eta =0$, i.e.
\begin{equation}\label{chern02}
\sqrt{-1}F_{\hat{K}_{S}}=\frac{\rank(S)}{r}\sqrt{-1}\tr F_{K}.
\end{equation}
  Let us consider the Higgs bundle  $(\tilde{E}, \overline{\partial}_{\tilde{E}}, \tilde{\theta })=(\wedge^{p}E\otimes(\det S)^{-1},\theta_{p})$, where $\theta_{p}$ is the induced Higgs field on $\wedge^{p}E\otimes(\det S)^{-1}$. Of course (\ref{k040}) and (\ref{chern02}) imply   the Higgs bundle $(\tilde{E}, \overline{\partial}_{\tilde{E}}, \tilde{\theta })$ is approximate Higgs-Hermitian flat, i.e. there exist a sequence of Hermitian metrics  $\tilde{H}_{\epsilon}$ such that $\sup_{X}|F_{\tilde{H}_{\epsilon}, \tilde{\theta }}|_{\tilde{H}_{\epsilon}}\rightarrow 0$ as $\epsilon \rightarrow 0$. A Higgs morphism $\det(S)\rightarrow \wedge^{p}E$ can be seen as a $\theta_{p}$-invariant section of $\wedge^{p}E\otimes(\det S)^{-1}$. Lemma \ref{l:1} tells us that the non-zero Higgs morphism $\det(S)\rightarrow \wedge^{p}E$ is injective. Then by Lemma 1.20 in \cite{DPM}, we know that $S$ is a Higgs subbundle of $E$.

From the exact sequence of Higgs bundles, one can see that
\begin{equation}
ch_{1}(S, \overline{\partial}_{S})+ch_{1}(Q, \overline{\partial}_{Q})=ch_{1}(E, \overline{\partial}_{E}),\end{equation} and \begin{equation}ch_{2}(S, \overline{\partial}_{S})+ch_{2}(Q, \overline{\partial}_{Q})=ch_{2}(E, \overline{\partial}_{E}).\end{equation}
By (\ref{chern02}) and the Gauss-Codazzi equation, there exist metrics $K_{S}$ and $K_{Q}$ such that
\begin{equation}\label{chern03}
\frac{1}{\rank(S)}\sqrt{-1}\tr F_{K_{S}}=\frac{1}{\rank(Q)}\sqrt{-1}\tr F_{K_{Q}}=\frac{1}{\rank(E)}\sqrt{-1}\tr F_{K},
\end{equation}
and then
\begin{equation}\label{chern04}
\begin{split}
&\frac{1}{\rank(E)}ch_{1}(E, \overline{\partial}_{E})\wedge ch_{1}(E, \overline{\partial}_{E})\\=&\frac{1}{\rank(S)}ch_{1}(S, \overline{\partial}_{S})\wedge ch_{1}(S, \overline{\partial}_{S})+\frac{1}{\rank(Q)}ch_{1}(Q, \overline{\partial}_{Q})\wedge ch_{1}(Q, \overline{\partial}_{Q}).\\
\end{split}
\end{equation}
On the other hand, we know that $(S, \overline{\partial}_{S}, \theta_{S})$ is stable and $(Q, \overline{\partial}_{Q}, \theta_{Q})$ is semi-stable. According to Theorem \ref{thm:3}, we have the following Bogomolov type inequalities
\begin{equation}
(-2ch_{2}(S, \overline{\partial}_{S})+\frac{1}{\rank(S)}ch_{1}(S, \overline{\partial}_{S})\wedge ch_{1}(S, \overline{\partial}_{S}))\cdot [\omega]^{n-2}\geq 0
\end{equation}
and
\begin{equation}
(-2ch_{2}(Q, \overline{\partial}_{Q})+\frac{1}{\rank(Q)}ch_{1}(Q, \overline{\partial}_{Q})\wedge ch_{1}(Q, \overline{\partial}_{Q}))\cdot [\omega]^{n-2}\geq 0.
\end{equation}
So we conclude that both $(S, \overline{\partial}_{S}, \theta_{S})$  and $(Q, \overline{\partial}_{Q}, \theta_{Q})$ satisfy the vanishing  Chern number condition (\ref{CC}).
Set $(E_{1},\theta_{E_{1}})=(S,\theta_{S})$ and then establish the desired filtration by induction on the rank.
\end{proof}

\section{Proof of Theorem \ref{thm:2-2}}

First, under the assumption of Theorem \ref{thm:2-2}, we can prove:
\begin{lemma}\label{nl:1}
Suppose $(E,\bar{\partial}_{E},\theta,H)$ is a Higgs bundle with metric $H$ satisfying $F_{H,\theta}=0$. Then
\begin{equation}
H^{1}_{DR}(X,E)\simeq H^{1}_{Dol}(X,E).
\end{equation}
\end{lemma}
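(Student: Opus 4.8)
The plan is to run Hodge theory for the two elliptic complexes $(\mathcal{A}^{\bullet}(E),D'')$ and $(\mathcal{A}^{\bullet}(E),D)$ --- here $D''=\bar{\partial}_{E}+\theta$, $D'=\partial_{H}+\theta^{*H}$, and $D=D'+D''=D_{H,\theta}$ is the flat Hitchin--Simpson connection --- and to compare their spaces of harmonic $1$-forms. Because $\theta$ and $\theta^{*H}$ are of order zero, $D''$ and $D$ have the same principal symbols as $\bar{\partial}_{E}$ and as the exterior derivative respectively, so both complexes are elliptic over the compact manifold $X$; by Hodge theory with respect to $H$ and $\omega$ one gets $H^{1}_{Dol}(X,E)\cong\mathcal{H}^{1}_{D''}$ and $H^{1}_{DR}(X,E)\cong\mathcal{H}^{1}_{D}$, where $\mathcal{H}^{1}_{D''}=\{\alpha\in\mathcal{A}^{1}(E):D''\alpha=0,\ (D'')^{*}\alpha=0\}$ and $\mathcal{H}^{1}_{D}=\{\alpha:D\alpha=0,\ D^{*}\alpha=0\}$, the adjoints taken with respect to $H$ and $\omega$. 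It therefore suffices to show $\mathcal{H}^{1}_{D}=\mathcal{H}^{1}_{D''}$ as subspaces of $\mathcal{A}^{1}(E)$.

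I would deduce this from the $L^{2}$-identity
\begin{equation*}
\|D\alpha\|^{2}_{L^{2}}+\|D^{*}\alpha\|^{2}_{L^{2}}=2\|D''\alpha\|^{2}_{L^{2}}+2\|(D'')^{*}\alpha\|^{2}_{L^{2}},
\end{equation*}
valid for \emph{every} $E$-valued $1$-form $\alpha$, which makes the two harmonic spaces coincide at once. To obtain it, start from the pointwise Bochner--Kodaira--Nakano formula in the non-K\"ahler setting: Demailly's identities for the Chern connection of $(E,\bar{\partial}_{E},H)$ (the $E$-valued analogue of \eqref{z:1}), together with the zeroth-order contributions of $\theta$ and $\theta^{*H}$, express $(D'')^{*}$ and $(D')^{*}$ through $\sqrt{-1}[\Lambda_{\omega},D']$ and $\sqrt{-1}[\Lambda_{\omega},D'']$ up to torsion operators built from $\tau=[\Lambda_{\omega},\partial\omega]$ and its conjugate. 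Expanding $\Delta_{D}=DD^{*}+D^{*}D$ this way and using $F_{H,\theta}=0$ to cancel every curvature-of-$E$ term, one arrives at $\Delta_{D}=2\Delta_{D''}+\mathcal{T}$ with $\mathcal{T}$ a first-order operator assembled from the torsion of $\omega$. The key point is that $\int_{X}\langle\mathcal{T}\alpha,\alpha\rangle_{H}\,\frac{\omega^{n}}{n!}=0$ when $\alpha$ has degree one: after one integration by parts each torsion contribution becomes the integral of a fixed bilinear expression in $\alpha$ wedged with $\partial\bar{\partial}\omega^{n-1}$ or with $\partial\bar{\partial}\omega^{n-2}$ --- the degree-one analogue of the computations already carried out in Lemma \ref{lem:l1} (degree zero) and in Lemma \ref{lem:x1} (the $\omega^{n-2}$ pairing) --- and both of these forms vanish by the Gauduchon and astheno-K\"ahler hypotheses $\partial\bar{\partial}\omega^{n-1}=\partial\bar{\partial}\omega^{n-2}=0$. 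Integrating $\langle\Delta_{D}\alpha,\alpha\rangle$ then yields the displayed identity, hence $\mathcal{H}^{1}_{D}=\mathcal{H}^{1}_{D''}$. Along the way one also invokes Lemma \ref{lem:l1}, which identifies the image of $D''$ on $\mathcal{A}^{0}(E)$ with the image of $D$ on $\mathcal{A}^{0}(E)$, so that the equality of harmonic spaces descends correctly to cohomology.

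The main obstacle is the bookkeeping concealed in $\mathcal{T}$: one must write out Demailly's formulas for $E$-valued forms, keep careful track of the several torsion terms and of the cross terms between $D'$ and $D''$ (which do not split by bidegree, since $\theta$ and $\theta^{*H}$ mix types), and verify that after integration by parts each term is genuinely of the shape (a bilinear expression in $\alpha$) wedged with $\partial\bar{\partial}\omega^{n-1}$ or with $\partial\bar{\partial}\omega^{n-2}$. It is precisely here that the restriction to $H^{1}$ matters: in higher degree there remain torsion contributions of the wrong form, which is why --- as for its K\"ahler predecessor in \cite{NZ1} --- the statement is asserted only in degree one. Equivalently, one could organize the argument around the pair of square-zero, anticommuting operators $D$ and $D^{c}=D''-D'$, the claim being that the degree-one instance of a ``$DD^{c}$-lemma'' holds under the hypotheses of Theorem \ref{thm:2-2}; the Hodge-theoretic route sketched above seems the most economical.
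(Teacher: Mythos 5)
Your route is genuinely different from the paper's: you propose to identify the two groups through their harmonic spaces via an integrated Bochner--Kodaira identity $\|D\alpha\|^{2}+\|D^{*}\alpha\|^{2}=2\|D''\alpha\|^{2}+2\|(D'')^{*}\alpha\|^{2}$ asserted for \emph{every} $E$-valued $1$-form $\alpha$. The paper instead argues representative by representative: starting from a $D''$-closed (resp.\ $D$-closed) $\beta$ it first solves $\sqrt{-1}\Lambda_{\omega}D'_{H}D''_{E}\gamma=-\sqrt{-1}\Lambda_{\omega}D'_{H}\beta$ to produce $\tilde{\beta}$ in the same class with $\sqrt{-1}\Lambda_{\omega}D'_{H}\tilde{\beta}=0$ (resp.\ with both contractions vanishing), and only then runs the curvature/torsion computation on that particular $\tilde{\beta}$.

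The gap is the unproved $L^{2}$ identity, which is the entire content of the lemma in your scheme and which the paper's own computations do not support for general $\alpha$. The formula from \cite{MZ} that converts the $\tau^{*}$-terms into wedge products with $\bar{\partial}\omega^{n-2}$ applies only to $2$-forms $\alpha$ with $\Lambda_{\omega}\alpha=0$ --- this is exactly why the paper first normalizes $\sqrt{-1}\Lambda_{\omega}D'_{H}\tilde{\beta}=0$; and even after that normalization the resulting boundary term $\frac{1}{(n-2)!}\int_{X}\bar{\partial}\omega^{n-2}\wedge\langle\tilde{\beta}^{1,0},\bar{\partial}_{E}\tilde{\beta}^{1,0}+\theta(\tilde{\beta}^{0,1})\rangle_{H}$ vanishes only because $D''_{E}\tilde{\beta}=0$, not because of $\partial\bar{\partial}\omega^{n-2}=0$. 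In the converse direction the two torsion integrals are not shown to vanish separately at all: the paper only proves that their sum $A$ satisfies $\bar{A}=-A$ and therefore drops out of a real equation. None of this amounts to $\int_{X}\langle\mathcal{T}\alpha,\alpha\rangle_{H}\frac{\omega^{n}}{n!}=0$ for arbitrary $1$-forms, and a test on the trivial Higgs line bundle over a non-K\"ahler Gauduchon surface (where the claim reduces to $\|\partial\bar{\partial}f\|^{2}=\|\bar{\partial}^{*}\bar{\partial}f\|^{2}$ for all functions $f$) suggests it is simply false. A second symptom of the same problem: your argument never uses the standing hypothesis $\int_{X}\partial[\eta]\wedge\frac{\omega^{n-1}}{(n-1)!}=0$ for $[\eta]\in H^{0,1}(X)$, which the paper needs precisely as the integrability condition for solving the auxiliary equation; if your identity held unconditionally, the lemma would follow on every Gauduchon astheno-K\"ahler manifold, a strictly stronger statement than the one the authors were able to prove. (Also, the closing appeal to Lemma \ref{lem:l1} to ``descend to cohomology'' is not needed if the harmonic spaces literally coincide, and that lemma compares kernels on $\mathcal{A}^{0}(E)$, not images.)
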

\begin{proof}
We will show that for any $[\beta]\in H_{Dol}^{1}(X,E)$, there is a representative $\tilde{\beta}\in[\beta]$, such that $D_{H,\theta}\tilde{\beta}=0$. Consider the following equation
\begin{equation}
\sqrt{-1}\Lambda_{\omega}D^{'}_{H}D^{''}_{E}\gamma=-\kappa.
\end{equation}
Making use of the continuity method, one can prove that the solvability of this equation is equivalent to the one of
\begin{equation}
\int_{X}\langle\kappa,\vartheta\rangle_{H}\frac{\omega^{n}}{n!}=0,
\end{equation}
for any $\vartheta\in\Gamma(X,E)$ satisfying $D^{''}_{E}\vartheta=D_{H}^{'}\vartheta=0$. By the assumption $\int_{X}\partial [\eta]\wedge\frac{\omega^{n-1}}{(n-1)!}=0$ for any Dolbeault class $[\eta]\in H^{0,1}(X)$ and $D^{''}_{E}\beta =0$, we know
\begin{equation}
\int_{X}\langle\sqrt{-1}\Lambda_{\omega}D^{'}_{H}\beta,\vartheta\rangle_{H}\frac{\omega^{n}}{n!}
=\int_{X}\sqrt{-1}\partial\langle\beta^{0,1} ,\vartheta\rangle_{H}\wedge\frac{\omega^{n-1}}{(n-1)!}
=0.
\end{equation}
Suppose $\gamma\in\Gamma(X,E)$ is a solution of
\begin{equation}
\sqrt{-1}\Lambda_{\omega}D^{'}_{H}D^{''}_{E}\gamma=-\sqrt{-1}\Lambda_{\omega}D^{'}_{H}\beta.
\end{equation}
Let $\tilde{\beta}=\beta+D^{''}_{E}\gamma$, then $\sqrt{-1}\Lambda_{\omega}D_{H}^{'}\tilde{\beta}=0$.  According to (\ref{z:1}), one can easily check that
\begin{equation}
\sqrt{-1}[\Lambda_{\omega},D_{E}^{''}]=(D_{H}^{'})^{*}+\tau^{*},\ \ -\sqrt{-1}[\Lambda_{\omega},D_{H}^{'}]=(D_{E}^{''})^{*}+\bar{\tau}^{*}.
\end{equation}
A simple computation gives
\begin{equation}\label{eq:61}
\begin{split}
0&=\int_{X}\langle\sqrt{-1}[\Lambda_{\omega},F_{H,\theta}]\tilde{\beta},\tilde{\beta}\rangle_{H,\omega}\frac{\omega^{n}}{n!}\\
&=\int_{X}\langle\sqrt{-1}\Lambda_{\omega}D^{''}_{E}D^{'}_{H}\tilde{\beta},\tilde{\beta}\rangle_{H,\omega}\frac{\omega^{n}}{n!}\\
&=\int_{X}\langle\sqrt{-1}[\Lambda_{\omega},D^{''}_{E}]D^{'}_{H}\tilde{\beta},\tilde{\beta}\rangle_{H,\omega}\frac{\omega^{n}}{n!}\\
&=\int_{X}|D^{'}_{H}\tilde{\beta}|^{2}_{H,\omega}\frac{\omega^{n}}{n!}+\int_{X}\langle\tau^{*}D^{'}_{H}\tilde{\beta},\tilde{\beta}\rangle_{H,\omega}\frac{\omega^{n}}{n!}.
\end{split}
\end{equation}
From Proposition 4.1 in \cite{MZ}, we can see that for any $\alpha\in\Omega^{2}(E)$ with $\Lambda_{\omega}\alpha=0$, there holds
\begin{equation}
(\tau+\bar{\tau}^{*})\alpha=\frac{-*(d(\omega^{n-2})\wedge(\alpha^{1,1}-\alpha^{2,0}-\alpha^{0,2}))}{(n-2)!}.
\end{equation}
Let $\alpha=D^{'}_{H}\tilde{\beta}$. Then using Stokes formula and $D^{''}_{E}\tilde{\beta}=0$, we derive
\begin{equation}
\begin{split}
&\int_{X}\langle\tau^{*}D^{'}_{H}\tilde{\beta},\tilde{\beta}\rangle_{H,\omega}\frac{\omega^{n}}{n!}\\
=&\int_{X}\langle\frac{-*(\bar{\partial}(\omega^{n-2})\wedge(\partial_{H}\tilde{\beta}^{0,1}+\theta^{*}(\tilde{\beta}^{1,0})-\partial_{H}\tilde{\beta}^{1,0}))}{(n-2)!},\tilde{\beta}^{0,1}+\tilde{\beta}^{1,0}\rangle_{H,\omega}\frac{\omega^{n}}{n!}\\
=&\frac{-1}{(n-2)!}\int_{X} \langle\bar{\partial}\omega^{n-2}\wedge(\partial_{H}\tilde{\beta}^{0,1}+\theta^{*}(\tilde{\beta}^{1,0})-\partial_{H}\tilde{\beta}^{1,0}),\tilde{\beta}^{0,1}+\tilde{\beta}^{1,0}\rangle_{H}\\
=&\frac{-1}{(n-2)!}\int_{X} \bar{\partial}\omega^{n-2}\wedge \langle\partial_{H}\tilde{\beta}^{0,1},\tilde{\beta}^{0,1}\rangle_{H}\\
&-\frac{1}{(n-2)!}\int_{X} \bar{\partial}\omega^{n-2}\wedge \langle\theta^{*}(\tilde{\beta}^{1,0}),\tilde{\beta}^{0,1}\rangle_{H}\\
&+\frac{1}{(n-2)!}\int_{X} \bar{\partial}\omega^{n-2}\wedge \langle\partial_{H}\tilde{\beta}^{1,0},\tilde{\beta}^{1,0}\rangle_{H}\\
=&\frac{1}{(n-2)!}\int_{X}\bar{\partial}\omega^{n-2}\wedge \langle\tilde{\beta}^{1,0},\bar{\partial}_{E}\tilde{\beta}^{1,0}+\theta(\tilde{\beta}^{0,1})\rangle_{H}\\
=&0.
\end{split}
\end{equation}
Hence $D^{'}_{H}\tilde{\beta}=0$, and $[\tilde{\beta}]\in H^{1}_{DR}(E)$.

\medskip

Conversely, take $[\beta_{1}]\in H^{1}_{DR}(E)$. For any $\vartheta\in\Gamma(X,E)$ satisfying $D^{''}_{E}\vartheta=D_{H}^{'}\vartheta=0$, we have
\begin{equation}
\int_{X}\langle\sqrt{-1}\Lambda_{\omega}D^{'}_{H}\beta_{1},\vartheta\rangle_{H}\frac{\omega^{n}}{n!}
=\int_{X}\sqrt{-1}\partial\langle\beta_{1}^{0,1},\vartheta\rangle_{H}\wedge\frac{\omega^{n-1}}{(n-1)!}
=0.
\end{equation}
Assume $\gamma_{1}\in\Gamma(X,E)$ is a solution of
\begin{equation}
\sqrt{-1}\Lambda_{\omega}D^{'}_{H}D^{''}_{E}\gamma_{1}=-\sqrt{-1}\Lambda_{\omega}D^{'}_{H}\beta_{1}.
\end{equation}
Let $\tilde{\beta}_{1}=\beta_{1}+D\gamma_{1}$, then $\sqrt{-1}\Lambda_{\omega}D_{E}^{''}\tilde{\beta}_{1}=0$ and $\sqrt{-1}\Lambda_{\omega}D_{H}^{'}\tilde{\beta}_{1}=0$. Similar to (\ref{eq:61}), we obtain
\begin{equation}
\begin{split}
0=&\int_{X}\langle\sqrt{-1}[\Lambda_{\omega},DD^{c}_{H}+D_{H}^{c}D]\tilde{\beta}_{1},\tilde{\beta}_{1}\rangle_{H,\omega}\frac{\omega^{n}}{n!}\\
=&\int_{X}\langle\sqrt{-1}\Lambda_{\omega}DD^{c}_{H}\tilde{\beta}_{1},\tilde{\beta}_{1}\rangle_{H,\omega}\frac{\omega^{n}}{n!}.
\end{split}
\end{equation}
So
\begin{equation}\label{new:1}
\begin{split}
-2\int_{X}|D^{''}_{E}\tilde{\beta}_{1}|^{2}_{H,\omega}\frac{\omega^{n}}{n!}-2\int_{X}\langle\bar{\tau}^{*}D^{''}_{E}\tilde{\beta}_{1},\tilde{\beta}_{1}\rangle_{H,\omega}\frac{\omega^{n}}{n!}=0,
\end{split}
\end{equation}
and
\begin{equation}\label{new:2}
\begin{split}
-2\int_{X}|D^{'}_{H}\tilde{\beta}_{1}|^{2}_{H,\omega}\frac{\omega^{n}}{n!}-2\int_{X}\langle\tau^{*}D^{'}_{H}\tilde{\beta}_{1},\tilde{\beta}_{1}\rangle_{H,\omega}\frac{\omega^{n}}{n!}=0.
\end{split}
\end{equation}
Set $A=\int_{X}\langle\bar{\tau}^{*}D^{''}_{E}\tilde{\beta}_{1},\tilde{\beta}_{1}\rangle_{H,\omega}\frac{\omega^{n}}{n!}+\int_{X}\langle\tau^{*}D^{'}_{H}\tilde{\beta}_{1},\tilde{\beta}_{1}\rangle_{H,\omega}\frac{\omega^{n}}{n!}$. Then
 \begin{equation}
\begin{split}
A=&-\frac{1}{(n-2)!}\int_{X}\partial\omega^{n-2}\wedge \langle\bar{\partial}_{E}\tilde{\beta}_{1}^{1,0},\tilde{\beta}_{1}^{1,0}\rangle_{H}
-\frac{1}{(n-2)!}\int_{X}\partial\omega^{n-2}\wedge \langle\theta(\tilde{\beta}_{1}^{0,1}),\tilde{\beta}^{1,0}\rangle_{H}\\
&+\frac{1}{(n-2)!}\int_{X}\partial\omega^{n-2}\wedge \langle\bar{\partial}_{E}\tilde{\beta}_{1}^{0,1},\tilde{\beta}_{1}^{0,1}\rangle_{H}
-\frac{1}{(n-2)!}\int_{X} \bar{\partial}\omega^{n-2}\wedge \langle\partial_{H}\tilde{\beta}^{0,1},\tilde{\beta}^{0,1}\rangle_{H}\\
&-\frac{1}{(n-2)!}\int_{X} \bar{\partial}\omega^{n-2}\wedge \langle\theta^{*}(\tilde{\beta}^{1,0}),\tilde{\beta}^{0,1}\rangle_{H}
+\frac{1}{(n-2)!}\int_{X} \bar{\partial}\omega^{n-2}\wedge \langle\partial_{H}\tilde{\beta}^{1,0},\tilde{\beta}^{1,0}\rangle_{H}.
\end{split}
\end{equation}
It is not hard to find that $\bar{A}=-A$, so it is imaginary. Adding (\ref{new:1}) and (\ref{new:2}), we get $A=0$ and
\begin{equation}
\int_{X}|D^{''}_{E}\tilde{\beta}_{1}|^{2}_{H}\frac{\omega^{n}}{n!}+\int_{X}|D^{'}_{H}\tilde{\beta}_{1}|^{2}_{H}\frac{\omega^{n}}{n!}=0.
\end{equation}
Hence $[\tilde{\beta}_{1}]\in H^{1}_{Dol}(X,E)$.
\end{proof}

\subsection{From Higgs bundle to projectively flat bundle}
First, we construct a map $j:\mathcal{C}_{Dol}(E)\rightarrow \mathcal{C}_{DR}(E, \alpha )$ in rank $2$ case or when the length of the filtration in Theorem \ref{thm:1} is only $2$. Let $(E,\bar{\partial}_{E},\theta)$ be a semi-stable Higgs bundle with vanishing Chern number (\ref{CC}). Now we discuss the following two cases that may occur.

{\bf Case 1.} $(E,\bar{\partial}_{E},\theta)$ is polystable. By Theorem \ref{thm:2-1}, there is a projectively flat connection on $E$ denoted by $D_{E}$ and $(E,D_{E})$ is semi-simple. Define $j$ mapping from the equivalent class of $(E,\bar{\partial}_{E},\theta)$ to the equivalent class of $(E,D_{E})$.

{\bf Case 2.} $(E,\bar{\partial}_{E},\theta)$ is strictly semi-stable. By Theorem \ref{thm:1}, there is a filtration
\begin{equation}
0\subset S\subset E.
\end{equation}
It induces an exact sequence of Higgs bundles
\begin{equation}\label{exact:1}
0\rightarrow (S,\bar{\partial}_{S},\theta_{S})\rightarrow (E,\bar{\partial}_{E},\theta)\rightarrow (Q,\bar{\partial}_{Q},\theta_{Q})\rightarrow 0,
\end{equation}
where $(S,\bar{\partial}_{S}, \theta_{S})$ and $(Q,\bar{\partial}_{Q}, \theta_{Q})$ are stable Higgs bundles with vanishing Chern number (\ref{CC}).

Letting $H_{S}$ and $H_{Q}$ be Hermitian-Einstein metrics on the Higgs bundles $(S,\bar{\partial}_{S}, \theta_{S})$ and $(Q, \bar{\partial}_{Q}, \theta_{Q})$, we have
\begin{equation}
F_{H_{S}, \theta_{S}}=\frac{1}{\rank(S)}\sqrt{-1}\tr F_{H_{S}}\otimes \Id_{S}
\end{equation}
and
\begin{equation}
F_{H_{Q}, \theta_{Q}}=\frac{1}{\rank(Q)}\sqrt{-1}\tr F_{H_{Q}}\otimes \Id_{Q}.
\end{equation}
By (\ref{chern03}), we know that
\begin{equation}\label{chern06}
\frac{1}{\rank(S)}\sqrt{-1}\tr F_{H_{S}}=\frac{1}{\rank(Q)}\sqrt{-1}\tr F_{H_{Q}}=\frac{1}{\rank(E)}\sqrt{-1}\tr F_{K},
\end{equation}
where $K$ is a Hermitian metric on $E$ such that $\Lambda_{\omega }F_{K} $ is constant.
 For simplicity, we denote the Hitchin-Simpson connections on $(S, \bar{\partial}_{S}, \theta_{S})$ and $(Q, \bar{\partial}_{Q}, \theta_{Q})$ with respect to the Hermitian-Einstein metrics by $D_{S}$ and $D_{Q}$. Due to (\ref{chern06}), the induced Higgs bundle $(Q^{\ast} \otimes S, \bar{\partial}_{Q^{\ast} \otimes S}, \theta_{Q^{\ast} \otimes S})$ is Higgs-Hermitian flat, i.e.
 \begin{equation}
 F_{D_{S}\otimes D_{Q^{\ast}}}=0.
 \end{equation}

For the exact sequence (\ref{exact:1}), it determines a Higgs extension class in $H^{1}_{Dol}(Q^{*}\otimes S)$. Choose a $C^{\infty}$ splitting $f:S\oplus Q\rightarrow E$. The pull-back of $\bar{\partial}_{E}$ and $\theta$ on $S\oplus Q$ can be written as:
\begin{equation}
f^{*}(\bar{\partial}_{E})=\left(\begin{split}
  &\bar{\partial}_{S}  &\gamma \\
  &0   &\bar{\partial}_{Q}
\end{split}\right),\ \
f^{*}(\theta)=\left(\begin{split}
  &\theta_{S}  &\xi \\
  &0   &\theta_{Q}
\end{split}\right),
\end{equation}
where $\gamma\in\Omega^{0,1}(Q^{*}\otimes S)$ is the second fundamental form and $\xi\in\Omega^{1,0}(Q^{*}\otimes S)$. Setting $\beta=\gamma+\xi$, one can check that $D^{''}_{Q^{*}\otimes S}\beta=0$. The Higgs extension class can be presented by $\beta$. It is well-known that there is a one-to-one correspondence between the equivalence class of Higgs extensions of $(Q,\theta_{Q})$ by $(S,\theta_{S})$ and the elements in $H^{1}_{Dol}(Q^{*}\otimes S)$.

Let $H_{Q^{*}\otimes S}$ be the Higgs-Hermitian flat metric on $Q^{*}\otimes S$, $D_{Q^{*}\otimes S}$ be the related flat connection. Because of Lemma \ref{nl:1}, there is a $\tilde{\beta}=\beta+D^{''}\gamma\in[\beta]$ satisfying $D_{Q^{*}\otimes S}\tilde{\beta}=0$. Define $\tilde{f}:S\oplus Q\rightarrow E$ to be
\begin{equation}
\tilde{f}=f\circ \left(\begin{split}
  &\Id_{S}  &\gamma \\
  &0  &\Id_{Q}
\end{split}\right),
\end{equation}
then
\begin{equation}
\tilde{f}^{*}(D_{E}^{''})=\left(\begin{split}
  &D_{S}^{''}  &\tilde{\beta} \\
  &0  &D_{Q}^{''}
\end{split}\right).
\end{equation}
Define the  connection $D_{E}$ on $E$ by
\begin{equation}
\tilde{f}^{*}(D_{E})=\left(\begin{split}
  &D_{S}  &\tilde{\beta} \\
  &0  &D_{Q}
\end{split}\right),
\end{equation}
then $(E,D_{E})$ is projectively flat, i.e.
\begin{equation}
F_{D_{E}}=\frac{1}{\rank(S)}\sqrt{-1}\tr F_{H_{S}}\otimes \Id_{E},
\end{equation}
 and
\begin{equation}
0\rightarrow (S,D_{H_{s},\theta_{S}})\rightarrow (E,D_{E})\rightarrow (Q,D_{H_{Q},\theta})\rightarrow 0
\end{equation}
is an exact sequence of projective flat bundles. Define $j$ mapping from the equivalent class of $(E,\bar{\partial}_{E},\theta)$ to the equivalent class of $(E,D_{E})$.

We claim this map is well-defined. It is sufficient to prove that the map $j$ is independent of the choice of filtrations. Suppose we have another filtration
\begin{equation}
0\subset \tilde{S}\subset E.
\end{equation}
The induced exact sequence is
\begin{equation}
0\rightarrow (\tilde{S},\bar{\partial}_{\tilde{S}},\theta_{\tilde{S}})\rightarrow (E,\bar{\partial}_{E},\theta)\rightarrow (\tilde{Q},\bar{\partial}_{\tilde{Q}},\theta_{\tilde{Q}})\rightarrow 0.
\end{equation}
Choose a suitable splitting $\tilde{g}:\tilde{S}\oplus \tilde{Q}\rightarrow E$. The pull-back of $D_{E}^{''}$ is
\begin{equation}
\tilde{g}^{*}(D_{E}^{''})=\left(\begin{split}
  &D_{\tilde{S}}^{''}   &\tilde{\rho} \\
  &0   &D_{\tilde{Q}}^{''}
\end{split}\right),
\end{equation}
and $D^{'}\tilde{\rho}=0$. Define the projectively flat connection on $E$ by
\begin{equation}
\tilde{g}^{*}(\tilde{D}_{E})=\left(\begin{split}
  &D_{\tilde{S}}  &\tilde{\rho} \\
  &0  &D_{\tilde{Q}}
\end{split}\right).
\end{equation}
Now we need to show $(E,D_{E})\simeq(E,\tilde{D}_{E})$. First, we have a Higgs isomorphism
\begin{equation}
P=\tilde{g}^{-1}\circ \tilde{f}:(S\oplus Q,\left(\begin{split}
  &D_{S}^{''}  &\tilde{\beta} \\
  &0  &D_{Q}^{''}
\end{split}\right))\rightarrow (\tilde{S}\oplus \tilde{Q},\left(\begin{split}
  &D_{\tilde{S}}^{''}  &\tilde{\rho} \\
  &0  &D_{\tilde{Q}}^{''}
\end{split}\right)).
\end{equation}
That is,
\begin{equation}
P\circ\left(\begin{split}
  &D_{S}^{''}  &\tilde{\beta} \\
  &0  &D_{Q}^{''}
\end{split}\right)=\left(\begin{split}
  &D_{\tilde{S}}^{''}  &\tilde{\rho} \\
  &0  &D_{\tilde{Q}}^{''}
\end{split}\right)\circ P.
\end{equation}
Let
\begin{equation}
P=\left(\begin{split}
  &P_{1}^{1}  &P_{1}^{2} \\
  &P_{2}^{1}  &P_{2}^{2}
\end{split}\right),
\end{equation}
then
\begin{equation}
\left(\begin{split}
  &P_{1}^{1}\circ D_{S}^{''}  &P_{1}^{1}\circ\tilde{\beta}+P_{1}^{2}\circ D_{Q}^{''} \\
  &P_{2}^{1}\circ D_{S}^{''}  &P_{2}^{1}\circ\tilde{\beta}+P_{2}^{2}\circ D_{Q}^{''}
\end{split}\right)=\left(\begin{split}
  &D_{\tilde{S}}^{''}\circ P_{1}^{1}+\tilde{\rho}\circ P_{2}^{1}  &D_{\tilde{S}}^{''}\circ P_{1}^{2}+\tilde{\rho}\circ P_{2}^{2} \\
  &\qquad D_{\tilde{Q}}^{''}\circ P_{2}^{1}  &D_{\tilde{Q}}^{''}\circ P_{2}^{2}\qquad
\end{split}\right).
\end{equation}
Comparing both sides of this equation, we get
\begin{equation}\label{neweq:1}
\left\{\begin{split}
&D^{''}(P_{2}^{1})=0;\\
&D^{''}(P_{1}^{1})+\tilde{\rho}\circ P_{2}^{1}=0;\\
&D^{''}(P_{2}^{2})-P_{2}^{1}\circ\tilde{\beta}=0;\\
&D^{''}(P_{1}^{2})+\tilde{\rho}\circ P_{2}^{2}-P_{1}^{1}\circ\tilde{\beta}=0.
\end{split}\right.
\end{equation}
Using $D^{''}(P_{2}^{1})=0$, we know $\sqrt{-1}\Lambda_{\omega}D^{'}D^{''}(P_{2}^{1})=0$. This means $D^{'}(P_{2}^{1})=0$.
By $D^{'}\tilde{\rho}=D^{'}\tilde{\beta}=0$, one can obtain
\begin{equation}
\sqrt{-1}\Lambda_{\omega}D^{'}D^{''}(P_{1}^{1})=0, \ \sqrt{-1}\Lambda_{\omega}D^{'}D^{''}(P_{2}^{2})=0.
\end{equation}
So $D^{'}(P_{1}^{1})=D^{''}(P_{1}^{1})=0$ and $D^{'}(P_{2}^{2})=D^{''}(P_{2}^{2})=0$. From equation (\ref{neweq:1}) and  $D^{'}\tilde{\rho}=D^{'}\tilde{\beta}=0$, we also have $\sqrt{-1}\Lambda_{\omega}D^{'}D^{''}(P_{1}^{2})=0$. Then $D^{'}(P_{1}^{2})=0$ and $D^{''}(P_{1}^{2})=0$. Together with all of the above, we see
 \begin{equation}
P\circ\left(\begin{split}
&D_{S}^{'}  &0 \\
&0  &D_{Q}^{'}
\end{split}\right)=\left(\begin{split}
  &D_{\tilde{S}}^{'}  &0 \\
  &0  &D_{\tilde{Q}}^{'}
\end{split}\right)\circ P.
\end{equation}
Hence, $(E,D_{E})\simeq(E,\tilde{D}_{E})$.

\subsection{From projectively flat bundle to Higgs bundle}
Now we construct a map $i:\mathcal{C}_{DR}(E, \alpha )\rightarrow \mathcal{C}_{Dol}(E )$. Letting $(E,D)\in \mathcal{C}_{DR}(E, \alpha )$, by Theorem \ref{thm:2-1},  we only need to consider the case that $(E,D)$ is not semi-simple. Let $(S,D_{S})$ be a $D$-invariant subbundle of $(E,D)$ with minimal rank, then we have the following exact sequence of projectively flat bundles
\begin{equation}
0\rightarrow (S,D_{S})\rightarrow (E,D)\rightarrow (Q,D_{Q})\rightarrow 0.
\end{equation}
It is obvious that $(S,D_{S})$ is a simple projectively flat bundle. In the following, we suppose that $\rank (E)=2$ or $(Q,D_{Q})$ is simple. According to Theorem \ref{thm:2-1}, we can get Higgs structures $(\bar{\partial}_{S},\theta_{S})$ and $(\bar{\partial}_{Q},\theta_{Q})$ on $S$ and $Q$ by choosing harmonic metrics. It is straightforward to find that the induced Higgs bundle $(Q^{*}\otimes S, \bar{\partial}_{Q^{*}\otimes S},\theta_{Q^{*}\otimes S})$ is Higgs-Hermitian flat, i.e. $F_{D_{Q^{*}\otimes S}}=0$, where $D_{Q^{*}\otimes S}$ is the corresponding Hitchin-Simpson connection.

 Choose a $C^{\infty}$ splitting $f:S\oplus Q\rightarrow E$, then the pull-back of $D$ can be expressed as
\begin{equation}
f^{*}(D)=\left(\begin{split}
  &D_{S}  &\beta \\
  &0  &D_{Q}
\end{split}\right),
\end{equation}
and one can check that $D_{Q^{*}\otimes S}\beta=0$. Thus $\beta$ determines a flat extension class in $H^{1}_{DR}(Q^{*}\otimes S)$ of the flat bundle $(Q^{*}\otimes S, D_{Q^{*}\otimes S})$. By Lemma \ref{nl:1}, there is a $\tilde{\beta}=\beta+D_{Q^{*}\otimes S}\gamma\in [\beta]$ satisfying $D_{Q^{*}\otimes S}^{''}\tilde{\beta}=0$. Define $\tilde{f}:S\oplus Q\rightarrow E$ to be
\begin{equation}
\tilde{f}=f\circ \left(\begin{split}
  &\Id_{S}  &\gamma \\
  &0  &\Id_{Q}
\end{split}\right),
\end{equation}
then
\begin{equation}
\tilde{f}^{*}(D_{E})=\left(\begin{split}
  &D_{S}  &\tilde{\beta} \\
  &0  &D_{Q}
\end{split}\right).
\end{equation}
Define the holomorphic structure $\bar{\partial}_{E}$ and Higgs field $\theta_{E}$ on $E$ by
\begin{equation}
\tilde{f}^{*}(\bar{\partial}_{E})=\left(\begin{split}
  &\bar{\partial}_{S}  &\tilde{\beta}^{0,1} \\
  &0  &\bar{\partial}_{Q}
\end{split}\right)  \quad \mbox{and} \quad
\tilde{f}^{*}(\theta_{E})=\left(\begin{split}
  &\theta_{S}  &\tilde{\beta}^{1,0} \\
  &0  &\theta_{Q}
\end{split}\right).
\end{equation}
Then $(E,\bar{\partial}_{E},\theta_{E})$ is a semi-stable Higgs bundle with vanishing Chern number (\ref{CC}) and
\begin{equation}
0\rightarrow (S,\bar{\partial}_{S},\theta_{S})\rightarrow (E,\bar{\partial}_{E},\theta_{E})\rightarrow (Q,\bar{\partial}_{Q},\theta_{Q})\rightarrow 0
\end{equation}
is an exact sequence of Higgs bundles. Define $i$ mapping from the equivalent class of $(E,D)$ to the equivalent class of $(E,\bar{\partial}_{E},\theta_{E})$.  We can  show that this definition is well-defined. Because the proof is very similar as that for the map $j$, we omit it here.

\medskip

\begin{proof}[Proof of Theorem \ref{thm:2-2}]
From above, we define two maps $i$ and $j$. It is easy to see $i\circ j=\Id_{\mathcal{C}_{Dol}(E)}$ and $j\circ i=\Id_{\mathcal{C}_{DR}(E, \alpha )}$. So $i$ and $j$ are one-to-one maps, which finishes the proof.
\end{proof}

\section{Proof of Theorem \ref{thm:2-3}}
In this section, we assume that $(X,\omega)$ is a K\"ahler manifold. Let $\mathfrak{E}=(E,\bar{\partial}_{E},\theta)$ be a semi-stable Higgs bundle over $X$ with vanishing Chern number (\ref{CC}). By Theorem \ref{thm:1}, there is  a filtration
\begin{equation}
0=\mathfrak{E}_{0}\subset \mathfrak{E}_{1}\subset\cdots\subset \mathfrak{E}_{l}=\mathfrak{E}
\end{equation}
by Higgs subbundles such that the quotients $\mathfrak{Q}_{k}=(Q_{k}, \overline{\partial}_{Q_{k}}, \theta_{k})=\mathfrak{E}_{k}/\mathfrak{E}_{k-1}$ are stable Higgs bundles with vanishing Chern number (\ref{CC}) and $\frac{1}{\rank(Q_{k})}c_{1}(Q_{k}, \overline{\partial}_{Q_{k}})=\frac{1}{\rank(E)}c_{1}(E, \overline{\partial}_{E})=\frac{1}{2\pi }[\alpha ]$. Let $H_{k}$ be the Hermitian-Einstein metric on the stable Higgs bundle $(Q_{k}, \overline{\partial}_{Q_{k}}, \theta_{k})$, then we have
\begin{equation}
\sqrt{-1}F_{H_{k}, \theta_{k}}=\alpha \otimes \Id_{Q_{K}}.
\end{equation}
So every induced Higgs bundle $(Q_{k}^{\ast }\otimes Q_{j}, \overline{\partial}_{Q_{k}^{\ast }\otimes Q_{j}}, \theta_{Q_{k}^{\ast }\otimes Q_{j}})$ is Higgs-Hermitian flat. In the following, we denote
\begin{equation}
D_{Q_{k}}=D_{Q_{k}}^{'}+D_{Q_{k}}^{''}, \quad D_{Q_{k}}^{''}=\bar{\partial}_{Q_{k}}+\theta_{k}, \quad and \quad D_{Q_{k}}^{'}=\partial_{H_{k}}+\theta_{k}^{\ast H_{k}}.
\end{equation}

 Take a smooth splitting $f:\oplus_{i=1}^{l}Q_{i}\rightarrow E$, then the pull back of $D_{E}^{''}$ can be written as
\begin{equation}
\begin{split}
f^{*}(D_{E}^{''})=\left(\begin{array}{ccccc}
  D_{Q_{1}}^{''} &\cdots & \beta_{1}^{l} \\
   \vdots  &\ddots & \vdots\\
  0  & \cdots & D_{Q_{l}}^{''}
\end{array}\right).
\end{split}
\end{equation}
In the following, we  denote the induced  Hitchin-Simpson connections on $Q_{j}^{\ast} \otimes Q_{i}$ by $D'+D''$ for simplicity. The condition $(D_{E}^{''})^{2}=0$ implies
\begin{equation}\label{eq:7-1}
\left\{\begin{split}
&(D_{Q_{i}}^{''})^{2}=0, \ \ i=1,\cdots,l;\\
&D^{''}(\beta_{i}^{i+1})=0,\ \ i=1,\cdots,l-1;\\
&D^{''}(\beta_{i}^{j})+\sum_{k=i+1}^{j-1}\beta_{i}^{k}\wedge\beta_{k}^{j}=0,\ \ 1\leq i\leq j-2\leq l-2.
\end{split}\right.
\end{equation}

\begin{lemma}\label{l:7-1}
There exists a smooth splitting $\tilde{f}:\oplus_{i=1}^{l}Q_{i}\rightarrow E$, such that $\tilde{\beta}_{i}^{j}$ satisfies the equation (\ref{eq:7-1}) and $D^{'}\tilde{\beta}_{i}^{j}=0$ for $1\leq i<j\leq l$.
\end{lemma}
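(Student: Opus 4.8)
The plan is to build the splitting $\tilde f$ by a finite induction on the ``distance'' $j-i$ between the indices, successively modifying the off-diagonal entries so that they become $D'$-closed while preserving the constraints (\ref{eq:7-1}). The key analytic input is the same Hodge-theoretic fact used in Lemma \ref{nl:1}: each induced Higgs bundle $(Q_j^{\ast}\otimes Q_i, \bar\partial, \theta)$ is Higgs-Hermitian flat (its Hitchin-Simpson connection has zero curvature), so on the K\"ahler manifold $(X,\omega)$ the K\"ahler identities $\sqrt{-1}[\Lambda_\omega,D'']=(D')^{\ast}$ and $-\sqrt{-1}[\Lambda_\omega,D']=(D'')^{\ast}$ hold exactly (no torsion terms $\tau^{\ast},\bar\tau^{\ast}$), and the operator $\sqrt{-1}\Lambda_\omega D'D''$ is the relevant Laplacian-type operator whose cokernel consists of $D''$-harmonic sections. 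Concretely: if $\kappa$ is a $D''$-closed $\mathrm{End}$-valued $1$-form with $[\kappa]=0$ in $H^1_{Dol}$, then there is $\gamma$ with $D''\gamma = -\kappa$ modulo a $D'$-closed representative; more precisely, following the argument of Lemma \ref{nl:1}, one can solve $\sqrt{-1}\Lambda_\omega D'D''\gamma = -\sqrt{-1}\Lambda_\omega D'\kappa$ and set $\tilde\kappa = \kappa + D''\gamma$, which then satisfies $D'\tilde\kappa = 0$ while remaining in the same Dolbeault class as $\kappa$.

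First I would treat the base case $j-i=1$: the entries $\beta_i^{i+1}$ satisfy $D''\beta_i^{i+1}=0$ by (\ref{eq:7-1}), and they represent Dolbeault extension classes in $H^1_{Dol}(Q_{i+1}^{\ast}\otimes Q_i)$. Applying the modification above (change-of-splitting by a gauge transformation of the form $\Id + (\text{strictly upper-triangular})$, exactly as in Section 8), I replace each $\beta_i^{i+1}$ by a cohomologous $\tilde\beta_i^{i+1}$ with $D''\tilde\beta_i^{i+1}=0$ and $D'\tilde\beta_i^{i+1}=0$. Then I proceed by induction on $d = j-i \ge 2$: assume all entries with index-distance $<d$ have been arranged to be both $D''$-closed and $D'$-closed. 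For distance $d$, the constraint in (\ref{eq:7-1}) reads $D''\beta_i^j = -\sum_{k=i+1}^{j-1}\beta_i^k\wedge\beta_k^j$; the right-hand side, call it $-\Xi_i^j$, is built from lower-distance entries. Using $D'\beta_i^k = D'\beta_k^j = 0$ (inductive hypothesis) together with the Higgs-flatness identities, one checks $D'\Xi_i^j = 0$ and $D''\Xi_i^j=0$, so $\Xi_i^j$ is a $D'$-closed, $D''$-closed form; hence $\beta_i^j$ can be replaced by $\tilde\beta_i^j = \beta_i^j + D''\gamma_i^j$ with $D'\tilde\beta_i^j = 0$, and the new $\gamma_i^j$ absorbed into an adjustment of $\tilde f$ that affects only entries of distance $\ge d$ (hence does not disturb what was already fixed). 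Iterating up to $d = l-1$ produces the desired $\tilde f$.

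The bookkeeping obstacle — and the step I expect to require the most care — is showing that the successive change-of-splitting gauge transformations are compatible: each modification at distance $d$ must leave the already-normalized lower-distance entries untouched and must not destroy the algebraic constraints (\ref{eq:7-1}) for the higher-distance entries. This is handled by choosing the gauge transformation at stage $d$ to be $\Id$ plus a term supported in the $d$-th off-diagonal, so that in the filtered/triangular picture it commutes appropriately; one then re-derives (\ref{eq:7-1}) for the transformed connection and observes that the distance-$<d$ equations are unchanged while the distance-$d$ equation becomes precisely ``$D''\tilde\beta_i^j = -\tilde\Xi_i^j$'' with the same right-hand side (since $\tilde\Xi_i^j$ depends only on already-fixed lower entries). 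The analytic solvability at each stage is exactly Lemma \ref{nl:1}'s mechanism, so no new PDE estimate is needed; the content is the triangular-induction organization. Finally, I would remark that $D'\tilde\beta_i^j = 0$ for all $i<j$ together with (\ref{eq:7-1}) means the pulled-back connection $\tilde f^{\ast}(D_E)$ is upper-triangular with diagonal the flat connections $D_{Q_k}$ and $D'$-closed, $D''$-closed off-diagonal pieces, which is the structural statement the later parts of Section 9 use to produce the projectively flat connection on $E$.
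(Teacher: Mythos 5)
Your proposal follows essentially the same route as the paper's proof: the same triangular induction on the off-diagonal distance $d=j-i$, the same change-of-splitting by $\Id$ plus a single-off-diagonal term $\gamma_i^{j}$ solving $\sqrt{-1}\Lambda_\omega D'D''\gamma_i^{j}=-\sqrt{-1}\Lambda_\omega D'\beta_i^{j}$, and the same use of Higgs-Hermitian flatness of $Q_j^{\ast}\otimes Q_i$ together with the K\"ahler identities and the inductive hypothesis $D'\beta_i^{k}=D'\beta_k^{j}=0$ (which gives $D'D''\tilde\beta_i^{j}=0$) to conclude $\|D'\tilde\beta_i^{j}\|_{L^2}=0$. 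No gaps; this is the paper's argument.
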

\begin{proof}
Let $\tilde{\beta}_{i}^{i+1}=\beta_{i}^{i+1}+D^{''}\gamma_{i}^{i+1}$, where $\gamma_{i}^{i+1}\in \Gamma(X,Q_{i+1}^{*}\otimes Q_{i})$ satisfies
\begin{equation}
\sqrt{-1}\Lambda_{\omega}D^{'}D^{''}\gamma_{i}^{i+1}=-\sqrt{-1}\Lambda_{\omega}D^{'}\beta_{i}^{i+1}.
\end{equation}
Since $(Q_{i+1}^{\ast }\otimes Q_{i}, \overline{\partial}_{Q_{i+1}^{\ast }\otimes Q_{i}}, \theta_{Q_{i+1}^{\ast }\otimes Q_{i}})$ is Higgs-Hermitian flat,  by (\ref{eq:61}), we have \begin{equation}D^{'}\tilde{\beta}_{i}^{i+1}=0.\end{equation} Let $h_{i}^{i+1}:\oplus_{k=1}^{l}Q_{k}\rightarrow \oplus_{k=1}^{l}Q_{k}$ and $h_{i}^{i+1}=\oplus_{k=1}^{l}\Id_{Q_{k}}\oplus\gamma_{i}^{i+1}$. Define $f_{1}=f\circ h_{1}^{2}\circ\cdots \circ h_{l-1}^{l}$. Then the straightforward computations show us
\begin{equation}
\begin{split}
f_{1}^{*}(D_{E}^{''})=\left(\begin{array}{ccccc}
  D_{Q_{1}}^{''} & \tilde{\beta}_{1}^{2} & \cdots &\cdots & * \\
   0 & D_{Q_{2}}^{''} & \tilde{\beta}_{2}^{3} & \cdots & *   \\
   \vdots & \ddots&\ddots &\ddots & \vdots\\
   0 & \cdots &0 & D_{Q_{l-1}}^{''} & \tilde{\beta}_{l-1}^{l}\\
  0  & \cdots& \cdots & 0 & D_{Q_{l}}^{''}
\end{array}\right).
\end{split}
\end{equation}

Inductively, suppose we construct a splitting $f_{p}$, such that $\beta_{i}^{j}$ satisfies the equation (\ref{eq:7-1}) and $D^{'}\beta_{i}^{j}=0$ for $1\leq j-i\leq p$. Let $\tilde{\beta}_{i}^{i+p+1}=\beta_{i}^{i+p+1}+D^{''}\gamma_{i}^{i+p+1}$, where $\gamma_{i}^{i+p+1}\in \Gamma(X,Q_{i+p+1}^{*}\otimes Q_{i})$ satisfies
\begin{equation}\label{eq:7-2}
\sqrt{-1}\Lambda_{\omega}D^{'}D^{''}\gamma_{i}^{i+p+1}=-\sqrt{-1}\Lambda_{\omega}D^{'}\beta_{i}^{i+p+1}.
\end{equation}
This equation can be solved in K\"ahler manifolds case, because we have the following integrability condition
\begin{equation}
\int_{X}\langle\sqrt{-1}\Lambda_{\omega}D^{'}\beta_{i}^{i+p+1},\vartheta\rangle_{H}\frac{\omega^{n}}{n!}
=0
\end{equation}
for any $\vartheta \in \Gamma (Q_{i+p+1}^{\ast } \otimes Q_{i}) $ satisfying $D^{''}\vartheta=D^{'}\vartheta=0$, where $H$ is the induced metric with respect to the Hermitian-Einstein metrics $H_{k}$.
Note that $(Q_{i+p+1}^{\ast }\otimes Q_{i}, \overline{\partial}_{Q_{i+p+1}^{\ast }\otimes Q_{i}}, \theta_{Q_{i+p+1}^{\ast }\otimes Q_{i}})$ is Higgs-Hermitian flat. Calculating directly deduces
\begin{equation}
\begin{split}
0&=\int_{X}\langle\sqrt{-1}[\Lambda_{\omega},(D^{''}+D^{'})^{2}]\tilde{\beta}_{i}^{i+p+1},\tilde{\beta}_{i}^{i+p+1}\rangle_{H,\omega}\frac{\omega^{n}}{n!}\\
&=\int_{X}\langle\sqrt{-1}\Lambda_{\omega}D^{''}D^{'}\tilde{\beta}_{i}^{i+p+1},\tilde{\beta}_{i}^{i+p+1}\rangle_{H,\omega}\frac{\omega^{n}}{n!}\\
&=\int_{X}\langle\sqrt{-1}[\Lambda_{\omega},D^{''}]D^{'}\tilde{\beta}_{i}^{i+p+1},\tilde{\beta}_{i}^{i+p+1}\rangle_{H,\omega}\frac{\omega^{n}}{n!}\\
&=\int_{X}|D^{'}\tilde{\beta}_{i}^{i+p+1}|^{2}_{H,\omega}\frac{\omega^{n}}{n!},
\end{split}
\end{equation}
where in the second equality we have used the condition $D^{''}\tilde{\beta}_{i}^{i+p+1}=D^{''}\beta_{i}^{i+p+1}=-\sum_{k=i+1}^{i+p}\beta_{i}^{k}\wedge \beta_{k}^{i+p+1}$ and $D^{'}\beta_{i}^{j}=0$ for $1\leq j-i\leq p$. Thus $D^{'}\tilde{\beta}_{i}^{i+p+1}=0$.
Set $h_{i}^{i+p+1}=\oplus_{k=1}^{l}\Id_{Q_{k}}\oplus\gamma_{i}^{i+p+1}$ and define $f_{p+1}=f_{p}\circ h_{1}^{2+p}\circ\cdots \circ h_{l-p-1}^{l}$. Then $f_{p+1}$ is a splitting satisfying equation (\ref{eq:7-1}) and $D^{'}\beta_{i}^{j}=0$ for $1\leq j-i\leq p+1$. Finally $\tilde{f}=f_{l-1}$ is a splitting satisfying the conditions in this lemma.
\end{proof}

\begin{remark}\label{rmk:7-1}
In non-K\"ahler manifolds case, the equation (\ref{eq:7-2}) may have no solutions because the right hand side of the equation may not satisfy the integrability condition.
\end{remark}

\begin{proof}[Proof of Theorem \ref{thm:2-3}]
According to Lemma \ref{l:7-1}, we can define a projectively flat connection on $E$ by
\begin{equation}
\begin{split}
\tilde{f}^{*}(D_{E})=\left(\begin{array}{ccccc}
  D_{Q_{1}} & \tilde{\beta}_{1}^{2} & \cdots &\cdots & \tilde{\beta}_{1}^{l} \\
   0 & D_{Q_{2}} & \tilde{\beta}_{2}^{3} & \cdots & \tilde{\beta}_{2}^{l}   \\
   \vdots & \ddots&\ddots &\ddots & \vdots\\
   0 & \cdots &0 & D_{Q_{l-1}} & \tilde{\beta}_{l-1}^{l}\\
  0  & \cdots& \cdots & 0 & D_{Q_{l}}
\end{array}\right).
\end{split}
\end{equation}
Then we can define a map $j:\mathcal{C}_{Dol}(E)\rightarrow \mathcal{C}_{DR}(E, \alpha )$. Next, we will show this map is well-defined. Suppose we have another filtration
\begin{equation}
0=\tilde{\mathfrak{E}}_{0}\subset \tilde{\mathfrak{E}}_{1}\subset\cdots\subset \tilde{\mathfrak{E}}_{l}=\mathfrak{E}.
\end{equation}
Let $\tilde{\mathfrak{Q}}_{i}=(\tilde{Q}_{i}, \overline{\partial}_{\tilde{Q}_{i}}, \tilde{\theta}_{i})=\tilde{\mathfrak{E}}_{i}/\tilde{\mathfrak{E}}_{i-1}$ and choose a suitable splitting $\tilde{g}:\oplus_{i=1}^{l}\tilde{Q}_{i}\rightarrow E$, such that it satisfies the equations in Lemma \ref{l:7-1}. By the same way, we can define a projectively flat connection $\tilde{D}_{E}$ on $E$. Assume the pull-back of $D_{E}^{''}$ is
\begin{equation}
\begin{split}
\tilde{g}^{*}(D_{E}^{''})=\left(\begin{array}{ccccc}
  D_{\tilde{Q}_{1}}^{''} &\cdots & \tilde{\rho}_{1}^{l} \\
   \vdots  &\ddots & \vdots\\
  0  & \cdots & D_{\tilde{Q}_{l}}^{''}
\end{array}\right).
\end{split}
\end{equation}
Set $P=\tilde{g}^{-1}\circ \tilde{f}=(P_{i}^{j})$, where $P_{i}^{j}\in \Gamma(X,Q_{j}^{*}\otimes \tilde{Q}_{i})$. Then
\begin{equation}
P\circ\left(\begin{array}{ccccc}
  D_{Q_{1}}^{''} &\cdots & \tilde{\beta}_{1}^{l} \\
   \vdots  &\ddots & \vdots\\
  0  & \cdots & D_{Q_{l}}^{''}
\end{array}\right)=\left(\begin{array}{ccccc}
  D_{\tilde{Q}_{1}}^{''} &\cdots & \tilde{\rho}_{1}^{l} \\
   \vdots  &\ddots & \vdots\\
  0  & \cdots & D_{\tilde{Q}_{l}}^{''}
\end{array}\right)\circ P.
\end{equation}
This means
\begin{equation}
\left\{\begin{split}
&D^{''}(P_{l}^{1})=0;\\
&D^{''}(P_{i}^{1})+\sum_{k=i+1}^{l}\tilde{\rho}_{i}^{k}\circ P_{k}^{1}=0,\ \ 1\leq i\leq l-1;\\
&D^{''}(P_{l}^{j})-\sum_{k=1}^{j-1}P_{l}^{k}\circ\tilde{\beta}_{k}^{j}=0,\ \ 2\leq j<l;\\
&D^{''}( P_{i}^{j})+\sum_{k=i+1}^{l}\tilde{\rho}_{i}^{k}\circ P_{k}^{j}-\sum_{k=1}^{j-1}P_{i}^{k}\circ\tilde{\beta}_{k}^{j}=0,\ \ 1\leq i\leq l-1,\ 2\leq j\leq l.
\end{split}\right.
\end{equation}
Note that $(Q_{j}^{\ast }\otimes \tilde{Q}_{i}, \overline{\partial}_{Q_{j}^{\ast }\otimes \tilde{Q}_{i}}, \theta_{Q_{j}^{\ast }\otimes \tilde{Q}_{i}})$ is Higgs-Hermitian flat for any $i$ and $j$. Using $D^{''}(P_{l}^{1})=0$, similar to the argument in Section 8, we can show $D^{'}(P_{l}^{1})=0$. By induction, we can prove $D^{'}P_{i}^{j}=0$ for $1\leq i,j\leq l$, which implies $(E,D_{E})\simeq (E,\tilde{D}_{E})$. Therefore the map $j$ is well-defined.

Conversely, we can also define a map $i:\mathcal{C}_{DR}(E, \alpha )\rightarrow \mathcal{C}_{Dol}(E)$ by the same way. It is obvious that $i\circ j=\Id_{\mathcal{C}_{Dol}}$ and $j\circ i=\Id_{\mathcal{C}_{DR}(E, \alpha )}$. So $j$ is a one-to-one map between $\mathcal{C}_{Dol}(E)$ and $\mathcal{C}_{DR}(E, \alpha )$.
\end{proof}


\begin{thebibliography}{99}

\bibitem{BL} J.M.Bismut and J.Lott, {\em Flat vector bundles, direct images and higher real analytic torsion},
J. Am. Math. Soc. {\bf 8}(2)(1995), 291-363.



\bibitem{bu}
N.P.Buchdahl,
\newblock {\em Hermitian-Einstein connections and stable vector bundles over compact complex surfaces},
\newblock  Math. Ann., {\bf 280} (1988), no. 4, 625-648.


\bibitem{Cor} K.Corlette,
\newblock {\em Flat $G$-bundles with canonical metrics},
\newblock J. Differential Geom., {\bf 28} (1988), no. 3, 361-382.


\bibitem{CS} Y.Chen and M.Struwe,
\newblock {\em Existence and partial regularity results for the heat flow for harmonic maps},
\newblock Math. Z., {\bf 201} (1989), no. 1, 83-103.


\bibitem{D} J.P.Demailly,
\newblock {\em Complex analytic and differential geometry},
\newblock available at:  https://www-fourier.ujf-grenoble.fr/~demailly/manuscripts/agbook.pdf.


\bibitem{DPM} J.P.Demailly, T.Peternell and M.Schneider,
\newblock {\em Compact complex manifolds with numerically effective tangent bundles},
\newblock J. Algebraic Geom., {\bf 3} (1994), no. 2, 295-345.

\bibitem{De} Y.Deng,
\newblock {\em Generalized Okounkov bodies, hyperbolicity-related and direct image problems},
\newblock Universit\'e Grenoble Alpes (Ph.D. thesis), 2017.

\bibitem{Don2}
S.K.Donaldson,
\newblock {\em Anti self-dual Yang-Mills connections over complex algebraic surfaces and stable vector bundles},
\newblock  Proc. London Math. Soc. (3), {\bf 50} (1985), no. 1, 1-26.

\bibitem{Don3}
S.K.Donaldson,
\newblock {\em Twisted harmonic maps and the self-duality equations},
\newblock  Proc. London Math. Soc. (3), {\bf 55} (1987), no. 1, 127-131.



\bibitem{DC} J.Dupont, {\em Simplicial de Rham cohomology and characteristic classes of flat bundles}, Topology
{\bf 15} (1976), 233-245.


\bibitem{Gaud}
P.~Gauduchon,
\newblock{\em La 1-forme de torsion d'une vari{\'e}t{\'e} hermitienne compacte},
\newblock Math. Ann. 267 (1984), 495--518.

\bibitem{H} N.J.Hitchin,
\newblock {\em The self-duality equations on a Riemann surface},
\newblock  Proc. London Math. Soc. (3), {\bf 55} (1987), no. 1, 59-126.


\bibitem{Jac1} A.Jacob,
\newblock {\em Stable Higgs bundles and Hermitian-Einstein metrics on non-K\"ahler manifolds},
\newblock Analysis, complex geometry, and mathematical physics: in honor of Duong H. Phong, 117-140, Contemp. Math., 644, Amer. Math. Soc., Providence, RI, 2015.

\bibitem{KT} F.Kamber and Ph.Tondeur, {\em Characteristic invariants of foliated bundles}, Manuscripta Math.
{\bf 11} (1974), 51-89.

\bibitem{Ko} O.Korman, {\em Characteristic classes of Higgs bundles and Reznikov's theorem}, Manuscripta Math. {\bf 152}(2017), 433-442.



\bibitem{LY}
J. Li and S.T. Yau,
\newblock {\em Hermitian-Yang-Mills connection on non-K\"ahler manifolds},
\newblock Mathematical aspects of string theory (San Diego, Calif., 1986), 560-573,
Adv. Ser. Math. Phys., 1, World Sci. Publishing, Singapore, 1987.


\bibitem{LZ} J.Y.Li and X.Zhang,
\newblock  {\em The gradient flow of Higgs pairs},
\newblock J. Eur. Math. Soc. (JEMS), {\bf 13} (2011), no. 5, 1373-1422.


\bibitem{LZ1} J.Y.Li and X.Zhang,
\newblock  {\em The limit of the Yang-Mills-Higgs flow on Higgs bundles},
\newblock Int. Math. Res. Not. IMRN, {\bf 2017}(2017), no. 1, 232-276.




\bibitem{MA}
M. L\"ubke and A. Teleman,
\newblock {\em The Kobayashi-Hitchin correspondence},
\newblock   World Scientific Publishing Co., Inc., River Edge, NJ, 1995. x+254 pp.


\bibitem{MZ} J.McNamara and Y.Zhao,
\newblock {\em Limiting behavior of Donaldson's heat flow on non-K\"ahler surfaces},
\newblock arXiv:1403.8037.




\bibitem{NarSe}
 M.S.Narasimhan and C.S. Seshadri,
\newblock {\em Stable and unitary vector bundles on a compact Riemann surface},
\newblock  Ann. of Math. (2), {\bf 82} (1965), 540-567.


\bibitem{NZ1} Y.C.Nie and X.Zhang,
\newblock {\em A note on semistable Higgs bundles over compact K\"ahler manifolds},
\newblock Ann. Global Anal. Geom., {\bf 48} (2015), no. 4, 345-355.

\bibitem{NZ} Y.C.Nie and X.Zhang,
\newblock  {\em Semistable Higgs bundles over compact Gauduchon manifolds},
\newblock J. Geom. Anal., {\bf 28} (2018), no. 1, 627-642.





\bibitem{NZ2} Y.C.Nie and X.Zhang,
\newblock {\em The limiting behaviour of Hermitian-Yang-Mills flow over compact non-K\"ahler manifolds},
\newblock Sci. China Math., {\bf 63} (2020), no. 7, 1369-1390. https://doi.org/10.1007/s11425-018-9411-7.

\bibitem{OUV} A.Otal, L.Ugarte, R.Villacampa,
\newblock {\em Hermitian Metrics on Compact Complex Manifolds and Their Deformation Limits},
\newblock Special metrics and group actions in geometry, 269-290, Springer INdAM Ser., 23, Springer, Cham, 2017.



\bibitem{R} A.G.Reznikov, {\em All regulators of flat bundles are torsion},  Ann. Math. (2) {\bf 141}(1995), 373-386.

\bibitem{S1} C.Simpson,
\newblock {\em Constructing variations of Hodge structure using Yang-Mills theory and applications to uniformization},
\newblock J. Amer. Math. Soc., {\bf 1} (1988), no. 4, 867-918.


\bibitem{S2} C.Simpson,
\newblock  {\em Higgs bundles and local systems},
\newblock Inst. Hautes tudes Sci. Publ. Math., {\bf 75} (1992), 5-95.


\bibitem{UhYau}
K.K. Uhlenbeck and S.T.Yau,
\newblock {\em On the existence of Hermitian-Yang-Mills connections in stable vector bundles},
\newblock  Comm. Pure Appl. Math., {\bf 39} (1986), no. S, suppl., S257-S293.


\bibitem{ZZZ} C.J.Zhang, P.Zhang and X.Zhang,
\newblock  {\em Higgs bundles over non-compact Gauduchon manifolds},
\newblock  arXiv:1804.08994.






\end{thebibliography}
\end{document}